\newcommand{\on}[1]{\operatorname{#1}}
\newcommand{\ca}[1]{{\mathcal{#1}}}
\renewcommand{\o}[1]{{\overline{#1}}}
\newcommand{\cat}[1]{\left(\rm{#1}\right)}
\theoremstyle{definition}
\newtheorem{theorem}{Theorem}[section]
\newtheorem{lemma}[theorem]{Lemma}
\newtheorem{corollary}[theorem]{Corollary}
\newtheorem{proposition}[theorem]{Proposition}
\newtheorem{conjecture}[theorem]{Conjecture}
\newtheorem{definition}[theorem]{Definition}
\newtheorem{example}[theorem]{Example}
\newtheorem{remark}[theorem]{Remark}
\newtheorem{problem}[theorem]{Problem}
\DeclareMathOperator{\spec}{Spec} 
\newcommand{\scr}[1]{\ensuremath{\mathscr{#1}}}
\newcommand{\msout}[1]{\text{\sout{\ensuremath{#1}}}}
\newcommand{\ZZ}{\mathbb{Z}}
\newcommand{\CC}{\mathbb{C}}
\newcommand{\NN}{{\mathbb{N}}}
\newcommand{\QQ}{{\mathbb{Q}}}
\newcommand{\OO}{\mathcal{O}}
\newcommand{\RR}{\mathbb{R}}
\newcommand{\Aff}{{\mathbb{A}}}
\newcommand{\PP}{\mathbb{P}}
\newcommand{\GG}{\mathbb{G}}
\newcommand{\Spec}{{\rm{Spec}\:}}
\newcommand{\Spf}{{\rm{Spf}\:}}
\newcommand{\Hom}{{\rm Hom}}
\newcommand{\gp}[1]{#1^{\rm gp}}
\newcommand{\Log}{{\sf{Log}}}
\newcommand{\lpb}{{\arrow[dr, phantom, very near start, "\ulcorner {\rm fs}"]}}
\newcommand{\lpbstrict}{{\arrow[dr, phantom, very near start, "\ulcorner \msout{{\rm fs}}"]}}
\renewcommand{\tilde}[1]{\widetilde{#1}}
\renewcommand{\hat}[1]{\widehat{#1}}
\newcommand{\Ms}[1][{g, n}]{\overline{M}_{#1}}
\newcommand{\Mp}[1][{g, n}]{\mathfrak M_{#1}}
\newcommand{\Msl}[1][{g, n}]{\overline{M}^{\rm log}_{#1}}
\newcommand{\Mpl}[1][{g, n}]{\mathfrak M^{\rm log}_{#1}}
\newcommand{\longequals}{\xlongequal{\: \:}}
\DeclareMathOperator*{\colim}{colim}
\newcommand{\adj}[1]{\llbracket #1 \rrbracket}
\newcommand{\bra}[1]{\left[{#1}\right]}
\newcommand{\pra}[1]{\left({#1}\right)}
\newcommand{\AF}[1][{}]{\Theta_{{#1}}}
\newcommand{\af}[1]{\AF[#1]}
\newcommand{\Cone}{{\rm Cone} \,}
\newcommand{\cal}[1]{\ensuremath{\mathcal{#1}}}
\newcommand{\pt}{{\rm pt}}
\newcommand{\bb}[1]{\ensuremath{\mathbb{#1}}}
\newcommand{\Cones}{\cat{Cones}}
\newcommand{\Mons}{\cat{Mon}}
\newcommand{\longsimeq}{\overset{\sim}{\longrightarrow}}
\def\overnorm#1{\overline{#1}\vphantom{#1}}
\renewcommand{\bar}[1]{\ensuremath{\overnorm{#1}}}
\NewDocumentCommand{\bl}{ O{\lambda} m }{{{\left({#2}\right)}^{\sim}_{#1}}}
\NewDocumentCommand{\ev}{ O{} m }{\hat{\wedge}_{#1} #2}
\NewDocumentCommand{\evs}{ O{} m }{\wedge_{#1} #2}
\renewcommand{\log}{{\sf{log}}}
\title{Log geometry and lifting rational points}
\author[Leo Herr]{Leo Herr$^\heartsuit$}
\author[Sara Mehidi]{Sara Mehidi$^\clubsuit$}
\author[Marta Pieropan]{Marta Pieropan$^\clubsuit$}
\author[Thibault Poiret]{Thibault Poiret$^\diamondsuit$}
\date{\today \\ $^\heartsuit$ Virginia Tech, $^\clubsuit$ Utrecht University, $^\diamondsuit$ Jagiellonian University Krakow.}
\begin{document}

\subjclass[2020]{14A21 (14G05)}
\keywords{Rational points, log schemes, firmaments}

\maketitle

\begin{abstract}
    For a morphism $f : X \to Y$ of schemes, we give a tropical criterion for which points of $Y$ (valued in a field, discrete valuation ring, number ring, or Dedekind domain) lift to $X$. Our criterion extends the firmaments of Abramovich \cite{abramovich09ClayNotes} to a wide range of morphisms, even logarithmic stable maps. 
\end{abstract}

\setcounter{tocdepth}{1}
\tableofcontents

\section{Introduction}

\begin{problem}\label{prob:rationalpoints}
Given a morphism $f:X\to Y$ of varieties over a field $k$, determine the image $f(X(k))$ as a subset of $Y(k)$.
\end{problem}

Versions of this problem are studied in \cite{Serre90split, Skorobogatov96split, CTSSD98split, CTS00split, HS03split, Campana04, Campana05surfaces, abramovich09ClayNotes, BMS14split, BBL16split, LS16split, Sofos16split, Loughran18split,  Denef19split, LSS20pseudo-split, HWW22split, BLS23split, LM24split}. See the end of this introduction for a historical overview.

Abramovich \cite{abramovich09ClayNotes} suggests  that toroidal embeddings and log geometry can capture insights of Campana \cite{Campana04, Campana05surfaces} on Problem \ref{prob:rationalpoints}, which is the goal of this paper. 
We conclude with applications to rational points on varieties
and log stable maps.

\subsection{Firm morphisms}

We define a logarithmic and tropical condition for points in $Y(k)$ to be in the image $f(X(k))$ inspired by the ``firmaments'' of \cite[\S2]{abramovich09ClayNotes}. 

\begin{definition}\label{def:firm}
    Let $f \colon X \to Y$ and $p \colon S \to Y$ be maps of fine and saturated log schemes. For any strict geometric point $\bar s \to S$, we say that the morphism $p \colon S \to Y$ is \emph{$f$-firm at $\bar s$}, or \emph{firm along $f$ at $\bar s$} if there exist a strict geometric point $\bar w \to X \times_Y^\mathsf{fs} \bar s$ and a morphism of monoids $t$ which factors the identity\footnote{We are using fine and saturated fibre products here, but one would get the same notion of firmness with the non-fs fibre product.}
    \begin{equation}\label{eqn:factorizationfirmness}
       \bar M_{S, \bar s} \to \bar M_{X \times_Y^\mathsf{fs} S, \bar w} \overset{t}{\longrightarrow} \bar M_{S, \bar s}. 
    \end{equation}
    We say $p$ is \emph{$f$-firm}, or \emph{firm along $f$}, if it is $f$-firm at every strict geometric point.
\end{definition}

Let $p : s \to Y$ be a \emph{log point}, that is, a morphism of log schemes such that the underlying scheme of the source is the spectrum of a field $s^\circ = \spec k$. 
We denote underlying schemes by $X^\circ, Y^\circ$, etc. 

For a morphism $f : X \to Y$, the properties 
\begin{enumerate}
    \item\label{eit:1} $s \to Y$ lifts to a log point $s \to X$, called a \emph{log lift},
    \item\label{eit:2} $s \to Y$ is $f$-firm, 
    \item\label{eit:3} $s^\circ \to Y^\circ$ lands in the set-theoretic image of $f : X \to Y$,

\end{enumerate}
have evident implications \eqref{eit:1} $\Rightarrow$ \eqref{eit:2} $\Rightarrow$ \eqref{eit:3}. 

Our results give partial converses \eqref{eit:3} $\Rightarrow$ \eqref{eit:2} $\Rightarrow$ \eqref{eit:1} for nice $f$.

\begin{theorem}[{\S \ref{s:thibaultthm}}]\label{thm:thibault}
    Let $f \colon X \to Y$ be log flat and locally of finite presentation and $s \to Y$ a log point of $Y$. 
    There exists a strict morphism of log points $s'\to s$ corresponding to a finite field extension and a log lift $s' \dashrightarrow X$ if and only if $s \to Y$ is $f$-firm. If $f$ is log smooth, one can take ${s'}^\circ \to s^\circ$ to be separable.
\end{theorem}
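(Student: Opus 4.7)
The implication \eqref{eit:1} $\Rightarrow$ \eqref{eit:2} is immediate: a log lift $s' \to X$ with $s' \to s$ strict yields, by the universal property of the fs fibre product, a morphism $s' \to X \times_Y^{\mathsf{fs}} s$; choosing a geometric point of $s'$ above $\bar s$ with image $\bar w$, the composition $\bar M_{s, \bar s} \to \bar M_{X \times_Y^{\mathsf{fs}} s, \bar w} \to \bar M_{s', \bar{s'}} = \bar M_{s, \bar s}$ is the identity precisely because $s' \to s$ is strict.

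For the converse, the plan is to first base change to reduce to the case $Y = s$; this preserves log flatness (resp.\ log smoothness) and local finite presentation, and the firmness data survives unchanged, so one may assume $\bar w$ is a geometric point of $X$. By Kato's existence theorem for charts of log flat morphisms locally of finite presentation, one obtains, \'etale-locally around $\bar w$, a chart $i : P := \bar M_{s, \bar s} \to Q := \bar M_{X, \bar w}$ together with a factorisation $X \to Z := s \times^{\mathsf{fs}}_{\spec \ZZ[P]} \spec \ZZ[Q]$ that is strict and flat (strict and smooth if $f$ is log smooth). The firmness retraction $t : Q \to P$ cuts out the face $F := t^{-1}(0) \subset Q$, and the identity $t \circ i = \on{id}_P$ forces $F \cap i(P \setminus \{0\}) = \emptyset$, so the toric stratum $Z_F = \spec k[F^{\on{gp}}] \subset Z$ is a non-empty torus over $k$, whose closure contains the image of $\bar w$ (which sits in the deepest toric orbit of $Z$).

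The crucial step is to show that the stratum $X_F := f^{-1}(Z_F) \subset X$ is non-empty. Since $X \to Z$ is flat and locally of finite presentation, it is universally open, so the image of $X \to Z$ is an open neighbourhood of the image of $\bar w$; any open set containing a specialization also contains the corresponding generic points, so this open meets $Z_F$, whence $X_F$ is non-empty. A closed point $w' \in X_F$ then has residue field $k'$ finite over $k$, and the retraction $t$ descends to an isomorphism $Q/F \xrightarrow{\sim} P$; this ensures that the resulting morphism $s' := \spec k' \to X_F \hookrightarrow X$ is a log lift of $s' \to s \to Y$, with $s' \to s$ strict and finite. In the log smooth case, $X_F \to Z_F$ is smooth and $Z_F$ is a torus over $k$, so $X_F$ is smooth over $k$ and one may choose $w'$ with separable residue field. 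I expect the main technical hurdles to be the precise formulation of the chart existence theorem in the fs setting (to guarantee a chart with stalk exactly $\bar M_{X, \bar w}$ at $\bar w$) and the verification of strictness and ghost-sheaf compatibility of the produced log lift.
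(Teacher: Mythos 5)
Your overall strategy runs parallel to the paper's: base change to reduce to $Y = s$, pass to a local model (you use a Kato chart $P \to Q$ and the toric model $Z$; the paper uses the Artin fan map $\af{X} \to \af{Y}$), exploit log flatness and local finite presentation to see that the image of $X$ in the local model is open and hence meets the relevant stratum, and then find a closed point giving the lift. Your non-emptiness argument for $X_F$ is the chart-level counterpart of the paper's Lemma~\ref{lem:opendeepeststratasurjectiveAF} and Proposition~\ref{prop:logpointfirm=AFsection}.

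However, there is a genuine gap in the last step. The claim that the retraction $t : Q \to P$ descends to an \emph{isomorphism} $Q/F \xrightarrow{\sim} P$ is false in general. For instance, take $P = \NN$, $Q = \NN^3$, $i(1) = (1,0,0)$ and $t(a,b,c) = a+c$; then $t \circ i = \mathrm{id}$ and $F = t^{-1}(0) = \{0\} \times \NN \times \{0\}$, but $Q/F \cong \NN^2$ and the induced map $\NN^2 \to \NN$, $(a,c) \mapsto a+c$, is surjective and not injective. Since $Q/F$ is the characteristic monoid $\bar M_{X,w'}$ for $w' \in X_F$ (the chart being neat over $Z_F$), equipping $s' = \spec k'$ with the log structure pulled back from $X$ gives $\bar M_{s'} = Q/F \neq P$, so $s' \to s$ is \emph{not} strict. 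To salvage the approach you would need to instead equip $s'$ with the log structure pulled back from $s$ and then construct the log morphism $s' \to X$ by hand, using $\bar t : Q/F \to P$ on characteristics together with an extension of the tautological character $F^{\mathrm{gp}} \to k'^{*}$ (coming from evaluating $F$ at $w'$) to $Q^{\mathrm{gp}}$; this is possible because $Q^{\mathrm{gp}}/F^{\mathrm{gp}}$ is free, but the argument is not in your writeup. The paper sidesteps this entirely by forming $X' := X \times_{v'} s$, where $v'$ is the pullback of the origin $v \in \af{Y}$ along $\af{X} \to \af{Y}$ and $s \to v'$ is the section dual to $t$: since $X \to v'$ is strict, the map $X' \to s$ is strict, surjective and finitely presented, so any point of $X'$ over a finite extension $s'$ automatically yields a strict $s' \to s$ together with a log lift $s' \to X$, with no further verification needed. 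This is precisely where the Artin fan formalism earns its keep.
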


\begin{theorem}[{\S \ref{s:firm}}]\label{thm:firmlocus}
    Let $f \colon X \to Y$ be integral and saturated. A map $p \colon S \to Y$ is $f$-firm if and only if its set-theoretic image $p(S) \subseteq Y$ is contained in that of $f$: 
    \[p(S) \subseteq f(X) \subseteq Y.\]
    In other words, the map $p$ is $f$-firm if and only if the pullback\footnote{both fine saturated and ordinary in this case} $X \times_Y^{\msout{\rm fs}} S \to S$ is surjective. 
\end{theorem}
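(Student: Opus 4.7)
The forward implication is immediate from the fact that, for $f$ integral and saturated, the fs fibre product $X\times_Y^{\mathsf{fs}}\bar s$ has underlying scheme equal to the ordinary scheme-theoretic fibre $X^\circ\times_{Y^\circ}\bar s^\circ$. If $p$ is $f$-firm at $\bar s$, a strict geometric point $\bar w$ of $X\times_Y^{\mathsf{fs}}\bar s$ is in particular a point of this underlying scheme, forcing $p(\bar s)\in f(X)$. Running this pointwise gives $p(S)\subseteq f(X)$, and the ``in other words'' reformulation follows by the same identification of underlying schemes.

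For the converse, fix a strict geometric point $\bar s\to S$ with $p(\bar s)\in f(X)$. Pick $\bar x\in X^\circ$ with $f(\bar x)=p(\bar s)$; then $(\bar x,\bar s)$ is a geometric point of $X^\circ\times_{Y^\circ}\bar s^\circ$, hence, by the identification of underlying schemes, of $X\times_Y^{\mathsf{fs}}\bar s$. Let $\bar w$ be a strict geometric point of $X\times_Y^{\mathsf{fs}}\bar s$ above $(\bar x,\bar s)$. This produces the ``geometric'' part of Definition~\ref{def:firm}, so what remains is to produce the monoid retraction $t$.

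The characteristic monoid at $\bar w$ is the fs pushout
\[
\bar M_{X\times_Y^{\mathsf{fs}}S,\bar w}=\bar M_{X,\bar x}\oplus^{\mathsf{fs}}_{\bar M_{Y,\bar y}}\bar M_{S,\bar s},
\]
so by the universal property, constructing $t$ amounts to exhibiting a monoid morphism $g\colon\bar M_{X,\bar x}\to \bar M_{S,\bar s}$ whose restriction along $\bar M_{Y,\bar y}\to\bar M_{X,\bar x}$ equals the structural map $\bar M_{Y,\bar y}\to\bar M_{S,\bar s}$. I plan to produce $g$ from the monoid-theoretic content of integrality and saturation of $\bar M_{Y,\bar y}\to\bar M_{X,\bar x}$: it is injective on groupifications with torsion-free cokernel $K$, and the structure theory of integral-saturated monoid morphisms allows the transverse part of $\bar M_{X,\bar x}$ (generators lifting a basis of $K$) to be mapped freely in $\bar M_{S,\bar s}$; setting these transverse generators to $0$ is the canonical choice.

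The main obstacle is checking that this ``zero extension'' is well-defined as a monoid morphism whose image lies in $\bar M_{S,\bar s}$ itself, and not merely in its groupification. This is precisely the content of integral-saturation, which I would verify on charts using the standard local models for such monoid morphisms (i.e., exhibiting $\bar M_{X,\bar x}$ as the submonoid of $\bar M_{Y,\bar y}^{\rm gp}\oplus K$ cut out by a fan compatible with $\bar M_{Y,\bar y}$, whereupon the zero extension is forced to land in $\bar M_{S,\bar s}$). Once $g$ is in hand, the universal property of the fs pushout yields $t$, completing firmness at $\bar s$.
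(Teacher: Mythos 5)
Your approach is correct and, at bottom, rests on the same ingredient as the paper's proof: Ogus's structure theorem for local, integral, saturated morphisms of sharp f.s.\ monoids (\cite[Theorem I.4.8.14]{ogusloggeom}), which yields a decomposition $\bar M_{X,\bar x} \simeq \bar M_{Y,\bar y} \times F$ with $F$ a face. Your map $g$ is then simply the projection $\bar M_{X,\bar x}\to\bar M_{Y,\bar y}$ followed by the structure map $\bar M_{Y,\bar y}\to\bar M_{S,\bar s}$, and the universal property of the fs pushout (Lemma~\ref{lem:firmbasics}\ref{firmbasics7}) produces $t$. The difference from the paper is one of packaging: the paper wraps the splitting in Lemma~\ref{lem:intsatsection} about Artin cones and invokes Proposition~\ref{prop:logpointfirm=AFsection}, whereas you argue directly with characteristic stalks and never need Artin fan functoriality. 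That is a genuine simplification.

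Two things must be filled in to make the sketch a proof. First, Ogus's theorem requires $\bar M_{Y,\bar y}\to\bar M_{X,\bar x}$ to be \emph{local} (sharp); you should note this is automatic because $\bar x$ lies over $\bar y$, so $f^\sharp\colon\OO_{Y,\bar y}\to\OO_{X,\bar x}$ is a local ring map and cannot carry a non-unit $\alpha_Y(m)$ to a unit. (The paper handles the non-local case through the dichotomy in Lemma~\ref{lem:intsatsection} together with the observation that $X'$ must hit the origin of $\af{\bar y}$, which is disjoint from $\partial\af{\bar y}$; in your formulation that case simply never arises.) Second, your ``main obstacle'' paragraph hands off the key step to an unverified computation ``on charts.'' The ``standard local model'' you allude to is precisely the Ogus splitting: cite it and there is nothing left to check, since the projection lands in $\bar M_{Y,\bar y}$ by construction, not merely in some ambient group. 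Also, your preliminary claim that the characteristic map ``is injective on groupifications'' is a consequence of the splitting rather than a fact available beforehand, so as written the logic is mildly circular — harmless once the theorem is cited, but worth rephrasing.
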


This theorem applies if $f$ is strict, i.e.,\ $f$ is just a morphism of schemes. 

Lifting a log point $s \to Y$ to $X$ is usually much stronger than lifting the point on underlying schemes. 

\begin{example}
    Let $R$ be a complete discrete valuation ring and equip it with its natural log structure $M_R = R \setminus \{0\}$ so that $\bar M_R = \NN$. Write $S = \spec R$ for the resulting log scheme and $s \in S$ for its closed point.
    Let $S \to Y$ and $f : X \to Y$ be morphisms of log schemes. If $f$ is log smooth, the Log Hensel Lemma (cf.~Lemma \ref{LogHensel}) shows that finding a log lift $S \to X$ is equivalent to finding a log lift of the closed point $s \to X$.
\end{example}

The log structure also significantly constrains possible lifts. 

\begin{example}\label{eg:trivial}
    Suppose $s = s^\circ$ has trivial log structure and $f : X \to Y$ is the map $X \to X^\circ$ forgetting log structure. The forgetful map $f$ is log flat if and only if $X = X^\circ$ has no log structure, so Theorem \ref{thm:thibault} likely does not apply. Indeed, scheme-theoretic points $s \to X^\circ$ with trivial log structure $s = s^\circ$ are all firm. Yet they will not lift to log points $s \dashrightarrow X$ unless they lie in the open, possibly empty locus $X_0 \subseteq X$ where the log structure is trivial. 
\end{example}

For nice enough maps $f$, we show a log point $s \to Y$ lifts to $X$ if and only if the point of the underlying scheme $s^\circ \to Y^\circ$ does. 

\begin{corollary} \label{cor:log lifts}
    For $f \colon X \to Y$ integral, saturated, log smooth, and of finite presentation and $s \to Y$ a log geometric point, the log point lifts as $s \dashrightarrow X$ if and only if the point of the underlying schemes does as $s^\circ \dashrightarrow X^\circ$. 
\end{corollary}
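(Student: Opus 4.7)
The implication from log lift to underlying scheme lift is tautological: forgetting log structure on a lift $s \to X$ gives a lift $s^\circ \to X^\circ$. The content is in the converse, and the plan is to chain together the two main theorems of this section.

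First I would argue that the hypothesis $s^\circ \dashrightarrow X^\circ$ means the underlying scheme-theoretic image of $s \to Y$ is contained in the set-theoretic image $f(X) \subseteq Y$. Since $f$ is integral and saturated, Theorem \ref{thm:firmlocus} then applies directly to conclude that the log point $p \colon s \to Y$ is $f$-firm.

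Next I would invoke Theorem \ref{thm:thibault}. Since $f$ is log smooth (so in particular log flat) and of finite presentation, and since we have just shown that $s \to Y$ is $f$-firm, there exists a strict morphism $s' \to s$ corresponding to a finite \emph{separable} field extension $s'^\circ \to s^\circ$ together with a log lift $s' \dashrightarrow X$. Here the separability, rather than mere finiteness, is crucial and comes from the ``log smooth'' clause of Theorem \ref{thm:thibault}.

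The final step, which I expect to be routine rather than the main obstacle, is to observe that because $s$ is a log \emph{geometric} point, its underlying scheme $s^\circ$ is the spectrum of a separably closed field. Hence the finite separable extension $s'^\circ \to s^\circ$ is an isomorphism; since $s' \to s$ is strict, the map $s' \to s$ is itself an isomorphism of log schemes, and the lift $s' \dashrightarrow X$ descends to the desired log lift $s \dashrightarrow X$. The only genuine subtlety is making sure the correct notion of ``log geometric point'' is being used so that separable extensions of $s^\circ$ are trivial; with any of the standard conventions (separably closed residue field, with appropriate saturation on the monoid) this is immediate.
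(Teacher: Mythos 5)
Your proof is correct and follows the paper's intended approach: the scheme-theoretic lift places the image of $s$ in $f(X)$, Theorem \ref{thm:firmlocus} (for $f$ integral and saturated) upgrades this to $f$-firmness, Theorem \ref{thm:thibault} (for $f$ log smooth of finite presentation) produces a log lift over a finite \emph{separable} extension, and that extension is trivial because $s^\circ$ is already a geometric point. Your worry about the convention for ``log geometric point'' is moot: whether $s^\circ$ is required to be separably or algebraically closed, the separable extension produced by Theorem \ref{thm:thibault} collapses to an isomorphism.
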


Using our results, we can characterize which maps are firm. 

\begin{definition}
    View $Y$ as a presheaf on fine saturated log schemes. Let $Y(f) \subseteq Y$ be the subpresheaf consisting of morphisms $p \colon S \to Y$ from fine saturated log schemes $S$ which are $f$-firm, dubbed the \emph{firm locus}. 
\end{definition}

The map $f : X \to Y$ factors through $Y(f) \subseteq Y$. Firmness detects whether points log lift along a map $f$ but has nothing to do with how many lifts there are. 

\begin{example}\label{ex:infmanyliftsstdpoint}
    Let $S = X$ be a log geometric point with rank-one log structure $\bar M_S = \bar M_X = \NN$ and take $Y = X^\circ$ with the natural map $f : X \to Y$. Any map to $Y$ is $f$-firm, so the firm locus is $Y(f) = Y$. Indeed, $S \to Y$ lifts to $S \dashrightarrow X$. In fact, there are infinitely many such lifts given by all maps $\NN \to \NN$.
\end{example}

Depending on $f$, we show the firm locus $Y(f) \subseteq Y$ is ``representable'' to various degrees. 

\begin{corollary}
    Let $f \colon X \to Y$ be a morphism. 
    \begin{itemize}
        \item If $f$ is integral and saturated, the firm locus $Y(f) \subseteq Y$ is the set-theoretic image of $f$. In particular, 
        \begin{itemize}
            \item If $f$ is flat and locally of finite presentation, then $Y(f) \subseteq Y$ is an open subscheme. 
            \item If $f$ is proper, then $Y(f) \subseteq Y$ is an ind-closed subscheme. 
        \end{itemize}
        \item If $Y$ is quasicompact and $f$ is both of finite presentation and either log flat or proper, there is a log alteration $W \to Y(f)$ of $Y(f)$ with $W$ a scheme or formal scheme, respectively.   
    \end{itemize}
\end{corollary}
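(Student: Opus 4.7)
The plan is to derive the first bullet as essentially a restatement of Theorem \ref{thm:firmlocus}, and then use Theorem \ref{thm:thibault} together with log alteration techniques for the second bullet.

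For the first bullet: Theorem \ref{thm:firmlocus} shows that, when $f$ is integral and saturated, a map $p \colon S \to Y$ is $f$-firm if and only if $p(S) \subseteq f(X)$. Hence the subpresheaf $Y(f) \subseteq Y$ coincides with the subfunctor of maps landing set-theoretically in $f(X)$, i.e.\ with the image $f(X) \subseteq Y$ itself. When $f$ is additionally flat and locally of finite presentation, openness of flat morphisms of finite presentation gives that $f(X)$ is open in $Y$. When $f$ is proper, $f(X)$ is closed; writing $X$ as a filtered union of quasi-compact pieces presents this image as an ind-closed subscheme in the non-finite-type case.

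For the log flat, finitely presented half of the second bullet, Theorem \ref{thm:thibault} shows that every $f$-firm log point of $Y$ admits a log lift to $X$ after a finite field extension, so $X \to Y(f)$ is essentially surjective on geometric points. To convert the log scheme $X$ into a scheme $W$, I would apply a version of Kato-style log alteration (using finite presentation of $f$ and quasi-compactness of $Y$ to reduce to a finite chart situation) to obtain a proper, generically finite log alteration $W \to X$ with $W$ having trivial log structure, and then compose. For the proper case, the image $f(X)$ is ind-closed in $Y$ by the same reasoning as in the first bullet, and the formal scheme arises through completion: the Log Hensel Lemma (Lemma \ref{LogHensel}) lifts log points over complete local rings to $X$, so passing to formal completions of $X$ along the preimages of finite-type strata of $f(X)$ and applying the same alteration procedure yields the formal scheme $W$.

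The main obstacle I anticipate is invoking log alteration or desingularization in the precise generality needed. One must work relative to an arbitrary quasi-compact base $Y$ and end up with a genuine scheme (or formal scheme) rather than another fs log scheme, which forces a careful choice of alteration that trivialises the log structure while preserving surjectivity onto $Y(f)$. In the proper case, the additional delicacy is to assemble the completions into a single formal scheme over $Y(f)$ rather than a mere inverse system, and to verify that the resulting morphism remains a log alteration in the appropriate sense.
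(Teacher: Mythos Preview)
Your treatment of the first bullet is fine and matches the paper: Theorem \ref{thm:firmlocus} gives the identification with the set-theoretic image, and the openness/closedness follow from standard facts about flat and proper morphisms.

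Your approach to the second bullet, however, misreads what is being asked and would not work. The map $W \to Y(f)$ is supposed to be a log alteration \emph{of the firm locus}, not something produced by altering $X$. The paper's argument (Corollary \ref{cor:firmlocusalteration}) runs as follows: by Lemma \ref{lem:logaltnint+sat} there is a log alteration $Y' \to Y$ such that the f.s.\ pullback $f' \colon X' \to Y'$ is integral and saturated; by Lemma \ref{lem:firmlocuspullsback} the firm locus pulls back, so $Y(f) \times_Y^{\rm fs} Y' = Y'(f')$; now apply the first bullet to $f'$ to identify $Y'(f')$ with the set-theoretic image $f'(X')$. Chevalley's theorem makes this locally constructible (hence a formal scheme via Lemma \ref{lem:constructiblesetsasformalschemes}), and if $f$ is log flat then $f'$ is flat (integral plus log flat implies flat), so the image is open and $W \coloneqq Y'(f')$ is an honest scheme.

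Your proposal instead tries to alter $X$ to kill its log structure and then compose with $X \to Y(f)$. This fails on two counts. First, log alterations do not trivialise log structures; a log alteration of $X$ is still a log scheme with nontrivial log structure in general. Second, even granting some $W \to X$, the composite $W \to X \to Y(f)$ has no reason to be a log alteration of $Y(f)$: log alterations are proper, birational, of DM type and pulled back from maps of Artin fans, none of which is inherited from an arbitrary surjection $X \to Y(f)$. Theorem \ref{thm:thibault} and the log Hensel lemma concern lifting individual points and are not the right tools here; the key missing ingredient is the reduction to the integral-saturated case via a log alteration \emph{of the base}.
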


In the latter case, the firm locus $Y(f)$ is a ``weak (ind-)logarithmic space'' -- a functor on fine saturated log schemes which admits a log alteration by an (ind-)log scheme.

\subsection{Application to rational points}
In Section~\ref{s:appnsrationalpoints}, we relate our work to that of Abramovich~\cite{abramovich09ClayNotes}. We compare our notion of \emph{firmness} (cf.~Definition~\ref{def:firm}) with the one introduced in~\cite[Definition~2.4.17]{abramovich09ClayNotes}, which we generalize to the logarithmic setting in Definition~\ref{def:firmamentcontactorder}. To avoid confusion, we refer to the latter as \emph{lying in the firmament}.

Under mild assumptions, we prove the two notions coincide. 
\begin{theorem}[{Proposition \ref{comparefirmlieinfirmament}, Theorem \ref{thm:firm=firmament}}]
    Let $f : X \to Y$ be a 
    surjective, log flat, log reduced, finite
presentation morphism of log schemes and consider a map $p : S \to Y$. If $S$ is 
    \begin{itemize}
        \item a log point with rank one log structure, or 
        \item the spectrum $S = \spec R$ of a discrete valuation ring with log structure $R\setminus \{0\}$, 
    \end{itemize}
    then $p$ is $f$-firm if and only if $p$ lies in the firmament of $f$. 
\end{theorem}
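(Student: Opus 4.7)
The plan is to reduce to the case of a single log point with rank one characteristic monoid and then dictionary-translate both conditions into the same monoid-theoretic statement. Both firmness and lying in the firmament are properties that get checked at strict geometric points of $S$. In the DVR case $S = \spec R$ with log structure $R\setminus\{0\}$, the log structure is trivial at the generic point, where both conditions are tautologically satisfied, and has characteristic $\NN$ at the closed point. So it suffices to treat the log-point case with $\bar M_{S, \bar s} = \NN$.

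Next, I would unpack Definition \ref{def:firm} at such an $\bar s$. Firmness asks for a strict geometric point $\bar w \to X \times_Y^{\mathsf{fs}} \bar s$ and a monoid map $t \colon \bar M_{X \times_Y^{\mathsf{fs}} S, \bar w} \to \NN$ splitting the structural map. Since strict geometric points of $X \times_Y^{\mathsf{fs}} \bar s$ are classified by pairs consisting of a strict geometric point $\bar w \to X$ above $p(\bar s) \in Y$ and a face of the fs pushout of characteristics $\bar M_{X, \bar w} \oplus^{\mathsf{fs}}_{\bar M_{Y, f(\bar w)}} \NN$, the data of firmness at $\bar s$ is equivalent to choosing such a $\bar w$ together with a ray in $\mathrm{Hom}(\bar M_{X, \bar w} \oplus^{\mathsf{fs}}_{\bar M_{Y, f(\bar w)}} \NN, \NN)$ mapping to $1 \in \mathrm{Hom}(\NN, \NN)$. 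Dually, this is a rational ray in the dual cone of the characteristic at some point of $X$ over $p(\bar s)$ that projects isomorphically onto the ray associated to $S$.

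On the other side, I would unpack the firmament condition (Definition \ref{def:firmamentcontactorder} generalizing Abramovich's \cite[2.4.17]{abramovich09ClayNotes}). Tropically, it asks for the ray $\RR_{\geq 0}$ associated to $S$ to be covered by a cone from the tropicalization of $X$ under the tropicalization of $f$, i.e.\ a stratum $\bar w \in X$ over $p(\bar s)$ with a cone in $\Sigma_{X, \bar w}$ mapping onto the image ray. Translating across the duality between cones and sharp monoids, this is exactly the same data as in the previous paragraph. The content of the theorem is then that no splittings are lost or gained between the two descriptions. Here the surjectivity, log flat, log reduced, and finite presentation hypotheses on $f$ enter: surjectivity and log reducedness guarantee that every monoid-theoretic splitting is realized by an actual stratum of $X$, while log flatness of finite presentation ensures that the fs fiber product has the expected set of geometric points (no superfluous or missing components), by combining \Cref{thm:thibault} with the classification of fs pushouts in rank one.

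The principal obstacle will be keeping track of the correspondence between geometric points of $X \times_Y^{\mathsf{fs}} \bar s$ and facet data of the fs pushout of characteristic monoids, including the compatibility with the splitting $t$, and verifying that under the theorem's hypotheses on $f$ this dictionary is surjective in both directions. Once this is established, the rank one assumption makes the fs pushout explicit (ordinary pushout followed by saturation), reducing the remaining work to an elementary check on $\NN$-splittings of saturated monoids.
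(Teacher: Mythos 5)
Your reduction of the DVR case to the log-point case is reasonable: surjectivity of $f$ handles the generic point directly, so you can concentrate on the closed point. The paper instead routes this through Proposition~\ref{prop:generizationfirmlocus}, but your shortcut is legitimate under the combined hypotheses of the theorem.

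The serious problem lies in your unpacking of the firmament condition. You describe it as asking for ``a stratum $\bar w \in X$ \emph{over $p(\bar s)$} with a cone in $\Sigma_{X,\bar w}$ mapping onto the image ray.'' But Definition~\ref{def:firmamentcontactorder} defines $\Gamma_f$ as the image of $\Sigma_f \colon \Sigma_X(\NN) \to \Sigma_Y(\NN)$, where $\Sigma_X(\NN) = \Hom(\af{},\af X)$ ranges over integral points of the cone complex of the \emph{entire} log scheme $X$, not just the strata lying over $p(\bar s)$. By contrast, firmness of $\bar s$ is an assertion about the Artin fan of the \emph{fiber} $X\times^{\mathsf{fs}}_Y \bar s$. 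These two things — the fiber of $\af{X}$ over the image of $\bar s$, and the Artin fan $\af{X\times^{\mathsf{fs}}_Y \bar s}$ of the fiber — are genuinely different in general. This is exactly the phenomenon isolated in Example~\ref{ex:AFbasechangecounterex}, where a section of $\af{X}\to\af{Y}$ exists (coming from a stratum of $X$ living over the generic point of $Y$) yet the log point $s\to Y$ is not firm because the corresponding cone does not survive into $\af{X_{s}}$. Your translation silently conflates these two objects, which erases precisely the difficulty that the theorem is about.

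Consequently, the step where you assert ``Translating across the duality between cones and sharp monoids, this is exactly the same data as in the previous paragraph'' is not a translation but a claim equivalent to the theorem itself, and it requires the specific hypotheses in a structured way. The paper's proof supplies the missing mechanisms: in the log-point case (Proposition~\ref{comparefirmlieinfirmament}) it is the combination of the factorization through the origin $v\in\af Y$, flatness/openness of $X\to v'$, and surjectivity, via Lemma~\ref{lem:opendeepeststratasurjectiveAF}; in the DVR case (Theorem~\ref{thm:firm=firmament}) it is Lemma~\ref{lem:relAFbasechange}, which uses log flatness, log reducedness, and finite presentation to show $\af{X_{\bar y}}\times_{\af{\bar y}}\bar y \simeq \af X \times_{\af Y}\bar y$ — a genuinely non-formal statement relying on \cite{romagnypi0} and \cite{loghochschild}. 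Your sketch gestures at these hypotheses ``ensuring the dictionary is surjective in both directions'' but offers no argument, and the reference to ``combining Theorem~\ref{thm:thibault} with the classification of fs pushouts in rank one'' is not an adequate substitute: the characteristic monoid of a point of the fs fiber product is a face localization of the fs pushout, but which faces are realized as geometric points depends on the scheme structure of the fiber, not just on the monoid theory, as Example~\ref{ex:AFbasechangecounterex} shows concretely. To make your approach work you would need to state and prove an analogue of Lemma~\ref{lem:relAFbasechange}, which is the technical heart of the paper's proof and is not an ``elementary check on $\NN$-splittings of saturated monoids.''
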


This identification allows us, via Theorem~\ref{thm:thibault}, to establish the following claim—stated without proof in \cite[paragraph following Theorem~2.4.18]{abramovich09ClayNotes}:

\begin{theorem}[{Theorem \ref{LiftAbr}}] \label{thm:intro} Let $S=\Spec A$ where $A$ is a Dedekind domain with fraction field $K$ of characteristic zero. Let $f: X \to Y$ be a proper, dominant map of integral proper $K$-varieties.

After replacing $S$ by a sufficiently small nonempty open subset, there exists a proper log smooth $K$-birational model $f':X' \to Y'$ of $f$ and 
a proper log smooth $S$-model $g:\mathscr X'\to\mathscr Y'$ of $f'$ such that every $K$-point on the locus $X'_0\subseteq X'$ where the log structure is trivial induces an $S$-point on $\mathscr{Y}'$ that lies in the firmament of $g$ (Definition \ref{def:firmamentcontactorder}). 

Conversely, every $S$-point $p$ on $\mathscr{Y}'$ that 
intersects the locus where the log structure of $\mathscr Y'$ is trivial and
lies in the firmament of $g$ lifts étale locally on $S$ to a rational point on $X'$. 
 \begin{figure}[h]  \[
\begin{tikzcd}
&X \arrow[r,"f"] & Y \\
&X' \arrow[r,"f'"] \arrow[u]\arrow[d] & Y' \arrow[d] \arrow[u]\\
&\mathscr X' \arrow[r,"g"] & \mathscr Y'\\
\Spec K \times_S S'\arrow[r]\arrow[uur,dashed]& S' \arrow[u,dashed] \arrow[r,"\text{\'et}"]& S \arrow[u,"p"']
\end{tikzcd}
\]
\caption{Illustration of the setup in Theorem \ref{thm:intro}. For the direct implication take $S'=S$.}
\end{figure}
\end{theorem}

The argument relies on the fact that dominant morphisms of varieties over a field of characteristic zero admit toroidal models as well as a known logarithmic Hensel's Lemma \ref{LogHensel}.
See Section \ref{ExamplesAbr} for myriad examples.

Theorem \ref{thm:intro} describes explicitly the set of points that lift \'etale locally on a suitable (i.e., log smooth) birational model of the initial morphism $f$. See Corollary \ref{cor:abramovich} for a statement without log structures.
A local lifting criterion in the framework of Kato fans is given in \cite[Proposition 5.4]{BLS23split}.

Appendix \ref{appendixLog} is a rapid introduction to necessary themes from log geometry and a comparison between 
\begin{itemize}
    \item Log schemes and their Artin fans, and
    \item Toroidal embeddings and their cone complexes. 
\end{itemize}
In Appendix \ref{sec:appendix}, we explore Campana-type necessary conditions on images of points under morphisms of varieties with no regularity assumptions.

\subsection{Context and state of the art}
We recall the history of Problem \ref{prob:rationalpoints} in the case where $f$ is surjective.
If the field $k$ is separably closed, the induced map $f:X(k)\to Y(k)$ is surjective.
If the field is not separably closed, in general $f$ is not surjective on $k$-points, and
determining the image of the set of rational points under $f$ is a very difficult question in general. 

Rational points on families of varieties have been studied extensively in the case where every fiber has at least one reduced irreducible component. Under additional assumptions on the number of fibers that are ``split'', or ``simple'' in the sense of \cite{Skorobogatov96split}, or ``pseudosplit'' in the sense of \cite{LSS20pseudo-split}, there are several results on the Brauer-Manin obstruction to the Hasse principle and weak approximation \cite{Skorobogatov96split, CTSSD98split, CTS00split, HS03split, BMS14split, HWW22split}, there are
quantitative studies on the density of $f(X(k))$ \cite{Serre90split, Sofos16split, Loughran18split, LM24split}, of fibers  that satisfy weak approximation \cite{BBL16split} or that are everywhere locally soluble \cite{LS16split, LM24split}, and there are answers to Problem \ref{prob:rationalpoints} over local fields \cite{Denef19split, LSS20pseudo-split}.

The case where there are fibers with no reduced irreducible component has been investigated in \cite{abramovich09ClayNotes, LM24split, BLS23split} based on a geometric framework introduced by Campana \cite{Campana04, Campana05surfaces}. In \cite{abramovich09ClayNotes}, Problem \ref{prob:rationalpoints} was implicitly raised in the context where $f$ is a toroidal morphism of toroidal embeddings. Since such embeddings can naturally be equipped with a divisorial log structure, the same paper suggests that log geometry provides the appropriate framework to address the problem, with a promise to develop this perspective in future work. In this paper, we take up this direction and investigate the case of an arbitrary morphism of log schemes.

\subsection{Further applications}

Our firmness criterion is effectively computable in toroidal or logarithmic charts. In particular, for morphisms admitting toroidal structures, the firm locus can be described explicitly in terms of piecewise linear conditions on monoids. This gives an algorithmic way to determine whether a given local or global point lifts \'etale locally via a theory of tropical obstructions. 


To study the distribution of rational points in families of varieties $X/B$, one needs to understand which rational points on the base $B$ lift to $X$. As we prove in Theorem \ref{thm:intro}, firmanents can encode \emph{all} geometric obstructions to such lifts. 
They generalize
coarser invariants, such as the orbifold base of the family and the corresponding set of Campana points described in Appendix \ref{sec:appendix}, 
which
have been used to predict upper bounds for the number of everywhere locally soluble fibers for one-parameter families \cite{BLS23split}. 
Our notion of firm points applies equally well to higher dimensional bases 
 and is expected to appear in the leading constant of asymptotic formulas.


Points lying in the firmament are, similarly to Campana points, rational points satisfying intersection conditions with the boundary divisor determined by the firmament. Classical conjectures concerning rational points admit natural analogues for Campana points, which have been extensively studied \cite{Campana05surfaces,SJPA,PSTVA21}. One can expect the same for our firm points. 
For example, Abramovich formulated a strong density conjecture 

\begin{conjecture}[{\cite[Conjecture 2.4.19]{abramovich09ClayNotes}}]
\label{conj:density}
Let $(Y,\Delta)$ be a smooth projective orbifold with a firmament $\Gamma$. Then the set of points on $Y$ lying in the firmament $\Gamma$ is potentially dense if and only if $(Y,\Delta)$ is special.
\end{conjecture}

Here $(Y,\Delta)$ is a pair with boundary divisor $\Delta$ with standard coefficients determined by $\Gamma$ as in \cite[Definition 2.4.4]{abramovich09ClayNotes}. 
If $\Gamma$ is induced by a morphism $f:X\to Y$, then the coefficients of $\Delta$ depend on $f$ and are defined by the orbifold base invariants described in Appendix \ref{sec:appendix}.
The theory of special orbifolds is developed in \cite{Campana04, Campana11}.
Typical examples are log Fano orbifolds, log Calabi-Yau orbifolds and rationally connected orbifolds \cite[\S\S 8, 10]{Campana11}.

As stated in \cite{abramovich09ClayNotes}, Conjecture \ref{conj:density} is particularly interesting, as it would imply the well-known Campana conjecture, which predicts that for a variety $Y$ defined over a number field $k$, the set of rational points of $Y$ is potentially dense if and only if $Y$ is special. 

Our firm morphisms are a functorial version of 
Abramovich's
firmaments that make sense for an arbitrary morphism of log schemes and generalize the classical case $(Y, \Delta)$. They can be used to study Conjecture \ref{conj:density} and related problems. In fact, the ``firm locus'' we construct is a geometric moduli space object parameterizing firm maps.


Campana orbifolds also play a central role in birational geometry \cite{Campana04,
Campana11,
CampanaPaun15,2024arXiv240710668K}. However, they are not functorial in a naive sense: morphisms of orbifolds require pulling back boundary divisors, and such pullbacks do not make sense for morphisms that land in the boundary of the target. This obstruction disappears in logarithmic geometry, where the pullback of the boundary is built into the structure of a log morphism. Using the techniques developed in this paper, one can formulate a fully functorial notion that generalizes Campana orbifolds in the logarithmic category. The strength of this perspective is illustrated by the universality of our firm locus, which is therefore the canonical receptacle for lifting problems.

We found an unexpected connection also with the theory of logarithmic stable maps. The ``type'' of a log stable map $C \to X$ defined by Gross-Siebert \cite[Definition 1.10]{gross2013logarithmic} is similar in spirit to the firm locus depicted in this paper. See Example \ref{gross2013logarithmic} for a detailed comparison. As a result, we can state a new obstruction to lifting log stable maps along a morphism $f : X \to Y$ in terms of the firm locus of the map $f$.


\subsection*{Conventions}

We assume all our schemes, algebraic spaces, and algebraic stacks are locally of finite type. We use fine saturated log structures exclusively except where noted otherwise, and we use the shorthand f.s.\ for fine and saturated. The algebraic stacks $\Log$, $\Log_S$ are M.\ Olsson's stacks of f.s. log structures, denoted ${\mathscr T}or$, ${\mathscr T}or_S$ in \cite{logstacks} 
and recalled in Definition \ref{def:logstacks}.
The f.s.\ fiber product is denoted $\times^{\mathsf{fs}}$ or $\ulcorner {\rm fs}$ if it appears in a pullback square in a diagram. We write $\times^{\msout{\rm fs}}$ or $\ulcorner \msout{\rm fs}$ when it also happens to be a fiber product in the category of underlying stacks. 
By ``localize,'' we mean to work on a strict-\'etale cover.

We spell out our conventions concerning Artin fans, log alterations, log subalterations, log modifications, log submodifications, log blowups, root stacks, etc.\ in Appendix \ref{appendixLog}. We write 
\[
\Aff_P \coloneqq \spec \ZZ[P]
\]
\[
\af{P} \coloneqq \bra{\Aff_P/\Aff_{\gp P}}, \qquad \af{} \coloneqq \af{\NN} 
\]
for the affine toric variety corresponding to a monoid $P$ and its Artin cone, the stack quotient by its dense torus. We also write $\af{X}$ for the Artin fan of a log algebraic stack $X$ (Definition \ref{Artinfan}).

\subsection*{Acknowledgments}

The project began in the \href{https://webspace.science.uu.nl/~piero001/index_2023seminarCampanaPoints.html}{Seminar on Campana Points} at Utrecht and D.\ Abramovich's 2024 LMS Invited Lecture Series \href{https://www.lms.ac.uk/events/lms-invited-lecture-series-2024}{Logs and stacks in birational geometry and moduli}. The authors were trying to translate the results of \cite{abramovich09ClayNotes} in terms of log geometry. We are grateful to D.\ Abramovich for his blessing to do so and helpful correspondence.
L.\ Herr is grateful to Sebastian Casalaina-Martin for asking about the case of stable maps after a talk he gave at CU Boulder. 
S. Mehidi thanks A. M. Botero for valuable discussions on toroidal geometry.
M.\ Pieropan thanks F.\ Bartsch for stimulating discussions about Appendix \ref{sec:appendix} and the organizers  of the workshop ``Diophantine and Rationality Problems'' in Sofia for the opportunity to give a talk and the participants for asking stimulating questions afterwards. The authors thank the anonymous referee for their comments, which improved the exposition of this paper.

L.\ Herr received partial support from the NWO grant VI.Vidi.193.006.
S. Mehidi is supported by the NWO grant VI.Vidi.213.019. 
M.\ Pieropan is partially supported by the NWO grants  OCENW.XL21.XL21.011 and VI.Vidi.213.019.
T. Poiret is supported by EPSRC New Investigator award EP/X002004/1.
For the purpose of open access, a CC BY public copyright license is applied to any Author Accepted Manuscript version arising from this submission.

\section{Firmness and log lifts}\label{s:thibaultthm}

The goal of this section is to prove Theorem \ref{thm:thibault}. We start with some basic properties of firmness.

\begin{lemma}\label{lem:firmbasics}
    Consider maps $f : X \to Y$ and $p : S \to Y$ as in Definition \ref{def:firm}. The map $p : S \to Y$ is firm along $f$ if and only if, for some (equivalently any) strict surjection locally of finite type $S' \to S$ there is a commutative diagram of log schemes
    \begin{equation}\label{eqn:localcoverfirmnesscheckdiagram}
        \begin{tikzcd}
        W \ar[rr] \ar[d]       &       &X \ar[d]      \\
        S' \ar[r]      &S \ar[r]      &Y,
        \end{tikzcd}   
    \end{equation}
    with the identity $S' \longequals S'$ firm along $W \to S'$. In particular, 
    \begin{enumerate}[label=(\roman*), ref=(\roman*)]
        \item Firmness can be checked strict-\'etale locally on $S$ and $Y$ and after replacing $X$ by a strict-\'etale cover. \label{firmbasics i}
        \item The map $p$ is $f$-firm if and only if the map from the strict reduced subscheme $S_{\rm red} \subseteq S \to Y$ is $f$-firm. \label{firmbasics3}
        \item The map 
        $p$ is $f$-firm if and only if the identity $S \longequals S$ is firm along the pullback $X \times_Y^{{\rm fs}} S \to S$. \label{firmbasics5}
        \item 
        The map $p$ is $f$-firm if and only if all strict geometric points $\bar s \to S$ are $f$-firm. \label{firmbasics iv}
    \end{enumerate}
    In addition, 
    \begin{enumerate}[label=(\roman*), ref=(\roman*), resume]
        \item If $p$ is $f$-firm and $X' \coloneqq X \times^{\rm fs}_Y S$ is the f.s.\ pullback, the map $X' \to S$ is surjective. \label{firmbasics2}
        \item The map $p$ is $f$-firm if and only if for all strict geometric points $y \to Y$ and $s \to S\times_Y y$, there exists a strict geometric point $x \to X \times_Y y$ such that $\o M_{Y,y} \to \o M_{S,s}$ factors through $\o M_{Y,y} \to \o M_{X,x}$. \label{firmbasics7}
        \item Given $t : Y' \to Y$, let $f' \colon X' \to Y', p' \colon S' \to Y'$ be the f.s.\ base changes of $f, p$ along $t$. If $p$ was $f$-firm, then $p'$ is $f'$-firm. \label{firmbasics4}
        \item If $p$ is $f$-firm, then any composite $S' \to S \to Y$ is also $f$-firm. I.e., the set of $S \to Y$ which are $f$-firm form a sieve over $Y$. \label{firmbasics6}
        \item 
        If $S$ is a log point, $p : S \to Y$ is $f$-firm if and only if there exists a strict geometric point $\bar s \to S$ which is $f$-firm. \label{it:firmpointsatgeompoints}
    \end{enumerate}
\end{lemma}

\begin{proof}
    We prove $p$ is firm along $f$ if and only if such a diagram \eqref{eqn:localcoverfirmnesscheckdiagram} exists with $S' \longequals S'$ firm along $W \to S'$. 
    Parts \ref{firmbasics i}-\ref{firmbasics iv} then follow directly. By \cite[\href{https://stacks.math.columbia.edu/tag/0487}{Tag 0487}]{stacks-project}, all geometric points $\bar s \to S$ lift to $S'$.
    Consider the map between the fibers 
    \[W_{\bar s} \to X_{\bar s}.\]
    As the identity $S' \longequals S'$ is firm along $W \to S'$, there is a strict geometric point $\bar w \to W_{\bar s}$ and a dashed retraction fitting in the sequence
\[
    \bar M_{S, \bar s} \to \bar M_{X\times_Y^\mathsf{fs} S, \bar w} \to \bar M_{W, \bar w} \dashrightarrow \bar M_{S, \bar s}.
\]
Such a retraction witnesses firmness of $\bar s \to S' \to S \to Y$ along $X \to Y$. The converse implication holds by properties of fiber product.

Part \ref{firmbasics7} follows from the universal property of pushout and the fact that $\bar M_{X\times_Y^{\mathsf{fs}}S,x}$ is the pushout of $\bar M_{Y,y}\to\bar M_{S,s}$ and $\bar M_{Y,y}\to \bar M_{X,x}$.
For \ref{firmbasics4}, let $\overline s'\to S'$ be a geometric point and $\overline s\to S$ be the induced point. Then firmness of $p$ along $f$ gives
\[
\begin{tikzcd}
    \bar M_{S, \bar s} \ar[r] \ar[d]      &\bar M_{X_S, \bar w} \ar[r] \ar[d]      &\bar M_{S, \bar s} \ar[d]         \\
    \bar M_{S', \bar s'} \ar[r]       &\bar M_{X'_{S'}, \bar w'} \ar[r, dashed]      &\bar M_{S', \bar s'}.
\end{tikzcd}
\]
The dashed arrow exists because the solid square is a pushout coming from the fiber product of fs log schemes. The proof of \ref{firmbasics6} is similar. For \ref{it:firmpointsatgeompoints} it suffices to argue that if a composite $\bar s \to \bar t \to S$ is firm, so is $\bar t \to S$. For this, one can use the same geometric point $\bar w \to X_{\bar s} \to X_{\bar t}$ for the requisite splitting in each case.
\end{proof}

The first part of Lemma \ref{lem:firmbasics} will routinely be used to assume $Y$ is atomic (Definition \ref{def:atomic}) and $X$ is a disjoint union of atomics, as \cite[Proposition 2.2.2.5]{logpic} shows any (locally finite type) log scheme admits such a strict-\'etale cover.

We express the notion of firmness in terms of Artin fans, reviewed in Appendix \ref{appendixLog}. Artin cones and Artin fans provide a geometric incarnation of log structures that has become ubiquitous since its introduction in 
\cite{skeletonsandfans, wisebounded, birationalinvarianceabramovichwise}.

Let $f : X \to Y$ and $p : S \to Y$ be morphisms inducing compatible solid arrows of Artin fans 
\[
\begin{tikzcd}
            &\af{X} \ar[d]         \\
    \af{S} \ar[r] \ar[ur, dashed]      &\af{Y}.
\end{tikzcd}
\]
To be firm means \emph{approximately} that there is a dashed lift $\af{S} \dashrightarrow \af{X}$ making the diagram commute. This is not literally true because the Artin fan of a pullback is not the pullback of the corresponding Artin fans, as we show in Example \ref{ex:AFbasechangecounterex}. But a version of this statement holds for geometric points. See the end of Section \ref{s:firm} for a general statement with the relative Artin fan.

\begin{proposition}\label{prop:logpointfirm=AFsection}
    If $S = Y$ is a log geometric point, the identity $S = Y$ is firm along a map $f : X \to Y$
    if and only if the map $\af{X} \to \af{Y}$ on Artin fans admits a section.
\end{proposition}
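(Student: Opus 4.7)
Setting $P := \bar M_{Y,y}$, we have $\af Y = \af P$, a single Artin cone. My plan is to exploit two facts recalled in Appendix \ref{appendixLog}: morphisms of Artin cones $\af P \to \af Q$ are in contravariant bijection with monoid homomorphisms $Q \to P$ of fs sharp monoids, and every strict geometric point $\bar w \to X$ admits an atomic neighborhood providing an Artin cone chart $\af{\bar M_{X,\bar w}} \to \af X$, with such charts covering $\af X$. Under this bijection, the map $\af f \colon \af X \to \af Y$ restricted to the chart at $\bar w$ corresponds to the structural monoid homomorphism $P \to Q := \bar M_{X,\bar w}$.

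For the direction firm $\Rightarrow$ section, I would start from a firmness datum: a geometric point $\bar w \to X$ and a monoid retraction $t \colon Q \to P$ of $P \to Q$. Letting $t^\ast \colon \af P \to \af Q$ denote the corresponding morphism of Artin cones, I would define $\sigma \colon \af Y \to \af X$ as the composite $\af P \xrightarrow{t^\ast} \af Q \to \af X$ with the atomic chart. The contravariant bijection identifies $\af f \circ \sigma$ with the monoid composite $P \to Q \xrightarrow{t} P = \mathrm{id}_P$, so $\sigma$ is a section.

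Conversely, starting from a section $\sigma \colon \af Y \to \af X$, the plan is to recover a firmness datum by factoring $\sigma$ through a single atomic chart. Since $\af P$ has a unique closed point (the origin $BT_P$), its image under $\sigma$ meets a single closed stratum of $\af X$, contained in an atomic chart $\af Q \to \af X$ for some $Q = \bar M_{X,\bar w}$. Lifting $\sigma$ through this chart yields $\tilde{\sigma} \colon \af P \to \af Q$, which under the bijection corresponds to a monoid homomorphism $t \colon Q \to P$. The section identity $\af f \circ \sigma = \mathrm{id}_{\af Y}$ translates into $P \to Q \xrightarrow{t} P = \mathrm{id}_P$, giving the required firmness datum $(\bar w, t)$.

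The hard step will be justifying the factorization of $\sigma$ through a single atomic Artin cone chart in the converse direction. Intuitively $\af P$ is irreducible with a unique closed orbit, but a careful argument will rely on the stratified structure of $\af X$ from Appendix \ref{appendixLog}: I would pull back the atomic cover $\coprod_\alpha \af{Q_\alpha} \to \af X$ along $\sigma$ and select a chart whose image dominates the closed orbit of $\af P$, with the residual torus data on the open orbit of $\af P$ then automatically captured by the monoid homomorphism $t$.
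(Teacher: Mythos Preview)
Your approach is essentially the same as the paper's: both reduce the question to the contravariant equivalence between Artin cones and sharp monoids, and both hinge on factoring a map $\af P \to \af X$ through a single Artin cone chart. The only organizational difference is that the paper first replaces $X$ by a strict \'etale cover $\bigsqcup X_i$ by atomics (invoking Lemma~\ref{lem:firmbasics}\ref{firmbasics i}), so that $\af X = \bigsqcup \af{P_i}$ becomes a disjoint union of Artin cones and the factoring step is trivial (a map from the connected $\af Y$ lands in one component); you instead keep $X$ as is and factor through an atomic chart directly. Your ``hard step'' is exactly Lemma~\ref{lem:artinconesfactorthroughartincones} in the paper (the strict \'etale site of an Artin cone is trivial, so pulling back the atomic cover along $\sigma$ yields a section), and you would do well to cite it rather than re-argue it via closed strata. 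One small point to tighten: when you write $\af f$ for the map $\af X \to \af Y$, recall the Artin fan is not functorial in general; here the map exists because $\af Y$ is an Artin cone (Lemma~\ref{lem:AFmapstocone}), and you should verify explicitly that its restriction to the chart $\af Q \to \af X$ is dual to the structural $P \to Q$, which follows from how Lemma~\ref{lem:AFmapstocone} constructs the factorization.
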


There exists such a map $\af{X} \to \af{Y}$ because $\af{Y} = \af{\Gamma(Y, \bar M_Y)}$ is an Artin cone by Lemma \ref{lem:AFmapstocone}. 
As we show in Example \ref{ex:AFbasechangecounterex},
the proposition is false if $Y$ is not a log geometric point.

The Artin fan of $X$ is the empty set $\af{X} = \varnothing$ if and only if $X = \varnothing$. The only morphism which is firm along the unique map $f : \varnothing \to Y$ is $f$ itself.

\begin{example}\label{ex:AFbasechangecounterex}
    We give an example of a log scheme map $f \colon X \to Y$ and a log geometric point $s \to Y$ which is not $f$-firm, but such that $\af s \to \af Y$ factors through $\af X$. 
    See Figure \ref{fig:AFbasechange} for a picture.
    The map $\af s \to \af Y$ will be the identity, thereby also showing that the base change of Artin fans
    \[
    \af X \times_{\af Y} \af s = \af X
    \]
    is not the Artin fan of the base change $X \times_Y s$.  

    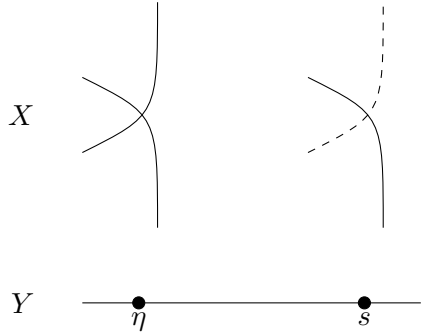
\begin{figure}[h]
    \centering
    \begin{tikzpicture}
        \draw [-] (0, 1) ..controls (1, 1.5).. (1, 3);
        \draw [-] (0, 2) ..controls (1, 1.5).. (1, 0);

        \draw[-] (0, -1) to (4.5, -1);

        \filldraw (.75, -1) circle (.08);
        \filldraw (3.75, -1) circle (.08);

        \node[below] at (3.75, -1){$s$};
        \node[below] at (.75, -1){$\eta$};

        \node [left] at (-.5, 1.5){$X$};
        \node [left] at (-.5, -1){$Y$};
        \begin{scope}[shift = {(3, 0)}]
        \draw [-, dashed] (0, 1) ..controls (1, 1.5).. (1, 3);
        \draw [-] (0, 2) ..controls (1, 1.5).. (1, 0);
        \end{scope}
    \end{tikzpicture}
    \caption{The family of open curves $X$ from Example \ref{ex:AFbasechangecounterex}. The generic fibre $X_\eta \to \eta$ admits a section which induces a section of the map $\af{X} \to \af{Y}$ of Artin fans, but there is no section over the closed point. }
    \label{fig:AFbasechange}
\end{figure}

    Let $Y = \Spec R$ where $R$ is a discrete valuation ring, and give $Y$ the constant log structure associated to the map 
    \begin{align*}
        \bar M_Y \coloneqq\NN t & \to \OO_Y \\
        t & \mapsto 0
    \end{align*}
    Let $\eta, s$ be respectively the generic point and special point of $Y$, both endowed with their $Y$-strict log structure. Let $\o X \to Y$ be the constant curve
    \[
    \o X := \Spec(R[x,y]/(xy^3))
    \]
    with log structure given by
    \[
    \o M_{\o X}(\o X) := (\o M_Y \oplus \bb N\log(x) \oplus \bb N\log (y))/(\log (x)+3\log (y)=t)
    \]
    mapping to $\ca O_{\o X}$ via $t \mapsto 0, \log(x) \mapsto x,\log(y) \mapsto y$. Let $D(x),D(y)$ be the nonvanishing loci in $\o X$ of $x$ and $y$ respectively. Let $X \subset \o X$ be the union of $D(x)$ and of the generic fibre $\o X_\eta$. We equip the open subschemes $D(x),D(y),X$ of $\o X$ with their $\o X$-strict log structures. The characteristic sheaf $\o M_{\o X}$ is constant on $D(y)$ with value $\bb N\log(x) = \o M_Y$, and constant on $D(x)$ with value $\bb N\log(y)$. The map 
    \[
    \bb N = \o M_Y \to \bb N\log y = \bb N
    \]
    is multiplication by $3$, which does not have a cosection, so the point $s \to Y$ is not firm with respect to $f \colon X \to Y$. However, the composite
    \[
    \af s = \af Y \xrightarrow{\sim} \af{D(y)} = \af{D(y) \times_{\spec R} \eta} \to \af X
    \]
    is a section of the natural map $\af X \to \af Y$.
\end{example}

We need a general lemma to prove Proposition \ref{prop:logpointfirm=AFsection}.

\begin{lemma}[{\cite[Remark II.1.2.8]{ogusloggeom}}]\label{lem:artinconesfactorthroughartincones}
    Let $\scr C$ be an Artin fan with a strict \'etale cover $\{\af{Q_i} \to \scr C\}_{i \in I}$ by Artin cones. If $P$ is a monoid, any map $\af{P} \to \scr C$ from its Artin cone factors through some $\af{Q_i} \to \scr C$. Scilicet, the family 
    \[\Hom(\af{P}, \af{Q_i}) \to \Hom(\af{P}, \scr C)\]
    is jointly surjective.
\end{lemma}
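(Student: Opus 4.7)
The approach is to exploit the fact that the Artin cone $\af{P} = [\Aff_P / \Aff_{\gp P}]$ has a unique closed point $0_P$ with residue gerbe $B\Aff_{\gp P}$. The crucial preliminary observation, which I would record first, is that \emph{the only open substack of $\af{P}$ containing $0_P$ is $\af{P}$ itself}. Indeed, open substacks of $\af{P}$ correspond to $\Aff_{\gp P}$-stable opens of $\Aff_P$, i.e.\ unions of torus orbits; since the origin of $\Aff_P$ lies in the closure of every torus orbit, any such open containing $0$ must contain every orbit.

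Given a map $\phi \colon \af{P} \to \scr{C}$, I would look at the image $\phi(0_P) \in |\scr{C}|$ and, using surjectivity of the cover $\{\af{Q_i} \to \scr{C}\}$, pick some index $i_0$ and a lift $q_0 \in |\af{Q_{i_0}}|$. The strict \'etale pullback $U := \af{P} \times_{\scr{C}} \af{Q_{i_0}} \to \af{P}$ then carries a distinguished point $u_0$ lifting $0_P$. The whole task reduces to producing a section of $U \to \af{P}$ through $u_0$, since composing with the projection $U \to \af{Q_{i_0}}$ then yields the desired factorization.

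To construct this section, I would unwrap $U = [V / \Aff_{\gp P}]$ for some \'etale $\Aff_{\gp P}$-equivariant $V \to \Aff_P$, obtained by pulling back along the chart $\Aff_P \to \af{P}$. The preimage of the origin in $V$ is a finite \'etale $\Aff_{\gp P}$-scheme, and connectedness of $\Aff_{\gp P}$ forces every point in it (in particular a lift of $u_0$) to be $\Aff_{\gp P}$-fixed. Consequently, the residue gerbe at $u_0$ is again $B\Aff_{\gp P}$ and maps isomorphically to that of $0_P$. An \'etale morphism of algebraic stacks that induces an isomorphism of residue gerbes at a point is an open immersion on an open neighborhood of that point; combined with the preliminary observation, this open immersion must have image all of $\af{P}$, yielding the sought section $\af{P} \isom V' \hookrightarrow U$.

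The main obstacle is the technical verification in the third paragraph: that strictness of $U \to \af{P}$ (together with the $\Aff_{\gp P}$-fixedness of the lift at the origin) upgrades the mere \'etaleness of the underlying stack map into the stronger statement that the residue gerbe at $u_0$ is isomorphic, under $\phi$, to $B\Aff_{\gp P}$. Once this is in hand, the passage from ``open immersion on a neighborhood'' to ``section'' is purely formal from the uniqueness of open substacks containing $0_P$.
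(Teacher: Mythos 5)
Your proof is correct in spirit and takes the same overall approach as the paper — pull back the given cover of $\scr C$ along $\af{P} \to \scr C$ and find a section of the resulting strict \'etale cover $U \to \af{P}$ — but where the paper cites \cite[Corollary 2.2.8]{birationalinvarianceabramovichwise} for the existence of the section, you unpack it into a direct argument. The structural fact you isolate as a preliminary observation (the only open substack of $\af{P}$ containing the origin is $\af{P}$ itself) is exactly what the paper records independently in Remark \ref{rmk:originclosedpoint} and exploits elsewhere (e.g.\ Lemma \ref{lem:opendeepeststratasurjectiveAF}); your argument is in effect a self-contained proof of the cited corollary.

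One caveat to be careful about: the paper's Artin cones live over $\spec\ZZ$ (so $\Aff_P = \spec\ZZ[P]$), which means the ``origin'' $0_P$ is not a single topological point but the closed substack $B\Aff_{\gp P}$ over $\spec\ZZ$, with a $\spec\ZZ$'s worth of points. Your preliminary observation is true for the full closed substack $0_P$, but not for a single topological point of it: e.g.\ the complement of $\spec\FF_p\subseteq\Aff_P$ is a $T$-invariant open containing the rest of the origin. Consequently, lifting one topological point $0_P\ni x$ to $u_0\in U$, matching residue gerbes at $x$ and $u_0$, and extracting an open immersion near $u_0$ only produces an open substack whose image contains that one point of $0_P$ — not the full origin — so the preliminary observation cannot yet be invoked. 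Over an algebraically closed field this issue disappears. Over $\ZZ$ the gap is closed by noting that the pullback $U=\af{P}\times_{\scr C}\af{Q_{i_0}}$ is itself an Artin fan (fiber products of Artin fans over Artin fans are Artin fans), so the fiber of $U\to\af{P}$ over $0_P\cong B\Aff_{\gp P}$ is a disjoint union of copies of $B\Aff_{\gp P}$ over $\spec\ZZ$, from which one gets a section over all of $0_P$ and then extends using your preliminary observation. Without the hypothesis that $U$ is an Artin fan (i.e.\ for an arbitrary strict \'etale cover of $\af P$), a section may genuinely fail to exist over $\ZZ$, so this restriction is being used implicitly.
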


\begin{proof}
    The strict \'etale site of an Artin cone $\af{P}$ is trivial in that every strict \'etale cover admits a section \cite[Corollary 2.2.8]{birationalinvarianceabramovichwise}. Pull back the cover of $\scr C$ and find a section of the cover of $\af{P}$.
\end{proof}

\begin{proof}[{Proof of Proposition \ref{prop:logpointfirm=AFsection}}]
    By definition, firmness is equivalent to finding a geometric point $\bar x \to X$ and a retraction 
    \[
    \bar M_Y \to \bar M_{X, \bar x} \dashrightarrow \bar M_Y.
    \]
    We can replace $X$ by a strict \'etale cover using Lemma \ref{lem:firmbasics}, in particular by a disjoint union of atomics $X = \bigsqcup X_i$ \cite[Proposition 2.2.2.5]{logpic}. The Artin fan of $X$ is then a disjoint union of the Artin cones of each connected component $\af{X_i} = \af{P_i}$. Write $Q = \Gamma(Y, \bar M_Y)$.

    Sections of the map $\af{X} = \bigsqcup \af{P_i} \to \af{Y} = \af{Q}$ are the disjoint union of the sections of each map $\af{P_i} \to \af{Y}$ as $\af{Y}$ is a connected Artin cone. A section $\af{Y} \dashrightarrow \af{P_i}$ is dual to a morphism 
    \[P_i \to Q\]
    such that the composite $Q \to P_i \to Q$ is the identity. As $\af{P_i}$ is the Artin fan of $X_i$, there exists a geometric point $\bar x \to X_i$ such that $\bar M_{X_i, \bar x} = P_i$. This geometric point $\bar x$ necessarily maps to $Y$, so the proof is complete.
\end{proof}

\begin{remark}\label{rmk:originclosedpoint}
    Consider the affine toric variety $\Aff_P$ with dense torus $T = \spec \ZZ[\gp P]$ associated to a sharp (f.s.) monoid $P$. The ideal generated by $P^+ = P \setminus \{0\}$ in $\ZZ[P]$ cuts out a closed point $\tilde v \in \Aff_P$. We likewise get a closed stacky point $v \in \af{P}$ as the quotient of $\tilde v \in \Aff_P$ by $T$. Dub both the ``origin.'' 
    
    If $P = \NN^k$ for example, $\tilde v \in \Aff^k$ is the origin and $v \in \af{\NN^k} = \af{}^k$ is the stacky closed point $B\GG_m^k$ with rank-$k$ log structure. Every nonempty closed substack of $\af{P}$ contains $v$, as $P^+ \subseteq P$ is the largest proper monoid ideal of $P$. 

    If $X$ is a log scheme or log algebraic stack with Artin fan $\af{X} = \af{P}$ an Artin cone, the origin $v \in \af{P}$ is always in the image of $X$. Otherwise, $X$ would factor through its open complement $\partial \af{P} \subseteq \af{P}$. But the Artin fan $\af{X}$ of $X$ is defined as the initial Artin fan factoring $X \to \af{X} \to \Log$. This is a contradiction because an open substack of an Artin fan is again an Artin fan.
\end{remark}

\begin{lemma}\label{lem:opendeepeststratasurjectiveAF}
    Suppose the map $X \to \af{X}$ to its Artin fan factors through a closed substack $Z \subseteq \af{X}$ and the factorization $X \to Z$ is open. Then the map $X \to Z$ is surjective. 
\end{lemma}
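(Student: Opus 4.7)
The plan is to exploit the defining minimality of the Artin fan $\af{X}$ as the initial Artin fan under $X$. Suppose $X \to Z$ has image $U \subseteq Z$. Since the map $X \to Z$ is open, $U$ is an open substack of $Z$. Since $Z$ is closed in $\af{X}$, the complement $Z \setminus U$ is closed in $Z$ and thus closed in $\af{X}$. Therefore
\[
V \coloneqq \af{X} \setminus (Z \setminus U)
\]
is an open substack of $\af{X}$ satisfying $V \cap Z = U$. By construction, the composite $X \to Z$ lands in $U \subseteq V$, so $X$ factors through the open substack $V \subseteq \af{X}$.

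Next I would invoke two key facts about Artin fans (cf.\ Appendix \ref{appendixLog} and Remark \ref{rmk:originclosedpoint}): an open substack of an Artin fan is itself an Artin fan, and $\af{X}$ is characterized by the universal property that any map $X \to \scr F$ to an Artin fan factors uniquely through $X \to \af{X}$. Applying this universal property to the Artin fan $V$ and the map $X \to V$ produces a unique factorization $\af{X} \to V$. The composite
\[
\af{X} \longrightarrow V \hookrightarrow \af{X}
\]
is then an endomorphism of $\af{X}$ under $X$. Since the identity $\af{X} \to \af{X}$ is another such endomorphism, uniqueness of the factorization forces the composite to equal the identity.

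Consequently, the open immersion $V \hookrightarrow \af{X}$ has a section, so it is surjective; being an open immersion and a surjection, it is an isomorphism, i.e.\ $V = \af{X}$. Intersecting with $Z$ yields $U = V \cap Z = Z$, so $X \to Z$ is surjective, as desired.

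The only point to handle with any care is the bookkeeping around the definition of ``open'' for the factorization: I only need that the set-theoretic image $U \subseteq Z$ is open, which is automatic once one interprets ``the factorization $X \to Z$ is open'' as meaning the underlying morphism of topological stacks is open. No further regularity of the map $X \to Z$ enters the argument; the entire force of the proof is the minimality of $\af{X}$ among Artin fans under $X$.
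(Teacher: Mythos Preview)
Your proof is correct and essentially the same idea as the paper's, but packaged globally rather than locally. The paper localizes to assume $X$ is atomic, so that $\af{X} = \af{P}$ is a single Artin cone; it then uses the concrete fact that the origin $v \in \af{P}$ lies in every nonempty closed substack and in the image of $X$ (Remark \ref{rmk:originclosedpoint}), and that the only open of $\af{P}$ containing $v$ is all of $\af{P}$. Your argument instead invokes the universal property of $\af{X}$ directly: if the image $U \subsetneq Z$, you build the open $V = \af{X} \setminus (Z \setminus U)$ through which $X$ factors, and minimality of $\af{X}$ among Artin fans under $X$ forces $V = \af{X}$.

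Both routes hinge on the same fact, that an open substack of an Artin fan is again an Artin fan, which the paper states in Remark \ref{rmk:originclosedpoint}. Your version has the advantage of avoiding the reduction to atomics and the explicit description of the topology of $\af{P}$; the paper's version is perhaps more hands-on and makes the role of the closed origin visible. Indeed, the argument you give is exactly how Remark \ref{rmk:originclosedpoint} justifies that the origin lies in the image of $X \to \af{P}$, so you have effectively absorbed that remark into the proof of the lemma itself.
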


\begin{proof}
    The question is local in $X$, so assume $X$ is atomic. Then $\af{X} = \af{P}$ is an Artin cone. We can assume $X \neq \varnothing$, so $Z \neq \varnothing$ and $Z$ must contain the origin $v \in \af{P}$ of Remark \ref{rmk:originclosedpoint}. 

    The point $v \in \af{P}$ is always in the image of $X \to \af{X}$ as in Remark \ref{rmk:originclosedpoint}. We assumed the set-theoretic image ${\rm Im}(X \to Z)$ is an open and it contains $v$. The only open of $\af{P}$ containing $v$ is the whole stack $\af{P}$, so ${\rm Im}(X \to Z)$ must likewise be $Z$. 
\end{proof}

\begin{proof}[{Proof of Theorem \ref{thm:thibault}}]
    A log lift clearly implies firmness. For the converse, f.s.\ base change to assume $s = Y$. \'Etale localize to assume the map $f : X \to Y$ has a compatible map on Artin fans $\af{X} \to \af{Y}$ by Lemma \ref{lem:localAFfunctoriality}. Remark \ref{rmk:originclosedpoint} shows the map $Y \to \af{Y}$ factors through the origin $v \in \af{Y}$. Write $v'$ for the pullback of $v \in \af{Y}$ to $\af{X}$:
    \[
    \begin{tikzcd}
        X \ar[r] \ar[dr]       &v' \ar[r] \ar[d] \lpbstrict        &v \ar[d]      \\
                &\af{X} \ar[r]         &\af{Y}.
    \end{tikzcd}
    \]
    The map $X \to v'$ is log flat and strict, hence flat. It is also finitely presented, so open \cite[\href{https://stacks.math.columbia.edu/tag/01UA}{Tag 01UA}]{stacks-project}. Lemma \ref{lem:opendeepeststratasurjectiveAF} ensures $X \to v'$ is surjective. 

    The point $s \longequals Y$ is $f$-firm if and only if there is a section $\af{Y} \dashrightarrow \af{X}$ by Proposition \ref{prop:logpointfirm=AFsection}. Then 
    by the universal property of fiber product
    there is a logarithmic section 
    \[
    \begin{tikzcd}
                &v' \ar[d]         \\
        s \ar[r] \ar[ur, dashed]       &v.
    \end{tikzcd}
    \]
    To come up with a logarithmic lift, it remains to find a section 
    \[
    \begin{tikzcd} 
                &X \ar[d]       \\
        s \ar[r] \ar[ur, dashed]       &v',
    \end{tikzcd}    
    \]
    or equivalently a section of the morphism of algebraic spaces with log structures $X \times_{v'} s \to s$. This morphism is strict, surjective, and finitely presented, so finding such a section is equivalent to finding a lift on the level of underlying algebraic spaces. This is possible after a finite field extension $s' \to s$ \cite[\href{https://stacks.math.columbia.edu/tag/0487}{Tag 0487}]{stacks-project}.
    
    If $f$ was further assumed log smooth, then $X \to v'$ is smooth. Firmness ensures a lift $s \to v'$ as above. Log lifts $s \dashrightarrow X$ correspond to $k$-points of the variety $X' = X \times_{v'} s$. Because $X'$ is smooth and nonempty, it has a $k'$-point for a finite separable extension $k'/k$.
\end{proof}

With similar proof, we can show the firm locus of a log flat map is closed under generization. We first illustrate how to reduce lifting to strictly henselian local rings.

\begin{lemma}[{Raynaud's limit argument}]\label{lem:strhenselianlifting}
    Let $f : X \to Y$ be a morphism of log schemes or log algebraic stacks locally of finite presentation and $p : S \to Y$ a morphism. Let $\hat S$ be the strict henselization of $S$ at a geometric point $\bar s \to S$ and endow $\bar s, \hat S$ with log structure pulled back from $S$. 
    
    The set of lifts of the strict henselization $\hat S \dashrightarrow X$ to $X$ is the colimit under refinements of \'etale neighborhoods $\bar s \to U \to S$ of $\bar s$ of lifts $U \dashrightarrow X$:
    \[
    \left\{
    \begin{tikzcd}[column sep=1.9em]
                &       &       &X \ar[d]      \\
        \bar s \ar[r]      &\hat S \ar[r] \ar[urr, dashed, bend left=15]         &S \ar[r]        &Y
    \end{tikzcd}
    \right\} = 
    \colim_{\bar s \to U \to S}
    \left\{
    \begin{tikzcd}[column sep=1.9em]
                &       &       &X \ar[d]      \\
        \bar s \ar[r]      &U \ar[r] \ar[urr, dashed, bend left=15]      &S \ar[r]      &Y
    \end{tikzcd}
    \right\}.
    \]
\end{lemma}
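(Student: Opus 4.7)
The plan is to reduce the statement to the classical Raynaud--EGA~IV.8 limit theorem for morphisms locally of finite presentation (see Stacks Project Tag 01ZM in the scheme case and Tag 0CMT for algebraic stacks), enriched with a descent argument for log structures. One direction --- that any lift $U \dashrightarrow X$ induces a lift $\hat S \dashrightarrow X$ by precomposition with $\hat S \to U$ --- is tautological. The real task is to show that every lift $\hat S \dashrightarrow X$ descends to some finite stage, and that two lifts from the same $U$ that agree after pullback to $\hat S$ already agree after pullback to some smaller $U' \to U$.

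First, I present the strict henselization as a cofiltered limit
\[
\hat S = \lim_U U
\]
in f.s.\ log schemes, indexed by affine \'etale neighborhoods $\bar s \to U \to S$ of $\bar s$ with affine \'etale transition maps. This limit is valid at the log level because the log structures on $\hat S$ and on each $U$ are pulled back from $S$, and the pullback functor commutes with such filtered limits of \'etale sites; in particular $\Gamma(\hat S, M_{\hat S}) = \colim_U \Gamma(U, M_U)$.

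Next I reduce to a problem over $S$: setting $X' \coloneqq X \times_Y^{\mathsf{fs}} S$, which remains locally of finite presentation over $S$, a $Y$-lift $T \dashrightarrow X$ from a strict $S$-log-scheme $T$ corresponds bijectively to an $S$-morphism $T \to X'$, using that $T$ is f.s. The claim thus becomes
\[
\Hom_S(\hat S, X') = \colim_U \Hom_S(U, X').
\]
The classical Raynaud theorem applied to $X'^\circ \to S^\circ$ handles the underlying scheme morphism, descending any $\hat S^\circ \to X'^\circ$ over $S^\circ$ to some $U^\circ \to X'^\circ$. For the log enhancement, I work strict-\'etale locally on $X'$ where $M_{X'}$ admits a chart by a finitely generated f.s.\ monoid $P$; a log morphism then amounts to a monoid homomorphism $P \to \Gamma(\hat S, M_{\hat S})$ compatible with the given map $P \to \OO_{X'}$, and since $P$ is finitely generated while the target is a filtered colimit, the homomorphism factors through some $U$. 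Filteredness of the neighborhood system allows me to refine to a common $U$ carrying both the underlying map and the logarithmic datum, and the same chart argument checked on generators yields uniqueness after further refinement.

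The main obstacle is coupling the scheme-theoretic descent with the descent of the log morphism while preserving $S$-compatibility: both pieces of data must land at the same finite stage. This relies crucially on the local existence of finitely generated monoid charts for $M_{X'}$, which is where the locally-of-finite-presentation hypothesis on $f$ enters. The algebraic stack case is identical in structure, using the stacky analogue of the classical limit theorem.
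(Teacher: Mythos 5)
Your argument is correct in outline but takes a genuinely different route from the paper. The paper's proof is a two-step reduction: after noting $S$ may be assumed affine so that $\hat S = \lim_U U$ is a cofiltered limit of affine log schemes over $S$ (Stacks Tag 04HX), it simply cites \cite[Lemma 2.2.3.4]{logpic}, which is exactly the log-scheme version of the limit theorem: for $X \to Y$ locally of finite presentation, $\Hom_Y(\lim T_i, X) = \colim \Hom_Y(T_i, X)$ over a cofiltered system of affine log $Y$-schemes. You instead reprove the content of that citation from the classical Raynaud/EGA~IV.8 theorem applied to underlying schemes, enhanced by the observation that a log morphism into a charted target is the underlying map plus a homomorphism from a finitely generated monoid $P$ into $\Gamma(\hat S, M_{\hat S}) = \colim_U \Gamma(U, M_U)$, which factors through a finite stage. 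This is precisely how one would prove the cited lemma from scratch, so both approaches are sound; the paper's is modular and short, yours is self-contained.

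Two rough edges in your sketch worth flagging (neither is fatal): first, ``work strict-\'etale locally on $X'$'' is not quite literal --- charts for $M_{X'}$ exist only strict-\'etale locally, so one must replace $X'$ by a strict-\'etale cover, then use that \'etale covers of the strictly henselian $\hat S$ split and that $\hat S$ is quasi-compact so its image lands in finitely many chart domains, and finally spread all of this out over a common finite stage $U$; the cited lemma is precisely the packaging of these patching details. Second, your reduction to $X' = X \times_Y^{\mathsf{fs}} S$ is a clean step not present in the paper but quietly invokes that the f.s.\ base change of a locally finitely presented morphism is again locally of finite presentation over $S$, which is true but should be noted since the f.s.\ fiber product involves saturation.
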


\begin{proof}
    We can assume $S$ is affine, and then $\hat S$ is a cofiltered limit of affine log schemes given by the strict \'etale neighborhoods $\bar s \to U \to S$ \cite[\href{https://stacks.math.columbia.edu/tag/04HX}{Tag 04HX}]{stacks-project}. We are reduced to \cite[Lemma 2.2.3.4]{logpic}, which states that maps from cofiltered systems $\{S_i\}$ of affine log schemes to locally finite presentation morphisms $X \to Y$ are compatible with limits/colimits:
    \[
        \colim_i \Hom_{{\rm log sch}/Y}(S_i, X) = \Hom_{{\rm log sch}/Y}(\lim S_i, X). \qedhere
    \]
\end{proof}

\begin{proposition}\label{prop:generizationfirmlocus}
    Let $f \colon X \to Y$ be a log flat morphism of log schemes. Let $\sigma \colon S \to Y$ be a morphism and $s \to S$ a strict geometric point. If $s \to S \to Y$ is $f$-firm, then so are all generizations of $s$ in $S$.
\end{proposition}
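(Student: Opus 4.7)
The plan is to exploit the combinatorial structure of the retraction witnessing firmness at $s$, and then transport it to $s'$ via going-down applied to a Kato chart of $f$. First I would apply Lemma \ref{lem:firmbasics}\ref{firmbasics4} to fs-base change along $\sigma$ and assume $S = Y$, $\sigma = \mathrm{id}$; log flatness is preserved by fs base change. By Lemma \ref{lem:firmbasics}\ref{firmbasics iv}, firmness can be checked at strict geometric points, so I fix a strict geometric generization $s' \leadsto s$ in $Y$. Lemma \ref{lem:firmbasics}\ref{firmbasics7} reformulates firmness at $s$ as the existence of a strict geometric point $x \to X$ over $s$ together with a monoid retraction $\rho \colon \bar M_{X,x} \to \bar M_{Y,s}$ of the structure map $i \colon \bar M_{Y,s} \to \bar M_{X,x}$; the goal is to produce the analogous datum at $s'$.

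Next I would carry out the monoid bookkeeping. Let $G \subseteq \bar M_{Y,s}$ be the face for which the generization map identifies $\bar M_{Y,s'}$ with the sharpened face quotient $\bar M_{Y,s}/G$, and set $F \coloneqq \rho^{-1}(G) \subseteq \bar M_{X,x}$, again a face. Since $\rho \circ i = \mathrm{id}$, we have $i(G) \subseteq F$, $\rho(F) = G$, and $\rho|_F \colon F \to G$ splits $i|_G \colon G \to F$; in particular $\gp G \hookrightarrow \gp F$ is split injective. Moreover $\rho$ descends to a monoid retraction $\bar\rho \colon \bar M_{X,x}/F \to \bar M_{Y,s}/G = \bar M_{Y,s'}$ of the induced structure map $\bar M_{Y,s'} \to \bar M_{X,x}/F$. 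It therefore suffices to exhibit a strict geometric $x' \to X$ over $s'$ with $x' \leadsto x$ and $\bar M_{X,x'} = \bar M_{X,x}/F$, as $\bar\rho$ then witnesses firmness at $s'$ via Lemma \ref{lem:firmbasics}\ref{firmbasics7}.

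To produce such an $x'$ I would strict-\'etale-localize around $s$ and $x$ and choose a Kato chart for $f$ consisting of sharp fs monoids $Q = \bar M_{Y,s}$, $P = \bar M_{X,x}$ and the structure map $i \colon Q \to P$, so that log flatness supplies a strict flat morphism $X \to Y \times^{\mathsf{fs}}_{\Aff_Q} \Aff_P$. The points $s, x$ map to the origins of $\Aff_Q, \Aff_P$, and $s'$ maps to a point $s_0'$ lying on the $G$-torus orbit $\Spec \ZZ[\gp G] \subseteq \Aff_Q$. Because $\gp G \hookrightarrow \gp F$ is split injective, the induced torus morphism $\Spec \ZZ[\gp F] \to \Spec \ZZ[\gp G]$ is surjective, giving a point $x_0' \in \Spec \ZZ[\gp F] \subseteq \Aff_P$ over $s_0'$; this $x_0'$ automatically specializes to $x_0$ since the origin lies in the closure of every torus orbit. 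The pair $(s', x_0') \in Y \times^{\mathsf{fs}}_{\Aff_Q} \Aff_P$ then generizes $(s, x_0)$, and going-down for the strict flat map $X \to Y \times^{\mathsf{fs}}_{\Aff_Q} \Aff_P$ lifts $x$ to the required $x'$ with $\bar M_{X,x'} = P/F$.

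The main obstacle is the chart step: extracting a Kato chart with the strict flat factorization in the fs setting takes some care and is the only place where log flatness is used nontrivially. Once such a chart is in hand, the rest is the face-theoretic identity $F = \rho^{-1}(G)$ combined with classical going-down for flat ring maps.
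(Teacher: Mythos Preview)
Your overall strategy matches the paper's: reduce to $S=Y$, take a Kato chart so that log flatness yields a strict flat map $X\to W\coloneqq Y\times_{\Aff_Q}\Aff_P$, and then apply going-down. The gap is the sentence ``the pair $(s',x_0')$ then generizes $(s,x_0)$.'' You justify $x_0'\rightsquigarrow x_0$ in $\Aff_P$ and $s'\rightsquigarrow s$ in $Y$, but specialization in each factor does \emph{not} force specialization in the fiber product $W$. Concretely, take $Q=\NN^2$, $P=\NN^3$, $i(a,b)=(a,b,0)$, $\rho(a,b,c)=(a,b)$, $Y=\Aff^2$ with the identity chart, $s$ the origin, and $s'$ the generic point of the $x$-axis; then $G=\NN\times 0$, $F=\NN\times 0\times\NN$, $W=\Aff^3$, and $(s,x_0)$ is the origin of $\Aff^3$. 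The point $x_0'$ given by the generic point of $\{y=0,\ z=1\}$ lies in $O_F$, maps to $s_0'$, but its closure in $W$ misses the origin. So your choice of an \emph{arbitrary} lift $x_0'$ is not enough.

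The paper repairs exactly this by first \emph{sharpening} the retraction: before treating the generization $s'$, it uses going-down for $X\to W$ within the fiber over $s$ to replace $x$ by a generization $y$ whose characteristic monoid is $P/\rho^{-1}(0)$, so that the new retraction is sharp. Once $\rho$ is sharp, the monoid retraction induces a scheme-theoretic section $Y\to W$ sending $s$ to the image of $x$; then $s'\rightsquigarrow s$ in $Y$ pushes forward along this section to the needed specialization in $W$, and going-down finishes. Your argument becomes correct if you insert this sharpening step (or, equivalently, if you take $x_0'$ to be the image of $s'$ under the section induced by $\rho$ \emph{after} arranging $\rho$ to be sharp so that the section hits the image of $x$).
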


\begin{proof}
    Replace $S$ by the strict henselization at $s \to S$, to which all generizations lift. Assume $Y = S$ by Lemma \ref{lem:firmbasics}\ref{firmbasics5}. As $s \to Y$ is $f$-firm, there is a strict log geometric point $x \to X = X \times_Y^{\rm fs} s$ and a factorization 
    \[
    \bar M_{S, s} \to \bar M_{X, x} \to \bar M_{S, s} 
    \]
    of the identity map $id_{\bar M_{S, s}}$. We can replace $X$ by its strict henselization at $x$ as well. Write 
    \[P \coloneqq \bar M_{S, s}, \qquad Q \coloneqq \bar M_{X, x},\]
    and $Q'$ for the sharpening of the factorization $Q \to P$, so that there is a factorization
    \[
    P \to Q \to Q' \to P
    \]
    of the identity. Locally near $x$ and $s$, the morphism $X \to S$ is charted by the monoid map $P \to Q$, i.e., there exist strict \'etale neighbourhoods $U$ of $x$ in $X$ and $V$ of $s$ in $S$ and a commutative square
    \[
    \begin{tikzcd}
        U \ar[r] \ar[d]       &\Aff_Q \ar[d]         \\
        V \ar[r]       &\Aff_P
    \end{tikzcd}
    \]
    with strict horizontal maps. Since $X$ and $S$ are local and strictly henselian with closed points $x,s$, we may pick $U=X$ and $V=S$. Write $W \coloneqq S \times_{\Aff_P} \Aff_Q$ and expand the above chart:
    \[
    \begin{tikzcd}
        X \ar[r] \ar[dr]       &W \ar[r] \ar[d] \lpbstrict         &\Aff_Q \times S \ar[r] \ar[d] \lpbstrict         &\Aff_Q \ar[d]         \\
                &S \ar[r]      &\Aff_P \times S \ar[r]        &\Aff_P.
    \end{tikzcd}
    \]
    By \cite[Theorem IV.4.1.7]{ogusloggeom}, the map $X \to W$ is flat. 

    Write 
    \[v \in \Aff_Q(\Spec \ZZ), \qquad v' \in \Aff_{Q'}(\Spec \ZZ), \qquad w \in \Aff_P(\Spec \ZZ)\]
    for the closed points, with underlying schemes $\spec \ZZ$. These are the origins of Remark \ref{rmk:originclosedpoint}. The point $(v, s) \in \Aff_Q \times s$ is a specialization of the point $(v', s) \in \Aff_{Q'} \times s \subseteq \Aff_Q \times s$. Both points map to the image of $s \in \Aff_P \times S$, so they lie in the closed subscheme $W_s = W \times_S s$. The point $x \to X$ maps to $(v, s) \in \Aff_Q$. We can lift the specialization $(v', s) \rightsquigarrow (v, s) \in W_s$ to a specialization $y \rightsquigarrow x$ in $X$ by flatness of $X \to W$ \cite[\href{https://stacks.math.columbia.edu/tag/03HV}{Tag 03HV}]{stacks-project}. 
    
    Replace $X$ by its strict henselization at $y \in X$ and $x$ by $y$. The point $s \to S$ remains $f$-firm, as the same section
    \[P \to Q \to Q' \to P\]
    witnesses firmness of the point $y \in X$ with characteristic monoid $\bar M_{X, y} = Q'$. After replacing $X$, our retraction $Q \to P$ is sharp and we have $Q = Q'$ and $v = v'$. 

    Continue to use notation as above. Consider any geometric point $t \to S$, necessarily a generization of the closed point $s \to S$. The image $t'$ of the composite $t \to \Aff_P \to \Aff_Q$ with the section is a generization of the unique closed ``point'' $v \in \Aff_Q$. So we have a specialization $(t', t) \rightsquigarrow (v, s)$ in $\Aff_Q \times S$. This specialization lies inside the closed subscheme $W$. As $x \to X$ maps to $(v, s) \in \Aff_Q$, we can again lift this specialization $(t', t) \rightsquigarrow (v, s)$ to a specialization $z \rightsquigarrow x$ in $X$ by flatness of $X \to W$ \cite[\href{https://stacks.math.columbia.edu/tag/03HV}{Tag 03HV}]{stacks-project}. 

    We claim $z \to X$ witnesses the firmness of $t \to S$. Write 
    \[\bar P \coloneqq \bar M_{S, t}, \qquad \bar Q \coloneqq \bar M_{X, z}.\]
    As $t' \in \Aff_Q$ was the image of $t$ under the section $\Aff_P \to \Aff_Q$, we have a commutative diagram 
    \[
    \begin{tikzcd}
        P \ar[r] \ar[d] \ar[rr, bend left, equals]       &Q \ar[r] \ar[d]      &P \ar[d]      \\
        \bar P \ar[r]      &\bar Q \ar[r]         &\bar P. 
    \end{tikzcd}
    \]
    It results that the bottom row composes to the identity, so $t \to S$ is firm. 
\end{proof}

\begin{example}
    Proposition \ref{prop:generizationfirmlocus} is not saying that the firm locus is an open subscheme, but that it is a log subalteration with open image. 
    Consider the map 
    \[
    \bra{r} : \Aff^1_x \to \Aff^1_y; \qquad y = x^r
    \]
    for some positive integer $r \in \NN^+$. It factors through the stack quotient $X = \bra{\Aff^1/\mu_r}$. The map $X \to \Aff^1$ is a root stack and it is a monomorphism in the category of (f.s.) log algebraic stacks, while $\bra{r}$ is a Kummer map. 

    We claim the firm locus of $\bra{r}$ is precisely $X \to \Aff^1$. We claim a morphism $S \to \Aff^1$ of log schemes corresponding to $t \in \Gamma(S, M_S)$ lifts to $X$ (uniquely) if and only if there is an $r$th root of $t$. A lift of $S$ along $\bra{r}$ is a \emph{choice} of an $r$th root of $t$. A map $S \to \Aff^1$ is firm along $\bra{r}$ if and only if it factors through $X$, so $X = \Aff^1(\bra{r})$ is the firm locus of $\bra{r}$.
\end{example}

The next section systematically investigates the firm locus.

\section{The firm locus}\label{s:firm}

This section proves Theorem \ref{thm:firmlocus}. As an application, we describe to what extent the firm locus $Y(f) \subseteq Y$ is representable. 

\begin{lemma}\label{lem:intsatsection}
    Let $p : \scr B \to \tau$ be an integral, saturated morphism from an Artin fan $\scr B$ to an Artin cone $\tau$. Either $p$ admits a section $\tau \dashrightarrow \scr B$ or $p$ factors through the open complement $\partial \tau$ of the closed origin of $\tau$. 
\end{lemma}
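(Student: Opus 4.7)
The plan is to reduce to the case of a single Artin cone and then translate to a question about monoids. Choose a strict étale cover $\{\af{P_i} \to \scr B\}_i$ of $\scr B$ by Artin cones. By Lemma~\ref{lem:artinconesfactorthroughartincones} any section $\tau \dashrightarrow \scr B$ factors through some $\af{P_i}$, so it suffices to find an $i$ for which the composite $p_i : \af{P_i} \to \scr B \to \tau$ admits a section. Writing $\tau = \af Q$, each $p_i$ is dual to a sharp f.s.\ monoid map $\phi_i : Q \to P_i$, still integral and saturated since strict maps are both.

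Next I would identify when the origin $v \in \tau$ lies in the image of $p_i$. The strata of $\af{P_i}$ are indexed by faces $F \subseteq P_i$, and the stratum for $F$ maps to the stratum of $\af Q$ indexed by $\phi_i^{-1}(F)$. The origin $v$ corresponds to the face $\{0\} \subseteq Q$, so $v \in p_i(\af{P_i})$ exactly when some face $F \subseteq P_i$ satisfies $\phi_i^{-1}(F) = \{0\}$. Since $0$ lies in every face, this forces $\ker \phi_i = \{0\}$, and conversely if $\phi_i$ is local then $F = \{0\}$ works. Hence $p$ factors through $\partial \tau$ precisely when no $\phi_i$ is local, settling that side of the dichotomy.

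Otherwise I restrict to a cone $\af P \to \scr B$ for which $\phi \coloneqq \phi_i$ is local. A section of $\af P \to \af Q$ corresponds dually to a monoid retraction $r : P \to Q$ of $\phi$, by the same reasoning as in the proof of Proposition~\ref{prop:logpointfirm=AFsection}. Integrality of $\phi$ gives $\gp \phi$ injective, and saturation makes the cokernel torsion-free, so there is a group retraction $\gp r : \gp{P} \to \gp{Q}$. The remaining task is to choose this group retraction with $\gp r(P) \subseteq Q$.

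Producing this monoid retraction is the main obstacle. Dually it amounts to finding a lattice section of the surjection of dual rational polyhedral cones induced by $\phi$. The saturated hypothesis is precisely what forces the lattice fibers of this dual map to meet the lattice in the dual cone of $P$; by contrast, for the integral but non-saturated map $\phi : \NN \to \NN^2$, $1 \mapsto (2,3)$, the dual fiber over the generator of the dual cone of $Q$ contains no lattice point of the dual cone of $\NN^2$, and correspondingly no retraction exists. I would finish by picking compatible lattice lifts of primitive generators of the extremal rays of the dual cone of $Q$, using that $\phi$ is local, integral, and saturated between sharp f.s.\ monoids to ensure the linear extension of these lifts takes values in the dual cone of $P$.
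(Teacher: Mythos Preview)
Your reduction to a single Artin cone and your identification of the dichotomy (the origin lies in the image of $\af{P_i} \to \af Q$ if and only if $\phi_i$ is local) are correct and match the paper's argument essentially verbatim.

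The gap is in your final paragraph. Having reduced to a local, integral, saturated morphism $\phi : Q \to P$ of sharp f.s.\ monoids, you need a monoid retraction $P \to Q$. You correctly observe that $\gp\phi$ is injective with free cokernel, so a \emph{group} retraction exists; but the step ``pick compatible lattice lifts of the extremal ray generators of $Q^\vee$ and extend linearly'' is not a proof. If $Q^\vee$ is not simplicial, its extremal ray generators satisfy $\ZZ$-linear relations, and independent lifts to $P^\vee$ have no reason to satisfy the same relations, so the ``linear extension'' is not well defined. Even when $Q^\vee$ is simplicial, you have not shown that each generator of $Q^\vee$ lifts to an element of $P^\vee$ at all; this surjectivity of $\phi^\vee$ on lattice points already requires the full strength of saturation, not merely torsion-freeness of the cokernel of $\gp\phi$, and you give no argument for it.

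The paper does not attempt to build the retraction by hand. It invokes \cite[Theorem I.4.8.14~(7)]{ogusloggeom}: a local, integral, saturated morphism $\theta : P \to Q$ of toric monoids (reverting to the paper's notation, so $\tau=\af P$ and $\theta$ is your $\phi$) induces an isomorphism $Q \simeq P \times (Q \setminus \sqrt{K_\theta})$ for a certain ideal $K_\theta \subseteq Q$, and projection onto the first factor is the desired retraction. This structural splitting is exactly the monoidal analogue of ``smooth morphisms are locally projections,'' and its proof is where the substantive work lies. Your sketch would amount to reproving this theorem in the dual picture, which is possible in principle but is not accomplished by what you wrote.
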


\begin{proof}
 We can assume $\scr B = \sigma$ is an Artin cone because Lemma \ref{lem:artinconesfactorthroughartincones} guarantees maps $\tau \to \scr B$ factor through Artin cones $\sigma \to \scr B$ in a strict \'etale cover. Write $P = \Gamma(\tau, \bar M_\tau), Q = \Gamma(\sigma, \bar M_\sigma)$ so that $\sigma \to \tau$ comes from a map of sharp monoids $\theta : P \to Q$. The map $\theta$ is sharp if and only if the map $\sigma \to \tau$ does not factor through $\partial \tau \subseteq \tau$. 

    Assuming $\theta$ is sharp, we need to show $p$ admits a section by producing a map $Q \dashrightarrow P$ such that the composite $P \to Q \to P$ is the identity. Note that $P, Q$ are ``toric'' monoids as they are sharp, f.s., with free associated groups $\gp P, \gp Q$ and that the integral map $P \to Q$ is locally exact by \cite[Theorem I.4.7.7]{ogusloggeom}. So Theorem I.4.8.14 of \cite{ogusloggeom} provides an isomorphism $P \times (Q \setminus \sqrt{K_\theta}) \simeq Q$ for a certain monoid ideal $\sqrt{K_\theta} \subseteq Q$. 
    Projecting away from this ideal gives us our retraction $Q \dashrightarrow P$. 
\end{proof}

\begin{proof}[{Proof of Theorem \ref{thm:firmlocus}}] 
    Consider a strict log geometric point $\bar y \to Y$. Replace $Y$ by its strict henselization at $\bar y$ and $X$ by a strict \'etale cover by a disjoint union of atomics using Lemma \ref{lem:firmbasics}. 
    
    Let $X' = X \times_Y^{\msout{\rm fs}} \bar y$ be the (f.s.\ and scheme-theoretic) fiber over $\bar y$. There are compatible maps of Artin fans 
    \[
    \begin{tikzcd}
        \af{X'} \ar[r] \ar[d, "\pi", swap]         &\af{X} \ar[d]         \\
        \af{\bar y} \ar[r, equals]         &\af{Y}
    \end{tikzcd}
    \]
    by Lemmas \ref{lem:strictAFfunctorial}, \ref{lem:localAFfunctoriality} because $X, Y$ are disjoint unions of atomics and $X' \to X, \bar y \to Y$ are strict. Because the rest of the morphisms in the square are compatible with maps of log schemes, there is a commutative square 
    \[
    \begin{tikzcd}
        X' \ar[r] \ar[d]      &\af{X'} \ar[d, "\pi"]        \\
        \bar y \ar[r]      &\af{\bar y}.
    \end{tikzcd}    
    \]
    
    The point $\bar y \to Y$ is in the set-theoretic image of $f$ if and only if $X'$ is nonempty. In that case, the map $\pi : \af{X'} \to \af{\bar y}$ admits a section if and only if $\bar y \to Y$ is $f$-firm by Proposition \ref{prop:logpointfirm=AFsection}. It is clear that firm points are in the set-theoretic image of $f$.
    So it suffices to show that if $\pi$ does not admit a section, then $X'$ is empty. 

    By Lemma \ref{lem:intsatsection}, $\pi$ factors through $\partial \af{\bar y} \subseteq \af{\bar y}$. Then we have a commutative rectangle
    \[
    \begin{tikzcd}
        X' \ar[r] \ar[d]      &\af{X'} \ar[r]        &\partial \af{\bar y} \ar[d]       \\
        \bar y \ar[r]      &v \ar[r]       &\af{\bar y},
    \end{tikzcd}
    \]
    where $v \in \af{\bar y}$ is the closed point as in Remark \ref{rmk:originclosedpoint}. But $\partial \af{\bar y}$ is the complement $\af{\bar y} \setminus v$, so $X' = \varnothing$. 
\end{proof}

\begin{lemma}\label{lem:firmlocuspullsback}
    Let $f : X \to Y$ be a morphism of log schemes or log algebraic stacks and $Y(f) \subseteq Y$ the firm locus. Given a map $Y' \to Y$, consider the base change $f' : X' \coloneqq X \times_Y^{\rm fs} Y' \to Y'$ and its firm locus $Y'(f')$. There is an f.s.\ pullback square 
    \[
    \begin{tikzcd}
        Y'(f') \ar[r] \ar[d] \lpb      &Y' \ar[d]         \\
        Y(f) \ar[r]        &Y.
    \end{tikzcd}
    \]
\end{lemma}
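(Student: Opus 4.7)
The plan is to unwind the square to a pointwise statement about firmness and then invoke associativity of f.s. fibre products. Since $Y(f) \hookrightarrow Y$ and $Y'(f') \hookrightarrow Y'$ are sub-presheaves (on fine saturated log schemes), the square in the lemma being f.s. cartesian is equivalent to the following claim: for every morphism $\sigma : S \to Y'$ from an f.s.\ log scheme $S$, the composite $p = (Y'\to Y)\circ\sigma : S \to Y$ is $f$-firm if and only if $\sigma$ itself is $f'$-firm.

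To verify this claim, I would first use associativity of the f.s.\ fibre product to identify
\[
X' \times_{Y'}^{\mathsf{fs}} S \;=\; \bigl(X \times_Y^{\mathsf{fs}} Y'\bigr) \times_{Y'}^{\mathsf{fs}} S \;\longsimeq\; X \times_Y^{\mathsf{fs}} S.
\]
This isomorphism is canonical and strict, hence induces an identification of log structures and characteristic monoids at every strict geometric point $\bar w$ of either pullback. Then Definition \ref{def:firm} applied to $p$ and applied to $\sigma$ requires, at each strict geometric point $\bar s \to S$, exactly the same data: a strict geometric point $\bar w$ of $X \times_Y^{\mathsf{fs}} S = X' \times_{Y'}^{\mathsf{fs}} S$ and a retraction
\[
\bar M_{S,\bar s} \longrightarrow \bar M_{X\times_Y^{\mathsf{fs}} S, \bar w} \longrightarrow \bar M_{S,\bar s}
\]
of the identity. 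Hence $p$ is $f$-firm at $\bar s$ iff $\sigma$ is $f'$-firm at $\bar s$, and iterating over all strict geometric points of $S$ gives the claim.

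Finally, to conclude: because $Y(f) \subseteq Y$ is a sieve (Lemma \ref{lem:firmbasics}\ref{firmbasics6}) and $Y'(f') \subseteq Y'$ is a sub-presheaf, the f.s.\ fibre product $Y(f) \times_Y^{\mathsf{fs}} Y'$ represents the sub-presheaf of $Y'$ whose $S$-points are those $\sigma : S\to Y'$ for which $(Y'\to Y)\circ\sigma$ is $f$-firm. By the claim, this is exactly $Y'(f')$, which gives the desired f.s.\ pullback square.

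I do not anticipate a genuine obstacle here; the content is essentially bookkeeping with the definition of firmness plus associativity of $\times^{\mathsf{fs}}$. The only subtle point worth double-checking is that the identification $X' \times_{Y'}^{\mathsf{fs}} S = X \times_Y^{\mathsf{fs}} S$ is strict (so that the characteristic monoids match on the nose), which follows from the universal property of the f.s.\ fibre product in the category of fine saturated log schemes or log algebraic stacks.
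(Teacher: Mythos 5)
Your proof is correct and takes essentially the same route as the paper's. The paper reduces to strict geometric points via Lemma~\ref{lem:firmbasics}\ref{firmbasics5}\ref{firmbasics iv} and then observes that the fibers $X_{\bar s}$ and $X'_{\bar s}$ coincide, while you unwind Definition~\ref{def:firm} directly and invoke associativity of the f.s.\ fibre product to identify $X'\times_{Y'}^{\mathsf{fs}} S \simeq X\times_Y^{\mathsf{fs}} S$; the core observation is the same.
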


\begin{proof}
    Suppose given a map $S \to Y'$. The composite $S \to Y' \to Y$ factors through $Y(f)$ if and only if, for all log geometric points $\bar s \to S$, the identity $\bar s \longequals \bar s$ is firm along the pullback $X_{\bar s} \to \bar s$ by Lemma \ref{lem:firmbasics}\ref{firmbasics5}\ref{firmbasics iv}. But this is the same as $\bar s \longequals \bar s$ being firm along the pullback $X'_{\bar s}=X_{\bar s} \to \bar s$, so $S$ equivalently factors through $Y'(f')$. 
\end{proof}

We need a version of a well-known blowup lemma (see 
\cite{AbrKaru00, fkatointegralblowup, universalsemistablereductionmolcho}).

\begin{lemma}[{\cite[Theorem 1.1, Corollary 1.2]{fkatointegralblowup}, \cite[Theorem 1.0.1]{universalsemistablereductionmolcho}}]\label{lem:logaltnint+sat}
    Let $f : X \to Y$ be a quasicompact map of (f.s.) log schemes with $Y$ quasicompact. There are 
    \begin{itemize}
        \item a log blowup $Y_1 \to Y$ and 
        \item a log alteration $Y_2 \to Y$
    \end{itemize}
    with f.s.\ pullbacks $f_i : X_i \coloneqq Y_i \times^{\rm fs}_Y X \to Y_i$ such that $f_1$ is integral and $f_2$ is integral and saturated.
\end{lemma}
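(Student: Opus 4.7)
\emph{Plan.} The strategy is to reduce the problem to a purely combinatorial statement on Artin fans and then invoke the two cited works, which perform exactly the required subdivisions.

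First, I would reduce to the Artin fan. Integrality and saturatedness of a morphism are properties of the induced maps on characteristic sheaves (cf.\ \cite[Theorem I.4.6.7]{ogusloggeom}), so they can be tested after composition with the maps to the Artin fans $\af{X}\to\af{Y}$. Since every log blowup (respectively log alteration) of $\af{Y}$ pulls back to a log blowup (respectively log alteration) of $Y$, and since the formation of f.s.\ pullback along strict maps commutes with these constructions, it suffices to produce the blowup/alteration on $\af{Y}$. Using quasicompactness of $Y$, one may cover $\af{Y}$ strict-\'etale by finitely many Artin cones $\af{P}$, and the problem becomes: given a morphism of sharp f.s.\ monoids $\theta:P\to Q$ modelling $\af{X}\to\af{Y}$ locally, find a subdivision of the cone dual to $P$ such that after f.s.\ pullback the map becomes integral (respectively integral and saturated).

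For $Y_1\to Y$, this is Kato's integral blowup \cite[Theorem 1.1, Corollary 1.2]{fkatointegralblowup}. Locally on $\af{Y}$ he constructs a canonical coherent sheaf of fractional ideals built from the images of generators of $Q$ in $\gp{P}$, whose log blowup produces a subdivision rendering $\theta$ integral. The canonicity of the construction ensures that the local blowups glue to a global log blowup $Y_1\to Y$, and by Lemma \ref{lem:firmlocuspullsback}-style base-change compatibility, the f.s.\ pullback $f_1$ is integral.

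For $Y_2\to Y$, one further refines $Y_1$ to achieve saturation. This is exactly the output of Molcho's universal weak semistable reduction \cite[Theorem 1.0.1]{universalsemistablereductionmolcho}, which functorially produces a log alteration of $Y$ whose f.s.\ pullback of $f$ is both integral and saturated in a single step. Alternatively, one can post-compose $Y_1\to Y$ with a Kummer log alteration (a root stack / $\NN$-scaling of the cone lattice) to saturate the fibres of $\theta$, relying on \cite[Theorem III.2.5.3]{ogusloggeom} to see that saturated, integral morphisms are stable under f.s.\ pullback.

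The main obstacle is bureaucratic rather than mathematical: the cited works phrase things in terms of subdivisions of fans or toroidal morphisms, whereas we need a statement about log blowups and log alterations of an arbitrary quasicompact log scheme $Y$. The translation proceeds through the Artin fan, using that log blowups and log alterations of $Y$ are precisely the f.s.\ pullbacks to $Y$ of the corresponding operations on $\af{Y}$, and that integrality/saturatedness descend from, and can be checked on, characteristic monoids.
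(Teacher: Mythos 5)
Your plan correctly identifies the two references and the reduction to Artin fans, but it glosses over what the paper treats as the real content of the proof: globalization. The paper explicitly says that Kato's and Molcho's results ``show that such a base change is possible after passing to a strict \'etale neighborhood of any point of $Y$,'' and that the remaining work is ``to find a global log blowup/alteration $Y'\to Y$ refining any local one.'' You dispose of this in one sentence by appealing to ``canonicity of the construction,'' but you do not verify that Kato's local ideal is independent of the chart and hence descends. In fact the paper does not attempt to glue the local blowups at all; instead it sidesteps the question by \emph{constructing} an explicit cofinal family $Y'_n\to Y$ of log blowups and log alterations (after first blowing up $\af{Y}$ to make it free and monodromy-free, then taking iterated star subdivisions at bounded integer vectors, and rescaling the lattice by $n!$ for the alteration case). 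Since any local Kato blowup is refined by some $Y'_n$, the pullback of $Y'_n$ is integral on every local patch, hence globally integral. This is a genuinely different (and essential) piece of the argument.

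A secondary issue: your alternative for $Y_2$ — post-composing $Y_1\to Y$ with a global Kummer root stack to saturate — needs a uniform bound on the denominators required across all charts, which again comes down to quasicompactness and an explicit cofinal construction, precisely the step you have not supplied. So the proposal is correct in spirit and in its choice of inputs, but it contains a gap where the paper does real work: it neither verifies that the local constructions glue nor constructs a global refinement as the paper does.
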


\begin{proof}
    F.\ Kato's results \cite[Theorem 1.1, Corollary 1.2]{fkatointegralblowup} show that such a base change is possible after passing to a strict \'etale neighborhood of any point of $Y$. Instead of a log alteration, he takes a finite Kummer map. Molcho \cite[Theorem 1.0.1]{universalsemistablereductionmolcho} instead provides a log alteration. To conclude, we need to find a global log blowup/alteration $Y' \to Y$ refining any local one. 

    One can define a special family $Y'_n \to Y$ of log blowups/alterations indexed by $n \in \NN$. First, take a log blowup of $\af{Y}$ to assume its cones are free as in the proof of \cite[Chapter 1, \S 2, Theorem 11]{KTE}. Subdivide further to also assume $\af{Y}$ is without monodromy in the sense that it admits a \emph{Zariski} cover by free Artin cones $\af{}^r = \bra{\Aff^1/\GG_m}^r$.

    For each cone $\sigma = \af{}^r$ of $\af{Y}$, we define a system of log blowups and log alterations and leave the reader to check that they glue along faces. See Figure \ref{fig:superbarycentricsubdivision}. 

    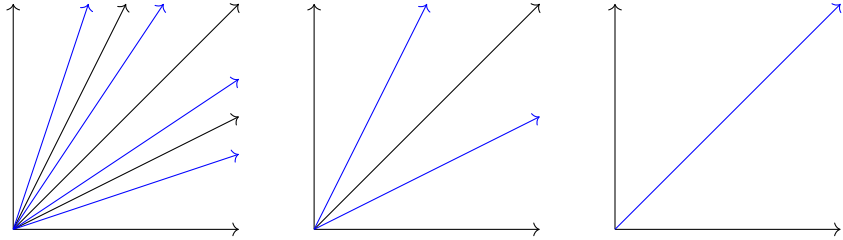
\begin{figure}[h]
    \centering
    \begin{tikzpicture}
        \draw[->] (0, 0) to (3, 0);
        \draw[->] (0, 0) to (0, 3);
        \draw[->, blue] (0, 0) to (3, 3);
        \begin{scope}[shift = {(-4, 0)}]
        \draw[->] (0, 0) to (3, 0);
        \draw[->] (0, 0) to (0, 3);
        \draw[->] (0, 0) to (3, 3);
        \draw[->, blue] (0, 0) to (3, 1.5);
        \draw[->, blue] (0, 0) to (1.5, 3);
        \end{scope}
        \begin{scope}[shift = {(-8, 0)}]
        \draw[->] (0, 0) to (3, 0);
        \draw[->] (0, 0) to (0, 3);
        \draw[->] (0, 0) to (3, 3);
        \draw[->] (0, 0) to (3, 1.5);
        \draw[->] (0, 0) to (1.5, 3);
        
        \draw[->, blue] (0, 0) to (3, 1);
        \draw[->, blue] (0, 0) to (1, 3);
        \draw[->, blue] (0, 0) to (3, 2);
        \draw[->, blue] (0, 0) to (2, 3);
        \end{scope}
    \end{tikzpicture}
    \caption{The sequence $\Sigma_n$ of log blowups of $\af{}^2$ for $n = 1, 2, 3$. This sequence eventually refines all log blowups. The log alteration $\Sigma'_n$ also adds in an increasing sequence of root stacks to refine all log alterations. }
    \label{fig:superbarycentricsubdivision}
\end{figure}
    A log alteration $\scr B \to \sigma$ factors as 
    \[
        \scr B \to \scr B' \to \sigma
    \]
    with $\scr B \to \scr B'$ a root stack and $\scr B' \to \sigma$ a log modification. By quasicompactness, there is a natural number $N \in \NN$ and a factorization
    \[
        [N] : \scr B' \dashrightarrow \scr B \to \scr B'
    \]
    of the multiplication by $N$ map $[N]$ on $\scr B'$. The log modification $\scr B' \to \sigma$ is refined by a log blowup, which can be refined by a stellar subdivision. So it suffices to construct a family of subdivisions that refines all stellar subdivisions and all roots to finite order $N \in \NN$. 
    
    Write $e_1, \cdots, e_r \in \NN^r$ for the standard basis vectors. For each set of integers $a_1, \cdots, a_r \in \{0, 1, 2, \cdots, n\}$ bounded by $n$, we get a map $\NN^r \to \NN$ defined by the $(1 \times r)$-matrix $[a_1 \cdots a_r]$. 
    
    Consider the star subdivision of $\sigma$ at the dual map of Artin cones $\af{} \to \af{}^r$. If we perform star subdivision at a finite set of vectors $\af{} \to \af{}^r$, the result depends on the order of the vectors. Nevertheless, there is a unique minimal subdivision $\Sigma_n \to \af{}^r$ refining each of these star subdivisions given by taking their intersection as subfunctors of $\af{}^r$ depicted in Figure \ref{fig:orderedstellarsubdivisions}.  

\begin{figure}[h]
    \centering
    \begin{tikzpicture}
      \coordinate (A) at (0,0);
      \coordinate (B) at (3,0);
      \coordinate (C) at (1.5,3);
      \coordinate (D) at (1,1);
      \coordinate (E) at (2,1);
      \coordinate (F) at (1.5, .5);
    
      \draw (A) -- (B) -- (C) -- cycle; 
      \draw (D) -- (A);
      \draw (D) -- (B);
      \draw (D) -- (C);
      \draw (E) -- (D);
      \draw (E) -- (B);
      \draw (E) -- (C);
      \draw (F) -- (D);
      \draw (F) -- (A);
      \draw (F) -- (B);

      \fill (D) circle[radius=1.5pt];
      \fill (E) circle[radius=1.5pt];
      \fill (F) circle[radius=1.5pt];
      \node[left] at (D) {$u$};
      \node[right] at (E) {$v$};
      \node[below] at (F) {$w$};

    \begin{scope}[shift = {(4, 0)}]
      \coordinate (A) at (0,0);
      \coordinate (B) at (3,0);
      \coordinate (C) at (1.5,3);
      \coordinate (D) at (1,1);
      \coordinate (E) at (2,1);
      \coordinate (F) at (1.5, .5);
    
      \draw (A) -- (B) -- (C) -- cycle; 
      \draw (E) -- (A);
      \draw (E) -- (B);
      \draw (E) -- (C);
      \draw (D) -- (E);
      \draw (D) -- (C);
      \draw (D) -- (A);
      \draw (F) -- (A);
      \draw (F) -- (B);
      \draw (F) -- (E);
      \fill (D) circle[radius=1.5pt];
      \fill (E) circle[radius=1.5pt];
      \fill (F) circle[radius=1.5pt];
      \node[left] at (D) {$u$};
      \node[right] at (E) {$v$};
      \node[below] at (F) {$w$};
    \end{scope}

    \begin{scope}[shift = {(8, 0)}]
      \coordinate (A) at (0,0);
      \coordinate (B) at (3,0);
      \coordinate (C) at (1.5,3);
      \coordinate (D) at (1,1);
      \coordinate (E) at (2,1);
      \coordinate (F) at (1.5, .5);
    
      \draw (A) -- (B) -- (C) -- cycle; 
      \draw (D) -- (A);
      \draw (D) -- (B);
      \draw (D) -- (C);
      \draw (D) -- (E);
      \draw (D) -- (F);
      \draw (E) -- (A);
      \draw (E) -- (B);
      \draw (E) -- (C);
      \draw (E) -- (F);
      \draw (F) -- (A);
      \draw (F) -- (B);
      \draw (F) -- (C);
      \fill (D) circle[radius=1.5pt];
      \fill (E) circle[radius=1.5pt];
      \fill (F) circle[radius=1.5pt];
      \node[left] at (D) {$v$};
      \node[right] at (E) {$w$};
      \node[below] at (F) {$w$};
    \end{scope}
    \end{tikzpicture}
    \caption{Left: The stellar subdivision at $u$, then $v$, then $w$. Middle: That at $v$, then $u$ and $w$ in either order. Right: The intersection of all stellar subdivisions at $u, v, w$ in any order. }
    \label{fig:orderedstellarsubdivisions}
\end{figure}
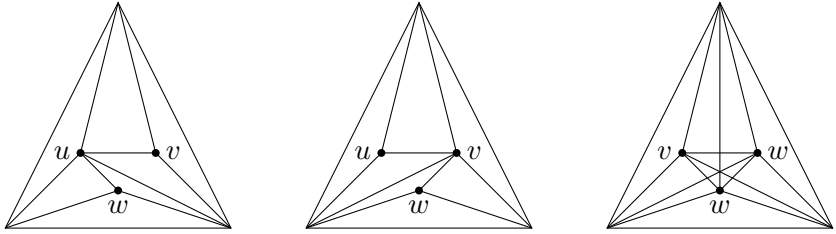
    
    Let $\Sigma'_n \to \Sigma_n$ be the root stack given by rescaling the lattice by $n!$. For $n \gg 0$, $\Sigma_n \to \sigma$ refines any log blowup and $\Sigma'_n \to \sigma$ refines any log alteration. This is because 
    As we assumed $\af{Y}$ was free, we get an induced log alteration of $\af{Y}$ which pulls back to define $Y'_n \to Y$. Check that it refines every possible log alteration of $Y$ or any $Y_0 \to Y$.
\end{proof}

We review formal schemes and logarithmic spaces to discuss representability of the firm locus.

\begin{definition}\label{def:weaklogspace}
    Let $X$ be a sheaf on the big strict \'etale site of (f.s.) log schemes. Recall that $X$ is a 
    \begin{itemize}
        \item \emph{logarithmic space} if it admits a log smooth cover $X' \to X$ by a log scheme \cite[Theorem C]{logpic}, and
        \item \emph{weak logarithmic space} if it is a root stack of a logarithmic space.
    \end{itemize}
\end{definition}

\begin{remark}
    A weak logarithmic space $X$ need not be a logarithmic space -- we are explicitly allowing roots divisible by the characteristic. However, locally in the log smooth topology near a point $x \to X$ of residue characteristic $p$, there must exist a covering $f \colon X' \to X$ by a log scheme such that $f$ is a $\mu_p^{\Lambda}$-torsor for some free commutative abelian group $\Lambda$. A fortiori, $X$ has a log flat cover by a log scheme.
\end{remark}

Let $T \subseteq Y$ be a locally constructible subset of a scheme $Y$. View it as a functor on schemes by 
\begin{equation}\label{eqn:constructibleisindformalscheme}
    T(X) \coloneqq \{f : X \to Y \, | \, f^{-1}(T) = X\}.
\end{equation}
We explain how to view this as a formal scheme. 

\begin{lemma}\label{lem:constructiblesetsasformalschemes}
    The functor $T$ in \eqref{eqn:constructibleisindformalscheme} is representable by a formal scheme which is an ind-subscheme of $Y$. 
\end{lemma}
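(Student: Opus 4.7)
The plan is to build $T^\wedge$ as a filtered colimit of subschemes of $Y$ contained set-theoretically in $T$. Since representability is Zariski-local on $Y$, I would first reduce to the case where $Y$ is affine, so $T$ is constructible in the classical sense.

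The crucial warm-up is the \emph{closed} case: for a closed subset $Z \subseteq Y$ with ideal sheaf $\mathcal I$, the subfunctor $\{f : X \to Y : f^{-1}(Z) = X\}$ is represented by the formal completion $\hat Y_Z = \colim_n V(\mathcal I^{n+1})$. The standard argument is that $f^{-1}\mathcal I \cdot \mathcal O_X$ is a nilideal on $X$, hence—on quasicompact locally Noetherian $X$—nilpotent, so $f$ factors through some $V(\mathcal I^{n+1})$; the non-quasicompact case follows by gluing over a quasicompact cover. Applying this with $Y$ replaced by an open $U \subseteq Y$ then handles the \emph{locally closed} case $T = U \cap Z$: the functor is represented by $\hat U_{T \cap U}$, an ind-subscheme of $U \subseteq Y$.

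For the general constructible case, I would define $T^\wedge$ as the filtered colimit (in Zariski sheaves on $Y$) of all subschemes $W \hookrightarrow Y$ (open, closed, or locally closed) whose underlying topological space is contained in $T$. This is automatically an ind-subscheme of $Y$, and one packages it as a formal scheme in the usual way. To prove it represents the functor, I would argue: given $f : X \to Y$ with $f(X) \subseteq T$, the constructibility of $T$ combined with Chevalley's theorem (applicable since $f$ is locally of finite type) makes $f(X) \cap V$ constructible in any affine open $V \subseteq Y$; refining a Zariski cover of $X$, each restriction of $f$ has locally closed image, and thus factors through a locally closed subscheme $W \subseteq V$ with $W \subseteq T$, giving a local element of $T^\wedge(X)$.

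The main obstacle is this last step: verifying that these local factorizations sheafify consistently to a single element of $T^\wedge(X)$, and conversely that elements of the colimit all come from maps with image in $T$. The subtlety is sharpest when $T$ combines open and closed pieces meeting set-theoretically (e.g.\ $T = \{x \neq 0\} \cup \{y = 0\} \subset \Aff^2$): here neither a disjoint union nor a naive sheafy union of the standard pieces represents $T$, and the filtered colimit must include ``diagonal'' subschemes such as $V(y - x^2)$ whose underlying set lies in $T$ but which is not contained in either piece individually. The technical payoff of taking the colimit over \emph{all} subschemes $W \subseteq Y$ with $W \subseteq T$ is precisely that such diagonal contributions are automatically accounted for.
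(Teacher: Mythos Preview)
Your treatment of the locally closed case is the paper's entire argument: the paper Zariski-localizes $Y$, declares that one may then assume $T = U \cap D$ is a single locally closed piece, and handles the closed case by formal completion exactly as you do.

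You correctly sense that the reduction to the locally closed case is the real issue---indeed, your own example $T = D(x) \cup V(y) \subset \Aff^2$ is not locally closed in any Zariski neighbourhood of the origin, so the paper's reduction step is not justified as written. Your attempt to treat the general constructible case therefore goes beyond what the paper actually proves.

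However, the filtered colimit you propose does not work: the system of locally closed subschemes $W \hookrightarrow Y$ with $|W| \subseteq T$ is \emph{not filtered}. In your example, take $W_1 = D(xy)$ and $W_2 = V(y)$. Any immersion $W_3 \hookrightarrow \Aff^2$ through which the dense open $W_1$ factors must have scheme-theoretic closure equal to all of $\Aff^2$, so $W_3$ is itself open in $\Aff^2$; but no open subset of $\Aff^2$ both contains $V(y)$ and stays inside $T$. Thus $W_1$ and $W_2$ have no common upper bound in your indexing system. The ``diagonal'' closed subschemes such as $V(y - x^2)$ that you invoke cannot help, since they are one-dimensional and can never dominate the two-dimensional $W_1$. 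The obstacle is therefore more basic than the sheafification step you flag: the indexing diagram itself fails to be filtered, so the colimit is not an ind-scheme in the standard sense before any gluing question arises.
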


\begin{proof}
    The claim is Zariski-local in $Y$, so we can assume $Y$ is affine and $T \subseteq Y$ is the intersection $U \cap D$ of an open $U$ and a closed subset $D$ of $|Y|$. It suffices to handle the cases $T = U$ and $T = D$ separately, and $T = U$ is an open subscheme of $X$. The functor $T = D$ is representable by the formal spectrum $\Spf (\OO_Y)^\wedge_{I_D}$ of the completion of $\OO_Y$ at the ideal $I_D$ of $D$, an ind-scheme. See \cite[\href{https://stacks.math.columbia.edu/tag/0AIZ}{Tag 0AIZ}]{stacks-project} for more details.
\end{proof}
\begin{corollary}\label{cor:firmlocusalteration}
    Let $f : X \to Y$ be a finitely presented morphism of log schemes with $Y$ quasicompact. There is a log alteration $Y' \to Y$ along which the firm locus pulls back to a locally constructible subset $Y(f) \times_Y^{\rm fs} Y' \subseteq Y'$, interpreted as a formal scheme and ind-subscheme as in \eqref{eqn:constructibleisindformalscheme}. If $f$ is log flat, the locally constructible subset of $Y'$ is an open subscheme. 
\end{corollary}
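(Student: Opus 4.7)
My strategy is to reduce to the integral and saturated case, where Theorem \ref{thm:firmlocus} identifies the firm locus with the set-theoretic image, and then invoke Chevalley's theorem.

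First, I apply Lemma \ref{lem:logaltnint+sat} to produce a log alteration $Y' \to Y$ whose f.s.\ base change $f' \colon X' = X \times_Y^{\rm fs} Y' \to Y'$ is integral and saturated. I should note that $f'$ remains of finite presentation on underlying schemes: $f$ is finitely presented by hypothesis, log alterations are of finite presentation, and f.s.\ pullbacks of finitely presented log morphisms are computed locally from finitely presented monoid pushouts, so finite presentation descends to the underlying schemes. By Lemma \ref{lem:firmlocuspullsback}, the firm locus pulls back: $Y(f)\times_Y^{\rm fs} Y' = Y'(f')$ as subpresheaves of $Y'$.

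Next, I apply Theorem \ref{thm:firmlocus} to $f'$ (which is integral and saturated) to conclude that $Y'(f') \subseteq Y'$ is precisely the set-theoretic image $f'(X')$. Since $f'$ is of finite presentation, Chevalley's theorem (\cite[\href{https://stacks.math.columbia.edu/tag/054K}{Tag 054K}]{stacks-project}) shows that $f'(X') \subseteq Y'$ is a locally constructible subset. Viewing this locally constructible subset as a formal scheme via Lemma \ref{lem:constructiblesetsasformalschemes} yields the first assertion.

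For the log flat case, log flatness is preserved under f.s.\ base change, so $f'$ is log flat, integral, and saturated. By a standard structural result in log geometry (cf.\ \cite[Theorem IV.4.3.5]{ogusloggeom}), such a morphism is flat on underlying schemes. A flat, finitely presented morphism of schemes is open \cite[\href{https://stacks.math.columbia.edu/tag/01UA}{Tag 01UA}]{stacks-project}, so $f'(X') \subseteq Y'$ is open. The main technical point I expect to have to verify carefully is the preservation of finite presentation under f.s.\ base change along the log alteration; once that is in hand, Theorem \ref{thm:firmlocus} and Chevalley do all the real work.
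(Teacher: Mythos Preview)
Your proposal is correct and follows the paper's proof essentially verbatim: apply Lemma \ref{lem:logaltnint+sat} to reduce to the integral saturated case, use Lemma \ref{lem:firmlocuspullsback} to identify the pulled-back firm locus, then invoke Theorem \ref{thm:firmlocus} and Chevalley, and in the log flat case use \cite[Theorem IV.4.3.5]{ogusloggeom} plus openness of flat finite-presentation maps. The only difference is that you explicitly flag preservation of finite presentation under the f.s.\ base change, which the paper uses tacitly.
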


In other words, the firm locus $Y(f) \subseteq Y$ in this corollary is an ind-weak logarithmic space as in Definition \ref{def:weaklogspace}, where ``ind-'' refers to filtered colimits \cite[\href{https://stacks.math.columbia.edu/tag/05PW}{Tag 05PW}]{stacks-project}. It is a weak logarithmic space if $f$ is log flat. 

\begin{proof}
    Lemma \ref{lem:logaltnint+sat} provides a log alteration $Y' \to Y$ such that the pullback 
    \[
    \begin{tikzcd}
        X' \ar[r] \ar[d, "f'", swap] \lpb      &X \ar[d, "f"]      \\
        Y' \ar[r]      &Y
    \end{tikzcd}
    \]
    of $f$ is integral and saturated. By Lemma \ref{lem:firmlocuspullsback}, the firm locus pulls back:
    \[Y(f) \times_Y^{\rm fs} Y' = Y'(f').\]

    Replace $f$ by the integral, saturated $f'$. Theorem \ref{thm:firmlocus} identifies the set-theoretic image $f(X) \subseteq Y$ as the firm locus. By Chevalley's theorem \cite[\href{https://stacks.math.columbia.edu/tag/054K}{Tag 054K}]{stacks-project}, $f(X)$ is locally constructible. If $f$ is assumed log flat, it is flat (since it is also integral) by \cite[Theorem IV.4.3.5]{ogusloggeom} and $f(X) \subseteq Y$ is open \cite[\href{https://stacks.math.columbia.edu/tag/01UA}{Tag 01UA}]{stacks-project}.
\end{proof}

\begin{example}
    Let $f : X \to Y$ be the inclusion $\vec 0 \in \Aff^1$ of the origin. As $f$ is strict, Theorem \ref{thm:firmlocus} shows a map $S \to Y$ is $f$-firm if and only if it set-theoretically factors through the origin $\vec 0 \in \Aff^1$. But this includes all closed subschemes $Z \subseteq \Aff^1$ supported at the origin, for example $Z = V(x^n)$ for any $n \in \NN$. Firmness is defined in terms of geometric points, and those of $Z$ are the same as the geometric points of the origin $\vec 0 \in \Aff^1$. As in Lemma \ref{lem:constructiblesetsasformalschemes}, the $f$-firm locus is the formal scheme 
    \[
        \colim_{\sqrt{I} = (x)} V(I) = \Spf k \adj{x} \qquad \subseteq \Aff^1.
    \]
\end{example}

\begin{lemma}\label{lem:firmlocusfactorthroughblowupaltn}
    Let $f : X \to Y$ be a morphism locally of finite presentation which factors through a log alteration $Y' \to Y$. The firm locus $Y(f)$ also factors uniquely through $Y' \subseteq Y$.
\end{lemma}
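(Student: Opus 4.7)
The plan is to show that every $f$-firm map $p \colon S \to Y$ admits a unique lift to a map $S \to Y'$ over $Y$. Uniqueness is automatic because log alterations are monomorphisms in the category of fs log algebraic stacks: log blowups are monomorphisms by the universal property of blowing up a log ideal, and root stacks are monomorphisms in the fs setting since $n$-th roots of an element of an fs characteristic monoid are unique when they exist.

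For existence, write $f = g \circ h$ with $h \colon X \to Y'$ and $g \colon Y' \to Y$. The fs pullback $X \times_Y^{\mathsf{fs}} S \to S$ then factors through $Y' \times_Y^{\mathsf{fs}} S \to S$. Lemma~\ref{lem:firmbasics}\ref{firmbasics2} gives that the composite is surjective, so the second arrow $Y' \times_Y^{\mathsf{fs}} S \to S$ is surjective. The desired lift $S \to Y'$ amounts to a section of this map, so it suffices to establish that $Y' \times_Y^{\mathsf{fs}} S \to S$ is an isomorphism.

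I check this pointwise. At a strict geometric point $\bar s \to S$ with image $\bar y \to Y$, firmness supplies $\bar w \to X \times_Y^{\mathsf{fs}} \bar s$ and a retraction $\bar M_{Y, \bar y} \to \bar M_{X, \bar w} \to \bar M_{Y, \bar y}$ of the identity. Its first arrow factors through $\bar M_{Y', \bar y'}$, where $\bar y' \coloneqq h(\bar w)$, inducing a retraction $\bar M_{Y, \bar y} \hookrightarrow \bar M_{Y', \bar y'} \to \bar M_{Y, \bar y}$ of the identity. Because $g$ is a log alteration---locally a subdivision of cones possibly composed with a root-lattice refinement---the injection $\bar M_{Y, \bar y} \hookrightarrow \bar M_{Y', \bar y'}$ admits no nontrivial section splitting the identity (in a subdivision, the sub-cone strictly enlarges the dual monoid unless it equals the ambient cone; in a root refinement, the image sits as a finite-index subgroup unless the index is trivial). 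Hence this injection is an equality at every geometric point of $S$, so $Y' \times_Y^{\mathsf{fs}} S \to S$ is strict.

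The main obstacle is converting pointwise strictness into a global isomorphism. Strictness everywhere on characteristic monoids means that the induced map of Artin fans is the identity after base change to $S$, so the local subdivisions and root-lattice refinements defining $g$ become trivial on $Y' \times_Y^{\mathsf{fs}} S$; combined with the surjectivity just established and with the monomorphism property from uniqueness, this yields the desired isomorphism $Y' \times_Y^{\mathsf{fs}} S \cong S$ and hence the factorization $S \to Y'$.
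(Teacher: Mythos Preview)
Your proof has two issues, one minor and one substantive.

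The minor one: firmness (Definition~\ref{def:firm}) provides a retraction of $\bar M_{S,\bar s} \to \bar M_{X \times_Y^{\mathsf{fs}} S,\bar w}$, not of $\bar M_{Y,\bar y} \to \bar M_{X,\bar w}$. These coincide only after you base change to $Y = S$, which you did not do. This is easily repaired.

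The substantive gap is the final paragraph, which you yourself flag as ``the main obstacle'' and then do not resolve. Your pointwise argument, once corrected, produces for each geometric point $\bar s$ of $S$ a \emph{single} point $\bar y'$ in the fibre of $Y' \times_Y^{\mathsf{fs}} S$ over $\bar s$ at which the characteristic monoid map is an isomorphism. It does not show this at every point of $Y' \times_Y^{\mathsf{fs}} S$, so ``strictness everywhere on characteristic monoids'' is not established. And even granting strictness everywhere, passing from the pointwise statement to the global isomorphism is exactly the step that requires a genuine argument; the phrases ``the induced map of Artin fans is the identity after base change'' and ``the local subdivisions \ldots become trivial'' are restatements of what must be proved, not proofs. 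What is missing is a mechanism to propagate the pointwise lift at $\bar s$ to an honest morphism $S \to Y'$. The paper supplies precisely this: reduce to $S$ strictly henselian via Lemma~\ref{lem:strhenselianlifting}, so that $\af S = \af{\bar s}$, and then the section $\af{\bar s} \to \af{X_{\bar s}}$ coming from firmness (Proposition~\ref{prop:logpointfirm=AFsection}) composes to give $S \to \af S = \af{\bar s} \to \scr B$, hence $S \to Y'$. Without this reduction, or an equivalent device, your argument does not close.
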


\begin{proof}
    Let $S \to Y$ be a map firm along $f$. The problem is local in $S, Y, X$. Assume $S$ is strictly henselian by Lemma \ref{lem:strhenselianlifting} and then base change to assume $Y = S$. Replace $X$ by a strict \'etale cover to assume it is a disjoint union of atomics. 

    As $Y$ is strictly henselian, we have a (f.s.\ and ordinary) pullback square 
    \[
    \begin{tikzcd}
        Y' \ar[r] \ar[d] \lpbstrict     &\scr B \ar[d]         \\
        Y \ar[r]       &\scr C
    \end{tikzcd}
    \]
    with $\scr B \to \scr C$ a log alteration of Artin fans by Definition \ref{defn:logalteration}. We need to show that $S \to Y$ factors through $Y'$.

    The Artin fan is then the same as that of its closed point $\bar s \to S$ as in Example \ref{ex:strhenselianAF}: 
    \[\af{S} = \af{\bar s}.\]

    By Lemma \ref{lem:firmbasics}\ref{firmbasics iv}, $S$ being firm implies that the log geometric point $\bar s \to S \to Y$ is firm.
    By Proposition \ref{prop:logpointfirm=AFsection}, the log geometric point $\bar s \to S \to Y$ is firm if and only if the map $\af{X_{\bar s}} \to \af{\bar s}$ admits a section. We may have $\af{X_{\bar s}} \neq \af{X}$ as in Example \ref{ex:AFbasechangecounterex}. The strict map $X_{\bar s} \to X$ nevertheless induces $\af{X_{\bar s}} \to \af X$ by Lemma \ref{lem:strictAFfunctorial} and 
    the factorization through $\af{X_{\bar s}}$ induces a dashed arrow 
    \[
    \begin{tikzcd}
       &         &       &\scr B \ar[d]       \\
   S \ar[r] &  \af{\bar s} = \af{S} \ar[r,"="] \ar[urr, dashed, bend left=15]        &\af{Y} \ar[r]         &\scr C,
    \end{tikzcd}
    \]
    where the map $\af{Y} \to \scr C$ exists because $Y \to \scr C$ is strict, hence factors via $\af{Y}$. But then $S \to Y \to \scr C$ factors through $\scr B$ and so $S \to Y$ factors through $Y' \to Y$.
\end{proof}

\begin{example}[{\cite[Remark after Proposition 2.6]{logetale2nakayama}}]
    We give an example due to C.\ Nakayama of a sequence $X \to Y' \to Y$ where both $X \to Y'$ and $X \to Y$ are log blowups but $Y' \to Y$ is not a log modification in our sense. Lemma \ref{lem:firmlocusfactorthroughblowupaltn} fails for the sequence $X \to Y' \to Y$. 
    
    Let $k = \bar k$ be an algebraically closed field for simplicity. Let $Y = \Spec k [x, y]/(x^2, y^2)$ and endow it with the strict log structure from its natural inclusion in $\Aff^2$. Let $X \to Y$ be the blowup at $(x, y)$. Then $X \to Y$ factors through the reduced closed point 
    \[Y' \coloneqq \Spec k \subseteq \Spec k[x, y]/(x^2, y^2)\]
    because on $X$, the equations
    \[xy = x^2(y/x) = 0, \qquad xy = y^2(x/y) = 0\]
    on each chart of the blowup force $xy = 0$. The sequence $X \to Y' \to Y$ fits the above description. The identity $Y \longequals Y$ is firm along $p : Y' \to Y$ because $Y', Y$ have the same set of log geometric points, so the firm locus $Y(p) = Y$ does not factor through $Y'$.
\end{example}

Recall that a morphism $X \to Y$ of log schemes is \emph{log reduced} if $X \to \Log_Y$ has reduced geometric fibers.

\begin{remark}\label{rmk:relAFfirmness}
    Let $f : X \to Y$ be a log flat, log reduced morphism of finite presentation. By \cite{RdC}, there exists a \emph{relative Artin fan} $\af{X/Y}$ factoring $X \to \af{X/Y} \to Y$. 
    
    In good situations, the proof of Theorem \ref{thm:thibault} applies to show $X \to \af{X/Y}$ is surjective. Then $X$ and $\af{X/Y}$ have the same images in $Y$. If $f$ is integral and saturated, the firm locus can then equally be characterized as the image of $\af{X/Y} \to Y$ by Theorem \ref{thm:firmlocus}.
\end{remark}

Using the idea of the relative Artin fan, we prove a technical lemma for later. 

\begin{lemma}\label{lem:relAFbasechange}
    Let $X \to Y$ be a log flat, log reduced morphism of finite presentation and $\bar y \to Y$ a strict geometric point. Write $X_{\bar y} = X \times_Y^{\rm fs} \bar y$ for the f.s.\ fiber.
    Then the fibers of the Artin fans of $\af{X}$ and $\af{X_{\bar y}}$ over $\bar y$ coincide. That is, there exists an isomorphism $\af{X_{\bar y}} \times_{\af{\bar y}} \bar y \longsimeq \af{X} \times_{\af{Y}} \bar y$ which makes the triangle
    \[
    \begin{tikzcd}
        X_{\bar y} \arrow[d] \ar[dr, bend left = 15] & \\
        \af{X_{\bar y}} \times_{\af{\bar y}} \bar y \arrow[r, "\sim"] & \af X \times_{\af Y} \bar y
    \end{tikzcd}
    \]
    commute.
\end{lemma}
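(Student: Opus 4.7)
The plan is to reduce strict-\'etale locally to an explicit monoid picture, and then observe that the strata of $\af X$ lying over the image of $\bar y$ coincide with the strata actually hit by $X_{\bar y}$, which are by definition the strata making up $\af{X_{\bar y}}$. The hypotheses of log flatness, log reducedness, and finite presentation are ambient context from Remark~\ref{rmk:relAFfirmness} but are not used directly in the argument sketched below; only the strictness of $\bar y \to Y$ is needed.

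First, the question is strict-\'etale local on $Y$, so I replace $Y$ by its strict henselization at $\bar y$. This gives $\af Y = \af P$ with $P \coloneqq \bar M_{Y,\bar y}$ and puts the image of $\bar y \to Y \to \af Y$ at the origin of $\af P$ (Remark~\ref{rmk:originclosedpoint}). Strictness of $\bar y \to Y$ yields $\bar M_{\bar y} = P$, so $\af{\bar y} \to \af Y$ is the identity of $\af P$. I further pass to a strict \'etale cover of $X$ by atomics, with distinguished geometric points $\bar x$ and monoids $Q \coloneqq \bar M_{X,\bar x}$, so that locally $\af X = \af Q$. Since $\bar y \to Y$ is strict, its fs base change $X_{\bar y} \to X$ is strict too, hence $\bar M_{X_{\bar y},\bar x} = Q$ at every geometric point $\bar x$ of $X_{\bar y}$.

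Locally, $\af{X_{\bar y}}$ is then the sub-Artin fan of $\af Q = \af X$ parametrizing exactly those strata hit by the scheme-theoretic fiber $X_{\bar y} \subseteq X$. Because $X_{\bar y}$ is the locus of $X$ mapping to $\bar y$, and $\bar y \to \af Y$ factors through the origin of $\af P$, a stratum of $\af Q$ is hit by $X_{\bar y}$ if and only if its image in $\af P$ is the origin---exactly the condition cut out by $\af X \times_{\af Y} \bar y$. This yields the local identification
\[
\af{X_{\bar y}} \times_{\af{\bar y}} \bar y \;=\; \af Q \times_{\af P} \bar y \;=\; \af X \times_{\af Y} \bar y,
\]
and strict-\'etale descent glues this to the asserted global isomorphism. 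Commutativity of the triangle with $X_{\bar y}$ is automatic: both arrows to the pullback are induced from the canonical strict factorization $X_{\bar y} \to \af{X_{\bar y}} \to \af X$ together with $X_{\bar y} \to \bar y$.

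The main obstacle is that Artin fans do not commute with fs base change (Example~\ref{ex:AFbasechangecounterex}): the natural strict monomorphism $\af{X_{\bar y}} \hookrightarrow \af X$ need not be an isomorphism, and the naive guess $\af{X_{\bar y}} \stackrel{?}{=} \af X \times_{\af Y} \af{\bar y}$ fails. What makes the lemma go through is that restricting all the way down to $\bar y$---rather than stopping at $\af{\bar y}$---kills precisely the strata of $\af X$ that are missing from $\af{X_{\bar y}}$, namely those mapping to non-origin points of $\af P$. Verifying this globally, not merely on each atomic chart, is the technical heart of the proof.
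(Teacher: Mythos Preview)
Your proposal contains a genuine gap, and in fact the statement you sketch is false without the hypotheses you dismissed. The claim ``a stratum of $\af Q$ is hit by $X_{\bar y}$ if and only if its image in $\af P$ is the origin'' conflates two different things: whether a stratum of $\af X$ lies over the origin of $\af Y$ (a purely combinatorial condition on characteristic monoids), and whether that stratum of $X$ actually meets the scheme-theoretic fibre $X_{\bar y}$ (a condition on the underlying map of schemes). These can disagree. Example~\ref{ex:AFbasechangecounterex}, which you cite, is already a counterexample to the lemma with the hypotheses removed: there the stratum $D(y)_\eta \subseteq X$ has characteristic monoid $\NN\log x$ mapping isomorphically to $\bar M_Y = \NN$, so it lies over the origin of $\af P$; yet $D(y) \cap X_s = \varnothing$ because that stratum only exists over the generic point. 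Consequently $\af X \times_{\af Y} s$ contains a stratum that $\af{X_s} \times_{\af s} s$ does not, and the asserted isomorphism fails.

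The paper's proof uses the hypotheses in an essential way. Log flatness, log reducedness, and finite presentation make $X \to \Log_Y$ flat of finite presentation with reduced geometric fibres, so Romagny's theorem produces an initial factorization $X \to \scr C \to \Log_Y$ with $\scr C$ \'etale over $\Log_Y$, and likewise $X_{\bar y} \to \scr B \to \Log_{\bar y}$. Flatness and finite presentation force $X \to \scr C$ and $X_{\bar y} \to \scr B$ to be open, hence surjective by the minimality of $\scr B, \scr C$; connectedness of fibres then identifies $\scr B$ with $\scr C \times_{\Log_Y} \Log_{\bar y}$. The surjectivity step is exactly what your argument lacks and what fails in the example above. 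Put differently, the role of the hypotheses is to guarantee that every stratum of the relative Artin fan over $\bar y$ is actually inhabited by a point of the scheme-theoretic fibre, and there is no way to get this from strictness of $\bar y \to Y$ alone.
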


\begin{proof}
    Any finitely presented, flat morphism $W \to Z$ with reduced geometric fibers admits an initial factorization through an \'etale $Z$-algebraic space by \cite[Theorem 2.5.2]{romagnypi0}. The morphisms $X_{\bar y} \to \Log_{\bar y}$, $X \to \Log_Y$ meet these criteria. Write $\scr B, \scr C$ for their initial factorizations, which are the \emph{relative Artin fans} \cite{RdC} of $X_{\bar y} \to \bar y$ and $X \to Y$. 
    
    The maps $X_{\bar y} \to \scr B$, $X \to \scr C$ have connected geometric fibers by loc.\ cit. We have a diagram 
    \[
    \begin{tikzcd}
        X_{\bar y} \ar[r] \ar[d]      &X \ar[d]     \\
        \scr B \ar[r, dashed] \ar[d]      &\scr C \ar[d]       \\
        \Log_{\bar y}  \ar[r] \ar[d] \lpbstrict       &\Log_Y \ar[d]         \\
        \bar y \ar[r]          &Y,
    \end{tikzcd}    
    \]
    where the dashed arrow exists by the initialness of $\scr B$ and the fact that $\scr C_{\bar y} \coloneqq \scr C \times_{\Log_Y} \Log_{\bar y}$ is \'etale and representable over $\Log_{\bar y}$.

    The morphisms $X_{\bar y} \to \scr B$ and $X \to \scr C$ are flat of finite presentation since $X_{\bar y} \to \Log_{\bar y}$ and $X \to \Log_Y$ are. In particular, they are open and we may write $U \subseteq \scr B, W \subseteq \scr C$ for their open, nonempty images. Since $X \to \Log_{\bar y}$ factors through the \'etale map $U \to \Log_{\bar y}$, we have $U=\scr B$ by the initialness of $\scr B$, so $X_{\bar y} \to \scr B$ is surjective. Likewise, $X \to \scr C$ is surjective. The maps 
    \[X_{\bar y} \to \scr B, \qquad X_{\bar y} \to \scr B \to \scr C_{\bar y}\]
    are surjective morphisms with connected geometric fibers. By \cite[Proposition 3.13]{loghochschild}, the map $\scr B \to \scr C_{\bar y}$ is then an isomorphism.
\end{proof}

\section{Applications to finding rational points}\label{s:appnsrationalpoints}

This section connects our work with Abramovich's theory of firmaments of toroidal morphisms in \cite[\S2.4]{abramovich09ClayNotes}. 
We give some background on toroidal embeddings in Section~\ref{Toroidal-maps-log-smoothness}, and review the cone complex $\Sigma_X$ associated to a toroidal embedding $X$ in Section~\ref{ss:AFvsconecomplexes}.

A toroidal morphism of toroidal embeddings $f : (X,U_X) \to (Y,U_Y)$ induces functorially a compatible map of cone complexes $\Sigma_X \to \Sigma_Y$. We equip toroidal embeddings with their natural log structures as in Section~\ref{Toroidal-maps-log-smoothness}.

In that case, Abramovich attaches to $f$ a combinatorial object called the \emph{firmament} $\Gamma_f \subseteq \Sigma_Y(\NN)$ of $f$ consisting of integral points of the cone complex $\Sigma_Y$ of $Y$ (cf.\ Definition \ref{def:firmamentcontactorder}). Let $\phi : \spec R \to Y$ be a morphism from a discrete valuation ring $R$ that sends the open point of $\spec R$ to the dense open  $U_Y \subseteq Y$. Abramovich defines an element ``$n_\phi$'' in $\Sigma_Y(\NN)$ (cf.\ 
Proposition \ref{prop:firmamentssameasAbramovich}). He then claims that to lift an $R$-point along $f$, it is necessary and sometimes sufficient that the contact order $n_\phi$ of $\Spec R$ in $Y$ lies in the firmament $\Gamma_f \subseteq \Sigma_Y(\NN)$\footnote{Abramovich says a point $\Spec R \to Y$ is ``firm'' if its contact order lies in the firmament in $\Sigma_Y$. This inspired our use of the term, but we simply say the point ``lies in the firmament'' to avoid confusion.}\footnote{The firmament $\Gamma_f$ defines a unique set $M$ for the toroidal boundary such that a point lies in the firmament if an only if it is an M-point in the sense of \cite{moerman}.}. 

This section is organized as follows. Section \ref{ss:firmamentsvsabramovich} introduces the firmament and compares it with the notion of Abramovich \cite{abramovich09ClayNotes}. We compare the firmament with our notion of firmness in 
Section \ref{ss:abramovichrelation}. In Section~\ref{lifting_rat}, we prove the claim from \cite[\S2.4.16]{abramovich09ClayNotes} concerning the lifting of points lying in the firmament mentioned above using preliminary results established earlier in this paper. In Section \ref{ss:stablemaps} we compare the firmament and type for log stable maps.

The set  $\Sigma_X(\NN)$ of \emph{integral points of the cone complex} $\Sigma_X$ can be defined (Definition \ref{defn:integralpoints}) for a general log scheme $X$ as 
\[
\Sigma_X(\NN) \coloneqq \Hom(\af{}, \af{X}).
\]
Proposition \ref{prop:functoriallyidentifyingconecomplexes} shows this set $\Sigma_X(\NN)$ is functorial in $X$ for morphisms of log schemes $f : X \to Y$ in a way that reproduces the functoriality of cone complexes of toroidal embeddings. If $f : X \to Y$ is a log blowup, the sets of integral points of cone complexes are identified 
\[
\Sigma_f : \Sigma_X(\NN) \longsimeq \Sigma_Y(\NN)
\]
in Corollary \ref{cor:logblowupsameintegralconecomplex}. 

\begin{example}
    Let $b : B = Bl_{\vec 0} \Aff^2 \to \Aff^2$ be the blow-up of $\Aff^2$ at the origin. The cone complex of $\Aff^2$ consists of one cone $\RR_{\geq 0}^2$, with set of integral points $\NN^2$. The morphism $b$ is toric, and hence toroidal, corresponding to the subdivision of $\RR_{\geq 0}^2$ into two cones along the diagonal. The map of cone complexes and the induced identification of sets of integral points is depicted in Figure \ref{fig:blowupA2}.

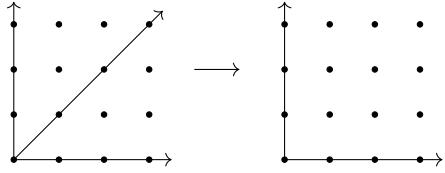
\begin{figure}[h]  
    \centering
    \begin{tikzpicture}[scale=.6]
        \draw[->] (0, 0) to (3.5, 0);
        \draw[->] (0, 0) to (0, 3.5);
        \foreach \x in {0,1,2,3}{
            \foreach \y in {0,1,2,3}{
                \draw[fill=black] (\x,\y) circle (0.06);
            }
        }

        \draw[->] (-2, 2) to (-1, 2);

        \begin{scope}[shift = {(-6, 0)}]
            \draw[->] (0, 0) to (3.5, 0);
            \draw[->] (0, 0) to (0, 3.5);
            \draw[->] (0, 0) to (3.3, 3.3);
            \foreach \x in {0,1,2,3}{
                \foreach \y in {0,1,2,3}{
                    \draw[fill=black] (\x,\y) circle (0.06);
                }
            }
        \end{scope}
    \end{tikzpicture}
    \caption{The map on cone complexes corresponding to the log blowup of $\Aff^2$ at the origin. Remark that source and target have the same set $\NN^2$ of integral points. }
    \label{fig:blowupA2}
\end{figure}

    Regarding $b$ as a morphism of log schemes, it is pulled back from the subdivision of Artin fans $\af{B} \to \af{\Aff^2} = \af{}^2$ which may be obtained by quotienting $b$ by $\GG_m^2$ depicted in Figure \ref{fig:blowupA2}. As $\af{B} \to \af{}^2$ is a subdivision, source and target have the same set of integral points 
    \[\Sigma_B(\NN) = \Sigma_{\Aff^2}(\NN) = \NN^2\]
    divided among a different set of cones.
\end{example}

\subsection{The firmaments of Abramovich}\label{ss:firmamentsvsabramovich}

\begin{definition}\label{def:firmamentcontactorder}
    For a morphism $f : X \to Y$ of log schemes (or log algebraic stacks), define the \emph{firmament} $\Gamma_f \subseteq \Sigma_Y(\NN)$ as the set-theoretic image $\Gamma_f \coloneqq \Sigma_f(\Sigma_X(\NN))$ of the morphism of cone complexes 
    \[
        \Sigma_f : \Sigma_X(\NN) \to \Sigma_Y(\NN).
    \]
    If $p : S \to Y$, $f : X \to Y$ are maps of log schemes and $S$ has log structure of rank at most one everywhere, we say that $p$ \emph{lies in the firmament of }$f$ if $\Gamma_p \subseteq \Gamma_f$ as subsets of $\Sigma_Y(\NN)$. 
\end{definition}

If $f : X \to Y$ is a flat morphism of toroidal embeddings, this is the definition of the ``base (toroidal) firmament'' in \cite[Definition 2.4.13]{abramovich09ClayNotes}; this is shown to coincide with the valuative definition of firmaments in the next section \cite[\S 2.4.14]{abramovich09ClayNotes}.

If $X' \to X$ is a morphism inducing a surjection $\Sigma_{X'}(\NN) \to \Sigma_X(\NN)$ on cone complexes, $f$ and the composite $X' \to X \to Y$ have the same firmament. We can thus localize freely in the source and target (cf.\ Proposition \ref{lem:artinconesfactorthroughartincones}). Firmaments are $\NN$-sets (they are closed under scaling by $\NN$) but they need not be sublattices or submonoids in the cones of $\Sigma_Y$. 

\begin{example}\label{Disjoint_union}
    For each $r \in \NN$, we get a map 
    \[\bra{r} : \Aff^1 \to \Aff^1; \qquad t \mapsto t^r.\]
    Let $f : X \to Y$ be the disjoint union of two of these maps, say for $r = 2$ and $r = 3$:
    \[\bra{2} \sqcup \bra{3} : \Aff^1 \sqcup \Aff^1 \to \Aff^1.\]
    The map on integral points of cone complexes is the disjoint union 
    \[\NN \sqcup \NN \to \NN\]
    of the maps defined by $1 \mapsto 2$ and $1 \mapsto 3$. The resulting firmament is the union 
    \[
    \Gamma_f = 2 \NN \cup 3 \NN \qquad \subseteq \NN = \Sigma_Y,
    \]
    which is closed under scaling by $\NN$ but lacks the element $5$ and so is not a submonoid of $\NN$. 
\end{example}

\begin{example}[{\cite[Example (9), \S 2.4.15]{abramovich09ClayNotes}}]\label{ex:firmamentrootcone}
The inclusion of monoids
\[Q_1 \coloneqq \left\{(a, b) \in \NN^2 \, \middle| \, 2 | (a + b)\right\} \qquad \subseteq \NN^2\]
in Figure \ref{fig:firmamentrootcone} leads to an equivariant morphism of affine toric varieties 
\[
f : \Spec \ZZ[s, t, \sqrt{st}] \to \Spec \ZZ[s, t].
\]
Its firmament is simply $Q_1 \subseteq \NN^2$. More generally, root stacks and Kummer maps $f : X \to Y$ have associated inclusions $\Sigma_X \subseteq \Sigma_Y$ of cone complexes and their firmaments are precisely $\Sigma_X \subseteq \Sigma_Y$.

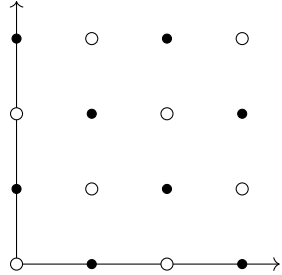
\begin{figure}[h]
    \centering
    \begin{tikzpicture}
        \draw[->] (0, 0) to (3.5, 0);
        \draw[->] (0, 0) to (0, 3.5);
        \foreach \x in {0,1,2,3}{
            \foreach \y in {0,1,2,3}{
                \pgfmathparse{int(\x+\y)}
                \ifodd\pgfmathresult
                    \draw[fill=black] (\x,\y) circle (0.06);
                \else
                    \draw[fill=white] (\x,\y) circle (0.08);
                \fi
            }
        }
    \end{tikzpicture}
    \caption{Depicted is the set $\Hom(\af{}, \af{}^2) = \Sigma_{\Aff^2}$ of possible multiplicites along the $x-$ and $y-$axes in $\Aff^2$. The white
    circles are the points of the firmament in Example \ref{ex:firmamentrootcone}. The white circles
    are also the monoid $Q_1$ in Example \ref{ex:blowupnotenoughtomakefirm}. 
    }
    \label{fig:firmamentrootcone}
\end{figure}
\end{example}

The firmament $\Gamma_f \subseteq \Sigma_Y$ is ``logarithmically birational.''

\begin{corollary}\label{cor:firmamentblowup}
    Let $f : X \to Y$ be a map of log schemes. Given a commutative diagram 
    \[
    \begin{tikzcd}
        \tilde X \ar[r, "{\tilde f}"] \ar[d]       &\tilde Y \ar[d]       \\
        X \ar[r, "f", swap]       &Y
    \end{tikzcd}
    \]
    with vertical maps log blowups, the firmaments $\Gamma_f = \Gamma_{\tilde f}$ are identified under the isomorphism $\Sigma_{\tilde Y}(\NN) = \Sigma_Y(\NN)$ of Corollary \ref{cor:logblowupsameintegralconecomplex}. 
\end{corollary}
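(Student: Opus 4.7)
The plan is to unpack the definition of the firmament and apply the functoriality of integral points of cone complexes together with Corollary \ref{cor:logblowupsameintegralconecomplex}. First, by applying the functor $\Sigma_{(-)}(\NN) = \Hom(\af{}, \af{(-)})$ to the given commutative square of log schemes, I obtain a commutative square
\[
\begin{tikzcd}
    \Sigma_{\tilde X}(\NN) \ar[r, "\Sigma_{\tilde f}"] \ar[d, "\alpha"']   &\Sigma_{\tilde Y}(\NN) \ar[d, "\beta"]       \\
    \Sigma_{X}(\NN) \ar[r, "\Sigma_{f}"']                                  &\Sigma_{Y}(\NN)
\end{tikzcd}
\]
of sets of integral points; commutativity is the naturality statement from Proposition \ref{prop:functoriallyidentifyingconecomplexes}.

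Next, since the two vertical arrows arise from log blowups, Corollary \ref{cor:logblowupsameintegralconecomplex} ensures that $\alpha$ and $\beta$ are bijections. Taking images along the horizontal maps and using commutativity then gives
\[
\beta\bigl(\Gamma_{\tilde f}\bigr) \;=\; \beta\bigl(\Sigma_{\tilde f}(\Sigma_{\tilde X}(\NN))\bigr) \;=\; \Sigma_f\bigl(\alpha(\Sigma_{\tilde X}(\NN))\bigr) \;=\; \Sigma_f\bigl(\Sigma_X(\NN)\bigr) \;=\; \Gamma_f,
\]
where the third equality uses surjectivity of $\alpha$. Hence under the identification $\beta \colon \Sigma_{\tilde Y}(\NN) \longsimeq \Sigma_Y(\NN)$, the firmaments $\Gamma_{\tilde f}$ and $\Gamma_f$ correspond.

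There is essentially no obstacle here beyond citing the right naturality: the content of the corollary is entirely absorbed into Corollary \ref{cor:logblowupsameintegralconecomplex} (log blowups do not alter the set of integral points of the cone complex) and the functoriality of $\Sigma_{(-)}(\NN)$ for morphisms of log schemes established in Proposition \ref{prop:functoriallyidentifyingconecomplexes}. The only mild care required is to verify that Corollary \ref{cor:logblowupsameintegralconecomplex} applies to both vertical arrows simultaneously and that the identifications it produces are indeed compatible with the horizontal maps, which is exactly the commutativity of the diagram above.
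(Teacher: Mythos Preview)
Your argument is correct and is exactly the routine verification the paper has in mind; the paper itself simply writes ``Omitted.'' One small citation fix: the functoriality of $X \mapsto \Sigma_X(\NN)$ (and hence the commutativity of your square) is Lemma~\ref{lem:conecomplexfunctorial}, not Proposition~\ref{prop:functoriallyidentifyingconecomplexes}, which is about compatibility with the classical cone complex of toroidal embeddings.
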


\begin{proof}
    Omitted. 
\end{proof}

\begin{remark}
    We will mostly be interested in the case of maps $p : S \to Y$ from a log scheme $S$ with log structure of rank at most one everywhere.
    Examples include: 
    \begin{itemize}
        \item The spectrum of a discrete valuation ring $S = \spec R$ with log structure $M_S \coloneqq R \setminus \{0\}$. This is the divisorial log structure at the closed point of $S$. 
        \item Log points with rank-one log structure $S = \spec k$ with $\bar M_S = \NN$. 
        \item The spectrum $S = \spec R$ of Dedekind rings $R$ such as $\ZZ$ with its divisorial log structure at a finite set of marked points
        \footnote{
        One can also take $M_S \coloneqq R \setminus \{0\}$ here, which is the limit of all these log structures at finite sets of primes. This is not a f.s.\ or even quasicoherent log structure. Nevertheless, its firmament is well defined. 
        }.
    \end{itemize}
    
    Suppose $p : S \to Y$ is a morphism from a discrete valuation ring $S = \spec R$ with the above log structure $M_S = R \setminus \{0\}$. The inclusion of the closed point $s \in S$ induces an isomorphism of Artin fans $\af{s} = \af{S}$. The 
    firmaments of $p : S \to Y$ and of the composite $s \subseteq S \to Y$ are the same.
    
    More generally, suppose $S$ has a log structure of rank at most one everywhere and write $S_{=1} \subseteq S$ for the reduced closed subscheme on which the log structure is supported. The Artin fan $\af{S_{=1}} = \bigsqcup \af{}$ is a disjoint union of copies of $\af{}$ indexed by $\pi_0(S_{=1})$. The Artin fan $\af{S}$ of $S$ is the quotient of this disjoint union by identifying some of the open points, together with a disjoint summand $\bigsqcup \pt$ of points indexed by connected components of $S$ with trivial log structure. Unless $S_{=1} = \varnothing$ is empty, the maps of cone complexes associated to $p : S \to Y$ and $S_{=1} \subseteq S \to Y$ have the same image and hence the same 
    firmament.

    As $S_{=1}$ supports a locally constant, rank-one characteristic monoid $\bar M_{S_{=1}}$, the map $S_{=1} \to Y$ corresponds to a map to the evaluation stack $\pra{S_{=1}}^\circ \to \wedge Y$ \cite{evaluationspacefamiliesofstandardlogpoints}. By taking topological connected components $\pi_0(-)$ of the map 
    \[\pra{S_{=1}}^\circ \to \wedge Y \to \wedge \af{Y},\]
    we obtain another equivalent definition of the firmament in this case. See Section \ref{ss:stablemaps} for the example of marked points on log prestable curves. 
\end{remark}

Let $X$ be a toroidal embedding and $R$ a discrete valuation ring with a map $\phi : \spec R \to X$ which sends the generic point to the open interior $U_X \subseteq X$. To these data, Abramovich associates a point $n_\phi \in N_\sigma$ in the monoid $N_\sigma$ dual to a local chart of $X$ at the image of the closed point of $\spec R$ \cite[\S 2.4.9]{abramovich09ClayNotes}. By composing with the map $N_\sigma \to \Sigma_X$, we can view $n_\phi$ as an integral point in $\Sigma_X(\NN)$. 

Such a morphism $\phi$ can be uniquely promoted to a morphism of log schemes $\varphi : \spec R \to X$ with $\varphi^\circ = \phi$ by giving $\spec R$ the log structure 
\begin{equation}\label{eqn:DVRlogstr}
    M_{\spec R} \coloneqq R \setminus \{0\}.
\end{equation}
This is because the log structure of $X$ comes from a divisor which pulls back to the closed point of $\spec R$ by our assumption. (Any log map $\spec R \to X$ from a discrete valuation ring with log structure \eqref{eqn:DVRlogstr} must conversely send the generic point into the interior of $X$.)

\begin{proposition}\label{prop:firmamentssameasAbramovich}
    Use notation as above. The integral point $n_\phi \in \Sigma_X(\NN)$ defined by Abramovich \cite[\S 2.4.9]{abramovich09ClayNotes} is the unique generator of the firmament
    $\Gamma_\varphi \subseteq \Sigma_X(\NN)$ as an $\NN$-set. They determine each other uniquely. 
\end{proposition}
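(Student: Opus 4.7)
The plan is to identify both $n_\phi$ and the unique generator of $\Gamma_\varphi$ with the same monoid homomorphism $\bar M_{X,x} \to \NN$ that $\varphi$ induces on characteristic sheaves at the image $x \in X$ of the closed point of $\spec R$.

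First I would compute $\Sigma_{\spec R}(\NN)$. Because $R$ is a DVR with $M_{\spec R} = R \setminus \{0\}$, its characteristic sheaf is $\NN$ at the closed point (via the valuation) and trivial at the generic point, so $\af{\spec R} \cong \af{}$. Hence
\[
\Sigma_{\spec R}(\NN) = \Hom(\af{},\af{}) = \Hom_{\Mons}(\NN,\NN) = \NN,
\]
with $1$ corresponding to the identity. Since $\Sigma_\varphi$ is $\NN$-equivariant (post-composition with $\af{\varphi}$ commutes with pre-composition by self-maps of $\af{}$), the firmament is
\[
\Gamma_\varphi = \Sigma_\varphi(\NN) = \NN \cdot \Sigma_\varphi(1) \subseteq \Sigma_X(\NN),
\]
exhibiting $\Gamma_\varphi$ as singly generated as an $\NN$-set by $\Sigma_\varphi(1)$. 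Uniqueness of this generator is immediate: if $\Gamma_\varphi = \NN \cdot m$, then $m = k \Sigma_\varphi(1)$ and $\Sigma_\varphi(1) = \ell m$ for some $k, \ell \in \NN$, forcing either $\Sigma_\varphi(1) = m = 0$ or $k\ell = 1$ and $m = \Sigma_\varphi(1)$.

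The heart of the argument is identifying $\Sigma_\varphi(1) \in \Hom(\af{}, \af{X})$ with Abramovich's $n_\phi$. By construction $\Sigma_\varphi(1)$ is the composite $\af{} \cong \af{\spec R} \to \af{X}$. By Lemma \ref{lem:artinconesfactorthroughartincones}, strict étale-locally on $X$ near a geometric point $\bar x$ over $x$, this map factors through the Artin cone $\af{\bar M_{X,\bar x}} \to \af{X}$; under the duality $\Hom(\af{P}, \af{Q}) = \Hom_{\Mons}(Q,P)$ the resulting map $\af{} \to \af{\bar M_{X,\bar x}}$ corresponds to the monoid map $\bar M_{X,\bar x} \to \NN$ induced by $\varphi^\flat$ on characteristic sheaves. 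On the other hand, Abramovich's $n_\phi$ is by definition the composite of $\phi^\flat : M_{X,x} \to R \setminus \{0\}$ with the valuation $R \setminus \{0\} \twoheadrightarrow \NN$, which factors through $\bar M_{X,x} \to \NN$ and agrees with the previous map because the log structure $M_{\spec R} = R \setminus \{0\}$ sharpens to $\NN$ via the valuation.

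The main obstacle is purely bookkeeping: keeping straight the direction of arrows between monoids and their Artin cones, and verifying that Abramovich's recipe, originally stated using an affine toric chart at $x$, matches the stalk-level characteristic monoid map extracted here. Both recipes become manifestly identical on a strict étale neighborhood of $x$ where $X$ admits an affine toric chart with constant characteristic sheaf, so the identification reduces to the tautological statement that the toric chart of Abramovich is a presentation of the Artin cone $\af{\bar M_{X,\bar x}}$.
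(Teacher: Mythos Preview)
Your proposal is correct and follows essentially the same approach as the paper: both reduce to a strict \'etale neighborhood where $X$ has a toric chart, then identify $n_\phi$ with the monoid map $\bar M_{X,\bar x} \to \NN$ obtained by composing $\varphi^\flat$ with the valuation $R\setminus\{0\} \twoheadrightarrow \NN$, which is exactly the element $\af{} \to \af{\bar M_{X,\bar x}}$ in $\Sigma_X(\NN)$ coming from functoriality of Artin fans for atomic log schemes. Your treatment is slightly more explicit than the paper's about the $\NN$-set structure of $\Gamma_\varphi$ and the uniqueness of its generator, which the paper leaves implicit.
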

\begin{proof}
    Both can be defined \'etale locally in $X$ and $R$. We can assume $X$ is atomic and admits a strict \'etale morphism $X \to V$ to an affine toric variety $V = \spec \ZZ[P]$. The morphism $P \to P' \coloneqq \Gamma(X, \bar M_X)$ is then localization at a face, so $X \to V$ factors through an open immersion $\spec \ZZ[P'] \to V$. This factorization is also strict \'etale \cite[\href{https://stacks.math.columbia.edu/tag/02GW}{Tag 02GW}]{stacks-project}, so we can replace $P$ by $P'$ and assume $\af{X} = \af{V}$ so that $\Sigma_X(\NN) = \Sigma_V(\NN)$. 

    By the equality of cone complexes, it suffices to verify $n_\phi$ generates $\Gamma_\varphi$ for $X = V$. As $\phi$ sends the generic point of $\spec R$ to the dense torus, the map $\phi^\sharp : \ZZ[P] \to R$ is nonzero on elements $p \in P$ of the monoid: $\phi^\sharp(p) \in R \setminus \{0\}$. By definition, $N_\sigma = \Hom(P, \NN)$ and $n_\phi : P \to \NN$ is the morphism sending $p \in P$ to the valuation of $\phi^\sharp(p) \in R \setminus \{0\}$ in the discrete valuation ring $R$. This valuation is the quotient 
    \[R \setminus \{0\} \longrightarrow \dfrac{R \setminus \{0\}}{R^*} \simeq \NN.\]
    So the element $n_\phi \in \Hom(P, \NN)$ is the composite of the top row in the diagram 
    \[
    \begin{tikzcd}
        P \ar[r] \ar[d]       &R \setminus \{0\} \ar[r] \ar[d]      &\NN        \\
        \ZZ[P] \ar[r]      &R.
    \end{tikzcd}
    \]
    This corresponds to the point $\af{} \to \af{P} \in \Sigma_V(\NN)$ given by functoriality of the Artin fan for the map of atomic log schemes $\spec R \to V$, which was to be shown. 
\end{proof}

\subsection{Firmaments vs firmness}\label{ss:abramovichrelation}

Let $f:X\to Y$ be a toroidal morphism of toroidal embeddings, regarded as always as a morphism of log schemes.

\begin{proposition}\label{comparefirmlieinfirmament}
    Let $f : X \to Y$ be a surjective, log flat morphism locally of finite presentation of log schemes and $p : S \to Y$ a log point with rank-one log structure. Then $p$ is $f$-firm if and only if $p$ lies in the firmament of $f$.
\end{proposition}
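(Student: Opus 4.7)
The plan is to reformulate both conditions in terms of maps of Artin fans and then compare them. I would first apply Lemma~\ref{lem:firmbasics}\ref{it:firmpointsatgeompoints} to reduce to the case $S = \bar s$ is a strict log geometric point. Since $\bar M_{\bar s} = \NN$, one has $\Sigma_S(\NN) = \NN$ generated by the canonical integral point, so $\Gamma_p$ is its $\NN$-orbit and the firmament condition $\Gamma_p \subseteq \Gamma_f$ is the single statement $\Sigma_p(1) \in \Gamma_f$. Using Lemma~\ref{lem:artinconesfactorthroughartincones} to parametrize integral points of $\af{X}$, this is equivalent to the existence of a lift $\af{\bar s} \to \af{X}$ of the given map $\af{\bar s} \to \af{Y}$.

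On the firmness side, I would combine Lemma~\ref{lem:firmbasics}\ref{firmbasics5}, which reduces firmness of $p$ to firmness of the identity of $\bar s$ along the base change $X_{\bar s} = X \times^{\mathsf{fs}}_{Y} \bar s \to \bar s$, with Proposition~\ref{prop:logpointfirm=AFsection}, which applies because $\bar s$ is a log geometric point. This turns firmness into the existence of a section $\af{\bar s} \dashrightarrow \af{X_{\bar s}}$. The forward implication is then immediate: composing such a section with the strict map $\af{X_{\bar s}} \to \af{X}$ induced by $X_{\bar s} \to X$ yields a lift of $\af{\bar s} \to \af{Y}$ through $\af{X}$, as required by the firmament condition.

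For the converse, a lift $\af{\bar s} \to \af{X}$ produces by the universal property of the fibre product a section of the projection $\af{X} \times_{\af{Y}} \af{\bar s} \to \af{\bar s}$. To upgrade this into a section of $\af{X_{\bar s}} \to \af{\bar s}$, I would invoke Lemma~\ref{lem:relAFbasechange}, which under log flatness, log reducedness and finite presentation identifies $\af{X} \times_{\af{Y}} \bar y$ with $\af{X_{\bar y}} \times_{\af{\bar y}} \bar y$ at the strict geometric point $\bar y \to Y$ in the image of $\bar s$; the factorization of $\bar s \to Y$ through $\bar y$ then allows us to convert the lift $\af{\bar s} \to \af{X}$ into the desired section $\af{\bar s} \to \af{X_{\bar s}}$.

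The main obstacle is precisely this last step, since in general the Artin-fan functor does not commute with fs base change, as Example~\ref{ex:AFbasechangecounterex} demonstrates with a log flat, surjective, finitely presented map that is not log reduced and for which firmament fails to imply firmness. The argument must therefore lean on log reducedness (which appears in the companion Theorem~\ref{thm:firm=firmament}) as the ingredient making Lemma~\ref{lem:relAFbasechange} available, with the surjectivity of $f$ playing the auxiliary role of ensuring the relevant fs fibres are nonempty and that the relative Artin fan $\af{X/Y}$ of Remark~\ref{rmk:relAFfirmness} surjects onto its image in $Y$.
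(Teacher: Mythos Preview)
Your forward implication is fine and matches the paper. The converse, however, has a real gap: you rely on Lemma~\ref{lem:relAFbasechange} to pass from a section of $\af{X}\times_{\af{Y}}\af{\bar s}\to\af{\bar s}$ to one of $\af{X_{\bar s}}\to\af{\bar s}$, but that lemma requires $f$ to be log reduced, which is \emph{not} among the hypotheses here. You notice this yourself, but your proposed resolution---importing log reducedness from Theorem~\ref{thm:firm=firmament}---would weaken the proposition rather than prove it. There is also a secondary issue: Lemma~\ref{lem:relAFbasechange} concerns a \emph{strict} geometric point $\bar y\to Y$, whereas your $\bar s$ has rank-one log structure and need not be strict over $Y$; bridging from $\bar y$ to $\bar s$ would require further argument. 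Finally, your invocation of Example~\ref{ex:AFbasechangecounterex} is misleading: the map there is not surjective, so it is not a counterexample under the stated hypotheses.

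The paper avoids comparing $\af{X_{\bar s}}$ with $\af{X}\times_{\af{Y}}\af{\bar s}$ altogether. After localizing so that $Y$ is atomic with $S$ in the closed stratum and $X$ a disjoint union of atomics, it argues as in the proof of Theorem~\ref{thm:thibault}: let $v\in\af{Y}$ be the origin and $v'$ its pullback to $\af{X}$. Log flatness and finite presentation make the relevant map to $v'$ open, and Lemma~\ref{lem:opendeepeststratasurjectiveAF} then gives surjectivity onto $v'$. The firmament lift $\af{S}\to\af{X}$ lands in $v'$ since $S$ lies in the closed stratum, and surjectivity lets one lift the geometric point $S\to v'$ to a point of $X$; strictness of $X\to v'$ upgrades this to a log lift, which immediately witnesses firmness. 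This route uses surjectivity, log flatness, and finite presentation exactly as stated, with no need for log reducedness.
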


\begin{proof}
    Because lying in the firmament and firmness are each local on $S$, we can replace $S$ by one of its strict geometric points and $X, Y$ by strict-\'etale covers to assume $Y$ is atomic with $S \to Y$ in its closed stratum and $X$ a disjoint union of atomics. 
    
    Now there are compatible maps of Artin fans between them 
    \[
    \begin{tikzcd}
        \af{X \times_Y^{\rm fs} S} \ar[r] \ar[dr]      &\af{X} \times^{\rm fs}_{\af{Y}} \af{S} \ar[r] \ar[d] \lpb      &\af{X} \ar[d]         \\
                &\af{S} \ar[r]         &\af{Y}.
    \end{tikzcd}
    \]
    Firmness is equivalent to finding a section 
    $\af{S} \dashrightarrow \af{X \times_Y^{\rm fs} S}$ by Proposition \ref{prop:logpointfirm=AFsection}. Such a map results in a lift $\af{S} \dashrightarrow \af{X}$ over $\af{Y}$, which means $p$ lies in the firmament of $f$. 

    Now suppose $p$ lies in the firmament of $f$. As in the proof of Theorem \ref{thm:thibault}, write $v \in \af{Y}$ for the origin and $v' \in \af{X}$ for its pullback. The map $X \to v'$ is open and hence surjective by Lemma \ref{lem:opendeepeststratasurjectiveAF}. 

    We assumed $S \to Y$ lied in the closed stratum, so $S \to Y \to \af{Y}$ factors through $v$. Lying in the firmament means there is a lift of $\af{S} = \af{} \to \af{Y}$ to $\af{X}$, so there is a lift of $S \to Y \to \af{Y}$ to $\af{X}$. This lift factors through the pullback $v'$:
    \[
    \begin{tikzcd}
                &v' \lpbstrict \ar[r] \ar[d]         &\af{X} \ar[d]         \\
        S \ar[r] \ar[ur, dashed]       &v \ar[r]          &\af{Y}.
    \end{tikzcd}
    \]
    As $X \to v'$ is surjective and locally of finite presentation, the geometric point $S \to v'$ lifts to a point of $X^\circ$ \cite[\href{https://stacks.math.columbia.edu/tag/0487}{Tag 0487}]{stacks-project}. As $X \to v'$ is strict, this also gives a log lift. 
\end{proof}

\begin{theorem}\label{thm:firm=firmament}
    Let $f : X \to Y$ be a log flat, log reduced, finite presentation morphism of log schemes and $\spec R \to Y$ a log morphism from a discrete valuation ring $R$ equipped with its valuative log structure $M_R = R \setminus \{0\}$. Let $s \in \Spec R$ be the closed point with induced log structure. 
    
    The following are equivalent: 
    \begin{itemize}
        \item The map $\spec R \to Y$ lies in the firmament $\Gamma_f \subseteq \Sigma_Y$. 
        \item The map $\spec R \to Y$ is $f$-firm. 
        \item The composite $s \to \spec R \to Y$ is $f$-firm. 
    \end{itemize}
   
\end{theorem}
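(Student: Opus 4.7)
The plan is to prove (2) $\Leftrightarrow$ (3) and (1) $\Leftrightarrow$ (3) separately, with the implication (1) $\Rightarrow$ (3) being the main content.

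For (2) $\Leftrightarrow$ (3), I would argue that the strict geometric points of $\spec R$ lie above either the closed point $s$ or the generic point $\eta$. By Lemma \ref{lem:firmbasics}\ref{firmbasics iv}, $\spec R \to Y$ is $f$-firm if and only if it is $f$-firm at each such point. By Lemma \ref{lem:firmbasics}\ref{it:firmpointsatgeompoints}, $f$-firmness of the log point $s$ is equivalent to that of any chosen strict geometric point $\bar s$ above it. Since $f$ is log flat, Proposition \ref{prop:generizationfirmlocus} propagates firmness from $\bar s$ to any strict geometric point above $\eta$. Chaining these reductions yields $\spec R$ firm $\Leftrightarrow$ $\bar s$ firm $\Leftrightarrow$ $s$ firm.

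For (1) $\Leftrightarrow$ (3), I would first observe that $s \hookrightarrow \spec R$ is strict and induces an isomorphism of Artin fans $\af{s} \simeq \af{\spec R} \simeq \af{}$, so the cone complexes and firmaments agree, $\Gamma_{\spec R \to Y} = \Gamma_{s \to Y}$, and (1) becomes the statement that $s$ lies in the firmament of $f$. The easy direction (3) $\Rightarrow$ (1) then follows by composing the section $\af{s} \to \af{X \times_Y^{\rm fs} s}$ produced by Proposition \ref{prop:logpointfirm=AFsection} with the natural map $\af{X \times_Y^{\rm fs} s} \to \af X$ (strict functoriality of Artin fans) to obtain a section $\af s \to \af X$ over $\af Y$, witnessing the firmament condition.

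The main step is (1) $\Rightarrow$ (3), which adapts the proof of Proposition \ref{comparefirmlieinfirmament} to our setting where $f$ is not assumed globally surjective. I would first localize so that $Y$ is atomic with $s$ in its closed stratum and $X$ is a disjoint union of atomics, and invoke Lemma \ref{lem:firmbasics}\ref{firmbasics5} to replace $(X,Y)$ by $(X_s, s)$, the base change remaining log flat, log reduced, and of finite presentation. The firmament hypothesis supplies a section $\sigma : \af{s} \to \af X$ over $\af Y$ whose image lands in the closed substack $v' \coloneqq \af X \times_{\af Y}^{\rm fs} v$ (for $v \in \af Y$ the origin), so $v'$ is automatically non-empty; this is precisely where Proposition \ref{comparefirmlieinfirmament} required global surjectivity of $f$, and here it comes for free. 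Log flatness and finite presentation then render $X_s \to v'$ flat, finite presentation, and open, so Lemma \ref{lem:opendeepeststratasurjectiveAF} forces $X_s \to v'$ to be surjective.

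The main obstacle is the final transfer of $\sigma$ to an actual section of $\af{X_s} \to \af s$, since the comparison map $\af{X_s} \to \af X \times_{\af Y}^{\rm fs} \af s$ need not be an isomorphism (cf.\ Example \ref{ex:AFbasechangecounterex}). Here the key tool is Lemma \ref{lem:relAFbasechange}, which uses precisely the log flatness, log reducedness, and finite presentation of $f$ to identify $\af X \times_{\af Y} \bar s$ with $\af{X_{\bar s}} \times_{\af{\bar s}} \bar s$. I would pull $\sigma$ back along $\bar s \to \af Y$ and transport it through this identification to produce a geometric point $w \to X_s$ together with the retraction $\bar M_s \to \bar M_{X_s, w} \to \bar M_s$ of Definition \ref{def:firm}, completing the verification of firmness of $s$ along $f$.
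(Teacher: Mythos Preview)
Your proposal is essentially correct and lands on the same key mechanism as the paper: Lemma \ref{lem:relAFbasechange} identifies $\af{X} \times_{\af{Y}} \bar s$ with $\af{X_{\bar s}} \times_{\af{\bar s}} \bar s$, so a lift $\bar s \to \af{X}$ over $\af{Y}$ (the firmament condition) is the same datum as a lift $\bar s \to \af{X_{\bar s}}$ over $\af{\bar s}$, which is firmness by Proposition \ref{prop:logpointfirm=AFsection}. The paper runs this identification symmetrically for both directions of (1) $\Leftrightarrow$ (3) at once; you split them, but the content is the same.

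Two corrections. First, in your (3) $\Rightarrow$ (1), the map $\af{X \times_Y^{\rm fs} s} \to \af X$ is not produced by ``strict functoriality of Artin fans'': the map $s \to Y$ is not strict in general, hence neither is $X_s \to X$. You can salvage this by further localizing $X_s$ into atomics and invoking Lemma \ref{lem:localAFfunctoriality}, or more cleanly by running Lemma \ref{lem:relAFbasechange} in reverse, which is what the paper does.

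Second, the entire $v'$ paragraph in your (1) $\Rightarrow$ (3) is superfluous: your final paragraph via Lemma \ref{lem:relAFbasechange} is self-contained and never uses the surjectivity of $X_s \to v'$. Worse, your invocation of Lemma \ref{lem:opendeepeststratasurjectiveAF} is not quite right as stated: that lemma applies to a closed substack of $\af{X_s}$, whereas your $v'$ sits inside $\af X$, and there is no evident strict comparison map $\af{X_s} \to \af X$ along which to transport it. The paper's proof simply omits this detour and goes straight to Lemma \ref{lem:relAFbasechange}.
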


\begin{proof}
Because the firmament of $\spec R$ is the same after localizing, we can use Lemma \ref{lem:coverbyatomics} and Lemma \ref{lem:localAFfunctoriality} to \'etale localize and assume there are compatible morphisms of Artin fans fitting in a commutative diagram of solid arrows
    \[
    \begin{tikzcd}
            &   &X \ar[dl] \ar[d]      \\
    s \ar[r] \ar[d]       &Y \ar[d]      &\af{X} \ar[dl]\\
    \af{} \ar[r] \ar[urr, crossing over, dashed]     &\af{Y}
    \end{tikzcd}
    \]
    and that $Y$ is affine. The last two bullet points are equivalent by Proposition \ref{prop:generizationfirmlocus}. 
    
    The map $\spec R \to Y$ lies in the firmament when there exists a dashed factorization. Equivalently, we need a dashed arrow $s \dashrightarrow \af{X}$. 
    
    Replace $s$ by a strict geometric point. By Lemma \ref{lem:relAFbasechange}, the two pullbacks give the same $\scr B$ in the diagram: 
    \[
    \begin{tikzcd}
        \af{X_s} \ar[d]        &\scr B \ar[r] \ar[d] \ar[l] \lpbstrict \ar[dl, very near start, phantom, "\msout{\ell}\urcorner"]      &\af{X}  \ar[d]        \\
        \af{s}      &s \ar[r] \ar[l] \ar[ur, dashed] \ar[ul, dashed]      &\af{Y}
    \end{tikzcd}
    \]
    so finding the dashed lifts in the two squares are equivalent. But the dashed factorization in the left square is equivalent to firmness of $s \to \spec R \to Y$ along $f$ by Proposition \ref{prop:logpointfirm=AFsection}.
\end{proof}

Theorem \ref{thm:firm=firmament} shows a rank-one log geometric point $s \to Y$ coming from a discrete valuation lies in the firmament of $X \to Y$ if and only if it is firm, i.e., it lifts to a log point of $X$. Up to log alterations of $Y$, we now show that firmness can also be checked on log points.

\begin{corollary}\label{cor:firmiffNpoints}
    Let $f : X \to Y$ be a morphism of finite presentation with $Y$ quasicompact. Suppose each log geometric point $\bar s \to Y$ with either 
    \begin{itemize}
        \item rank-one log structure $\bar M_{\bar s} = \NN$ or 
        \item divisible rank-one log structure $\bar M_{\bar s} = \QQ_{\geq 0}$\footnote{Beware that these log points are not f.s.\ log schemes, but integral and saturated. }
    \end{itemize}
    is $f$-firm. Then there is a log alteration $Y' \to Y$ which is $f$-firm. 
\end{corollary}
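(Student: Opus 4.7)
Plan: The approach is to reduce the statement to a combinatorial problem on cone complexes and then resolve it by a standard toric subdivision-plus-root-stack construction.

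First, since the conclusion is strict-\'etale local on $Y$ by Lemma \ref{lem:firmbasics}\ref{firmbasics i} and $Y$ is quasicompact, and since firmness pulls back under log alterations by Lemma \ref{lem:firmbasics}\ref{firmbasics4}, I may assume $Y$ is atomic with Artin fan $\af{Q}$ for a sharp f.s.\ monoid $Q$, and that $X$ is a finite disjoint union of atomics with Artin fans $\af{P_i}$, corresponding to monoid maps $Q \to P_i$ (finite presentation of $f$ ensures finiteness of the index set).

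Next I translate the hypothesis. Setting $\sigma = \Hom(Q, \RR_{\geq 0})$ and $\tau_i = \Hom(P_i, \RR_{\geq 0})$, the maps $Q \to P_i$ induce morphisms of cones $\tau_i \to \sigma$. By Proposition \ref{prop:logpointfirm=AFsection}, firmness of an $\NN$-point $\bar s \to Y$ with $\bar M_{\bar s} = \NN$ (corresponding to a monoid map $Q \to \NN$) is equivalent to the existence of an integral factorization $Q \to P_i \to \NN$ for some $i$; dually, every integer point of $\sigma$ is the image of an integer point of some $\tau_i$. The analog holds for $\QQ_{\geq 0}$-points. Since rational points are dense in $\sigma$ and each rational polyhedral image $\mathrm{image}(\tau_i \to \sigma)$ is closed, it follows that $\sigma = \bigcup_i \mathrm{image}(\tau_i \to \sigma)$.

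The main work is to construct a rational polyhedral subdivision $\Sigma$ of $\sigma$ together with a sufficiently divisible lattice refinement, so that each maximal cone $\sigma' \in \Sigma$ is contained in some $\mathrm{image}(\tau_i)$ \emph{and} this containment lifts to an integral map of cones $\sigma' \to \tau_i$. Concretely, one refines $\sigma$ along the walls of the finitely many images (following the method of Lemma \ref{lem:logaltnint+sat}), then takes a root stack whose index is divisible enough to clear denominators and guarantee the requisite integral lifts. This refinement defines the desired log alteration $Y' \to Y$.

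To verify firmness: at any strict geometric point $\bar t \to Y'$ sitting over a maximal cone $\sigma' \in \Sigma$ with characteristic monoid $Q'$, the previous step produces a factorization $Q \to P_i \to Q'$ of the structure map, which by Lemma \ref{lem:firmbasics}\ref{firmbasics7} witnesses firmness of $\bar t \to Y$ along $f$. I expect the main obstacle to be the combinatorial construction of the subdivision: the $\QQ_{\geq 0}$-hypothesis is essential to guarantee that $\sigma$ is actually covered by the $\mathrm{image}(\tau_i)$ (rather than only a sparse subset of rational rays), and the root-stack refinement is essential to upgrade rational lifts to integral ones. Patching the local constructions into a global log alteration over the original $Y$ is then routine given strict-\'etale locality and finite presentation.
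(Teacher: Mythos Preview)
Your approach differs substantially from the paper's and carries a gap in the verification step. The paper's proof is three lines: the $\NN$-hypothesis implies the $\QQ_{\geq 0}$-hypothesis (any map to $\QQ_{\geq 0}$ factors through some $\frac{1}{n}\NN \cong \NN$); Lemma~\ref{lem:logaltnint+sat} then produces a log alteration $Y' \to Y$ along which the pullback $f'$ becomes integral and saturated; since $\QQ_{\geq 0}$-points factor uniquely through any log alteration the hypothesis persists, and Theorem~\ref{thm:firmlocus} identifies the firm locus of $f'$ with its set-theoretic image, which is all of $Y'$ because every scheme-point underlies a $\QQ_{\geq 0}$-log point.

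Your gap: to verify firmness of $\bar t \to Y'$ via Lemma~\ref{lem:firmbasics}\ref{firmbasics7} you need an actual strict geometric point $\bar x \to X \times_Y \bar y$ whose characteristic monoid sits in the factorization, not merely a monoid map $Q \to P_i \to Q'$. Nothing in your argument ensures that the atomic piece of $X$ contributing $P_i$ has a point over $\bar y$; this is precisely the discrepancy between $\af{X_{\bar s}}$ and $\af{X} \times_{\af{Y}} \af{\bar s}$ of Example~\ref{ex:AFbasechangecounterex}, and it is why Proposition~\ref{prop:logpointfirm=AFsection} concerns the former rather than the latter. The problem is worse for $\bar t$ lying over a non-deepest stratum of $Y$, where $\bar M_{Y,\bar y}$ is a proper quotient $Q/F$ rather than $Q$: you would need to pass to a suitable face of $P_i$ and check that the corresponding stratum of $X$ is nonempty over that $\bar y$. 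This bookkeeping can likely be carried out (for full-dimensional $\sigma'$ one can argue that $Q \to P_i$ must be sharp, forcing the deepest stratum of the relevant atomic to land over the deepest stratum of $Y$), but the paper's route via Lemma~\ref{lem:logaltnint+sat} and Theorem~\ref{thm:firmlocus} sidesteps it entirely by reducing the question to set-theoretic surjectivity.
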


\begin{proof}
    If all rank-one log geometric points $s \to Y$ lift to $X$, so do all divisible rank-one log points $t \to Y$. This is because any divisible rank-one log point $t \to Y$ factors through a rank-one log geometric point corresponding to some inclusion $\dfrac{1}{n} \NN \subseteq \QQ_{\geq 0}$. So we have reduced to the divisible case. 
    
    All divisible rank-one log geometric points factor uniquely through any log alteration. So we can replace $f$ by its f.s.\ pullback along a log alteration $Y' \to Y$ to assume $f$ is integral and saturated by Lemma \ref{lem:logaltnint+sat}. Then Theorem \ref{thm:firmlocus} shows the firm locus is the set-theoretic image, but $f$ is surjective by the lifting condition.
\end{proof}

If all rank-one log geometric points $\bar s \to Y$ with $\bar M_{\bar s}$ lift to $X$, we would like to find a log blowup of $Y$ which makes $f$ firm. Unfortunately, this is not the case. 

\begin{example}\label{ex:blowupnotenoughtomakefirm}
    Let $Q_1, Q_2, Q_3 \subseteq \NN^2$ be the submonoids 
    \[
    Q_1 = \{(a, b) \in \NN^2 \, | \, 2\mid a+b\},
    \qquad
    Q_2 = 2 \NN \times \NN, \qquad 
    Q_3 = \NN \times 2 \NN.
    \]
    The monoid $Q_1 \subseteq \NN^2$ is depicted in Figure \ref{fig:firmamentrootcone}. Then $\NN^2 = Q_1 \cup Q_2 \cup Q_3$, as for $(a, b) \in \NN^2$, if either $a$ or $b$ is even, it lies in $Q_2$ or $Q_3$. Otherwise, both $a, b$ are odd, and it lies in $Q_1$. 

    Let $k$ be a geometric point. Define log schemes $\bar x_i, Y$ by 
    \[\bar x_i^\circ = Y^\circ = \Spec k\]
    and 
    \[
    \bar M_{\bar x_i} = \Hom(Q_i, \NN), \qquad
    \bar M_Y = \NN^2.
    \]
    The inclusions $Q_i \subseteq \NN^2$ dualize to give maps $\bar M_Y \to \bar M_{\bar x_i}$ inducing maps of log schemes $\bar x_i \to Y$. Let $X = \bar x_1 \sqcup \bar x_2\sqcup \bar x_3$ be their disjoint union. 

    All rank-one log geometric points $\bar s \to Y$ lift to $X$, as $\NN^2 = Q_1 \cup Q_2 \cup Q_3$. But no log blowup of $Y$ will be $f$-firm. This is because no submonoid $P \subseteq \NN^2$ with the same associated group $\gp P = \gp{\pra{\NN^2}} = \ZZ^2$ lifts to any of the $Q_i$ even though all elements $p \in P$ land in some $Q_i$.
\end{example}

\subsection{Application: lifting rational points}\label{lifting_rat}
In \cite[paragraph following Theorem 2.4.18]{abramovich09ClayNotes}, a property of lifting rational points through toroidal dominant maps is stated without proof. We prove the claim in Theorem \ref{LiftAbr} using Theorem \ref{thm:thibault}. 

We use a log version of Hensel's lemma which is known \cite[Lemma 3.2]{JanDenef}, \cite[Proposition 5.13]{LSS20pseudo-split}. We give a proof using Artin fans.

\begin{lemma}(\textbf{Log Hensel Lemma})\label{LogHensel} Let $f \colon X \to Y$ be a log smooth morphism of log schemes. Let $p \colon s \to S$ be a strict log point such that the residue field of $s$ is algebraic over that of $p(s)$ and contains a separable closure of it, for example $s$ a log geometric point. For any commutative square of log schemes 
    \begin{equation} \label{log_hensel}
    \begin{tikzcd}
        s \arrow[r] \arrow[d] & X \arrow[d] \\
        S \arrow[r,"p"] & Y,
    \end{tikzcd}
    \end{equation}
    there exists a strict \'etale neighborhood $V \to S$ of $s$ and a factorization 
    \begin{equation} \label{log_hensel_2}
    \begin{tikzcd}
        s \arrow[rd] \arrow[r] & V \arrow[r] \arrow[d] & X \arrow[d] \\
        & S \arrow[r] & Y.
    \end{tikzcd}
    \end{equation}
\end{lemma}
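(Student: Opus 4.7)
The strategy is to split the lifting problem into a combinatorial part at the level of Artin fans and a classical part handled by the usual Hensel lemma for smooth morphisms; log smoothness of $f$ glues these two together. First I would strict-\'etale localize on $S$, $Y$, and $X$ around $s$ and its image so that $S$ and $Y$ are atomic and the component of $X$ containing the image of $s$ is atomic. By the local structure theorem for log smooth morphisms (cf.~\cite[Theorem IV.3.3.1]{ogusloggeom}), further strict-\'etale localization produces a chart $\theta \colon P \to Q$ of $f$ with $\gp \theta$ injective, torsion cokernel of order invertible on $Y$, and with induced morphism of underlying schemes
\[
    X^\circ \longrightarrow Y^\circ \times^{\mathsf{fs}}_{\Aff_P} \Aff_Q
\]
classically smooth; in particular $\af{Y} = \af{P}$ and the relevant component of $\af{X}$ is $\af{Q}$.

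Next, I would produce a compatible lift of $S$ to the Artin fan of $X$. Strictness of $p$ gives $\bar M_s = \bar M_{S, p(s)} =: \bar R$, and the square of the statement translates at the level of characteristic monoids into a commutative diagram
\[
    \bar P \longrightarrow \bar Q \longrightarrow \bar R,
\]
where the second arrow comes from $s \to X$ and the composite equals the map induced by $S \to Y$. Atomicity of $S$ gives $\bar R = \Gamma(S, \bar M_S)$, so the monoid map $\bar Q \to \bar R$ is a sharp chart on $S$ and hence induces a morphism of log algebraic stacks $S \to \af{Q} = \af{X}$ extending $s \to \af{X}$ and compatible with $S \to \af{Y}$. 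Together with $S \to Y$, this yields a morphism
\[
    S \longrightarrow Z \coloneqq Y \times_{\af{Y}}^{\mathsf{fs}} \af{X}
\]
extending the point $s \to Z$ and admitting the given $s \to X$ as a partial lift along $X \to Z$.

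The third step is classical Hensel. The morphism $X \to Z$ factors as $X \to Y \times^{\mathsf{fs}}_{\Aff_P} \Aff_Q \to Z$, in which the first map is classically smooth by the chart characterization of log smoothness, and the second is (\'etale locally) the quotient by the dense torus of $\Aff_Q$, hence smooth; so $X \to Z$ is smooth in the classical sense. The hypothesis on the residue field of $s$ together with the partial lift $s \to X$ now lets us invoke the classical Hensel lemma for smooth morphisms to produce an \'etale neighborhood $U \to Z$ of the image of $s$ and a factorization $s \to U \to X$ over $Z$. Then $V \coloneqq U \times^{\mathsf{fs}}_Z S$ is a strict \'etale neighborhood of $s$ in $S$, and $V \to U \to X$ is the desired log lift.

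The main obstacle is the second step: verifying that the stalkwise monoid factorization genuinely promotes to a morphism $S \to \af{X}$ of log algebraic stacks compatible with all other given data. This relies on the atomicity of $S$ to trivialize the characteristic monoid, on the strictness of $p$ to identify $\bar M_s$ with $\bar M_{S, p(s)}$, and on the universal property of Artin cones that converts sharp charts into morphisms to $\af{Q}$.
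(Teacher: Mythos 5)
Your proposal follows essentially the same strategy as the paper's proof: lift $S$ to $Z := Y \times_{\af Y}^{\mathsf{fs}} \af X$ using the universal property of the Artin fan, observe that $X \to Z$ is classically smooth, and finish with the classical Hensel lemma. The route is the same; there are, however, a few places where the details as stated are slightly off and would need shoring up.

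First, the claim that ``atomicity of $S$ gives $\bar R = \Gamma(S, \bar M_S)$'' is not quite right on its own: it requires not just that $S$ be atomic, but that $p(s)$ lie in the \emph{deepest closed stratum} of $S$. Otherwise $\bar M_{S,p(s)}$ is a proper face localization of $\Gamma(S,\bar M_S)$, and the monoid map $\bar Q \to \bar R$ does not give a map $S \to \af X$ by Lemma \ref{lem:AFmapstocone}. This is easily arranged by shrinking $S$, but should be said. The paper sidesteps the issue entirely by using Raynaud's limit argument (Lemma \ref{lem:strhenselianlifting}) to reduce to the strictly henselian case, where $\af{s'} = \af S$ holds automatically (Example \ref{ex:strhenselianAF}).

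Second, the step ``the hypothesis on the residue field of $s$ together with the partial lift $s \to X$ now lets us invoke the classical Hensel lemma'' conceals the point of the hypothesis. The lift $s \to X$ is a $\kappa(s)$-point of the fiber of $X \to Z$ over the image $z$, and $\kappa(s)$ is a priori a purely inseparable extension of a separable closure of $\kappa(z)$; the classical Hensel lemma wants a point with separable residue extension. One uses smoothness of $X \to Z$ (geometrically regular fibers, hence separably generated residue fields) to descend the $\kappa(s)$-point to a $\kappa(z)^{\mathrm{sep}}$-point. The paper makes this explicit by passing to the separable closure $s'$ of $p(s)$ in $s$ at the outset.

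Finally, invoking the local structure theorem to produce an explicit chart $P \to Q$ is correct but unnecessary machinery; the paper simply uses the equivalent reformulation that log smoothness of $f$ makes $X \to Y \times_{\af Y}^{\mathsf{fs}} \af X$ classically smooth. None of this changes the strategy, which matches the paper's.
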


\begin{proof}
    Let $s'$ be the separable closure of $p(s)$ in $s$. Replace $X$ by $X \times_Y^{\rm fs} S$ to assume $S = Y$. The claim is strict-\'etale local in $S = Y$, so Lemma \ref{lem:strhenselianlifting} lets us assume the underlying scheme $S^\circ$ is strictly henselian with closed point $s'$. The statement is also strict-\'etale local in $X$, so we can assume $X$ is a disjoint union of atomics using Lemma \ref{lem:coverbyatomics}. 

    The Artin fan is functorial for morphisms between disjoint unions of atomics, so we have a diagram 
    \[
    \begin{tikzcd}
                &         &X \ar[d]         \\
                &         &Y \times_{\af{Y}}^{\rm fs} \af{X} \lpbstrict \ar[r] \ar[d]      &\af{X} \ar[d]             \\
        s \ar[r] \ar[uurr, bend left=25]       &S \ar[r] \ar[ur, dashed]        &Y \ar[r]       &\af{Y}.
    \end{tikzcd}
    \]
    Because the Artin fans $\af{s} = \af{S}$ are the same, we have a dashed arrow in the diagram $S \dashrightarrow Y \times_{\af{Y}}^{\rm fs} \af{X}$. Because $X \to Y$ is log smooth, $X \to Y \times_{\af{Y}}^{\rm fs} \af{X}$ is smooth.
    Pull back $X$ along this dashed arrow to obtain a strict smooth morphism
    \[W \coloneqq X \times_{Y \times_{\af{Y}}^{\rm fs} \af{X}} S \to S.\]
    Since $W \to S$ is a smooth morphism of algebraic spaces, the point $s \to W$ factors through a point $s' \to W$ and we are reduced to the classical Hensel's lemma \cite[\S 10 Exercise 9]{atiyah-macdonald}, which concludes the proof.
\end{proof}

\begin{corollary}\label{corollary:etalelocallifts}
    Let $f \colon X \to Y$ and $p \colon S \to Y$ be morphisms of log schemes and $s \to S$ a strict log geometric point. Suppose $f$ is log smooth. Then,
    \begin{enumerate}
        \item There exists a strict \'etale neighbourhood $V \to S$ of $s$ factoring through $X$ if and only if $s \to Y$ is $f$-firm.
        \item There exists a strict \'etale cover $V \to S$ such that $V \to Y$ factors through $X \to Y$ if and only if $p$ is $f$-firm.
    \end{enumerate}
\end{corollary}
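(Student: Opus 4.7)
The plan is to prove (1) by combining Theorem \ref{thm:thibault} with the Log Hensel Lemma \ref{LogHensel}, and then to derive (2) from (1) using the strict-\'etale local nature of firmness recorded in Lemma \ref{lem:firmbasics}.

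For (1), the ``\'etale neighborhood $\Rightarrow$ firm'' direction is immediate: given a strict \'etale neighborhood $V \to S$ of $s$ with a factorization $V \to X$ over $Y$, the composite $s \to V \to X$ supplies a log lift of $s \to Y$, which witnesses firmness by the evident implication \eqref{eit:1} $\Rightarrow$ \eqref{eit:2} from the introduction. For the converse, assume $s \to Y$ is $f$-firm. Since $f$ is log smooth it is in particular log flat and locally of finite presentation, so Theorem \ref{thm:thibault} produces a strict morphism $s' \to s$ corresponding to a finite separable extension together with a log lift $s' \to X$ over $Y$. Because $s$ is a strict log geometric point, its underlying scheme is the spectrum of a separably closed field, so this separable extension is trivial and the log lift descends to $s \to X$. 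Feeding the resulting commutative square with vertices $s, X, S, Y$ into the Log Hensel Lemma \ref{LogHensel} produces the required strict \'etale neighborhood $V \to S$ of $s$ factoring through $X$.

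For (2), the ``factors $\Rightarrow$ firm'' direction follows from the first characterization in Lemma \ref{lem:firmbasics}: given a strict \'etale cover $V \to S$ with $V \to X$ over $Y$, take $S' = W = V$ with the identity as $W \to S'$ and the given factorization as $W \to X$; the identity $V \longequals V$ is trivially firm along itself (each geometric point witnesses firmness via the identity retraction), so $p$ is $f$-firm. Conversely, if $p$ is $f$-firm then Lemma \ref{lem:firmbasics}\ref{firmbasics iv} ensures every strict geometric point $\bar s \to S$ yields an $f$-firm composite $\bar s \to Y$. Applying part (1) to each $\bar s$ yields a strict \'etale neighborhood $V_{\bar s} \to S$ of $\bar s$ factoring through $X$, and their disjoint union $\bigsqcup_{\bar s \to S} V_{\bar s} \to S$ is a strict \'etale cover of $S$ with the desired factorization through $X$.

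The main obstacle is very mild and essentially bookkeeping: one must verify that the finite separable extension supplied by Theorem \ref{thm:thibault} is trivial on a strict log geometric point, and that the residue field hypothesis of the Log Hensel Lemma \ref{LogHensel} is automatically satisfied. Both points are immediate once one recalls that the residue field of a strict log geometric point is separably closed, so no additional input beyond the two cited results is needed.
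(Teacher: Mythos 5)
Your proof is correct and takes essentially the same route as the paper: Theorem \ref{thm:thibault} supplies a log lift at the geometric point (log smoothness eliminating the finite separable extension), the Log Hensel Lemma \ref{LogHensel} then spreads the lift out over a strict \'etale neighborhood, and part (2) follows from Lemma \ref{lem:firmbasics} exactly as you argue.

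One small point of care: in asserting that the residue-field hypothesis of Lemma \ref{LogHensel} is ``automatically satisfied'' you conflate two distinct conditions. The hypothesis requires the residue field of $s$ to be both \emph{algebraic} over $\kappa(p(s))$ and to contain a separable closure of it. Separable closedness of a geometric point gives the second, but the first is not automatic unless one adopts a convention under which geometric points are algebraic over the image's residue field. The paper's own proof guards against this by first replacing $s$ with $s'$, the algebraic closure of $\kappa(p(s))$ inside $k(s)$ — still separably closed, now algebraic, and with the same strict \'etale neighborhoods in $S$ — before invoking Theorem \ref{thm:thibault} and Lemma \ref{LogHensel}. Inserting that one-line reduction makes your argument airtight; alternatively, since Lemma \ref{LogHensel} explicitly lists log geometric points as examples satisfying its hypothesis, one may regard this as a matter of the paper's conventions and your direct application as justified.
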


\begin{proof}
    (1) implies (2) since $p$ is $f$-firm if and only if the composite
    \[
    p_s \colon s \to S \to Y
    \]
    is $f$-firm for all strict log geometric points $s$ of $S$. For (1), pick a strict geometric point $p \colon s \to S$ and let $s'$ be the algebraic closure of $p(s)$ in $s$. The corollary applied to $s' \to S$ implies the corollary applied to $s \to S$, so we may assume $s=s'$. By Theorem \ref{thm:thibault}, $p_s$ is $f$-firm if and only if it factors through $f$. By Lemma \ref{LogHensel}, $p_s$ factors through $f$ if and only if there is a strict \'etale neighbourhood $V \to S$ of $s$ in $S$ such that $V \to S \to Y$ factors through $f$.
\end{proof}

\begin{remark}\label{remark:etalelocallifts_for_divisorial_case}
    In the setting of Corollary \ref{corollary:etalelocallifts}, suppose that $X \to Y$ is set-theoretically surjective and that $S$ has the divisorial log structure coming from a divisor $D$. Any strict geometric point of $S \setminus D$ has the trivial log structure, so it is automatically $f$-firm. Hence, $p$ lifts \'etale-locally along $f$ if and only if the strict geometric points of $D$ are $f$-firm.
\end{remark}

Let $S=\spec A$ where $A$ is a Dedekind domain with fraction field $K$ of characteristic 0. For example, $A$ could be the ring of integers of a number field $K$. 
If $\mathscr Y$ is an $A$-scheme endowed with a divisorial log structure given by a reduced divisor $\mathscr D\subseteq \mathscr Y$, and $\mathcal Q: S\to\mathscr Y^\circ$ is a morphism of schemes such that the generic point of $S$ maps to $\mathscr Y^\circ \backslash \mathscr D$, we endow $S$ with the divisorial log structure defined by the divisor $\mathcal Q^{-1}(\mathscr D)$ and we call it the \emph{divisorial log structure on $S$ induced by $\mathcal Q$}.
\begin{theorem}\label{LiftAbr}
   Let $f: X \to Y$ be a proper, dominant map of integral proper $K$-varieties and $S = \Spec A$ a Dedekind domain as above. After replacing $S$ by a sufficiently small nonempty open subset, there exists a proper log smooth $K$-birational model $f':X' \to Y'$ of $f$ and 
   a proper log smooth $S$-model $g:\mathscr X'\to\mathscr Y'$ of $f'$ such that every $K$-point on the locus $X'_0\subseteq X'$ where the log structure is trivial induces an $S$-point $\mathcal Q$ on $\mathscr{Y}'$ that lies in the firmament of $g$, and conversely, every $S$-point $\mathcal Q$ on $\mathscr{Y}'$ 
   that intersects the locus where the log structure of $\mathscr Y'$ is trivial and lies in the firmament of $g$ 
   lifts étale locally on $S$ to a rational point on $X'$.
\end{theorem}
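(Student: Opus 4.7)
The plan is to combine toroidalization of morphisms in characteristic zero with the equivalences between log lifting, firmness, and lying in the firmament established earlier in the paper. Since $\mathrm{char}(K) = 0$, applying the toroidalization theorem of Abramovich--Karu to the proper dominant map $f$ produces a $K$-birational modification $f' \colon X' \to Y'$ of $f$ that is a proper log smooth morphism of log smooth toroidal $K$-varieties equipped with their canonical divisorial log structures. Choosing an integral model over $\Spec A$ and shrinking $S$ to a sufficiently small nonempty open subset, the data spread out to a proper log smooth $S$-model $g \colon \mathscr{X}' \to \mathscr{Y}'$ whose log structures are divisorial along the closures of the boundary divisors of $X'$ and $Y'$. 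Further shrinking ensures that $g$ is log smooth, proper, surjective, and of finite presentation over $S$.

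For the forward implication, let $x \in X'_0(K)$ and set $y \coloneqq f'(x)$, which lies in $Y'_0$ because the toroidal morphism $f'$ sends the open interior of $X'$ into that of $Y'$. Properness of $\mathscr{X}' \to S$ together with the valuative criterion applied at each closed point of $S$ extends $x$ to an $S$-point $\mathcal{P} \colon S \to \mathscr{X}'$, and I put $\mathcal{Q} \coloneqq g \circ \mathcal{P}$. The firmament is manifestly functorial for composition of morphisms of log schemes (Definition \ref{def:firmamentcontactorder}), so
\[
\Gamma_\mathcal{Q} \;=\; \Sigma_g(\Gamma_\mathcal{P}) \;\subseteq\; \Sigma_g(\Sigma_{\mathscr{X}'}(\NN)) \;=\; \Gamma_g,
\]
and $\mathcal{Q}$ lies in the firmament of $g$.

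For the converse, suppose $\mathcal{Q} \colon S \to \mathscr{Y}'$ meets the trivial log structure locus of $\mathscr{Y}'$ and lies in the firmament of $g$, and endow $S$ with the divisorial log structure induced by $\mathcal{Q}$. At each closed point $s \in S$ where this log structure is nontrivial, the local ring $A_s$ is a DVR with its valuative log structure, and the restriction $\mathcal{Q}_s \colon \Spec A_s \to \mathscr{Y}'$ still lies in the firmament of $g$ because $\Gamma_{\mathcal{Q}_s} \subseteq \Gamma_\mathcal{Q}$. Theorem \ref{thm:firm=firmament} then identifies this with $g$-firmness at $s$. Since $g$ is log smooth and surjective, Remark \ref{remark:etalelocallifts_for_divisorial_case} (which packages Theorem \ref{thm:thibault} with the Log Hensel Lemma \ref{LogHensel}) produces a strict étale cover $V \to S$ and a lift $V \to \mathscr{X}'$ of $\mathcal{Q}$. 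Its restriction to the generic fibre yields a rational point of $X'$ over the finite étale $K$-algebra $V \otimes_A K$, as required.

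The main obstacle I expect is the first step: producing the $K$-birational log smooth model $f'$ of $f$ and extending it to a proper log smooth $S$-model $g$. This invokes the toroidalization theorem (crucially using $\mathrm{char}(K) = 0$) together with a careful spreading-out argument over the Dedekind base, including control of the behaviour of the boundary divisor on each fibre. Once $g$ is in place, the biconditional with the firmament follows cleanly from Theorem \ref{thm:firm=firmament}, Theorem \ref{thm:thibault} (via the Log Hensel Lemma \ref{LogHensel} and Remark \ref{remark:etalelocallifts_for_divisorial_case}), and the functoriality of the firmament.
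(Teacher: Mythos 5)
Your proof follows essentially the same approach as the paper's: toroidalize $f$ using Abramovich--Karu, spread out to a log smooth $S$-model via Poonen's spreading-out theorem, prove the forward direction by functoriality of the firmament, and prove the converse by combining Theorem \ref{thm:firm=firmament} with Theorem \ref{thm:thibault} and the Log Hensel Lemma \ref{LogHensel} (via Remark \ref{remark:etalelocallifts_for_divisorial_case}, which packages exactly the chain of lemmas the paper invokes directly).

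Two small points where the paper is more careful and you elide detail. First, toroidalization yields a \emph{toroidal} morphism, not a priori a log smooth one: the paper needs Remark \ref{remark:toroidal_implies_pseudotoroidal}, Lemma \ref{Dominant=finite kernel}, and Lemma \ref{lemma:pseudotoroidal+dominant+good characteristic => log smooth} to conclude log smoothness from dominance and $\mathrm{char}\,K = 0$; you treat this as part of the toroidalization statement. Second, in your forward direction you compute the firmament of $\mathcal{Q}$ by extending $x$ to $\mathcal{P}$ and using functoriality $\Gamma_{\mathcal{Q}} = \Sigma_g(\Gamma_{\mathcal{P}})$, which requires $\mathcal{P}$ and $\mathcal{Q}$ to be log morphisms out of $S$ with the \emph{same} log structure. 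With the $\mathcal{P}$-induced divisorial log structure this works, and the resulting firmament agrees with the one computed from the $\mathcal{Q}$-induced log structure (the extra rays contribute zero in $\Sigma_{\mathscr{Y}'}(\NN)$), but this identification is worth a sentence, since the theorem statement uses the $\mathcal{Q}$-induced log structure. Neither of these is a genuine gap — the paper itself labels the forward direction ``immediate'' — but both are places where your argument leans on facts that should be cited or justified.
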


In the statement of Theorem \ref{LiftAbr}, for each $S$-point $\mathcal Q$, the log structure on $S$ is the divisorial log structure induced by $\mathcal Q$.

\begin{proof} 
According to \cite[Theorem 1.1]{WeakToroidalization}, there exists a toroidal model $f': X' \to Y'$ of $f$ such that 
\begin{itemize}
    \item $X', Y'$ are smooth proper toroidal embeddings which admit charts Zariski-locally, 
    \item $f'$ is proper and dominant, 
    \item the toroidal boundary divisors of $X', Y'$ are strict normal crossings. 
\end{itemize}
Endow $X'$ and $Y'$ with the divisorial log structures defined by the toroidal boundaries. As $\mathrm{char}(K)=0$, $f'$ is log smooth by Remark \ref{remark:toroidal_implies_pseudotoroidal}, Lemma \ref{Dominant=finite kernel} and Lemma \ref{lemma:pseudotoroidal+dominant+good characteristic => log smooth}. (Alternatively, the existence of $f'$ can be deduced by \cite[Theorem 3.9]{IllusieTemkin14}.)

After inverting finitely many primes of $A$, \cite[Theorem 3.2.1]{Poonen17} provides an $S$-model $g:\mathscr{X}' \to \mathscr{Y}'$ such that 
\begin{itemize}
    \item $\scr X', \scr Y'$ are regular and proper over $S$, 
    \item the closure of the s.n.c.\ divisors of $X', Y'$ are s.n.c.\ on $\scr X', \scr Y'$, 
    \item $g$ is log smooth with the divisorial log structures on $\scr X', \scr Y'$. 
\end{itemize}

    The direct implication is immediate. For the converse, let $\cal Q : S \to \scr Y'$ be such that the generic point of $S$ lands in the locus where the log structure of $\mathscr Y'$ is trivial.
    Equip $S$ with the divisorial log structure induced by $\cal Q$. This log structure is necessarily divisorial at finitely many points of $S$, as the generic point $\Spec K \in S$ does not map into the log structure of $\scr Y'$.
    
    Assume that $\mathcal{Q}$ lies in the firmament of $g$. Let $\bar s\to S$ be any strict geometric point with residue field that is algebraic over the residue field of its image in $S$. Then there exists a prime $ \mathfrak{p} \in S$ such that $\bar s \to \Spec A$ factors through the strict morphism $\Spec A_{\mathfrak{p}} \to S$ as a log map. 
    The induced point $\bar s \to \mathscr{Y}'$ is $g$-firm. Indeed, if $S$ has trivial log structure at $\mathfrak p$ it follows directly from the definition of firmness (cf.~Example \ref{eg:trivial}), while if $S$ has nontrivial log structure at $\mathfrak p$ it follows from Theorem \ref{thm:firm=firmament} and Lemma \ref{lem:firmbasics}\ref{it:firmpointsatgeompoints}, as $\Spec A_{\mathfrak{p}} \to \mathscr{Y}'$ lies in the firmament of $g$. In particular, $\bar s$ lifts to $\mathscr X'$ by Theorem \ref{thm:thibault}, and by Lemma \ref{LogHensel} there is a strict \'etale neighborhood $V_{\bar s}\to S$ of $\bar s$ and a lift $V_{\bar s}\to \mathscr X'$ of $\bar s\to\mathscr X'$. Since $S$ is Noetherian, finitely many such \'etale neighborhoods give a covering of $S$. Thus $\mathcal Q: S\to\mathscr Y'$ lifts \'etale locally on $S$ to $\mathscr X'$.
\end{proof}

\begin{remark}\label{primestoinvert} 
\phantom{a}
\begin{enumerate}[label=(\roman*), ref=(\roman*)]
    \item \label{item:same firmament}
    By shrinking $S$ further, one can assume that $Y'$ and $\mathscr Y'$ have the same cone complex and that $f'$ and $g$ define the same firmament as in Definition \ref{def:firmamentcontactorder}.

    \item A variant of Theorem \ref{LiftAbr} can be formulated without the assumption of dominance. If $f: X \to Z$ is a proper map of $K$-varieties, and $Y \subseteq Z$ is the image of $f$ with reduced scheme structure, then the same statement as above applies to $f: X \to Y$, which is now dominant.
    
    \item In the previous proof, many steps involve inverting primes in $S$ to construct a log smooth model $\mathscr{X}' \to \mathscr{Y}'$:
    \begin{enumerate}
        \item Choosing the models $\mathscr{X}'$ and $\mathscr{Y}'$ of $X'$ and $Y'$ to be regular and proper.
        \item After taking the Zariski closure of the snc toroidal divisor of $X'$ (resp. $Y'$) in $\mathscr{X}'$ (resp. $\mathscr{Y}'$), we can assume that it is an snc divisor on $\mathscr{X}'$ (resp. $\mathscr{Y}'$) after inverting enough primes in $S$. We endow both $\mathscr{X}'$ (resp. $\mathscr{Y}'$) with the induced divisorial log structure, obtaining a map of log schemes $\mathscr{X}' \to \mathscr{Y}'$.

        \item
        We can Zariski-locally obtain a chart 
        \[
        \begin{tikzcd}
        U \arrow{r}{} \arrow[swap]{d}{} & V \arrow{d}{} \\
        \Spec A[Q] \arrow{r}{} & \Spec A[P]
        \end{tikzcd}
        \]
        for the morphism $g : \scr X' \to \scr Y'$. Finitely many such charts are needed as $\scr X', \scr Y'$ are quasicompact. For each such chart,
         we need to invert primes of $S$ to make $U \to V \times_{\Spec A[P]} \Spec A[Q]$ smooth. The map $g : \scr X' \to \scr Y'$ is then pseudo-toroidal (Definition \ref{pseudo-tor}).
        \item 
       Dominant pseudo-toroidal maps such as $g$ become log smooth after inverting finitely many primes on $S$. The primes we must invert are those dividing the orders of the kernel and of the torsion part of the cokernel of the map $\gp P \to \gp Q$. 
    \end{enumerate}
  
\item By Corollary \ref{corollary:etalelocallifts} a point $\mathcal Q:S\to \mathscr Y'$ lies in the firmament of $g$ if and only if it lifts to $\mathscr X'$ \'etale locally on $S$. 
    Thus if the point does not lie in the firmament, there is at least one prime of $S$ that ramifies in all finite extension $L$ of $K$ such that $X'_{\mathcal Q}(L)\neq\emptyset$.
\end{enumerate}
    
\end{remark}

\begin{corollary}\label{cor:abramovich}
    Let $A$ be a Dedekind domain with fraction field $K$ of characteristic 0. Let $X\to Y$ be a proper dominant morphism of integral proper $K$-varieties. Then up to inverting finitely many primes of $A$, there is a toroidal birational model $f':X'\to Y'$, an open subset $U\subseteq Y$ where $Y'\to Y$ is an isomorphism, and a proper $A$-model $\mathscr Y'$ of $Y'$ such that 
    for every $y\in U(K)$ the following are equivalent:
    \begin{enumerate}[label=(\roman*),ref=(\roman*)]
        \item $y$ lies in the firmament of $f'$;
        \item for every prime $\mathfrak p$ of $A$ there is a finite extension $L$ of $K$ unramified at $\mathfrak p$ such that $X_y(L)\neq\emptyset$;
        \item there are finitely many finite extensions $L_1,\dots,L_r$ of $K$ such that $X_y(L_i)\neq\emptyset$ for all $i\in\{1,\dots,r\}$ and for every prime $\mathfrak p$ of $A$ there is $i\in\{1,\dots,r\}$ such that $L_i/K$ is unramified at $\mathfrak p$.
    \end{enumerate}
\end{corollary}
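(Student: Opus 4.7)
The plan is to deduce this corollary directly from Theorem \ref{LiftAbr} together with Remark \ref{primestoinvert}, by translating ``lifts étale locally on $S$'' into the language of unramified finite extensions of $K$. First apply Theorem \ref{LiftAbr} to obtain a toroidal log smooth birational model $f' \colon X' \to Y'$ and a proper log smooth $S$-model $g \colon \mathscr{X}' \to \mathscr{Y}'$; shrink $S$ further by Remark \ref{primestoinvert}\ref{item:same firmament} so that $Y'$ and $\mathscr{Y}'$ share the same cone complex and $f',g$ define the same firmament $\Gamma_{f'} = \Gamma_g \subseteq \Sigma_{Y'}(\NN) = \Sigma_{\mathscr{Y}'}(\NN)$. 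Define $U \subseteq Y$ as the nonempty open subset contained in the isomorphism locus of $Y' \to Y$, in the image of the trivial-log locus $Y' \setminus \partial Y'$, and in the complement of $f(Z)$, where $Z \subset X$ is the non-isomorphism locus of $X' \to X$ (closed in $X$, proper in $X$ by birationality, and with $f(Z) \subsetneq Y$ via the construction in the proof of Theorem \ref{LiftAbr}). For $y \in U(K)$ we then have $X_y = X'_y$, and the associated $K$-point of $Y' \setminus \partial Y'$ extends uniquely by properness of $\mathscr{Y}' \to S$ to an $S$-point $\mathcal{Q} \colon S \to \mathscr{Y}'$; endow $S$ with the divisorial log structure induced by $\mathcal{Q}$.

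Under the identification of firmaments, (i) holds if and only if $\mathcal{Q}$ lies in the firmament of $g$, which by the last item of Remark \ref{primestoinvert} is equivalent to $\mathcal{Q}$ lifting to $\mathscr{X}'$ étale locally on $S$. For (i) $\Rightarrow$ (iii), take such an étale surjective $V \to S$ with a lift $V \to \mathscr{X}'$; quasicompactness of $S$ lets us replace $V$ by a finite disjoint union of connected integral étale $S$-schemes $V_1,\dots,V_r$ (each cut out as a connected étale neighborhood above some prime) whose images still cover $S$, with fraction fields $L_1,\dots,L_r$. Each $L_i/K$ is finite and, by étaleness of $V_i \to S$, unramified at every prime of $A$ in the image of $V_i$; surjectivity covers every prime, and restricting each lift to the generic point of $V_i$ produces an $L_i$-point of $\mathscr{X}'$ above $y$, hence an $L_i$-point of $X'_y = X_y$. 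The implication (iii) $\Rightarrow$ (ii) is immediate.

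For (ii) $\Rightarrow$ (i), fix a prime $\mathfrak{p}$ and the extension $L = L_\mathfrak{p}$ with $X_y(L) = X'_y(L) \neq \emptyset$; let $B$ be the integral closure of $A$ in $L$, so the étale locus of $\mathrm{Spec}(B) \to S$ is open, contains every prime of $B$ above $\mathfrak{p}$, and has open image $U_\mathfrak{p} \ni \mathfrak{p}$ in $S$. The $L$-point of $X'_y$ yields an $L$-point of $\mathscr{X}'$ above $y$, which by properness of $\mathscr{X}' \to S$ extends uniquely to a $\mathrm{Spec}(B)$-point of $\mathscr{X}'$ lying over $\mathcal{Q}$; restricting to the étale locus produces an étale neighborhood of $\mathfrak{p}$ in $S$ together with a lift of $\mathcal{Q}|_{U_\mathfrak{p}}$. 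Ranging over all primes yields an étale surjective cover of $S$ lifting $\mathcal{Q}$, so $\mathcal{Q}$ lifts étale locally, giving (i).

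The main obstacle I anticipate is fixing the correct interpretation of ``$y$ lies in the firmament of $f'$'' for a bare $K$-point $y$: the meaningful statement, afforded by Remark \ref{primestoinvert}\ref{item:same firmament}, is that the induced $S$-point $\mathcal{Q}$ has its image in the cone complex $\Sigma_{\mathscr{Y}'}(\NN) = \Sigma_{Y'}(\NN)$ contained in $\Gamma_{f'} = \Gamma_g$. Once this translation and the construction of $U$ are pinned down, the equivalences reduce to the valuative criterion for the proper morphism $\mathscr{X}' \to S$ and quasicompactness of $S$.
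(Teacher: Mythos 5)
Your proposal takes essentially the same route as the paper's own proof: apply Theorem \ref{LiftAbr}, invoke Remark \ref{primestoinvert}\ref{item:same firmament} to identify firmaments, translate ``lifts \'etale locally'' into finite unramified extensions via Corollary \ref{corollary:etalelocallifts}, and finish with Noetherian quasicompactness of $\Spec A$. You spell out several steps the paper leaves implicit (the passage from an \'etale cover of $S$ to finitely many connected integral $V_i$ and their function fields; the role of properness in extending $K$-points of $Y'$ to $S$-points of $\mathscr Y'$; the need for $U$ to land inside the trivial-log locus of $Y'$ so that the $S$-points $\mathcal Q$ satisfy the hypothesis of Theorem \ref{LiftAbr}). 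These are useful clarifications.

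The one step that is not justified is your definition of $U$: you require $U$ to avoid $f(Z)$, where $Z\subset X$ is the non-isomorphism locus of the modification $X'\to X$, and you assert that $f(Z)\subsetneq Y$ ``via the construction in the proof of Theorem \ref{LiftAbr}.'' That claim is not established there and is false in general: the centers of the blowups in weak toroidalization may well dominate $Y$, in which case $f(Z)=Y$ (since $f$ is proper and $Y$ irreducible) and no nonempty open $U$ avoids $f(Z)$. You introduced this restriction precisely to force $X_y=X'_y$, which is the only point where your argument and the paper's diverge. Note that the paper's own proof works throughout with $X'_y$ rather than $X_y$, so it sidesteps this comparison; the implication $X'_y(L)\neq\emptyset\Rightarrow X_y(L)\neq\emptyset$ is free (there is a morphism $X'\to X$ over $Y$ over $U$), but the converse, which you would need for (ii)$\Rightarrow$(i) with $X_y$ literally as written, does not follow from general principles when the $L$-point of $X_y$ is singular (Lang--Nishimura requires a smooth source). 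You correctly noticed that some reconciliation between $X_y$ and $X'_y$ is needed, but the proposed fix does not work as stated.
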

\begin{proof}
    Let $U\subseteq Y$ be an open subset where the birational morphism $Y'\to Y$ is an isomorphism.
    From Remark \ref{primestoinvert}\ref{item:same firmament} and Corollary \ref{corollary:etalelocallifts} it follows that a point $y\in U(K)$ lies in the firmament of $f'$ if and only if for every prime $\mathfrak p$ of $A$ there is an \'etale neighborhood $V\to\Spec A$ of $\mathfrak p$ such that $X'_y(L)\neq\emptyset$ where $L$ is the function field of $V$. Since $A$ is Noetherian, finitely many such $V$ cover $\spec A$.
\end{proof}

\subsubsection{Examples}\label{ExamplesAbr}
To illustrate Theorem~\ref{LiftAbr}, we compute the étale local lift of a given firm point along a general toric morphism of affine spaces in Example~\ref{Eg.affine space}, and provide concrete instances of this construction in Examples~\ref{ex:23firmlocus} and~\ref{Inverte_cokernel}. Example~\ref{Diag} describes the firm locus in a case of a non-dominant map (a diagonal map).
\begin{example}\label{Eg.affine space}
When the toroidal embeddings are as simple as affine space, one can explicitly construct the étale base change giving the lift for a given firm point. Let $R$ be a discrete valuation ring with uniformizer $\pi$ and fraction field $K$ of characteristic $0$. Consider a dominant toric map\footnote{Note that the schemes in these examples are not proper, unlike the ones we work with in Theorem \ref{LiftAbr}. Properness only ensures that $K$-points extend to $R$-points on the models, which is not required here since we are directly studying the lifting property of $R$-points on the models themselves.}
\begin{align*}
    \bb A^m_R=\Spec R[x_1,\dots,x_m] &\xrightarrow{f} \bb A^n_R=\Spec R[y_1,\dots,y_n]\\
    (x_1,\dots,x_m) & \mapsto (x_1^{a_{1,j}} \cdots x_m^{a_{m,j}})_{1\leq j\leq n}
\end{align*}
with $a_{i,j}\in\NN$ for all $i$ and $j$. Then, the firmament is given by
$$ \Gamma_f:= \{f_{\Sigma}(\bb N^m)
\subset \bb N^n\},$$
where $f_{\Sigma}$ is the induced map on fans. 
Assume that $Q:\Spec R \to \bb A^n_R$ lies on the firmament $\Gamma_f$. 
If $\alpha: R[y_1,\dots,y_n] \to R, y_j \mapsto u_{y_j}\pi^{e_{y_j}}$ is the morphism defining $Q$, with $u_{y_j} \in R^{*}$ for all $j \in \{1,...,n\}$, this implies that $$(e_{y_1},\dots,e_{y_n}) \in \Gamma_f.$$
In particular, there exists $(e_{x_{1}},\dots,e_{x_{m}}) \in \bb N^m$ such that $f_{\Sigma}(e_{x_{1}},\dots,e_{x_{m}})=(e_{y_1},\dots,e_{y_n})$. We have
\begin{align}\label{A}
    \alpha(y_j) = u_{y_j}\pi^{e_{y_j}}=u_{y_j}\pi^{a_{1,j}e_{x_1}+\dots+a_{m,j}e_{x_{m}}}.\
\end{align}
To lift $Q$ to $\bb A^m_R$, it is enough to construct a map $\beta: R[x_1,\dots,x_m] \to R$ such that $\alpha=\beta \circ g$, where $g:R[y_1,\dots,y_n]\to R[x_1,\dots,x_m]$ is the morphism corresponding to $f$, i.e.,
\begin{align}\label{B}
 \alpha(y_j) = \beta (x_{1}^{a_{1,j}}\cdots x_m^{a_{m,j}}) 
    = \beta(x_1)^{a_{1,j}}\cdots\beta (x_m)^{a_{m,j}}
\end{align}
for all $j\in\{1,\dots,n\}$.
Comparing \eqref{A} and \eqref{B}, if we solve the system
\begin{equation}\label{system}
       \left\{
    \begin{array}{ll}
        u_{y_1} &= u_{x_1}^{a_{1,1}}\cdots u_{x_m}^{a_{m,1}} \\
        \vdots\\
        u_{y_n}&= u_{x_1}^{a_{1,n}}\cdots u_{x_m}^{a_{m,n}} ,
    \end{array}
\right.
\end{equation}
for $u_{x_1}, \dots,u_{x_m}$ units in some extension $R'$ of $R$,
and then set $\beta(x_i):=u_{x_i}\pi^{e_{x_i}}$, the point $Q$ lifts to $\bb A^m$ over $\spec R'$. Since $f$ is dominant, $m\geq n$ and the map on coordinate rings of $f$ is injective. In particular, the matrix $$(a_{i,j})_{\substack{1 \leq i \leq m \\ 1 \leq j \leq n}}$$ has full rank $n$, and therefore, the system $\eqref{system}$ is consistent. Since $m \geq n$, it has at least one solution in $\overline{K}$.
If $(u_{x_1},...,u_{x_m})$ is a solution of \ref{system}, we set $R':=R[u_{x_1},\dots,u_{x_m}]$. Therefore $Q$ lifts locally on $\Spec R$ to $\bb A^m$. The condition that $R'$ is \'etale over $R$ depends only on the matrix $(a_{i,j})_{1\leq i\leq m,1\leq j\leq n}$. We work out some concrete examples below.
\end{example}
\begin{remark}
A similar computation can be carried out when $R$ is a principal ideal domain rather than a discrete valuation ring. In this case, the local lift can be chosen to be étale after possibly shrinking $\Spec R$ to a smaller open subset $\Spec R'$, where $R'$ is obtained from $R$ by inverting a suitable finite set of primes depending only on $f$. As a consequence, if $f':X'\to Y'$ is a toric morphism of split toric varieties, one can take $r=1$ in Corollary \ref{cor:abramovich}.
\end{remark}

\begin{example} \label{ex:23firmlocus}
Let $R$ be a discrete valuation ring with uniformizer $\pi$, and where the prime $3$ is invertible. Then 
$$f: X=\Spec R[x,y] \to Y=\Spec R[s,t], \quad
(x,y)\mapsto (x^2y^3,x)
$$ has a global chart $\bb N^2 \to \bb N^2, (a,b) \mapsto (2a+b,3a)$. The kernel of the induced map of groups $\bb Z^2 \to \bb Z^2$ is trivial, and its cokernel is computed through the Normal Smith Form to be $\bb Z/3 \bb Z$. In particular $f$ is log smooth if and only if $3$ is invertible in $R$. Let $Q: \Spec R \to Y$ be  the point $(u_s\pi^{e_s},u_t\pi^{e_t})$, with $u_s,u_t$ units in $R$. Then $Q$ lies in the firmament of $f$ if and only if $(e_s,e_t)=(2a+3b,a)$ for some $(a,b) \in \bb N^2$. To lift $Q$ étale locally to $X$, it is enough to solve 
$$
   \left\{
    \begin{array}{ll}
        u_{s} &= u_{x}^{2}u_{y}^{3} \\
        u_{t}&= u_{x}.
    \end{array}
\right.
$$
One of the possible solutions is $u_x=u_t, u_y= \sqrt[3]{u_su_t^{-2}}$. We then have a lift $ \Spec R[u_x,u_y]=\Spec R[u_y] \to X$ given by the point $(u_x\pi^a,u_y \pi^b)$, and $\Spec R[u_x,u_y] \to \Spec R$ is étale because $3$ is assumed to be invertible. In other words, $Q$ lifts to $X$ after an \'etale extension of $R$.
\end{example}

\begin{example}\label{Inverte_cokernel}
   Let $R$ be a discrete valuation ring with uniformizer $\pi$. Then 
   \[f: X= \Spec R[x,y] \to Y=\Spec R[s,t], (x,y) \mapsto (x^2y,xy^2)\]
   has a global chart $\bb N^2 \to \bb N^2, (a,b) \mapsto (2a+b,a+2b)$. The kernel of the induced map of groups is trivial and its cokernel is $\bb Z/3\bb Z$. Let $Q: \Spec R \to Y, (s,t) \mapsto (u_s\pi^{e_s},u_t\pi^{e_t})$, with $u_s,u_t$ units in $R$. Then $Q$ lies in the firmament of $f$ if and only if $(e_s,e_t)=(2a+b,a+2b)$ for some $(a,b) \in \bb N^2$. To lift $Q$ \'etale locally to $X$, it is enough to solve 
   $$
   \left\{
    \begin{array}{ll}
        u_{s} &= u_{x}^{2}u_{y}^{} \\
        u_{t}&= u_{x}u_y^2.
    \end{array}
\right.
$$
Note that all the solutions require to take a $3$-th root of a unit. 
In particular, if $f$ is log smooth, i.e., if $3$ is invertible in $R$, then $Q$ lifts \'etale locally to $X$ (for example, choosing $(u_x,u_y)=(\sqrt[3]{u_s^2u_t^{-1}},\sqrt[3]{u_s^{-1}u_t^{2}})$). As this example shows, the order of the torsion of the cokernel doesn't have to be among the $a_{i,j}$.
\end{example}

\begin{example}
In Example \ref{Disjoint_union}, given two distinct primes $q_1$ and $q_2$, and a map $\Spec \bb Z \to \bb A^1$ given by $t=q_1^2q_2^3$, the contact order of the restriction to $\Spec \bb Z_{(p)} \to \bb A^1$ is $2$ if $p=q_1$, $3$ if $p=q_2$ and $0$ otherwise. In particular, every such restriction lies in the firmament $2\bb N \cup 3 \bb N$. Note that the point doesn't lift under $\bra{2} \sqcup \bra{3}: \Aff^1 \sqcup \Aff^1 \to \Aff^1$ as a $\bb Z$-point to $\Aff^1 \sqcup \Aff^1$, though it lifts locally.
\end{example}

\begin{example}\label{Diag}
    Consider the diagonal map $\Delta : \bb A^1 \to \bb A^2$. It is toric but not dominant. For example, the point $(\pi,-\pi)$ lies in the firmament, which is the diagonal in $\bb N \times \bb N$, but doesn't lift to $\bb A^1$. 
    
    The map $\Delta : X \coloneqq \Aff^1 \to Y \coloneqq \Aff^2$ factors through the log blowup $B \coloneqq Bl_{\vec 0} \Aff^2$ at the monoidal ideal $(x, y)$. See Figures \ref{fig:blowupA2}, \ref{fig:ex:Diag}. 

    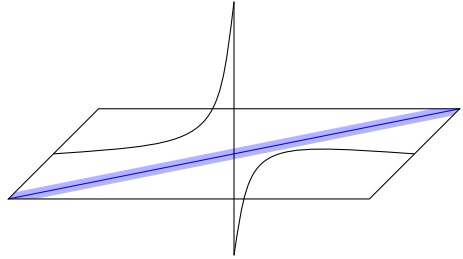
\begin{figure}[h]
    \centering
    \begin{tikzpicture}[scale=1.5]

    \draw[blue!30, line width = 4] (-2, 0) -- (2, 0.8);
    \fill[white] (-2, 0) rectangle (-1.5, -.5); 
    \fill[white] (-2, 0) -- (-1.2,0.8) -- (-1.6, 0.8) -- (-2.4, 0) -- cycle; 
    \fill[white] (2, 0.8) rectangle (1.5, 1.5); 
    \fill[white] (2, 0.8) -- (1.2,0) -- (1.2, 0) -- (2.4, 0.8) -- cycle; 
    \draw[blue] (-2, 0) -- (2, 0.8);
    
    \draw (-2,0) -- (-1.2,0.8) -- (2,0.8) -- (1.2,0) -- cycle;
    
    \draw (0,-0.5) -- (0,1.75);
    
    \draw (-1.6, 0.4) .. controls (-.15, .5) .. (0, 1.75);
    \draw (1.6, 0.4) .. controls (.15, .5) .. (0, -0.5);

    \end{tikzpicture}
    \caption{The firm locus $Y(\Delta)$ for the map $X \to Y$ given by the diagonal $\Delta : \Aff^1 \to \Aff^2$ in Example \ref{Diag} is the completion of the strict transform of the diagonal $\Delta$ in the blowup $B = Bl_{\vec 0} \Aff^2$. }
    \label{fig:ex:Diag}
\end{figure}

    By Lemma \ref{lem:firmlocusfactorthroughblowupaltn}, the firm locus $Y(\Delta) \subseteq Y$ factors through $B$ as well. As the factorization $f: \Aff^1 \to B$ is strict, the firm locus of $f$ is the set-theoretic image by Theorem \ref{thm:firmlocus}. The set theoretic image is represented by the formal scheme given by the completion of $B$ at $\Aff^1$, which is $Y(\Delta)$. 
    
\end{example}

\subsection{Stable maps}\label{ss:stablemaps}

Our applications so far have mainly concerned lifting points valued in Dedekind rings like number fields or in discrete valuation rings. Nothing about the definition of firmness or our results requires us to work in the arithmetic or mixed characteristic setting.

We now consider firmness for maps $f : X \to Y$ of log smooth, projective schemes over $\CC$ and $S$ a family of log smooth proper, connected curves over a base log scheme $T$. See the original 
\cite{fkatomodulilogcurves, gross2013logarithmic, abramovich2014stable, chen2014stable} or see \cite{RdC} for an expository account.

\begin{definition}
    A \emph{log prestable curve} (over a base log scheme $T$) is a morphism $\pi : C \to T$ of log schemes which is integral, saturated, and log smooth on which the fibers are geometrically connected of dimension one. A \emph{log prestable map} is a map $f_C : C \to X$ of log schemes with source a log prestable curve $C \to T$. A log prestable map is \emph{stable} if, for each geometric point $\bar t \to T$, the map $f_C$ restricts to a stable map on underlying schemes on the fiber $f_C : C_{\bar t} \to C \to X$. See \cite{fkatomodulilogcurves} for discussion of the genera and number of marked points of such curves, which are locally constant functions on $T$.

 Write $\Mpl(X)$ for the log algebraic stack parameterizing all log prestable maps to $X$ of genus $g$ with $n$ marked points. Let $\Msl(X) \subseteq \Mpl(X)$ be the strict open substack of log stable maps \cite{gross2013logarithmic}. These stacks are highly disconnected, with connected components indexed by ``contact orders'' of the marked points with the log structure of $X$. 
\end{definition}

The na\"ive ``evaluation map'' which restricts a log stable map $f_C \colon C \to X$ to its $n$ marked points does not give a log map 
\[ev : \Mpl(X) \dashrightarrow X^n.\]
Instead, there is an evaluation map to the log evaluation stack \cite{evaluationspacefamiliesofstandardlogpoints}:
\[
ev : \Mpl(X) \to \left(\wedge X\right)^n,
\]
which we now recall.

Consider the quotient $\af{} \to B\GG_m$ of the map $\Aff^1 \to \pt$ by $\GG_m$. Write $\scr Q \in \af{}$ for the stacky origin of Remark \ref{rmk:originclosedpoint}, so that the composite 
\begin{equation}\label{eqn:universalfamilyofstandardlogpoints}
    \scr Q \in \af{} \to B\GG_m
\end{equation}
is an isomorphism on underlying stacks. This composite \eqref{eqn:universalfamilyofstandardlogpoints} is  the universal ``family of standard log points'' as in \cite{evaluationspacefamiliesofstandardlogpoints}.


\begin{definition}[{The log evaluation stack \cite{evaluationspacefamiliesofstandardlogpoints}}]
    For a log scheme, log algebraic stack, or any (pseudo-)functor on f.s.\ log schemes $Z$, define its log evaluation stack $\wedge Z$ over $B\GG_m$ by
    \begin{equation}\label{eqn:logevalstack}
        \left\{
        \begin{tikzcd}
                    &\wedge Z  \ar[d]      \\
            T \ar[r] \ar[ur, dashed]      &B\GG_m
        \end{tikzcd}
        \right\} \coloneqq Z(T \times_{B\GG_m} \scr Q).
    \end{equation}
    The evaluation stack of an Artin cone $\scr B = \af{P}$ splits into components indexed by the set $\Hom(\af{}, \scr B) = \Hom(P, \NN)$ \cite[Example 2.47]{logjets}. 
\end{definition}

Given a map $f : X \to Y$, there is an induced commutative diagram
\[
\begin{tikzcd}
    \Mpl(X) \ar[r] \ar[d]         &\left(\wedge X\right)^n \ar[d]        \\
    \Mpl(Y) \ar[r]         &\left(\wedge Y\right)^n.
\end{tikzcd}
\]

In this way, the log evaluation map defined on log prestable maps is functorial.

\begin{remark}
    Beware that the analogous diagram for log \emph{stable} maps is not commutative: 
    \[
    \begin{tikzcd}
        \Msl(X) \ar[r] \ar[d] \ar[dr, phantom, "\times"]         &\left(\wedge X\right)^n \ar[d]        \\
        \Msl(Y) \ar[r]         &\left(\wedge Y\right)^n.
    \end{tikzcd}
    \]
    This can be seen even when $f : X \to Y$ is a morphism of smooth, projective schemes with trivial log structures, in which case $\Msl(X) = \Ms(X)$ and $\Msl(Y) = \Ms(Y)$. This is because each evaluation map $\Ms(X) \to \wedge X$ factors the $\psi$-class map $\Ms(X) \to B\GG_m$ encoding the cotangent line bundle at the corresponding marked point. These $\psi$-classes are incompatible with stabilization. 

    If $\pi : \Ms[g, n + 1] \to \Ms$ forgets the $(n+1)$st marked point for example, then \cite[Lemma 1.3.1]{kock-psi-notes}
    \[\pi^* \psi_i = \psi_i + D.\]
    Here $D \subseteq \Ms[g, n+1]$ is the divisor whose generic point has a single node joining a genus-$g$ smooth curve and a rational component which contains the points $i$ and $n+1$. The map $\pi$ factors by first forgetting the $(n+1)$st marked point and then stabilizing 
    \[\Ms[g, n+1] \to \Mp \to \Ms,\]
    and the first map identifies universal curves and hence $\psi$-classes. 

    To see this arise in the context of a stabilization morphism, take $X = \PP^1$ and consider degree-one maps $\Ms(\PP^1, 1)$. These all consist of a curve $C = C' \cup \PP^1$ with a rational component mapping isomorphically onto $X$ and the rest $C'$ being contracted. Take $f$ to be the map $X \to \pt$ so that a stable map $C \to X$ is sent to the stabilization of $C$. 
\end{remark}

Let $f : X \to Y$ be a morphism of projective, log smooth log schemes and $Y(f) \subseteq Y$ be its firm locus. The factorization $X \to Y(f) \subseteq Y$ results in a factorization of log evaluation stacks 
\[\wedge X \to \wedge Y(f) \to \wedge Y.\]
We define $\wedge Y(f)$ by the functor of points, though it may not be representable in a nice category. It still constrains the set of connected components of $\wedge Y$ in which the image of $\wedge X$ can lie.

\begin{proposition}
    We have a commutative diagram
    \[
    \begin{tikzcd}
        \Mpl(X) \ar[r] \ar[dd]         &\pra{\wedge X}^n \ar[d]       \\
                &\pra{\wedge Y(f)}^n \ar[d]        \\
        \Mpl(Y) \ar[r]         &\pra{\wedge Y}^n.
    \end{tikzcd}
    \]
    In order for a log prestable map $C \to Y$ to lift to $X$, it is necessary that each of the $n$ marked points of $C$ lands in the firm locus of the map $f$. 
\end{proposition}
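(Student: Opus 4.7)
The plan is to show that the morphism $f : X \to Y$ itself factors through the firm locus $Y(f) \subseteq Y$, then apply the evaluation stack functor $\wedge(-)$, and finally read off the two conclusions by diagram chase.

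First I would verify that $f$ is itself $f$-firm. By Lemma \ref{lem:firmbasics}\ref{firmbasics6} firmness is closed under precomposition, so once this is known, every map $T \to X \to Y$ is $f$-firm and hence $X \to Y$ factors uniquely through the subpresheaf $Y(f) \subseteq Y$. To check $f$-firmness of $f$, at each strict geometric point $\bar x \to X$ let $\bar w$ denote the image of $\bar x$ under the f.s.\ diagonal $\Delta : X \to X \times_Y^{\mathsf{fs}} X$. Since $\Delta$ splits the first projection $\pr_1$, passing to characteristic monoids at $\bar w$ produces a factorization
\[
\bar M_{X, \bar x} \xrightarrow{\pr_1^*} \bar M_{X \times_Y^{\mathsf{fs}} X, \bar w} \xrightarrow{\Delta^*} \bar M_{X, \bar x}
\]
of the identity, as required by Definition \ref{def:firm}.

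Next I would apply the evaluation stack functor $\wedge(-)$ defined pointwise by \eqref{eqn:logevalstack}. Because this definition is formulated entirely via Hom-sets into the target, $\wedge(-)$ is functorial and sends monomorphisms of presheaves to monomorphisms. Hence the factorization $X \to Y(f) \hookrightarrow Y$ induces
\[
\wedge X \to \wedge Y(f) \to \wedge Y,
\]
and similarly after taking $n$-fold products, supplying the right-hand column of the diagram.

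For the horizontal maps, the evaluation $\Mpl(Z) \to (\wedge Z)^n$ is natural in $Z$ by construction: restricting the universal log prestable curve over $\Mpl(Z)$ along each marked section classifies a $T$-point of $\wedge Z$, and this assignment is plainly functorial in $Z$. Naturality with respect to $f$ yields the outer commutative rectangle, and inserting the middle row through $(\wedge Y(f))^n$ completes the diagram. The necessary condition on marked points then follows by chasing: a lift $\tilde f_C : C \to X$ of a log prestable map $f_C : C \to Y$ produces a point of $\Mpl(X)$ whose image in $(\wedge Y)^n$ equals that of $f_C$ and, by commutativity, factors through $(\wedge Y(f))^n$. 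This is precisely the statement that each marked section of $f_C$ lands in the firm locus of $f$. The only nontrivial step is $f$-firmness of $f$ itself via the diagonal; everything else is formal.
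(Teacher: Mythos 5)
Your proof is correct and takes essentially the same approach as the paper, which states this proposition without an explicit proof because the ingredients appear in the surrounding discussion: the paper asserts just after Definition 1.7 that $f$ factors through $Y(f)$, and the paragraph immediately preceding the proposition records the factorization $\wedge X \to \wedge Y(f) \to \wedge Y$ and the functoriality of the evaluation map. Your diagonal argument for $f$-firmness of $f$ is valid, though there is a slightly quicker route the paper has in mind: every composite $T \to X \to Y$ has a tautological log lift (namely $T \to X$ itself), and a log lift is always firm by the implication $(1)\Rightarrow(2)$ in the introduction combined with Lemma~\ref{lem:firmbasics}\ref{firmbasics iv}; your diagonal simply makes the retraction of characteristic monoids explicit, which is the same fact unpacked.
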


The following example shows that the type of a log prestable map doesn't in general determine its firmament.
\begin{example}\cite[Examples]{gross2013logarithmic}\label{gross2013logarithmic}
Let $X=\mathbb{P}^1$ endowed with the divisorial log structure induced by the boundary divisor $D=\{0\}$. Consider two scheme-theoretic maps from smooth source curves $f_i : C_i^\circ \to X^\circ$ for $i = 1, 2$. Suppose $f_1^{-1}(D) = \{x_1, x_2, \cdots, x_n\}$ is a finite, nonempty set of points and $f_2^{-1}(D) = C_2^\circ$ is the whole curve. We will promote $f_1, f_2$ to log prestable map. By “type” below, we mean the type of the object in 
$\underline{GS}(\overline{\mathcal{M}})$ \cite[Definitions 1.7 and 1.10]{gross2013logarithmic} that corresponds to a prestable log map. 

Endow $C_1$ with the divisorial log structure at $\{x_1, \cdots, x_n, y_1, \cdots, y_s\}$, where $y_i \in C_1^\circ$ are points mapping away from $D \subseteq X$. Then $f_1$ is a log prestable map whose type consists of $n + s$ maps $u_{x_i}, u_{y_j} : \NN \to \NN$ of which only the $u_{y_j}$'s are the zero map. The firmament of $f_1$ is the union of the images of the $u_{x_i}$, as taking the dual gives the same map $u_{x_i}$. The type determines the firmament of $f_1$ and not conversely. 

There is no way to promote $f_2$ to a log prestable map without generic log structure coming from a base log scheme, as points with trivial log structure cannot map to points $D \subseteq X$ with nontrivial log structure. Let $Q$ be a sharp, nonzero, f.s.\ monoid and $S = \spec k$ the spectrum of a field with constant log structure $\bar M_S = \underline{Q}$ and $M_S = k^* \times \bar M_S$. 

Suppose $C_2$ has $n$ marked points $x_i$. Any choice of maps 
\[h : \NN \to Q, \qquad u_{x_i} : \NN \to \NN\]
with $h$ nonzero promotes $f_2$ to a log prestable map $C_2 \to X$ over $S$, where $C_2$ has generic log structure $Q$. The firmament is the union of the duals $h^\vee : Q^\vee \to \NN$ and $u_{x_i}^\vee = u_{x_i} : \NN \to \NN$ inside $\NN = \Sigma_X(\NN)$. The type of $f_2$ is merely the set $\{u_{x_i}\}$ of maps and does not include the information of the map $h$, so the type need not determine the firmament. 
 
\end{example}

\appendix

\section{Background}
\label{appendixLog}

We give an idiosyncratic introduction to the circle of ideas surrounding log schemes, Artin fans, and toroidal embeddings designed for brevity and relevance. This should not be the introduction of any reader to log schemes, for which 
\cite{katooriginal, ogusloggeom, loggeometryhandbook, RdC} are better-suited.

\subsection{Basics on Artin fans}
Any log scheme $X$ admits a natural map to a combinatorial object, called the \emph{Artin fan of $X$} and denoted by $\af X$, obtained by gluing local charts for $X$ in an appropriate sense. Here, in \ref{Artin fan of a log alg stack} and in \ref{ss:AFvsconecomplexes}, we discuss Artin fans and show that they are equivalent to the cone complexes used in \cite{abramovich09ClayNotes}.

\begin{definition}
    A \emph{monoid} for us is an integral, saturated, finitely generated, commutative, unital semigroup. A monoid $P$ is sharp if its group of units is zero $P^* = 0$.  
    
    Define the category of \emph{cones}%
    \footnote{This name is justified by the dual cone, traditionally viewed as a submonoid of $\RR^n$.}
    to be the opposite category of sharp monoids 
    \[\Cones \coloneqq \Mons^{\# \rm op}.\]
    Write $\Cone P$, $\bar M_\sigma$ for the cone and monoid corresponding to a monoid $P$ and a cone $\sigma$. We view cones as functors on monoids
    \[(\Cone P) (Q) \coloneqq \Hom(P, Q).\]
\end{definition}


\begin{example}\label{ex:affinetoricvars}
    For any integrally closed ring $R$, (normal) affine toric varieties over $R$ are precisely the schemes of the form $X = \Spec R[P]$ for some torsion-free monoid $P$. 

    Let $T \subseteq X$ be the dense torus $\Spec R [\gp P]$. Toric varieties admit sheaves of monoids
    \[M_X \coloneqq \{f \in \OO_X \, | \, f|_T \in \OO_T^*\}.\]
    Under the inclusion $M_X \subseteq \OO_X$, the units are identified $M_X^* = \OO_X^*$. Set $\bar M_X\coloneqq M_X/M_X^*$. All (f.s.) log structures are locally based on these sheaves. 
\end{example}

\begin{definition}
    A \emph{face} of a monoid $P$ is a submonoid $Q\subset P$ such that for all $a,b \in P$, if $a+b\in Q$, then $a, b \in Q$. They are exactly the kernels of maps between sharp f.s.\ monoids. 
    
    A \emph{face localization} is a quotient $P \to P/Q$ under which the face $Q$ is the preimage of $0 \in P$. Face localizations are dual to \emph{face inclusions of cones} under the identification $\Cone P = \Hom(P, \NN)$. 
    
    An \emph{cone space} is a ``space obtained by gluing cones along faces", i.e. the colimit of a diagram whose objects are cones and whose arrows are face inclusions. This colimit is in the category of presheaves on cones. 
\end{definition}

\begin{definition}\label{def:artin}
    An \emph{Artin cone} is the stack quotient of a (normal) affine toric variety over $\bb Z$ by the action of its dense torus. In other words, Artin cones are stacks on schemes of the form
    \[
    \af P := \left[\Spec\bb Z[P]/\Spec\bb Z[\gp P]\right]
    \]
    where $P$ is a cone. Write $$\af{} = \af{\NN} = \bra{\Aff^1/\GG_m}.$$ An \emph{Artin fan} $\scr B$ is a log algebraic stack which admits a strict \'etale cover by Artin cones $\{\af{P_i} \to \scr B\}$. 
\end{definition}

For example, the log algebraic stack $\af{} \times B\ZZ/3$ is an Artin fan in our sense. If $\scr B$ is an Artin fan, the map $\scr B \to \Log$ is \'etale and in particular of DM type.

If $\scr B \to \Log$ is an \'etale, representable map from an algebraic stack over a separably closed field, then $\scr B$ is conversely an Artin fan \cite[Lemma 2.3.1]{birationalinvarianceabramovichwise}. The statement is false without representability \cite[Remark 2.3.10]{birationalinvarianceabramovichwise}.


\begin{remark}
    The sheaf of monoids $\bar M_X$ defined in Example \ref{ex:affinetoricvars} descends to the stack quotient $\bra{X/T}$. It also glues along faces to provide a sheaf $\bar M_{\scr B}$ on any Artin fan $\scr B$. 
\end{remark}

Artin cones are equivalent to cones, which allows us to see cones and cone stacks as stacks on schemes.

\begin{theorem}[{\cite[Theorem 6.11]{tropicalcurvesmodulicones}}]
    The functor
    \[\Cones \longsimeq \cat{Artin Cones}; \qquad \sigma \mapsto \af{\bar M_\sigma}\]
    is an equivalence.
\end{theorem}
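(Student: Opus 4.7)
The plan is to verify the two criteria for an equivalence: essential surjectivity and full faithfulness of the functor $F \colon \Cones \to \cat{Artin Cones}$, $\sigma \mapsto \af{\bar M_\sigma}$. Essential surjectivity is immediate: by Definition \ref{def:artin}, every Artin cone is of the form $\af{P}$ for some sharp f.s.\ monoid $P$, and if $\sigma$ denotes the cone with $\bar M_\sigma = P$, then $F(\sigma) = \af{P}$.

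For full faithfulness, I would invoke the representing property of Artin cones in the $2$-category of log algebraic stacks: for any such $X$, there is a natural bijection
\[
\Hom(X, \af{P}) \;\cong\; \Hom_{\Mons}\bigl(P, \Gamma(X, \bar M_X)\bigr),
\]
which is a standard feature of the theory (cf.\ \cite{wisebounded, birationalinvarianceabramovichwise}), reflecting the fact that $\af{P} = [\Aff_P / T_P]$ strict-\'etale classifies log structures equipped with a global chart by $P$. Applying this with $X = \af{Q}$, the problem reduces to computing $\Gamma(\af{Q}, \bar M_{\af{Q}})$. The atlas $\Aff_Q \to \af{Q}$ is strict, so the pullback of $\bar M_{\af{Q}}$ is the characteristic sheaf of $\Aff_Q$, which for sharp $Q$ has global sections $Q$; since the torus $T_Q$ acts trivially on the characteristic sheaf, these descend to give $\Gamma(\af{Q}, \bar M_{\af{Q}}) = Q$. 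Combining,
\[
\Hom_{\cat{Artin Cones}}(\af{Q}, \af{P}) \;\cong\; \Hom_{\Mons}(P, Q) \;=\; \Hom_{\Cones}(\sigma_Q, \sigma_P),
\]
where $\sigma_P, \sigma_Q$ are the cones corresponding to $P, Q$. This is exactly full faithfulness of $F$.

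The main obstacle is the universal property itself: one must verify that a morphism of log stacks $X \to [\Aff_P / T_P]$ --- which \emph{a priori} carries $T_P$-torsor data --- is genuinely equivalent to a monoid map $P \to \Gamma(X, \bar M_X)$, with no residual ambiguity. The sharpness of $P$ and the triviality of the $T_P$-action on characteristic sheaves are precisely what kills the torsor data at the level of $\bar M$; this is the content of the fact that Artin cones moduli-represent $P$-charts. I would cite this from \cite{wisebounded, birationalinvarianceabramovichwise} rather than reconstruct it, since the rest of the argument is a formal consequence.
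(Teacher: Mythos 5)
The paper does not prove this statement; it is imported wholesale as \cite[Theorem~6.11]{tropicalcurvesmodulicones}, so there is no in-text argument to compare against. Your reconstruction is correct and follows the standard route, which is essentially the one in the cited reference and is consistent with how the present paper handles related facts (e.g.\ the proof of Lemma~\ref{lem:AFmapstocone} reduces precisely to the special case $\Hom(X,\af{}) \cong \Gamma(X, \bar M_X)$).

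Two points worth making explicit if you were to write this up fully. First, for the computation $\Gamma(\af Q, \bar M_{\af Q}) = Q$: you correctly reduce to the atlas $\Aff_Q$ by triviality of the $T_Q$-action on $\bar M$, and then you need that $\Gamma(\Aff_Q, \bar M_{\Aff_Q}) = Q$. This holds because $\Aff_Q = \Spec\ZZ[Q]$ with $Q$ sharp is irreducible and atomic in the sense of Definition~\ref{def:atomic}, with the closed torus orbit having stalk $Q$ and all cospecialization maps being face localizations, so a global section is determined by and freely specified by its germ there. Second, your final paragraph correctly flags that $\cat{Artin Cones}$ is a priori a $2$-category of stacks, so ``fully faithful'' must mean an equivalence on mapping groupoids, and the representability statement $\Hom(X,\af P)\simeq \Hom_{\Mons}(P,\Gamma(X,\bar M_X))$ is exactly what asserts these groupoids are discrete. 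That is the genuine content of the theorem and is appropriately delegated to \cite{wisebounded, birationalinvarianceabramovichwise}; everything else is formal.
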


\subsection{Log schemes}\label{Artin fan of a log alg stack}

We assume all schemes and algebraic stacks are locally of finite type and all log structures are f.s.\ unless otherwise stated. As a result, a log scheme $X$ is essentially the same as a scheme $X^\circ$ together with a map $X \to \scr B$ to an Artin fan, viewed as an algebraic stack. Different maps $X \to \scr B_1$, $X \to \scr B_2$ can result in the same log structure on $X$, but there is an initial one.

\begin{theorem}\label{thm:deflogstructures}
    Let $X$ be a (locally noetherian) algebraic stack. The following are equivalent: 
    \begin{enumerate}
        \item\label{it:Kato} A sheaf of monoids $M_X$ on the lisse-\'etale site of $X$ with a map $\varepsilon : M_X \to \OO_X$ to the structure sheaf $\OO_X$ (viewed as a multiplicative monoid) which restricts to an isomorphim $\varepsilon : \varepsilon^{-1} \OO^*_X \longsimeq \OO_X^*$ on the units and locally is pulled back from Example \ref{ex:affinetoricvars}. 

        \item\label{it:BV} A sheaf of sharp monoids $\bar M_X$ on the lisse-\'etale site of $X$ together with a monoidal map 
        \[
        \bar M_X \to \af{}|_X
        \]
        to the stack $\af{} = \bra{\Aff^1/\GG_m}$ of pairs $(L, s)$ of line bundles and sections restricted to the lisse-\'etale site of $X$. 

        \item\label{it:Olsson} A morphism $X \to \Log$ to M.\ Olsson's stack of (f.s.) log structures \cite{logstacks}. 

        \item\label{it:AF} (over $\CC$) A morphism $X \to \scr B$ to an Artin fan which is initial among all factorizations through Artin fans
        \[X \to \scr B \to \Log\]
        with $\scr B \to \Log$  representable by algebraic spaces. 
    \end{enumerate}
\end{theorem}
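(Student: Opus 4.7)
My plan is to prove the circle of equivalences $(1)\Leftrightarrow(2)\Leftrightarrow(3)$ and $(3)\Leftrightarrow(4)$, with the construction of the initial Artin fan in $(3)\Rightarrow(4)$ being the only substantial step. The equivalence $(1)\Leftrightarrow(2)$ is the Borne--Vistoli dictionary: given $(M_X,\varepsilon)$ as in $(1)$, set $\bar M_X \coloneqq M_X/\varepsilon^{-1}\OO_X^*$; the preimage of a local section $\bar m\in\bar M_X$ in $M_X$ is an $\OO_X^*$-torsor, hence defines a line bundle $L_{\bar m}$, and $\varepsilon$ gives a tautological section $s_{\bar m}$, making $\bar m\mapsto(L_{\bar m},s_{\bar m})$ a symmetric monoidal map $\bar M_X\to\af{}|_X$. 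Conversely, pullback of such a monoidal map along $\Aff^1\to\af{}$ recovers $M_X$, and the two constructions are mutually inverse. The equivalence $(1)\Leftrightarrow(3)$ is the defining property of Olsson's stack $\Log$ (Definition~\ref{def:logstacks}): a morphism $X\to\Log$ is by construction the same datum as an f.s.\ log structure on $X$ in Kato's sense.

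For $(3)\Rightarrow(4)$, I construct an initial Artin fan $\af X$ factoring $X\to\Log$ by a local-to-global argument. Strict-étale locally, $X$ admits a cover by atomic neighbourhoods $U$ (cf.~\cite[Proposition~2.2.2.5]{logpic}) on which $\bar M_X|_U$ has a deepest stalk $P$ that is a sharp f.s.\ monoid. A choice of chart yields a canonical map $U\to\af P$, and $\af P\to\Log$ is strict and representable by algebraic spaces. On overlaps $U_{12}$ of two such atomics $U_1,U_2$ with characteristic monoids $P_1,P_2$, the characteristic monoid of $U_{12}$ is obtained from each $P_i$ by face localization---since strict specializations of characteristic stalks of an f.s.\ log structure proceed by face localizations---so the two induced maps $U_{12}\to\af{P_i}$ factor through a common Artin cone that embeds into each $\af{P_i}$ as a face. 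This yields gluing data in the category of Artin cones with face inclusions, hence an Artin fan $\af X$ in the sense of Definition~\ref{def:artin}, together with a canonical factorization $X\to\af X\to\Log$.

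Initiality of $\af X\to\Log$ among strict representable factorizations follows from the universal property of the atomic charts: given any other factorization $X\to\scr B'\to\Log$ with $\scr B'\to\Log$ strict and representable by algebraic spaces, strict-étale locally $\scr B'$ is covered by Artin cones $\af Q$, and each atomic characteristic $P$ of $X$ surjects onto a face of $Q$, producing the unique compatible map $\af X\to\scr B'$. The direction $(4)\Rightarrow(3)$ is immediate by composition. The main obstacle will be the gluing step in $(3)\Rightarrow(4)$: one must verify that the transitions between atomic Artin cone charts really are face inclusions, which reduces to showing that characteristic sheaves of f.s.\ log structures vary by face localization under strict-étale specialization---a nontrivial but standard fact about f.s.\ monoids on atomic neighbourhoods.
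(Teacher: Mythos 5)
Your high-level route coincides with the paper's, which establishes all four equivalences by citation: Olsson \cite{logstacks} for $(1)\Leftrightarrow(3)$, Borne--Vistoli \cite{bornevistoliparabolic} for the interpretation $(2)$, and \cite[Proposition 3.2.1]{wisebounded} for the existence of the initial Artin fan in $(4)$. Your sketch is essentially an unpacking of those citations, and the Borne--Vistoli dictionary in $(1)\Leftrightarrow(2)$ and the definitional identification $(1)\Leftrightarrow(3)$ are fine.

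The gluing step in $(3)\Rightarrow(4)$ is however too quick. You assert that the atomic Artin cones assemble, via face-localization transition maps, into an Artin fan in the sense of Definition~\ref{def:artin}. But the colimit of such a diagram (taken as \'etale sheaves over $\Log$, as in Appendix~\ref{appendixLog}) can acquire monodromy that is not witnessed by face inclusions. The paper's own example following Lemma~\ref{lem:localAFfunctoriality}, drawn from \cite[\S 5.4.1]{skeletonsandfans}, exhibits a log scheme covered by two atomics with characteristic monoid $\NN^2$ whose Artin fan is $\bra{\af{}^2/\ZZ/2}$; the self-fiber-product of the cover $\af{}^2 \to \bra{\af{}^2/\ZZ/2}$ has a component on which one of the two projections is the coordinate swap, which is \emph{not} a face inclusion. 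Verifying that the colimit is nonetheless representable by an Artin fan --- or, more carefully, identifying the correct notion of Artin fan (generalized cone complexes) that accommodates such monodromy --- is precisely the nontrivial content of \cite[Proposition 3.2.1]{wisebounded} that your sketch elides as a ``standard fact.'' Your initiality argument is also phrased imprecisely: a strict representable factorization $X \to \scr B' \to \Log$ gives $\bar M_{\scr B',b} \cong \bar M_{X,x}$ on stalks, and initiality follows from the universal property of the colimit over the \'etale site of $X$, not from a surjection ``$P \twoheadrightarrow$ face of $Q$,'' which does not parse as stated.
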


\begin{proof}
    M.\ Olsson showed \eqref{it:Kato} is equivalent to \eqref{it:Olsson} in \cite{logstacks}. The second interpretation \eqref{it:BV} is due to \cite{bornevistoliparabolic}. The existence of an initial factorization $X \to \scr B \to \Log$ can be found in \cite[Proposition 3.2.1]{wisebounded}. 
\end{proof}

\begin{definition}\label{Artinfan}
    Any of the equivalent data in Theorem \ref{thm:deflogstructures} defines a \emph{log structure} on the algebraic stack $X$. Log schemes and log algebraic stacks are schemes and algebraic stacks equipped with log structures. Write $X^\circ$ for the underlying algebraic stack of a log algebraic stack, forgetting the log structure. 
\end{definition}

The initial factorization in Theorem \ref{thm:deflogstructures}\eqref{it:AF} defined over $\CC$ is called \emph{the Artin fan} of $X$ and written $\af{X} \coloneqq \scr B$. We generalize this construction to mixed characteristic in Definition \ref{def:AF} below.

We regard schemes and algebraic stacks as subcategories of the category of log algebraic stacks by endowing them with the initial log structure
\[M_X \coloneqq \OO^*_X.\]
Regarding the underlying stack $X^\circ$ as a log algebraic stack in this way, we have a canonical log morphism 
\[X \to X^\circ\]
forgetting all log structure. 

\begin{definition}\label{def:logstacks}
    By Definition \ref{Artinfan}, the algebraic stack $\Log$ of \cite{logstacks} parameterizes log structures: 
    \[
    \Log(T) \coloneqq \{\text{log structures on }T\}.
    \]
    For any log algebraic stack $Y$, there is an algebraic stack $\Log_Y$ parameterizing log structures together with a log map to $Y$: 
    \[
    \Log_Y(T) \coloneqq \{\text{a log structure on }T \text{ and a log map }T \to Y\}.
    \]
\end{definition}

\begin{definition}[{\cite[Theorem 4.6]{logstacks}}]\label{properties_logschemes}
    A log algebraic stack $X$ is \emph{log smooth}, \emph{log flat}, \emph{log \'etale}, or \emph{log reduced} if the map $X \to \Log$ is smooth, flat, \'etale, or has reduced geometric fibers. 
\end{definition}

These definitions can be made for a morphism $f \colon X \to Y$ of log schemes, using the map to the relative stack of log structures $X \to \Log_Y$. 

Unless specified otherwise, we assume all our schemes and algebraic stacks are locally of finite type. A log scheme or log algebraic stack $X$ (locally of finite type) has a stratification $X = \bigsqcup X_\alpha$ into locally closed subsets $X_\alpha \subseteq X$ given by finite intersections and complements of closed subschemes defined by monoidal ideals $I_X \subseteq M_X$ of the log structure \cite[\S 2.2.2]{logpic}. 

\begin{definition}[{\cite[Definition 2.2.2.2]{logpic}}]\label{def:atomic}
    A log scheme $X$ is \emph{atomic} if it has a unique stratum $X_\alpha \subseteq X$ which is closed, connected and, for all geometric points $\bar z \to X_\alpha$, the restriction map 
    \[\Gamma(X, \bar M_X) \to \bar M_{X, \bar z}\]
    is an isomorphism. 
\end{definition}

We very frequently localize log schemes and log algebraic stacks $X$ to reduce to the case where they are atomic, implicitly using the next lemma. 

\begin{lemma}[{\cite[Proposition 2.2.2.5]{logpic}}] \label{lem:coverbyatomics}
    Any locally Noetherian log scheme or log algebraic stack $X$ admits a strict \'etale cover $\{Y_i \to X\}$ by atomic log schemes $Y_i$. 
\end{lemma}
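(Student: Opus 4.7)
The plan is to construct, for each geometric point $\bar x \to X$, an atomic strict \'etale neighborhood of $\bar x$; the disjoint union over a set of $\bar x$'s hitting every point of $X$ then furnishes the desired cover. Set $P \coloneqq \bar M_{X,\bar x}$, which is a sharp f.s.\ monoid. The local structure theory of f.s.\ log structures (part of the data of Theorem \ref{thm:deflogstructures}\eqref{it:Kato}) produces, after replacing $X$ by a strict \'etale neighborhood of $\bar x$, a sharp chart $\theta \colon P \to \Gamma(X, M_X)$ whose induced map on characteristics at $\bar x$ is the identity. Equivalently, this is a strict morphism $X \to \Aff_P$ sending $\bar x$ to the origin.

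Next I would carve out the candidate atomic neighborhood. Let $Z \subseteq X$ be the preimage of the origin of $\Aff_P$; concretely, $Z$ is the closed subscheme defined by the ideal of $\OO_X$ generated by $\theta(P \setminus \{0\})$. Since $X$ is locally noetherian, $Z$ is locally connected; pick a connected open neighborhood $U$ of $\bar x$ inside $Z$ and set $V \coloneqq X \setminus (Z \setminus U)$. Then $V$ is an open neighborhood of $\bar x$ whose unique closed stratum, by the description of strata as pullbacks of the toric strata of $\Aff_P$ along the chart, is precisely $V \cap Z = U$, which is connected.

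Finally I would verify atomicity, i.e., that $\Gamma(V, \bar M_V) \to \bar M_{V,\bar z}$ is an isomorphism for every geometric point $\bar z \to U$. At such a $\bar z$, the chart $\theta$ induces a surjection $P \to \bar M_{V,\bar z}$ which is injective as well: $\bar z \in Z$ forces every nonzero element of $P$ to have non-unit image in $\OO_{V,\bar z}$. A gluing argument then identifies $\Gamma(V, \bar M_V)$ with $P$: every section of $\bar M_V$ locally lifts to $P$ because the chart is surjective on all stalks, and these lifts are unique because every stratum of $V$ specializes into the connected $U$, where the stalk map to $P$ is injective.

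The main obstacle is the final identification $\Gamma(V,\bar M_V) = P$ rather than some larger monoid. This is precisely why the shrinking from $X$ to $V$ with $U$ connected is essential: if $V$ retained several connected components of the closed stratum, local lifts to $P$ could disagree across components and contribute extra global sections. Once $U$ is made connected, the rigidity of the face stratification of $\Aff_P$ transports back to give the desired global identification.
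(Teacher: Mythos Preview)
The paper does not supply its own proof of this lemma; it simply records the citation to \cite[Proposition 2.2.2.5]{logpic}. Your overall strategy---pick a sharp chart at $\bar x$, then cut down to a neighborhood whose deepest stratum is the connected piece $U$ of $Z$ through $\bar x$---is exactly the standard one. The issue is that your specific shrinking $V \coloneqq X \setminus (Z \setminus U)$ does not do enough work, and the sentence ``these lifts are unique because every stratum of $V$ specializes into the connected $U$'' hides a genuine gap.

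Here is what goes wrong. Your $V$ only removes the other components of the \emph{deepest} stratum $Z$; it leaves all shallower strata untouched. Those shallower strata can have connected components whose closure never meets $U$, and sections of $\bar M_V$ on such components are unconstrained by $s|_U$. Concretely, start with the original log scheme $\Aff^2$ and, at step ``replace $X$ by a strict \'etale neighborhood,'' take $X$ to be two copies of $\Aff^2$ glued along $\GG_m^2$ (a perfectly good \'etale neighborhood of the origin). Then $Z$ is the two origins, $U$ is one of them, and your $V$ is $\Aff^2$ glued to $\Aff^2 \setminus \{0\}$ along $\GG_m^2$. This $V$ is connected, and $U$ is indeed its unique closed stratum, but the punctured-axis strata are disconnected with one component (the ``second copy'') not specializing to $U$. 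A direct sheaf computation gives $\Gamma(V,\bar M_V) \cong \NN^4$, so the restriction $\Gamma(V,\bar M_V) \to \bar M_{V,\bar x} = \NN^2$ is not injective and $V$ is not atomic.

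The fix is to shrink more aggressively: after forming your $V$, iteratively remove every connected component of every stratum whose closure misses $U$ (each such component is closed in the current $V$, so the result stays open, and the process terminates by noetherianity). Equivalently, arrange from the outset that every point of $V$ specializes into $U$. Once that holds, your gluing argument for $\Gamma(V,\bar M_V) = P$ goes through, because a section vanishing on $U$ is then forced to vanish at every point by cospecialization along a chain down to $U$.
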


We define the Artin fan $\af{X}$ of a locally Noetherian log algebraic stack $X$  in mixed characteristic. Let $\{X_i \to X\}$ be a strict smooth cover by atomic log schemes $X$. Choose strict smooth covers $X_{ij\alpha} \to X_i \times_X X_j$ of their intersections by atomic log schemes $X_{ij \alpha}$. Write $Y_0 = \bigsqcup X_i$ and $Y_1 = \bigsqcup X_{ij\alpha}$, so we have maps 
\[
    Y_1 \rightrightarrows Y_0 \to X
\]
that form a truncated strict smooth hypercover. The Artin fans of each $X_i, X_{ij\alpha}$ are defined as the Artin cone of their characteristic monoids and similarly for $Y_k$:
\[
    \af{X_i} = \af{\Gamma(X_i, \bar M_{X_i})}, \quad
    \af{X_{ij\alpha}} = \af{\Gamma(X_{ij\alpha}, \bar M_{X_{ij\alpha}})}, 
\]
\[
    \af{Y_0} = \bigsqcup \af{X_i}, \quad 
    \af{Y_1} = \bigsqcup \af{X_{ij\alpha}}. 
\]

\begin{definition}\label{def:AF}
    Let $X$ be a log algebraic stack locally of finite type and construct $X_i, X_{ij\alpha}, Y_0, Y_1$ as above. The \emph{Artin fan} of $X$ is the coequalizer 
    \[
        \af{X} \coloneqq {\rm coeq}(\af{Y_1} \rightrightarrows \af{Y_0}),
    \]
    computed in the category of \'etale sheaves on the stack $\Log$. 
\end{definition}

We check that this construction of the Artin fan taken from \cite[Proposition 3.2.1]{wisebounded} is independent of the choice of hypercover $X_i, X_{ij\alpha}, Y_0, Y_1$. It need not have the universal property of Theorem \ref{thm:deflogstructures}\eqref{it:AF} in mixed characteristic:

\begin{example}
    Let $X = \spec \ZZ[1/5]$ have trivial log structure. As $X$ is atomic, its Artin fan as constructed above is just $\af{\Gamma(X, \bar M_X)} = \pt = \Spec \ZZ$. As the map $X \to \Log$ is \'etale, it is its own initial \'etale factorization. But $X \neq \Spec \ZZ$. 
\end{example}


\begin{remark}\label{rmk:repabilityofAF}
The map $\af{X} \to \Log$ is always representable by algebraic spaces, as it is the \'espace \'etal\'e of an \'etale sheaf on $\Log$. General Artin fans $\scr B$ need not be representable over $\Log$. 
\end{remark}

\begin{example}\label{ex:strhenselianAF}
    Let $R$ be a strictly henselian local ring and $X$ a log scheme with underlying scheme $X^\circ = \spec R$ and closed point $x \in X$. Then $\af{X} = \af{x}$ because there are no nontrivial strict-\'etale covers of $X$. 
\end{example}

\begin{example}\label{ex:dvrAF}
    Let $R$ be a discrete valuation ring and equip it with log structure 
    \[M_R \coloneqq R \setminus \{0\}.\]
    Then $\bar M_R = \NN$ and a section $\bar M_R \to M_R$ is a choice of uniformizer. It is atomic, with Artin fan $\af{\spec R} = \af{}$. If $s \in \spec R$ is the closed point, then $\af{s} = \af{}$ also. 
\end{example}

The assignment $X \mapsto \af{X}$ is not functorial in that there are morphisms $f : X \to Y$ for which there cannot exist a commutative square \cite[\S 5.4.1]{skeletonsandfans}: 
\[
\begin{tikzcd}
    X \ar[r] \ar[d] \ar[dr, phantom, "\times"]           &Y \ar[d]      \\
    \af{X} \ar[r]     &\af{Y}.
\end{tikzcd}
\]

For an exact description of when such a square is possible, see \cite{RdC}. We will only need some weaker criteria. 

\begin{lemma}\label{lem:strictAFfunctorial}
    If $f : X \to Y$ is strict, there is a commutative square 
    \[
    \begin{tikzcd}
        X \ar[r] \ar[d]       &\af{X} \ar[d]         \\
        Y \ar[r]       &\af{Y}.
    \end{tikzcd}
    \]
\end{lemma}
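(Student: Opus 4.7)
The plan is to invoke the universal property of the Artin fan $\af{X}$ formulated in Theorem~\ref{thm:deflogstructures}\eqref{it:AF}: $\af{X}$ is initial among factorizations $X \to \scr{B} \to \Log$ with $\scr{B}$ an Artin fan and $\scr{B} \to \Log$ strict and representable by algebraic spaces. The natural candidate is to take $\scr{B} = \af{Y}$ together with the composite $X \xrightarrow{f} Y \to \af{Y}$; the universal property will then produce the required arrow $\af{X} \to \af{Y}$.

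The verification needed is that $X \to Y \to \af{Y}$ is a genuine factorization of the morphism $X \to \Log$ classifying the log structure on $X$. By the defining property of $\af{Y}$, the composite $Y \to \af{Y} \to \Log$ classifies the log structure of $Y$, and $\af{Y} \to \Log$ is strict and representable by algebraic spaces. Strictness of $f$ enters exactly here: it ensures that the log structure on $X$ coincides with the pullback along $f^\circ$ of the log structure on $Y$, so that the composite $X \to Y \to \Log$ agrees with the classifying map $X \to \Log$ of the log structure on $X$.

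With these two points in hand, the universal property of $\af{X}$ produces a unique morphism $\af{X} \to \af{Y}$ such that $X \to \af{X} \to \af{Y}$ equals $X \to Y \to \af{Y}$, which is exactly the commutativity asserted in the statement. The argument is thus a direct application of the universal property; no substantial obstacle arises. The only conceptual point to flag is that strictness of $f$ is precisely what makes $X \to \af{Y}$ a valid factorization of $X \to \Log$ through an Artin fan over $\Log$, since for a non-strict $f$ the composite $X \to Y \to \Log$ would instead classify the pullback log structure rather than the given one on $X$.
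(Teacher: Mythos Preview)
Your proof is correct and follows essentially the same approach as the paper: observe that strictness of $f$ yields a commutative triangle of $X$ and $Y$ over $\Log$, then apply the universal property of $\af{X}$ (Theorem~\ref{thm:deflogstructures}\eqref{it:AF}) to the factorization $X \to Y \to \af{Y} \to \Log$. The paper's version is terser but the content is identical.
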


\begin{proof}
    Over $\CC$, the universal property \eqref{it:AF} gives functoriality in this case as the morphism $X \to Y$ lies over $\Log$. In mixed characteristic, one can refine a hypercover of $Y$ by a hypercover of $X$ and use those to define the Artin fans. 
\end{proof}

The Artin fan is ``locally functorial'' in the following sense:

\begin{lemma}\label{lem:localAFfunctoriality}
    If $X, Y$ are atomic and $f : X \to Y$ is a morphism of log schemes, there is a commutative square 
    \[
    \begin{tikzcd}
        X \ar[r] \ar[d]       &Y \ar[d]      \\
        \af{X} \ar[r]      &\af{Y}.
    \end{tikzcd}
    \]
    In particular, we can find such a commutative square after localizing any morphism $f : X \to Y$ by Lemma \ref{lem:coverbyatomics}. 
\end{lemma}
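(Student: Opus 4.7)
The plan is to invoke the universal property of the Artin fan from Theorem \ref{thm:deflogstructures}\ref{it:AF}, which characterizes $\af{X}$ as the initial factorization of $X \to \Log$ through an Artin fan that is strict and representable by algebraic spaces over $\Log$. The strategy is to produce such a factorization $X \to Y \to \af{Y} \to \Log$ using the atomicity of $Y$, and then apply initiality to extract a compatible map $\af{X} \to \af{Y}$.

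First I would note that since $Y$ is atomic, its Artin fan is a single Artin cone $\af{Y} = \af{Q}$ with $Q = \Gamma(Y, \bar M_Y)$. The structure map $\af{Q} \to \Log$ is strict by construction (the log structure on $\af{Q}$ is the tautological one coming from the chart $Q$) and is representable by algebraic spaces, using the triviality of the strict étale site of an Artin cone recalled in the proof of Lemma \ref{lem:artinconesfactorthroughartincones}. Thus the composite $X \to Y \to \af{Y} \to \Log$ factors $X \to \Log$ through an Artin fan satisfying the hypotheses of the universal property, and initiality of $\af{X}$ yields a unique morphism $\af{X} \to \af{Y}$ such that $X \to \af{X} \to \af{Y}$ agrees with $X \to Y \to \af{Y}$, proving the desired commutative square. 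The final clause of the lemma follows by covering $X$ and $Y$ by atomic strict-étale opens via Lemma \ref{lem:coverbyatomics} and applying the atomic case to the induced morphisms.

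The main subtlety I expect to receive scrutiny is confirming that $\af{Y} \to \Log$ is strict and representable by algebraic spaces; once this is in hand, the rest of the argument is essentially formal. As a more hands-on alternative one could argue directly: set $P = \Gamma(X, \bar M_X)$, so atomicity of $X$ gives $\af{X} = \af{P}$, and construct $\af{X} \to \af{Y}$ as $\af{\theta}$ for the monoid map $\theta : Q \to P$ obtained by pulling back global sections along $f$ and applying the structure map $f^{-1}\bar M_Y \to \bar M_X$. Commutativity of the square would then reduce to identifying both composites $X \to \af{Y}$ with the same monoid map $Q \to \Gamma(X, \bar M_X)$, under the bijection between morphisms $T \to \af{Q}$ and sharp monoid maps $Q \to \Gamma(T, \bar M_T)$.
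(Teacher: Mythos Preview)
Your primary approach via the universal property has a genuine gap. The initial property of $\af{X}$ in Theorem \ref{thm:deflogstructures}\ref{it:AF} concerns factorizations of the morphism $X^\circ \to \Log$ that classifies the log structure $M_X$. But the composite $X^\circ \to Y^\circ \to \af{Y}^\circ \to \Log$ classifies the \emph{pullback} log structure $f^*M_Y$ on $X^\circ$, not $M_X$; these agree precisely when $f$ is strict. Indeed, this is exactly the argument used in the paper to prove Lemma \ref{lem:strictAFfunctorial}, the strict analogue. For a non-strict $f$ the composite is simply not a factorization of $X \to \Log$, so initiality of $\af{X}$ does not apply and no map $\af{X} \to \af{Y}$ is produced this way.

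Your ``hands-on alternative,'' on the other hand, is correct and is essentially the paper's own proof: writing $P = \Gamma(X,\bar M_X)$ and $Q = \Gamma(Y,\bar M_Y)$, atomicity gives $\af{X}=\af{P}$ and $\af{Y}=\af{Q}$, and the map $\af{X} \to \af{Y}$ is defined from the monoid homomorphism $Q \to P$ obtained by taking global sections of $f^{-1}\bar M_Y \to \bar M_X$. Commutativity of the square is then just functoriality of global sections. So you should promote the alternative to the main argument and drop the universal-property route.
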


\begin{proof}
    Write $P \coloneqq \Gamma(X, \bar M_X)$, $Q \coloneqq \Gamma(Y, \bar M_Y)$. As $X, Y$ are atomic, their Artin fans are Artin cones
    \[
    \af{X} = \af{P}, \qquad \af{Y} = \af{Q}. 
    \]
    The morphisms $X \to \af{X}, Y \to \af{Y}$ correspond to the identification of $P, Q$ with the global sections of $\bar M_X, \bar M_Y$. The desired commutative square comes from functoriality of global sections and the map of sheaves $\bar M_Y|_X \to \bar M_X$. 
\end{proof}

For functoriality of the Artin fan $f : X \to Y$, it does not suffice that $X, Y$ have Zariski charts:

\begin{example}[{\cite[Example 4.8]{Ulirsch17}}]
    Let $X = C_1 \cup C_2$ be the union of two copies of $C_i = \PP^1$ meeting at two nodes, $p$ and $q$. Write $U_p = X \setminus \{q\}$ and $U_q = X \setminus \{p\}$ and order the two components of the intersection $V_1 \sqcup V_2 = U_p \cap U_q = X \setminus \{p, q\}$. 

    Equip $U_p$ with the log structure associated to
    \[\NN^2 \to \ZZ[x, y]/(xy); \qquad (a, b) \mapsto 0^a \cdot 0^b\]
    and likewise for $U_q$. Our convention is that $0^0 \coloneqq 1$ and $0^a \coloneqq 0$ if $a \neq 0$. 
    
    Descend to $X$ by making \emph{distinct} identifications of 
    \[\bar M_{U_p}|_{V_1} \simeq \bar M_{U_q}|_{V_1} \qquad \text{and} \qquad 
    \bar M_{U_p}|_{V_2} \simeq \bar M_{U_q}|_{V_2}.\]
    Then $\bar M_X$ is not a constant sheaf. 

    The Artin fan $\af{X}$ is $\bra{\af{}^2/\ZZ/2}$. Then we have the same problem with functoriality as in the famous counterexample  \cite[\S 5.4.1]{skeletonsandfans}. We spell this out.

    Consider the subdivision $\scr B \to \af{X}$ given by the $\ZZ/2$-quotient of the subdivision at the ray $(1, 1)$. I.e., 
    \[\scr B = \bra{\bra{Bl_{\vec 0} \Aff^2/\GG_m^2}/\ZZ/2}.\]
    The pullback to $X$ is a log scheme $Z$ which we describe. 
    
    Let $E$ be the fiber over $\vec 0 \in \Aff^2$ of the log blowup $Bl_{\vec 0} \Aff^2$. As a scheme, $E^\circ = \PP^1$. It has generic log structure $\NN$ and two points at which the log structure is $\NN^2$. 

    Let $C'_i = C_i \times E$. Form $Z$ as the pushout of $C'_1 \sqcup C'_2$ given by identifying the two fibers $\{p\} \times E \subseteq C'_i$ and also, separately, the two fibers over $q$. Identify the fibers in two opposite ways, where one is via the identity $E \longequals E$ and the other by the automorphism swapping the coordinates on $\PP^1$. 

    \textbf{Claim:}
    The Artin fan $\af{Z}$ is not $\scr B$, but $\af{}^2$. In fact, the Artin fan of $\scr B$ is $\af{}^2$. 

    This is shown the same way as in \cite[\S 5.4.1]{skeletonsandfans}. The stack $\Log$ has a stacky point $B\GG_m$ parameterizing rank-one log structures, which is the image of the point $[B\GG_m/\ZZ/2] \in \scr B$ corresponding to the ray $\ZZ_{>0} \cdot (1, 1)$. But the $\ZZ/2$ in the stabilizer of the point of $\scr B$ maps to zero in the stabilizer of $\Log$. So this map $\scr B \to \Log$ is not representable. By Remark \ref{rmk:repabilityofAF}, $\scr B$ is not the Artin fan of any log scheme.


    We need to show we can't have a dashed arrow 
    \[
    \begin{tikzcd}
        Z \ar[r] \ar[d]       &\scr B \ar[r]          &\af{Z} = \af{}^2 \ar[dl, dashed]       \\
        X \ar[r]       &\af{X}
    \end{tikzcd}
    \]
    making the diagram commute. If we restrict to the fiber over the closed point in $\af{X}$, we see the dashed arrow would imply that the $\ZZ/2$-bundle over $Z$ ordering the coordinates of $\PP^1 = E$ is trivial on $Z$, which is false. 
\end{example}

We need one more case of functoriality for the Artin fan. 

\begin{lemma}\label{lem:AFmapstocone}
    Let $P$ be a sharp f.s.\ monoid and $X$ a log algebraic stack. Any morphism $X \to \af{P}$ factors uniquely through $X \to \af{X}$. 
\end{lemma}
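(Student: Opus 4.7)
The plan is to apply the universal property characterizing the Artin fan from Theorem \ref{thm:deflogstructures}\eqref{it:AF}: the map $X \to \af X \to \Log$ is initial among factorizations of $X \to \Log$ through morphisms $\scr B \to \Log$ that are strict and representable by algebraic spaces. The whole proof will consist in checking that $\af P \to \Log$ qualifies as such a $\scr B$, together with the observation that any log morphism $X \to \af P$ automatically factors $X \to \Log$ through $\af P$.

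First I would verify that $\af P \to \Log$ is strict and representable by algebraic spaces. Strictness is immediate from Definition \ref{def:artin}, since the canonical log structure on $\af P$ agrees by construction with the one pulled back from the tautological log structure on $\Log$. Representability (and in fact \'etaleness) is one of the foundational results of Olsson \cite{logstacks}: Artin cones serve as the basic \'etale local models for $\Log$, so $\af P \to \Log$ is \'etale and representable by algebraic spaces.

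Granting this, every morphism $f \colon X \to \af P$ of log algebraic stacks assembles into a commutative triangle $X \to \af P \to \Log$, exhibiting a factorization of the structure map through an allowed $\scr B = \af P$. Initiality of $\af X$ then yields a unique morphism $\af X \to \af P$ over $\Log$ whose composite with $X \to \af X$ recovers $f$, which is the desired factorization. Uniqueness of the factorization is automatic: any two candidates $\af X \to \af P$ are morphisms of log stacks, hence lie over $\Log$, hence must agree by the uniqueness clause of the universal property. No step should present any real obstacle; the argument is a formal invocation of universal properties, with the only nontrivial input being the known representability of $\af P \to \Log$.
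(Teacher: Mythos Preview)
Your argument has a real gap: the universal property of the Artin fan in Theorem \ref{thm:deflogstructures}\eqref{it:AF} only applies to \emph{strict} maps. The datum there is a factorization of the canonical classifying morphism $X \to \Log$ (the one corresponding to the log structure $M_X$) through some $\scr B \to \Log$. Given an arbitrary log morphism $f \colon X \to \af P$, the composite $X \to \af P \to \Log$ classifies the \emph{pullback} log structure $f^*M_{\af P}$, which agrees with $M_X$ exactly when $f$ is strict. For a non-strict $f$ the triangle you write down is not a factorization of $X \to \Log$, so the initiality of $\af X$ does not produce the desired map $\af X \to \af P$. Concretely, take $X$ a log point with $\bar M_X = \NN^2$ and $P = \NN$: any nonzero element of $\NN^2$ gives a non-strict $X \to \af\NN$, and your argument would not handle it.

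The paper avoids this obstacle by a different route: it presents $P$ as a coequalizer of free monoids to reduce to $P = \NN$, and then identifies $\Hom_{\log}(X,\af\NN)$ and $\Hom_{\log}(\af X,\af\NN)$ directly by proving $\Gamma(X,\bar M_X) \cong \Gamma(\af X,\bar M_{\af X})$ via descent along a strict \'etale atomic cover. This comparison of global sections of $\bar M$ is precisely what bridges the gap between strict and non-strict morphisms, and is the substantive content your argument is missing.
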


\begin{proof}
    Present $P$ as a coequalizer
    \[\NN^s \rightrightarrows \NN^r \to P.\]
    The diagram of Artin fans is then an equalizer
    \[\af{P} \to \af{}^r \rightrightarrows \af{}^s,\]
    so we can reduce to the case $P = \NN$. We need to identify the sections of the characteristic monoids 
    \[\Gamma(X, \bar M_X) \overset{?}{=} \Gamma(\af{X}, \bar M_{\af{X}}).\]

    Find a strict \'etale cover $U = \bigsqcup_\alpha U_\alpha \to X$ by a disjoint union of atomics $U_\alpha$. Do the same for the fiber products $V_{\alpha \beta \gamma} \to U_\alpha \times_X U_\beta$ and write $V \coloneqq \bigsqcup_{\alpha, \beta, \gamma} \af{V_{\alpha \beta \gamma}}$. We get a strict \'etale hypercover of the Artin fan of $X$
    \[
    \bigsqcup_{\alpha \beta \gamma} \af{V_{\alpha \beta \gamma}} \rightrightarrows 
    \bigsqcup_{\alpha} \af{U_\alpha} \to 
    \af{X}.
    \]

    Because $\bar M_X, \bar M_{\af{X}}$ are sheaves, their sections on $X, \af{X}$ are the equalizers of the sequences
    \[
    \Gamma(X, \bar M_X) \to \prod \Gamma(U_\alpha, \bar M_{U_\alpha}) \rightrightarrows \prod \Gamma(V_{\alpha \beta \gamma}, \bar M_{V_{\alpha \beta \gamma}})
    \]
    \[
    \Gamma(\af{X}, \bar M_{\af{X}}) \to \prod \Gamma(\af{U_\alpha}, \bar M_{\af{U_\alpha}}) \rightrightarrows \prod \Gamma(\af{V_{\alpha \beta \gamma}}, \bar M_{\af{V_{\alpha \beta \gamma}}}). 
    \]
    These sequences may be identified, and so can their equalizers. 
\end{proof}

\subsection{Log alterations, modifications, and blowups} 

\begin{definition}
\label{defn:logalteration}
    A map of Artin fans $\pi : \scr B \to \scr C$ is a \emph{log alteration} if it is of DM type, proper, and birational. A map of log algebraic stacks $X \to Y$ is a log alteration if, strict \'etale locally in $Y$, there is an f.s.\ pullback square 
    \[
    \begin{tikzcd}
        X \ar[r] \ar[d] \lpb       &\scr B \ar[d]         \\
        Y \ar[r]       &\scr C.
    \end{tikzcd}
    \]
    One can assume $Y \to \scr C$ is strict, making the square also an ordinary pullback square $\ulcorner \msout{\rm fs}$. 

    A log alteration $f : X \to Y$ of log algebraic stacks is a
    \begin{itemize}
        \item \emph{Log modification} if $f$ is representable. 
        \item \emph{Log blowup} if $f$ is representable and projective. 
        \item \emph{Root stack} if $f$ is $\QQ$-integral
        \footnote{Other authors have required root stacks to be integral, as these form a cofinal system of root stacks.}. 
    \end{itemize}
    \emph{Log subalterations and submodifications} are strict open substacks of log alterations and modifications. 
\end{definition}

See \cite[\S 2.4, 2.6]{birationalinvarianceabramovichwise} for discussion of what we call log modifications and the forthcoming \cite{RdC} for an expository account.

\begin{remark}\label{rmk:logaltnrmk}
    A map $f : X \to Y$ of log algebraic stacks is a log alteration if, strict-\'etale locally in $Y$, there is an f.s.\ pullback square 
    \[
    \begin{tikzcd}
        X \ar[r] \ar[d] \lpb       &\bra{V_1/T} \ar[d]        \\
        Y \ar[r]       &\bra{V_2/T}
    \end{tikzcd}
    \]
    for a proper, birational, equivariant morphism $V_1 \to V_2$ of $T$-toric varieties. Write $\pi : \Sigma_1 \to \Sigma_2$ for the map of fans associated to $V_1 \to V_2$. Then $f$ is a log modification if $\pi$ is a subdivision, a log blowup if $\pi$ is a subdivision which is the bend locus of a piecewise linear function, and a root stack if $\pi$ is the inclusion of a sublattice on each cone. 
    
    In particular, log alterations are log monomorphisms, or monomorphisms in the category of f.s.\ log schemes. This reduces to the case of $\bra{V_1/T} \to \bra{V_2/T}$ as above, which results because subdivisions and sublattices are subfunctors of the ambient fan. 
\end{remark}

\subsection{Toroidal maps v.s.\ log smoothness}\label{Toroidal-maps-log-smoothness}

\begin{definition}\label{defn:toroidalembedding}
    Let $S$ be a scheme. We call a \emph{toroidal embedding}\footnote{When $S$ is a field of characteristic $0$, this definition is equivalent to that in \cite{WeakToroidalization} as proved in \cite[\S 2]{ToroidalMorphisms}.} over $S$ a pair $(f \colon X \to S,U_X)$ where
    \begin{itemize}
        \item $f \colon X \to S$ is a morphism of schemes.
        \item $X \setminus U_X$ is a Weil divisor on $X$.
        \item Any geometric point $x \to X$ has an \'etale neighbourhood $x \to V_x \to X$ such that $V_x$ is \'etale over an affine toric variety $\Aff_P \times S$ over $S$, and $V_x \times_X U_X$ is the preimage of the dense torus $\Aff_{\gp P}$ of $\Aff_P$. We call $V_x \to \Aff_P \times S$ a \emph{toric chart} of $X$ at $x$.
    \end{itemize}
   
    We call $D=X \setminus U_X$ the \emph{boundary divisor} of the toroidal embedding $(f,U_X)$. When there is no ambiguity, we will denote $(f,U_X)$ by $(X,U_X)$, or $(X,D)$ or even $X$. 

    A \emph{toroidal} morphism $f \colon X \to Y$ between toroidal embeddings is a map of $S$-schemes such that, \'etale-locally on $X$ and $Y$, there exist commutative diagrams
    \begin{equation}\label{toroidal-map-loc}
    \begin{tikzcd}
        X \arrow[r] \arrow[d] & Y \arrow[d] \\
        \Aff_Q \times S \arrow[r] & \Aff_P \times S.
    \end{tikzcd}
    \end{equation}
    where the vertical maps are the toric charts and the bottom map is toric. This does not depend on the choice of toric charts since toric charts are unique modulo localization and the torus action. 
\end{definition}

\begin{remark}\label{toroidal_vs_logsmooth}
    The category of toroidal embeddings with toroidal morphisms admits a natural faithful embedding into log schemes, by equipping them with the divisorial log structure given by their toroidal boundary divisor. In particular, a toroidal morphism of toroidal embeddings is a morphism of the corresponding log schemes.
    We consider toroidal embeddings as log schemes without further mention.
\end{remark}

The following definition is designed to encompass both log smooth maps and toroidal maps.
\begin{definition}\label{pseudo-tor}
    Let $f \colon X \to Y$ be a map of log schemes. We say $f$ is \emph{pseudo-toroidal} if, strict \'etale-locally on $X$ and $Y$, there exists a global chart
    \[
    \begin{tikzcd}
        X \arrow[r] \arrow[d] & Y \arrow[d] \\
        \Aff_Q \arrow[r] & \Aff_P
    \end{tikzcd}
    \]
    such that the induced map $X \to Y \times_{\Aff_P} \Aff_Q $ is smooth.
\end{definition}

\begin{remark}\label{remark:toroidal_implies_pseudotoroidal}
    Toroidal morphisms are pseudo-toroidal. Indeed, given a diagram as in \eqref{toroidal-map-loc}, both $X$ and $Y\times_{\Aff_P} \Aff_Q$ are \'etale over $\Aff_Q \times S$ (the first by definition, the second by base change), so the map $X \to Y\times_{\Aff_P} \Aff_Q$ is \'etale as well by  \cite[\href{https://stacks.math.columbia.edu/tag/02GW}{Tag 02GW}]{stacks-project}. In particular, it is smooth.
\end{remark}

\begin{lemma}\label{lemma:toroidal_embeddings_over_K_are_irred}
    Let $K$ be a field. Any toroidal embedding over $\Spec K$ is a disjoint union of irreducible $K$-toroidal embeddings.
\end{lemma}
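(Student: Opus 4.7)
The plan is to show that any $K$-toroidal embedding $X$ is normal, deduce that it decomposes as a disjoint union of integral connected components, and then verify that each such component inherits the structure of an irreducible $K$-toroidal embedding.

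First I would establish normality. By Definition \ref{defn:toroidalembedding}, $X$ admits a strict étale cover by opens $V_x$, each étale over some $\Aff_P \times \Spec K = \Spec K[P]$ for a sharp f.s.\ monoid $P$. Since $P$ is fine and saturated, $K[P]$ is a normal domain (the standard fact that affine toric varieties over a field are normal), so $\Aff_P \times \Spec K$ is normal. Étale maps preserve normality, hence each $V_x$ is normal; normality descends along the étale cover, so $X$ is normal.

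By the paper's standing convention, $X$ is locally of finite type over $\Spec K$ and thus locally Noetherian. A locally Noetherian normal scheme is locally integral, so its connected components are open (hence clopen) and each is integral, in particular irreducible. Write $X = \bigsqcup_{i \in I} X_i$ for the connected component decomposition and set $U_{X_i} := U_X \cap X_i$; the boundary $D \cap X_i$ is then a Weil divisor on the integral normal scheme $X_i$. To check that $(X_i \to \Spec K, U_{X_i})$ is itself a toroidal embedding, pick a geometric point $x \to X_i \subseteq X$ and a toric chart $V_x \to X$ at $x$ with étale map $V_x \to \Aff_P \times \Spec K$. Replacing $V_x$ by the connected component containing a chosen lift of $x$, we may assume $V_x$ is connected; then the étale map $V_x \to X$ factors through the clopen $X_i$. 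The composite $V_x \to X_i \to \Aff_P \times \Spec K$ is étale, and since $V_x \times_{X_i} U_{X_i} = V_x \times_X U_X$, it is still the preimage of the dense torus. So $V_x \to X_i$ is a toric chart, and each $X_i$ is an irreducible $K$-toroidal embedding.

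The only delicate point is the clopen decomposition of $X$ into integral components: a general normal scheme need not split this way, which is why the ``locally of finite type over $K$'' hypothesis (giving local Noetherianity and hence local connectedness) is essential. The remaining verifications are routine: normality of $K[P]$ for $P$ f.s., pullback compatibility of the toric chart, and the fact that a connected normal locally Noetherian scheme is integral.
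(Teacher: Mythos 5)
Your proof is correct and takes essentially the same route as the paper: both hinge on the normality of affine toric varieties over a field, the fact that normality is étale-local, and local Noetherianity (from the standing locally-of-finite-type convention) to get open, irreducible connected components. The paper's version is terser — it covers $X$ by irreducible opens (images of connected étale neighbourhoods over affine toric varieties, hence irreducible by the same normality argument) and then invokes the topological fact that a connected space covered by irreducible opens is irreducible — and it leaves the verification that each component is itself a toroidal embedding implicit, which you spell out.

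One small wording slip: a toric chart $V_x \to X$ need not be an \emph{open} subscheme of $X$ (it is an étale neighbourhood, hence étale with open image, but not a priori an open immersion). This does not affect the argument — normality is an étale-local property, so it still descends — but the phrase ``strict étale cover by opens'' should just read ``strict étale cover.''
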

\begin{proof}
    Let $X$ be a toroidal embedding over $\Spec K$. For all points $x\in X$, pick a geometric point $\o x \to X$ above $x$ and a connected \'etale neighbourhood $j_x \colon U_x \to X$ of $\o x$ such that $U_x$ is \'etale over an affine toric $K$-variety, hence irreducible. As $x$ varies, the images $j_x(U_x)$ form a Zariski open covering of $X$ by irreducible schemes, so each connected component of $X$ is irreducible.
\end{proof}

\begin{lemma}\label{Dominant=finite kernel}
    Let $f \colon X \to Y$ be a toroidal map between connected $K$-toroidal embeddings and $x \to X$ a strict geometric point. Then, $f$ is dominant if and only if the group homomorphism
    \[
    \gp{\o M}_{Y,f(x)} \to \gp{\o M}_{X,x}
    \]
    is injective.
\end{lemma}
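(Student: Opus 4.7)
My plan is to reduce the statement to the classical toric fact that an equivariant toric morphism $\Aff_Q\to\Aff_P$ is dominant if and only if the induced homomorphism $\gp P\to\gp Q$ on character groups is injective.

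First, since toroidal morphisms are pseudo-toroidal (Remark \ref{remark:toroidal_implies_pseudotoroidal}) and one can refine a fine chart so that its chart monoid equals the stalk of the characteristic sheaf at a prescribed point (cf.\ \cite[II.2.3.2]{ogusloggeom}), I choose strict \'etale neighborhoods $U\to X$ of $x$ and $V\to Y$ of $f(x)$ fitting in a commutative square
\[
\begin{tikzcd}
U \ar[r] \ar[d] & V \ar[d] \\
\Aff_Q \ar[r,"\varphi"] & \Aff_P
\end{tikzcd}
\]
in which the vertical arrows are strict \'etale, the bottom arrow is the toric map attached to a monoid homomorphism $\theta\colon P\to Q$, and $P=\o M_{Y,f(x)}$, $Q=\o M_{X,x}$, so that $\theta$ coincides with the characteristic-sheaf map $\o M_{Y,f(x)}\to\o M_{X,x}$ in the statement. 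After shrinking I may assume $U$ and $V$ are connected.

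Next I reduce dominance of $f$ to dominance of $\varphi$. By Lemma \ref{lemma:toroidal_embeddings_over_K_are_irred}, $X$ and $Y$ are irreducible with unique generic points $\eta_X,\eta_Y$; the affine toric varieties $\Aff_P=\Spec K[P]$ and $\Aff_Q=\Spec K[Q]$ are also integral, since for sharp saturated f.s.\ monoids the groupifications are torsion-free. Being connected and \'etale over normal irreducible schemes, $U$ and $V$ are themselves irreducible with generic points $\eta_U,\eta_V$; the \'etale maps $U\to X$, $V\to Y$, $U\to\Aff_Q$, $V\to\Aff_P$ each send generic points to generic points, so $\eta_U\mapsto\eta_X,\eta_Q$ and $\eta_V\mapsto\eta_Y,\eta_P$. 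Chasing the image of $\eta_U$ around the square, $f(\eta_X)=\eta_Y$ iff $\varphi(\eta_Q)=\eta_P$: the preimage of $\eta_P$ (resp.\ $\eta_Y$) in the irreducible $V$ is just $\{\eta_V\}$ since \'etale morphisms preserve codimension-zero loci. Finally, $\varphi$ corresponds to the $K$-algebra map $K[P]\to K[Q]$ sending the basis element $[p]$ to $[\theta(p)]$, which is injective (equivalently, $\varphi$ is dominant) if and only if $\theta\colon P\to Q$ is injective. For integral monoids this is equivalent to injectivity of $\gp\theta$: any element of $\ker\gp\theta$ has the form $p-p'$ with $p,p'\in P$, and $\theta(p)=\theta(p')$ in $\gp Q$ forces the same equality in $Q$, whence $p=p'$. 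Since by Step~1, $\gp\theta$ is precisely the map $\gp{\o M}_{Y,f(x)}\to\gp{\o M}_{X,x}$, this concludes the argument.

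The main obstacle I expect is the chart-refinement step: simultaneously arranging $P=\o M_{Y,f(x)}$ and $Q=\o M_{X,x}$, with $\theta$ the induced map on characteristic sheaves, inside a single compatible diagram. This is a standard but somewhat delicate application of the theory of minimal fine charts; once it is in hand, the remainder of the argument is a combination of toric algebra and \'etale-topological bookkeeping.
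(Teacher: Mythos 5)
Your proof is correct and follows essentially the same strategy as the paper's: set up the toric chart so that the chart monoids equal the stalks $P=\o M_{Y,f(x)}$, $Q=\o M_{X,x}$, use Lemma~\ref{lemma:toroidal_embeddings_over_K_are_irred} and the fact that \'etale maps to irreducible schemes are dominant to reduce dominance of $f$ to dominance of the chart map $\Aff_Q\to\Aff_P$, then finish with a toric computation. The only departure is cosmetic: where you argue directly that $K[P]\to K[Q]$ is injective iff $\theta\colon P\to Q$ is injective iff $\gp\theta$ is injective, the paper restricts to the dense tori $\Spec K[\gp Q]\to\Spec K[\gp P]$ and computes that the cokernel is $\Spec K[N]$ with $N=\ker\gp\theta$ -- two equivalent versions of the same toric fact.
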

\begin{proof}
    Let $y=f(x)$, $P:=\o M_{Y,f(x)}$, $Q:=\o M_{X,x}$ and $N:=\on{Ker}(\gp P \to \gp Q)$. Since $X$ is toroidal, there exist a connected \'etale neighbourhood $V$ of $f(x)$ in $Y$, a connected \'etale neighbourhood $U$ of $x$ in $X\times_Y V$, and a commutative diagram
    \begin{equation}\label{toric_chart}
        \begin{tikzcd}
            U \arrow[r] \arrow[d] & V \arrow[d] \\
            \Aff_Q \times \Spec K \arrow[r] & \Aff_P \times \Spec K
        \end{tikzcd}
    \end{equation}
    as in \eqref{toroidal-map-loc}. All four schemes in \eqref{toric_chart} are irreducible by Lemma \ref{lemma:toroidal_embeddings_over_K_are_irred}, and the vertical arrows are \'etale by definition, so $U \to V$ is dominant if and only if the toric map
    \[
    \Spec K[Q] = \Aff_Q \times \Spec K \to \Aff_P \times \Spec K = \Spec K[P]
    \]
    is. This is in turn equivalent to the map of dense tori
    \[
    g \colon \Spec K[\gp Q] \to \Spec K[\gp P]
    \]
    being dominant. The cokernel of $g$ is $\Spec K[N]$, so $g$ is dominant if and only if $N=\{0\}$.

    It remains to show that $U \to V$ is dominant if and only if $X \to Y$ is. This reduces to showing that $U \to X$ and $V \to Y$ are dominant, which is true since $X$ and $Y$ are irreducible by Lemma \ref{lemma:toroidal_embeddings_over_K_are_irred}.
 
\end{proof}

\begin{lemma} \label{lemma:pseudotoroidal+dominant+good characteristic => log smooth}
     Let $f \colon X \to Y$ be a log scheme map, $x \to X$ a strict geometric point of characteristic $p$ and $y=f(x)$. Then $f$ is log smooth at $x$ if and only if $f$ is pseudo-toroidal at $x$ and the kernel and torsion part of the cokernel of $\gp{\o M}_{Y,y} \to \gp{\o M}_{X,x}$ are finite groups whose orders are not multiples of $p$.
\end{lemma}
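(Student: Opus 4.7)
The plan is to reduce this to K.~Kato's classical chart criterion for log smoothness (Ogus, \emph{Lectures on Logarithmic Algebraic Geometry}, Theorem IV.3.3.1), which states that $f$ is log smooth at $x$ if and only if, strict-\'etale locally near $x$, there exists a chart $P\to Q$ of $f$ (with $P$ a chart at $y$ and $Q$ a chart at $x$) such that
\begin{enumerate}
    \item[(i)] the induced morphism $X\to Y\times_{\Aff_P}\Aff_Q$ is smooth at $x$, and
    \item[(ii)] the kernel and torsion part of the cokernel of $\gp P\to\gp Q$ are finite groups of orders invertible in $\OO_{X,x}$, equivalently, not divisible by $p$.
\end{enumerate}

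Since $X$ and $Y$ are f.s., neat sharp charts exist strict-\'etale locally: one can arrange $P\xrightarrow{\sim}\bar M_{Y,y}$ and $Q\xrightarrow{\sim}\bar M_{X,x}$ as sharp monoids, by splitting the short exact sequences $\OO_{Y,y}^*\to\gp{M}_{Y,y}\to\gp{\bar M_{Y,y}}$ and similarly at $x$ (possible because the quotients are finitely generated free abelian). With such charts, $\gp P\to\gp Q$ is canonically identified with $\gp{\bar M_{Y,y}}\to\gp{\bar M_{X,x}}$, so Kato's condition (ii) translates literally into the kernel/cokernel condition in the statement.

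For the forward direction, applying Kato's theorem at the log smooth point $x$ yields a chart satisfying (i)--(ii); condition (i) directly witnesses pseudo-toroidality at $x$, and after refining to sharp neat charts as above, (ii) gives the characteristic-monoid condition. For the converse, pseudo-toroidality at $x$ supplies (after strict-\'etale localization) a chart $P\to Q$ of $f$ with $X\to Y\times_{\Aff_P}\Aff_Q$ smooth at $x$; refining to a sharp neat chart preserves (i), since changes of chart induce \'etale morphisms between the relevant $Y\times_{\Aff_\bullet}\Aff_\bullet$ schemes so that smoothness of the fiber product map is stable, while the hypothesis on $\gp{\bar M_{Y,y}}\to\gp{\bar M_{X,x}}$ yields (ii). Kato's criterion then gives log smoothness at $x$.

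The main obstacle is the careful passage between the arbitrary charts supplied by pseudo-toroidality and the sharp neat charts required to convert condition (ii) into a statement about characteristic monoids: one must verify that smoothness of $X\to Y\times_{\Aff_P}\Aff_Q$ is preserved under chart refinement, which is standard but deserves explicit check, and that the splittings producing neat sharp charts exist after strict-\'etale localization on $X$ and $Y$.
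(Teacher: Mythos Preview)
Your proposal is correct and follows essentially the same approach as the paper: reduce to Kato's chart criterion \cite[3.5]{katooriginal} after arranging neat sharp charts so that the chart monoids coincide with the stalks of the characteristic monoids. The paper compresses the entire argument into one sentence (``we may assume $f$ is globally charted by $\bar M_{Y,y} \to \bar M_{X,x}$''), whereas you spell out the chart-refinement step and flag the need to check that smoothness of $X \to Y \times_{\Aff_P} \Aff_Q$ is stable under such refinements; this is indeed the only point requiring care, and your treatment of it is adequate.
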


\begin{proof}
    Working locally on $X$ and $Y$ in the strict \'etale topology, we may assume $f$ is globally charted by $\gp{\o M}_{Y,y} \to \gp{\o M}_{X,x}$. Then, this is \cite[3.5]{katooriginal}.
\end{proof}

Conversely, varieties which admit log smooth maps to a field $K$ of characteristic $0$ are toroidal over that field, see \cite[Corollary 4.10]{Ulirsch17} or \cite[Remark 3.2]{ToroidalMorphisms}.

\subsection{Artin fans and cone complexes}\label{ss:AFvsconecomplexes}

For the reader's benefit, we compare Artin fans with the cone complexes used by  Abramovich in \cite{abramovich09ClayNotes}. See \cite{RdC} for a more thorough account. 

Recall the (generalized) cone complex of a toroidal embedding or log scheme $X$. For each geometric point $\bar x \to X$, consider the monoid $\bar M_{X, \bar x}$. A specialization $\bar x_1 \rightsquigarrow \bar x_2$ yields a cospecialization map $\bar M_{X, \bar x_2} \to \bar M_{X, \bar x_1}$, which dually gives
\[
\Hom(\bar M_{X, \bar x_1}, \NN) \longrightarrow \Hom(\bar M_{X, \bar x_2}, \NN).
\]
The colimit of these maps is the cone complex of $X$ \cite[page 71]{KTE}, \cite[Appendix B]{gross2013logarithmic}, \cite[\S 2.4.8]{abramovich09ClayNotes}, \cite[Appendix C]{puncturedloggwabramovichchengrosssiebert}:
\[
\Sigma_X \coloneqq \colim_{\bar x \to X} \Hom(\bar M_{X, \bar x}, \NN).
\]

The cone complex $\Sigma$ is a union of real cones $\sigma \subseteq \RR^{\dim \sigma}$ together with integral structures $\sigma_\NN \subseteq \ZZ^{\dim \sigma}$ such that $\sigma_\NN \otimes \RR = \sigma$ \cite[Definitions II.1.5, II.1.6]{KTE}.

\begin{definition}\label{defn:integralpoints}
    We denote by $\Sigma(\NN)$ the union of the integral structures $\sigma_\NN \subseteq \sigma$ of each cone $\sigma \in \Sigma$, and we call its elements the \emph{integral points} of $\Sigma$.
\end{definition}

If $\Sigma = (\sigma, \sigma_\NN)$ for example, $\Sigma(\NN) = \sigma_\NN$.

\begin{lemma}\label{lem:Artinfansconecomplexes}
    The set of maps $\af{} \to \af{X}$ to the Artin fan of $X$ coincides with the set of integral points of the cone complex $\Sigma_X$ of $X$
    \[
    \Hom(\af{}, \af{X}) = \Sigma_X(\NN).
    \]
\end{lemma}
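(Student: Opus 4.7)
The plan is to identify both sides with the set of equivalence classes of pairs $(\bar x, \varphi)$, where $\bar x \to X$ is a geometric point of $X$ and $\varphi \colon \bar M_{X, \bar x} \to \NN$ is a monoid homomorphism, modulo the equivalence relation generated by cospecializations along étale specializations $\bar x_1 \rightsquigarrow \bar x_2$.

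First I would unwind the right-hand side. By construction, the cones of the complex $\Sigma_X$ are (quotients of) the cones $\Hom(\bar M_{X, \bar x}, \NN)$, and the integral structure on each such cone is exactly $\Hom_{\mathrm{Mon}}(\bar M_{X, \bar x}, \NN)$ by \cite[Theorem I.2.2.3]{ogusloggeom}. So an integral point of $\Sigma_X$ is represented by a pair $(\bar x, \varphi)$, and two such pairs represent the same integral point precisely when they are linked by a zig-zag of cospecialization maps $\bar M_{X, \bar x_2} \to \bar M_{X, \bar x_1}$ coming from specializations $\bar x_1 \rightsquigarrow \bar x_2$.

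Next I would analyze the left-hand side. Using Lemma \ref{lem:coverbyatomics}, choose a strict étale cover of $X$ by atomics $\{U_\alpha \to X\}$. For each atomic $U_\alpha$, write $P_\alpha \coloneqq \Gamma(U_\alpha, \bar M_{U_\alpha})$, so that $\af{U_\alpha} = \af{P_\alpha}$ by Lemma \ref{lem:localAFfunctoriality}, and these Artin cones form a strict étale cover of $\af X$ glued along faces as in Definition \ref{def:artin}. By Lemma \ref{lem:artinconesfactorthroughartincones}, any morphism $\af{} \to \af{X}$ factors through some $\af{P_\alpha} \to \af{X}$. A map $\af{} \to \af{P_\alpha}$ is dual to a monoid morphism $P_\alpha \to \NN$, and the deepest stratum of $U_\alpha$ is nonempty, so choosing a geometric point $\bar x$ there gives $P_\alpha = \bar M_{X, \bar x}$ and produces the pair $(\bar x, \varphi)$.

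Finally I would match the two equivalence relations. Two factorizations through cones $\af{P_\alpha}$ and $\af{P_\beta}$ represent the same map to $\af{X}$ if and only if they agree on the strict étale fiber product $\af{P_\alpha} \times_{\af{X}} \af{P_\beta}$, which by the gluing-along-faces condition decomposes into Artin cones obtained by localizing at common faces. At the monoid level, this says precisely that $(\bar x_\alpha, \varphi_\alpha)$ and $(\bar x_\beta, \varphi_\beta)$ define the same point exactly when there is a geometric point $\bar z$ specializing to both $\bar x_\alpha$ and $\bar x_\beta$ along which the two monoid maps become identified via the cospecialization maps. This is exactly the equivalence relation defining $\Sigma_X(\NN)$ as a colimit, completing the identification. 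The main obstacle I expect is keeping the bookkeeping straight between the face-gluing presentation of $\af X$ and the colimit-of-cones presentation of $\Sigma_X$, in particular verifying that the two ways of identifying pairs $(\bar x, \varphi)$ agree up to a general étale zig-zag rather than only a single specialization.
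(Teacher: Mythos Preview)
Your proposal is correct and follows essentially the same approach as the paper: reduce to the atomic case via a strict \'etale cover, use Lemma \ref{lem:artinconesfactorthroughartincones} to factor maps $\af{} \to \af{X}$ through the covering Artin cones, and then identify both sides as the same colimit. The paper phrases the colimit over the atomic charts $U_\alpha$ rather than directly over geometric points, and leaves the matching of equivalence relations as a ``check that such an identification is functorial,'' whereas you spell out the face-gluing versus cospecialization comparison more explicitly; but this is a difference in bookkeeping, not in strategy.
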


\begin{proof}
    If $X$ is atomic and $\bar x \to X$ lies in its deepest closed stratum, observe that 
    \[
    \Sigma_X(\NN) = \Hom(\bar M_{X, \bar x}, \NN) = \Hom(\af{}, \af{\bar M_{X, \bar x}}) = \Hom(\af{}, \af{X})
    \]
    and conclude. 
    
    Now let $X$ be arbitrary and let $\bigsqcup U_i \to X$ be a strict-\'etale cover by atomic log schemes, resulting in a strict-\'etale cover $\af{Q_i} \to \af{X}$ of Artin fans, with $Q_i = \Gamma(U_i, \bar M_{U_i})$. Arguing as in Lemma \ref{lem:artinconesfactorthroughartincones}, we recognize the hom-set as the colimit
    \[\Hom(\af{}, \af{X}) = \colim_i \Hom(\af{}, \af{Q_i}).\]
    The same is true for $\Sigma_X(\NN)$. Reduce to the above atomic case $X = U_i$ by checking that such an identification is functorial. 
\end{proof}

\begin{definition}
    Define the (integral points of the) cone complex of an arbitrary log scheme $X$ admitting an Artin fan by 
    \[\Sigma_X(\NN) \coloneqq \Hom(\af{}, \af{X}).\]
\end{definition}

The set $\Sigma_X(\NN)$ does not have a monoid structure, though one can scale its elements to view $\Sigma_X(\NN)$ as an $\NN$-set \cite[\S I.1.2]{ogusloggeom}. Unlike the Artin fan, this cone complex is functorial for all maps of log schemes. 

\begin{lemma}\label{lem:conecomplexfunctorial}
    A morphism $f : X \to Y$ of log schemes yields an associated map
    \[\Sigma_f : \Sigma_X(\NN) \to \Sigma_Y(\NN)\]
    on integral points of cone complexes. 
    The assignment $X \mapsto \Sigma_X(\NN)$ yields a functor 
    \[
    ({\rm f.s.\ log\ schemes}) \longrightarrow (\NN-{\rm sets}).
    \]
\end{lemma}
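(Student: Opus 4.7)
The plan is to define $\Sigma_f$ using the presentation of $\Sigma_X(\NN)$ as the colimit
\[
\Sigma_X(\NN) = \colim_{\bar x \to X} \Hom(\bar M_{X, \bar x}, \NN)
\]
from the opening of Section \ref{ss:AFvsconecomplexes}, where the colimit is indexed by geometric points of $X$ with specialization morphisms, transitioning via precomposition with the cospecialization maps $\bar M_{X, \bar x_2} \to \bar M_{X, \bar x_1}$ for $\bar x_1 \rightsquigarrow \bar x_2$. A morphism $f \colon X \to Y$ sends $\bar x \to X$ to $f(\bar x) \to Y$, and the stalk morphism on characteristic monoids $f^\sharp_{\bar x} \colon \bar M_{Y, f(\bar x)} \to \bar M_{X, \bar x}$ induces, by precomposition, a map
\[
\Hom(\bar M_{X, \bar x}, \NN) \to \Hom(\bar M_{Y, f(\bar x)}, \NN).
\]
These are natural in specializations because the square
\[
\begin{tikzcd}
\bar M_{Y, f(\bar x_1)} \arrow[r, "f^\sharp"] \arrow[d] & \bar M_{X, \bar x_1} \arrow[d] \\
\bar M_{Y, f(\bar x_2)} \arrow[r, "f^\sharp"] & \bar M_{X, \bar x_2}
\end{tikzcd}
\]
commutes for every specialization $\bar x_1 \rightsquigarrow \bar x_2$. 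Passing to the colimit defines $\Sigma_f \colon \Sigma_X(\NN) \to \Sigma_Y(\NN)$.

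Functoriality is then immediate: for $X \xrightarrow{f} Y \xrightarrow{g} Z$, the equality $(g \circ f)^\sharp = f^\sharp \circ g^\sharp$ on stalks of characteristic monoids gives $\Sigma_{g \circ f} = \Sigma_g \circ \Sigma_f$, while $(\mathrm{id}_X)^\sharp = \mathrm{id}$ yields $\Sigma_{\mathrm{id}_X} = \mathrm{id}$. Compatibility with the $\NN$-action, under which $n \in \NN$ acts on $\phi \in \Hom(\bar M_{X, \bar x}, \NN)$ by post-composition with multiplication-by-$n$ on $\NN$, follows because post-composition commutes with pre-composition by $f^\sharp_{\bar x}$.

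The one point worth checking is that this construction agrees with the $\Hom(\af{}, \af{X})$ description of Lemma \ref{lem:Artinfansconecomplexes}. Given $\xi \colon \af{} \to \af{X}$, strict-\'etale localize $X$ by a cover by atomics (Lemma \ref{lem:coverbyatomics}) and apply Lemma \ref{lem:artinconesfactorthroughartincones} to factor $\xi$ through $\af{\bar M_{X, \bar x}} \to \af{X}$ for some geometric point $\bar x \to X$ in the deepest stratum of an atomic neighborhood $U$. Localizing $Y$ similarly around $f(\bar x)$ in an atomic $V$, Lemma \ref{lem:localAFfunctoriality} produces a compatible $\af{U} \to \af{V}$, which on characteristic monoids is precisely $f^\sharp_{\bar x}$. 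The resulting composite $\af{} \to \af{U} \to \af{V} \to \af{Y}$ matches $\Sigma_f(\xi)$ built from the colimit. The main obstacle is this reconciliation, since the Artin fan is not globally functorial; however, this failure never obstructs the construction, because a map out of $\af{}$ already factors through an atomic neighborhood, where local functoriality applies, and well-definedness of the colimit absorbs the ambiguity in choices of atomic representatives.
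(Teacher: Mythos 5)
Your proof is correct, but it takes a genuinely different route from the paper's. The paper works directly with the definition $\Sigma_X(\NN) = \Hom(\af{}, \af{X})$: it chooses compatible atomic covers of $X$ and $Y$, factors a given $r : \af{} \to \af{X}$ through an atomic $\af{U_\alpha}$ via Lemma \ref{lem:artinconesfactorthroughartincones}, applies the local functoriality of Lemma \ref{lem:localAFfunctoriality} to push into $\af{Y}$, and leaves the independence-of-lift check to the reader. You instead start from the colimit presentation $\Sigma_X(\NN) = \colim_{\bar x \to X} \Hom(\bar M_{X,\bar x},\NN)$, define $\Sigma_f$ by precomposing with the stalk maps $f^\sharp_{\bar x}$, verify naturality against the cospecialization maps, and pass to the colimit. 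That route is more elementary and makes the well-definedness automatic (the colimit does the bookkeeping), whereas the paper's route has the well-definedness as an explicit, if delegated, step. The price you pay is that you then need to reconcile your construction with the $\Hom(\af{}, \af{X})$ definition used in the statement; your reconciliation paragraph is essentially the paper's entire proof, so in effect you've given both constructions and shown they agree.

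One small notational slip: with the paper's convention that $\bar x_1 \rightsquigarrow \bar x_2$ means $\bar x_2$ lies in the closure of $\bar x_1$, the cospecialization maps run $\bar M_{X,\bar x_2} \to \bar M_{X,\bar x_1}$, so the vertical arrows in your commuting square should point from the $\bar x_2$-row to the $\bar x_1$-row rather than the other way. This is purely a labeling issue; the square commutes because $f^\sharp$ is a map of \'etale sheaves $f^{-1}\bar M_Y \to \bar M_X$ and the cospecialization maps are its restrictions to stalks.
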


\begin{proof}
    Choose strict-\'etale covers $\{U_\alpha \to X\}$, $\{V_\beta \to Y\}$ by affine, atomic log schemes $U_\alpha, V_\beta$ that form commutative squares 
    \[
    \begin{tikzcd}
        U_\alpha \ar[r] \ar[d]    &X \ar[d]     \\
        V_\beta  \ar[r]     &Y
    \end{tikzcd}
    \]
    for each $\alpha$ and appropriately chosen $\beta$. There result covers of Artin fans 
    \[\af{U_\alpha} \to \af{X}, \qquad \af{V_\beta} \to \af{Y}.\]
    By Lemma \ref{lem:artinconesfactorthroughartincones}, each map $r : \af{} \to \af{X}$ factors through some $\af{U_\alpha}$. We can therefore assume $X, Y$ are affine and atomic provided we check the assignment $\Sigma_f(r)$ is independent of the choice of lift to $\af{U_\alpha}$, which is left to the reader. 

    If $X, Y$ are atomic, there is a commutative square
    \[
    \begin{tikzcd}
        X \ar[r] \ar[d]       &\af{X} \ar[d]         \\
        Y \ar[r]       &\af{Y}.
    \end{tikzcd}    
    \]
    Define $\Sigma_f(r)$ by composition $\af{} \overset{r}{\to} \af{X} \to \af{Y}$ and conclude. 
\end{proof}

\begin{proposition}\label{prop:functoriallyidentifyingconecomplexes}
    Consider the fully faithful embedding $$({\rm Toroidal\ embeddings}) \subseteq ({\rm f.s.\ log\ schemes})$$ of toroidal embeddings as f.s.\ log schemes. The equality of Lemma \ref{lem:Artinfansconecomplexes} makes the square of functors 
    \[
    \begin{tikzcd}
        ({\rm Toroidal\ embeddings}) \ar[r] \ar[d, "X \mapsto \Sigma_X", swap]        &({\rm f.s.\ log\ schemes}) \ar[d, "X \mapsto \Sigma_X(\NN)"]      \\
        ({\rm Cone\ complexes}) \ar[r, swap, "\Sigma \mapsto \Sigma(\NN)"]     &(\NN-{\rm sets})
    \end{tikzcd}
    \]
    2-commutative. 
\end{proposition}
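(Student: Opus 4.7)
The plan is to check the two things a 2-commutative square demands: that on objects the two composite functors produce canonically isomorphic $\NN$-sets, and that this isomorphism is natural in morphisms. The object-level identification is essentially furnished by Lemma \ref{lem:Artinfansconecomplexes}, so most of the work lies in matching the two ways of inducing maps.

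First I would fix a toroidal embedding $X$ and unpack the $\NN$-set structure on both sides at an atomic neighbourhood of a geometric point $\bar x \to X$ in the deepest stratum. Classically, the cone $\sigma_{\bar x} = \Hom(\bar M_{X, \bar x}, \NN) \subset \RR^{\dim \sigma_{\bar x}}$ has integral structure $\sigma_{\bar x, \NN} = \Hom(\bar M_{X, \bar x}, \NN)$, scaled by $\NN$ in the target. Via Lemma \ref{lem:Artinfansconecomplexes}, this is $\Hom(\af{}, \af{\bar M_{X, \bar x}}) = \Hom(\af{}, \af{X}|_{\bar x})$, and the $\NN$-action corresponds to precomposition with the endomorphism of $\af{} = \bra{\Aff^1/\GG_m}$ dual to $n\cdot : \NN \to \NN$; by definition this is the $\NN$-action on $\Sigma_X(\NN)$ used in the right column of the square. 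Lemmas \ref{lem:artinconesfactorthroughartincones}, \ref{lem:coverbyatomics} and the gluing of Artin cones along faces (Definition \ref{def:artin}) ensure the passage from local atomic data to global $X$ matches the colimit/gluing of classical cones along face cospecialization maps.

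Second, for a toroidal morphism $f : X \to Y$ I would reduce to the situation of a local chart as in \eqref{toroidal-map-loc} with $X, Y$ atomic, say with characteristic monoids $Q = \Gamma(X, \bar M_X)$ and $P = \Gamma(Y, \bar M_Y)$. Shrinking further using Lemma \ref{lem:coverbyatomics} and Remark \ref{remark:toroidal_implies_pseudotoroidal}, we may assume the toric chart $\Aff_Q \to \Aff_P$ exhibits the local chart of $f$. Classically the induced map of cones sends a monoid homomorphism $Q \to \NN$ to its restriction along $P \to Q$. On the other hand, by Lemma \ref{lem:localAFfunctoriality} there is a compatible square of Artin fans $\af{X} = \af{Q} \to \af{P} = \af{Y}$, and the log-theoretic $\Sigma_f$ of Lemma \ref{lem:conecomplexfunctorial} sends $\af{} \to \af{Q}$ to $\af{} \to \af{Q} \to \af{P}$, which is precisely the same restriction map on $\Hom(-,\NN)$ under the equivalence $\Cones \simeq \cat{Artin Cones}$. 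Hence the two functorial constructions agree locally; both sides being defined by gluing/colimiting along cospecializations indexed by faces, they agree globally.

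The main obstacle, and the step I expect to require the most care, is checking that the cospecialization maps built into the classical colimit description $\Sigma_X = \colim_{\bar x \to X} \Hom(\bar M_{X, \bar x}, \NN)$ match the face inclusions used when one presents $\af{X}$ as a colimit of Artin cones $\af{Q_i}$. This is a bookkeeping exercise: a specialization $\bar x_1 \rightsquigarrow \bar x_2$ induces $\bar M_{X, \bar x_2} \to \bar M_{X, \bar x_1}$ realizing the former as a face localization of the latter, which dualizes to exactly the face inclusion of Artin cones gluing $\af{X}$. Once this identification is in hand the naturality check reduces, strict-étale locally, to the evident identity of monoid homomorphisms treated above.
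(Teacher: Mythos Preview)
Your proposal is correct and follows essentially the same approach as the paper: note that the object-level identification is Lemma \ref{lem:Artinfansconecomplexes}, reduce to atomic charts via strict \'etale covers (using surjectivity from Lemma \ref{lem:artinconesfactorthroughartincones}), and observe that in the atomic case both functorialities are given by precomposition with the same monoid map $P \to Q$. You are in fact more explicit than the paper about the $\NN$-action and the matching of cospecialization maps with face inclusions, which the paper dispatches as ``by construction.''
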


\begin{proof}
    All that remains to check is that the functoriality defined in Lemma \ref{lem:conecomplexfunctorial} coincides with the usual functoriality of cone complexes for toroidal embeddings on integral points. Let $f : X \to Y$ be a morphism of toroidal embeddings and choose strict \'etale covers $U \to X$, $V \to Y$ by disjoint unions of atomics $U, V$ that fit into a commutative square 
    \[
    \begin{tikzcd}
        U \ar[r] \ar[d]       &V \ar[d]      \\
        X \ar[r]      &Y
    \end{tikzcd}    
    \]
    with resulting square of integral points of cone complexes
    \begin{equation}\label{eqn:functorialityofconecomplexes}
    \begin{tikzcd}
        \Sigma_U(\NN) \ar[r] \ar[d]        &\Sigma_V(\NN)  \ar[d]   \\
        \Sigma_X(\NN) \ar[r]        &\Sigma_Y(\NN). 
    \end{tikzcd}
    \end{equation}
    In fact, we have two such squares with potentially distinct arrows which we need to equate. 
    
    By Lemma \ref{lem:artinconesfactorthroughartincones} with $P = \NN$,  $\Sigma_U(\NN) \to \Sigma_X(\NN)$ is surjective. So it suffices to check the functoriality maps are the same for all the maps in \eqref{eqn:functorialityofconecomplexes} besides the map $\Sigma_X(\NN) \to \Sigma_Y(\NN)$. But then this is by construction. 
\end{proof}

\begin{corollary}\label{cor:logblowupsameintegralconecomplex}
    If $b : \tilde X \to X$ is a log blowup, their cone complexes have the same set of integral points
    \[\Sigma_b : \Sigma_{\tilde X}(\NN) \longsimeq \Sigma_X(\NN).\]
\end{corollary}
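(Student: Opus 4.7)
The plan is to reduce to a purely combinatorial statement about subdivisions of cones, and then verify that the abstract functor $\Sigma_b$ of Lemma \ref{lem:conecomplexfunctorial} matches this combinatorial map. By Lemma \ref{lem:artinconesfactorthroughartincones} a map $\af{} \to \af{X}$ factors through some Artin cone $\af{Q_\alpha} \to \af{X}$ coming from a strict-\'etale cover by atomics $U_\alpha \to X$, with $Q_\alpha = \Gamma(U_\alpha, \bar M_{U_\alpha})$. I would use this to reduce to the case where $X$ is a single atomic log scheme, so $\af{X} = \af{Q}$ is an Artin cone, and $\tilde X \to X$ is the f.s.\ pullback of a toric blowup of the form $\bra{\tilde V/T} \to \bra{V/T}$ as in Remark~\ref{rmk:logaltnrmk}, where $V$ is the affine toric variety $\Aff_Q$.

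In that local situation the map of Artin fans $\af{\tilde X} \to \af{X}$ is the subdivision $\scr B \to \af{Q}$ associated to a subdivision $\pi : \tilde \Sigma \to \sigma_Q$ of the cone $\sigma_Q = \Hom(Q,\NN)_{\RR}$ dual to $Q$. Under the identification of Lemma \ref{lem:Artinfansconecomplexes}, the map
\[
\Hom(\af{},\scr B) \longrightarrow \Hom(\af{},\af{Q})
\]
sends a lattice point in some cone of $\tilde\Sigma$ to the same lattice point viewed inside $\sigma_Q$. This is a bijection: injectivity follows from Remark \ref{rmk:logaltnrmk}, which asserts that log alterations (hence log blowups) are log monomorphisms, so $\scr B \to \af{Q}$ is a monomorphism of functors; surjectivity is the combinatorial fact that the cones of $\tilde\Sigma$ cover $\sigma_Q$ set-theoretically, so every integral point $v \in \sigma_Q \cap N$ lies in some cone of $\tilde\Sigma$ and provides a preimage.

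To conclude, I would check that the functorial map $\Sigma_b$ of Lemma \ref{lem:conecomplexfunctorial} agrees, under the covers chosen above, with the bijection just constructed. Concretely, for $r : \af{} \to \af{X}$ factoring through some $\af{U_\alpha}$, the preimage $r' : \af{} \to \af{\tilde U_\alpha}$ built from the subdivision satisfies $\pi \circ r' = r$; functoriality at the level of Artin cones is immediate from Lemma \ref{lem:localAFfunctoriality}, so $\Sigma_b(r') = r$. Gluing over the cover $\{U_\alpha\}$, and using Lemma \ref{lem:artinconesfactorthroughartincones} to see that the bijection is independent of the chosen factorization, gives the global bijection $\Sigma_b : \Sigma_{\tilde X}(\NN) \longsimeq \Sigma_X(\NN)$.

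The main obstacle is the bookkeeping in the last paragraph: one must check that the locally defined inverse constructed on each Artin cone patches across different choices of atomic chart and factorization. This comes down to the fact that two local lifts of the same $r : \af{} \to \af{X}$ to $\scr B$ must agree, which is precisely the injectivity coming from $\scr B \to \af{Q}$ being a monomorphism, so the reduction is consistent and globalizes without further issue.
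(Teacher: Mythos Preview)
Your proposal is correct and follows essentially the same approach as the paper. The paper's proof is extremely terse: it reduces to $X$ atomic so that $\tilde X$ is pulled back from a log blowup $\scr B \to \af{X}$, and then simply asserts that maps from $\af{}$ factor uniquely through any log blowup, giving $\Hom(\af{}, \scr B) = \Hom(\af{}, \af{X})$. You unpack this single assertion into its two halves (injectivity via the log-monomorphism property of Remark~\ref{rmk:logaltnrmk}, surjectivity via the combinatorial covering by cones of a subdivision) and add the bookkeeping about gluing over charts, but the underlying argument is the same.
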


\begin{proof}
    The definition of the map $\Sigma_b$ is local in $X$, so we can assume $X$ is atomic and $\tilde X$ is pulled back from a log blowup $\scr B \to \af{X}$ of the Artin fan of $X$. Then maps from $\af{}$ factor uniquely through any log blowup
    \[\Hom(\af{}, \scr B) = \Hom(\af{}, \af{X}),\]
    so we are done. 
\end{proof}

\section{Points under dominant morphisms}
\label{sec:appendix}

The results of this section provide a generalization of \cite[Proposition 4.3]{Campana05surfaces} and \cite[Proposition 2.1.8]{abramovich09ClayNotes} by relaxing all regularity assumptions. As in \cite{Campana05surfaces, abramovich09ClayNotes} we first introduce some invariants that capture properties of the fibers of the morphism, such as the orbifold base, and then we show that images of rational points under the morphism are Campana points with respect to these invariants.

\subsection{The orbifold base invariants}
\label{sec:orbifold base invariants}
In \cite{Campana05surfaces} Campana introduces the concept of orbifold base of a dominant morphism to a smooth curve as a divisor on the curve whose coefficients are determined by the morphism. In \cite[\S 2.2]{abramovich09ClayNotes}, Abramovich extends the concept of orbifold base to higher dimensional base varieties in terms of $b$-divisors. Here, we introduce an invariant $m_{f,\mathscr D}$ for every integral closed subscheme $\mathscr D$ of the base, without passing to birational models.

Let $R$ be a Noetherian ring.
Let $f:\mathscr X\to \mathscr Y$ be a dominant morphism of integral proper $R$-schemes with reduced generic fiber. Let $\mathscr D\subseteq \mathscr Y$ be an integral closed subscheme.
Let $\mathscr U \subseteq \mathscr X$ be an affine open subset $\scr U = \spec A$ such that $\mathscr U \cap f^{-1}(\mathscr D) \neq \emptyset$.

Let $I\subseteq A$ be the ideal that defines $\mathscr D\times_{\mathscr Y} \mathscr U$. 
 Let $\mathfrak p_1,\dots,\mathfrak p_r$ be the isolated primes of $I$ in $A$. Let
$$
m_{f, \mathscr D,\mathscr U}=\min_{1\leq i\leq r}\max\{e:I\subseteq \mathfrak p_i^{e}\}.
$$

Then $m_{f, \mathscr D,\mathscr U}$ is the largest integer such that $I\subseteq \bigcap_{i=1}^r\mathfrak p_i^{m_{f, \mathscr D,\mathscr U}}$. Let $m_{f, \mathscr D}$ be the smallest $m_{f, \mathscr  D,\mathscr U}$, where $\mathscr U$ runs over all affine open subsets of $\mathscr X$ intersecting $f^{-1}(\mathscr D)$. The following example shows that there are in principle no relations between the orbifold base invariants $m_{f,\mathscr D}$ and $m_{f,\mathscr D'}$ for $\mathscr D'\subseteq\mathscr D$ integral closed subschemes of $\mathscr Y$.
\begin{example}\label{eg:inclusions}
    Consider $f:\mathbb{A}^3\to\mathbb{A}^3$ given by $$(x,y,z)\mapsto(x^2,y^2,yz).$$ Denote by $(u,v,w)$ the coordinates on the target affine space. If $\mathscr D=V(u-v)$ and $\mathscr D'=V(u,v)$, then $m_{f,\mathscr D}=1<m_{f,\mathscr D'}=2$. On the other hand, if $\mathscr D=V(v)$ and $\mathscr D'=V(v,w)$, then $m_{f,\mathscr D}=2>m_{f,\mathscr D'}=1$.
\end{example}

\subsection{Campana points}
The results \cite[Proposition 4.3]{Campana05surfaces} and \cite[Proposition 2.1.8]{abramovich09ClayNotes} state that the images of rational points under a dominant morphism to a smooth curve defined over a number field $k$ is contained in the set of $\OO_{k,M}$-Campana points determined by the orbifold base invariants for a suitable finite set of places $M$.
We recall that notions of Campana points have been defined on curves by \cite[\S3.4, \S4.1]{Campana05surfaces} and \cite[\S\S13.5-13.6]{Campana11} as \emph{orbifold rational points}, and on higher dimensional varieties by \cite[\S2.2]{abramovich09ClayNotes} as \emph{soft $S$-integral points} and by \cite{AVA18, PSTVA21, MNS24} as \emph{Campana points}. Here, we use the definition of intersection multiplicity as in \cite{MNS24} to define the local Campana condition with respect to the orbifold base invariants in the setting of Section \ref{sec:orbifold base invariants}.

Assume that $R$ is a discrete valuation ring. For each closed subscheme $\mathscr D\subseteq \mathscr Y$, and each $R$-point $y\in \mathscr Y(R)$ such that $y\notin \mathscr D(R)$, let $\spec R\times_\mathscr Y \mathscr D$ be the fiber product of  $y:\spec R\to \mathscr Y$ with the inclusion $\mathscr D\subseteq \mathscr Y$. Consider $n_{y,\mathscr D} \in \NN$ such that $\spec R\times_\mathscr{Y} \mathscr{D}=\spec R/(\pi)^{n_{y,\mathscr D}}$, where $\pi$ is a uniformizer of $R$. For $m\in\ZZ_{\geq 1}$, let $(\mathscr Y, \mathscr D, m)(R)$ be the set of points $y\in \mathscr Y(R)$ such that either $y\in \mathscr D(R)$ or $n_{y,\mathscr D}=0$ or $n_{y,\mathscr D}\geq m$.
\begin{remark}\label{rmk:inclusions}
    If $\mathscr D'\subseteq\mathscr D$ and $m'\leq m$, there is an inclusion $(\mathscr Y, \mathscr D, m)(R)\subseteq (\mathscr Y, \mathscr D', m')(R)$.
\end{remark}

\subsection{Images of points}

\begin{theorem}\label{thm:DVR}
Let $R$ be a discrete valuation ring and $f:\mathscr X\to \mathscr Y$ be a dominant morphism of integral proper $R$-schemes with reduced generic fiber.  
Let $\mathscr D\subseteq\mathscr Y$ be an integral closed subscheme.
Then $f(\mathscr X(R))\subseteq(\mathscr Y, \mathscr D, m_{f,\mathscr D})(R)$.

\end{theorem}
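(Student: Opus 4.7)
The plan is to reduce the statement to a purely local ideal-theoretic calculation on a well-chosen affine chart. Given $x\in\mathscr X(R)$ with image $y=f\circ x$, the only case requiring work is when $y\notin\mathscr Z(R)$ and $n_{y,\mathscr Z}\geq 1$, since the other two options are already accounted for in the definition of $(\mathscr Y,\mathscr Z,m_{f,\mathscr Z})(R)$. So I would assume that the closed point of $\spec R$ maps into $f^{-1}(\mathscr Z)$ and aim to show $n_{y,\mathscr Z}\geq m_{f,\mathscr Z}$.

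First I would pick an affine open $\mathscr U=\spec A\subseteq \mathscr X$ containing the image $\bar x$ of the closed point of $\spec R$. Since $\spec R$ is local, the preimage of $\mathscr U$ under $x$ is an open containing the closed point, hence equals $\spec R$, and so $x$ factors through $\mathscr U$ via some ring map $\phi\colon A\to R$. Because $\bar x\in f^{-1}(\mathscr Z)\cap \mathscr U$ (using $n_{y,\mathscr Z}\geq 1$), the chart $\mathscr U$ is admissible in the infimum defining $m_{f,\mathscr Z}$.

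Next, I would carry out the key computation. Let $I\subseteq A$ be the ideal cutting out $\mathscr Z\times_{\mathscr Y}\mathscr U$. Then the fiber product $y\times_{\mathscr Y}\mathscr Z$ is defined in $\spec R$ by the ideal $\phi(I)R=(\pi)^{n_{y,\mathscr Z}}$, where $\pi$ is a uniformizer of $R$. The closed point of $\spec R$ maps to the prime $\mathfrak q:=\phi^{-1}((\pi))\in\spec A$, which contains $I$. By primary decomposition in the Noetherian ring $A$, every prime containing $I$ contains at least one isolated prime of $I$, so pick $\mathfrak p_i\subseteq\mathfrak q$, and let $e_i=\max\{e:I\subseteq\mathfrak p_i^e\}$. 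From $\phi(\mathfrak p_i)\subseteq\phi(\mathfrak q)\subseteq(\pi)$ I obtain
\[\phi(I)\subseteq\phi(\mathfrak p_i)^{e_i}\subseteq(\pi)^{e_i},\]
whence $n_{y,\mathscr Z}\geq e_i\geq m_{f,\mathscr Z,\mathscr U}\geq m_{f,\mathscr Z}$, finishing the argument.

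The argument is really just bookkeeping, and the main care points are minor: confirming that $\mathscr U$ genuinely is admissible in the infimum, and that a containing prime must dominate some \emph{isolated} prime (as opposed to an embedded one). The properness of $\mathscr X$ and $\mathscr Y$ and the reducedness of the generic fiber of $f$ do not enter this local estimate; they serve only as background hypotheses ensuring that $m_{f,\mathscr Z}$ is well defined and nontrivial.
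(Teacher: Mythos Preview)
Your proof is correct and follows essentially the same approach as the paper: pick an affine chart containing the image of the closed point, identify $n_{y,\mathscr Z}$ with the $\pi$-adic valuation of the ideal generated by $\phi(I)$, and use $I\subseteq\mathfrak p_i^{e_i}$ together with $\phi(\mathfrak p_i)\subseteq(\pi)$ to bound $n_{y,\mathscr Z}$ below by $m_{f,\mathscr Z}$. Your explicit choice of an isolated prime $\mathfrak p_i\subseteq\mathfrak q=\phi^{-1}((\pi))$ is in fact slightly cleaner than the paper's phrasing, which writes the chain of inclusions ``for each $i$'' without singling out one for which $\psi(\mathfrak p_i)\subseteq(\pi)$ is guaranteed.
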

\begin{proof}
Let $x\in \mathscr X(R)$. Let $f(x)\in\mathscr Y(R)$ the point induced by $x$ by composition with $f$. 
If $f(x)\in \mathscr D(R)$ there is nothing to do. If $f(x)\notin \mathscr D(R)$, consider the commutative diagram of fiber products
$$
\begin{tikzcd}
\spec R\times_{\mathscr Y}\mathscr D \arrow[d]\arrow[r]  & \mathscr X\times_{\mathscr Y}\mathscr D \arrow[d]\arrow[r] & \mathscr D \arrow[hookrightarrow,d] \\
\spec R \arrow[r,"x"] & \mathscr X \arrow[r,"f"] & \mathscr Y.
\end{tikzcd}
$$
Commutativity of the diagram gives $n_{f(x),\mathscr D}=n_{x,\mathscr X\times_{\mathscr Y}\mathscr D}$.
Let $\mathscr U=\spec A$ be an affine open subset of $\mathscr X$ that contains $x(\spec R/(\pi))$.
Let $\psi:A\to R$ the ring homomorphism induced by $x$. 
Let $I\subseteq A$ be the ideal that defines $\mathscr D\times_{\mathscr Y} \mathscr U$. Note that 
$
\spec R\times_{\mathscr Y}\mathscr D \cong  \spec (R/\psi(I)).
$
Thus $\psi(I)=(\pi)^{n_{f(x),\mathscr D}}$. 

If $ f^{-1}(\mathscr D)\cap \mathscr U=\emptyset$, then $n_{f(x),\mathscr D}=0$. Assume that $ f^{-1}(\mathscr D)\cap \mathscr U\neq\emptyset$. 
Let $\mathfrak p_1,\dots,\mathfrak p_r$ be the isolated primes of $I$ in $A$. 
For each $i\in\{1,\dots,r\}$,
there are inclusions
$$\psi(I)\subseteq \psi(\mathfrak p_i^{m_{f, \mathscr D,\mathscr U}})
\subseteq (\pi)^{m_{f, \mathscr D,\mathscr U}}\subseteq (\pi)^{m_{f, \mathscr D}},$$
where  the third inclusion holds by definition of $m_{f, \mathscr D}$.
\end{proof}

Let $k$ be the fraction field of a DVR $R$, and let $m_{f,\mathscr D_k}$ be the multiplicity defined by the base change of $f$ and $\mathscr D$ to $k$. Then $m_{f,\mathscr D}\leq m_{f,\mathscr D_k}$, and hence $(\mathscr Y, \mathscr D, m_{f,\mathscr D_k})(R)\subseteq (\mathscr Y, \mathscr D, m_{f,\mathscr D})(R)$.
 We show that if $R$ is a Dedekind domain, this inclusion is strict at only a finite number of maximal ideals of $R$.

\begin{theorem}\label{thm:Dedekind}
Let $R$ be a Dedekind domain with fraction field $k$.
Let $f:X\to Y$ be a dominant morphism of integral proper varieties over $k$ with reduced generic fiber. 
Let $\mathscr Y$ be a proper $R$-model of $Y$. Let $D\subseteq Y$ be an integral closed subscheme and $\mathscr D\subseteq\mathscr Y$ an integral closed subscheme that is an $R$-model of $D$.
Then $f(X(k))\subseteq(\mathscr Y_{\mathfrak m}, \mathscr D_{\mathfrak m}, m_{f,D})(R_{\mathfrak m})$, for all but finitely many maximal ideals $\mathfrak m$ of $R$. 
\end{theorem}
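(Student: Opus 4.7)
The plan is to reduce Theorem \ref{thm:Dedekind} to Theorem \ref{thm:DVR} applied one maximal ideal at a time, controlling how the local multiplicity $m_{\tilde f_{\mathfrak m},\mathscr Z_{\mathfrak m}}$ computed on a global integral model relates to the generic fiber multiplicity $m_{f,Z}$. First I would build the model: by Nagata-type compactification over $R$, produce a proper integral $R$-scheme $\mathscr X_0$ with generic fiber $X$, and inside $\mathscr X_0\times_R\mathscr Y$ take the closure $\mathscr X$ of the graph $\Gamma_f\subseteq X\times_kY$ of $f$. Then $\mathscr X$ is a proper integral $R$-scheme with generic fiber $X$, and the second projection restricts to a morphism $\tilde f\colon\mathscr X\to\mathscr Y$ extending $f$. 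Since the generic fiber of $\tilde f$ equals $f$, the hypotheses of Theorem \ref{thm:DVR} (integrality, properness, dominance, reducedness of generic fiber) transfer to every base change $\tilde f_{\mathfrak m}\colon\mathscr X_{\mathfrak m}\to\mathscr Y_{\mathfrak m}$ over the DVR $R_{\mathfrak m}$.

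The next step is to rephrase the multiplicities as minima over irreducible components of preimages. By shrinking $\mathscr U$ in the definition of $m_{\tilde f,\mathscr Z,\mathscr U}$ around a single minimal prime of $\mathscr Z\times_{\mathscr Y}\mathscr U$, one sees
\[m_{\tilde f,\mathscr Z}\;=\;\min_W\on{mult}_W\bigl(\tilde f^{-1}(\mathscr Z)\bigr),\qquad m_{f,Z}\;=\;\min_{W'}\on{mult}_{W'}\bigl(f^{-1}(Z)\bigr),\]
where $W$ ranges over the irreducible components of $\tilde f^{-1}(\mathscr Z)\subseteq\mathscr X$ and $W'$ over those of $f^{-1}(Z)\subseteq X$, and analogously for $\tilde f_{\mathfrak m}$ and $\mathscr Z_{\mathfrak m}$. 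Since $\mathscr X$ is Noetherian, $\tilde f^{-1}(\mathscr Z)$ has only finitely many irreducible components, each either \emph{horizontal} (dominating $\spec R$) or \emph{vertical} (contained in a single closed fiber); let $S\subseteq\on{MaxSpec}R$ be the finite set of primes supporting some vertical component. For $\mathfrak m\notin S$, the irreducible components of $\tilde f_{\mathfrak m}^{-1}(\mathscr Z_{\mathfrak m})$ are exactly the restrictions to $\mathscr X_{\mathfrak m}$ of the horizontal $W$'s. The generic point $\eta_W$ of such a horizontal $W$ maps to the generic point of $\spec R$, so $\mathcal O_{\mathscr X_{\mathfrak m},\eta_W}=\mathcal O_{\mathscr X,\eta_W}=\mathcal O_{X,\eta_W}$ and the defining ideals of $\mathscr Z$, $\mathscr Z_{\mathfrak m}$, $Z$ pull back to a common ideal in this local ring. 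Via the bijection (by closure) between horizontal $W$'s and components $W'=W\cap X$ of $f^{-1}(Z)$, this yields $m_{\tilde f_{\mathfrak m},\mathscr Z_{\mathfrak m}}=m_{f,Z}$ for all $\mathfrak m\notin S$.

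Theorem \ref{thm:DVR} applied to $\tilde f_{\mathfrak m}$ over $R_{\mathfrak m}$ then gives $\tilde f_{\mathfrak m}(\mathscr X_{\mathfrak m}(R_{\mathfrak m}))\subseteq(\mathscr Y_{\mathfrak m},\mathscr Z_{\mathfrak m},m_{f,Z})(R_{\mathfrak m})$ for $\mathfrak m\notin S$; valuative properness of $\mathscr X_{\mathfrak m}$ promotes every $x\in X(k)$ to a unique $R_{\mathfrak m}$-point of $\mathscr X_{\mathfrak m}$, whose image under $\tilde f_{\mathfrak m}$ is the $R_{\mathfrak m}$-extension of $f(x)$ in $\mathscr Y_{\mathfrak m}$, completing the proof. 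The main obstacle will be the commutative-algebra bookkeeping in the middle step: verifying that the ``min over all affine opens'' definition of $m_{f,\mathscr Z}$ really reduces to a per-component description (Example \ref{eg:inclusions} warns that such multiplicities do not behave uniformly under closed inclusions), and cleanly excluding vertical components at good $\mathfrak m$ so that the multiplicity on the model matches the one on the generic fiber.
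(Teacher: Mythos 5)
Your structural setup (horizontal vs.\ vertical components, base-change to $R_{\mathfrak m}$, applying Theorem~\ref{thm:DVR}) matches the paper's skeleton, but the central step — deducing $m_{\tilde f_{\mathfrak m},\mathscr Z_{\mathfrak m}}=m_{f,Z}$ from $\mathcal O_{\mathscr X_{\mathfrak m},\eta_W}=\mathcal O_{X,\eta_W}$ — has a genuine gap, and it is exactly the gap you flag at the end. The invariant $m_{f,\mathscr Z}$ is \emph{not} a stalk invariant at the generic points of components. Its definition uses the ordinary powers $\mathfrak p_i^{e}$ in the affine ring $A(\mathscr U)$, and one takes a minimum over affine opens $\mathscr U$ because the quantity $\max\{e: I_{\mathscr U}\subseteq\mathfrak p_{W}^e\}$ is \emph{non-decreasing} under shrinking $\mathscr U$ (localization): this minimum is therefore sensitive to global affine geometry, not just to $\mathcal O_{X,\eta_W}$. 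Replacing it with a computation in the local ring at $\eta_W$ silently substitutes the symbolic power $\mathfrak p_W^{(e)}$ for the ordinary power, which is the alternative invariant $m_b$ of Section~\ref{sec:appendix}. The paper gives an explicit example at the end of that appendix showing that Theorem~\ref{thm:Dedekind} \emph{fails} for $m_b$, so any argument that only sees the stalk at $\eta_W$ cannot prove the statement.

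The paper's proof sidesteps this by working with the ideal sheaves globally and spreading them out. It records the containment $\mathcal I'\subseteq(\mathcal J_i')^{m_{f,Z}}$ on the generic fiber (which is exactly the content of the definition of $m_{f,Z}$), then invokes \cite[Theorem 3.2.1]{Poonen17} to extend this containment to a closed immersion over $R_{ab}$ for suitable $a,b\in R$. For $\mathfrak m\nmid ab$ one then has $\mathcal I_{\mathfrak m}\subseteq\mathcal J_{i,\mathfrak m}^{m_{f,Z}}$ as ideal sheaves, which restricted to \emph{every} affine open of $\mathscr X_{\mathfrak m}$ gives the needed inequality $m_{f_{\mathfrak m},\mathscr Z_{\mathfrak m}}\geq m_{f,Z}$ directly from the definition — no reduction to a per-component local computation is attempted. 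To repair your argument you would need to show that the ideal-theoretic inclusion $I_{\mathscr U}\subseteq\mathfrak p_{W,\mathscr U}^{m_{f,Z}}$ already known on generic-fiber affines actually persists on affine opens of $\mathscr X_{\mathfrak m}$ for almost all $\mathfrak m$, and that is precisely a spreading-out statement, not a stalk statement. Your observation that the components of $\tilde f^{-1}(\mathscr Z)$ split into finitely many horizontal and vertical pieces is correct and useful, but by itself it only controls the indexing set, not the exponents.
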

\begin{proof}
Let $\mathscr X$ be a proper $R$-model of $X$. 
Then $f$ induces a birational map $\mathscr X\dashrightarrow \mathscr Y$. 
By \cite[Theorem 3.2.1]{Poonen17} there is $a\in R$ such that the birational map induced by $f$ on $\mathscr X_{R_a}$ is a morphism $f_{R_a}:\mathscr X_{R_a}\to\mathscr Y_{R_a}$, where $R_a=R[\frac 1a]$. 
Let $\mathcal I$ be the ideal sheaf of $\mathscr D\times_{\mathscr Y}\mathscr X_{R_a}$ as a closed subscheme of $\mathscr X_{R_a}$, and let $\mathcal J_1,\dots,\mathcal J_s$ be the ideal sheaves of the irreducible components of $\mathscr D\times_{\mathscr Y}\mathscr X_{R_a}$ endowed with reduced scheme structure. Let $\mathcal I'$ be the ideal sheaf of $D\times_{Y}X$ as a closed subscheme of $X$, and let $\mathcal J'_1,\dots,\mathcal J'_r$ be the ideal sheaves of the irreducible components of $D\times_{Y}X$ endowed with reduced scheme structure. 
Note that $r\leq s$.
By \cite[Proposition 3.11]{atiyah-macdonald} we can assume without loss of generality that  $\mathcal I'=\mathcal I\otimes_{R_a}k$ and $\mathcal J'_i=\mathcal J_i\otimes_{R_a}k$ for all $i\in\{1,\dots,r\}$. By definition of $m_{f,D}$ we have $\mathcal I'\subseteq{\mathcal J_i'}^{m_{f,D}}$ for all $i\in\{1,\dots,r\}$, where ${\mathcal J_i'}^{m_{f,D}}$ is the sheaf associated to the $m_{f,D}$-th power of $\mathcal J_i'$.
For $i\in\{1,\dots,s\}$, let $\mathscr W_i\subseteq\mathscr X_{R_a}$ be the closed subscheme defined by $\mathcal J_i^{m_{f,D}}$. By \cite[Theorem 3.2.1]{Poonen17} there is $b\in R$ such that the rational map $\mathscr W_{i,R_{ab}}\to\mathscr D\times_{\mathscr Y}\mathscr X_{R_{ab}}$ induced by $\mathcal I'\subseteq{\mathcal J'_i}^{m_{f,D}}$ is a closed immersion for all $i\in\{1,\dots,r\}$ and $\mathscr W_{i,R_{ab}}=\emptyset$ for all $i\in\{r+1,\dots,s\}$.
Since $R$ is a Dedekind domain, the number of maximal ideals of $R$ that contain $ab$ is finite. Let $\mathfrak m$ be a maximal ideal of $R$ that does not contain $ab$, and let $f_{\mathfrak m}:\mathscr X_{\mathfrak m}\to\mathscr Y_{\mathfrak m}$ be the morphism induced by $f$. 
Then $\mathcal I_{\mathfrak m}\subseteq{\mathcal J_{i,\mathfrak m}}^{m_{f,D}}$, and hence, $m_{f_{\mathfrak m},\mathscr D_{\mathfrak m}}\geq m_{f,D}$. Thus $m_{f_{\mathfrak m},\mathscr D_{\mathfrak m}}=m_{f,D}$.
Since $\mathscr X_{\mathfrak m}$ is proper over $R_{\mathfrak m}$, we have $X(k)=\mathscr X_{\mathfrak m}(R_{\mathfrak m})$. Hence, $f(X(k))\subseteq(\mathscr Y_{\mathfrak m}, \mathscr D_{\mathfrak m}, m_{f_{\mathfrak m},\mathscr D_{\mathfrak m}})(R_{\mathfrak m})$ by Theorem \ref{thm:DVR}. 
\end{proof}

Since the set of points of $X$ where the fibers of $f$ are reduced is open \cite[Corollaire 12.1.7]{MR0217086}, there is a proper closed subset $\mathscr W$ of $\mathscr Y$ that contains all integral closed subschemes $\mathscr D$ of $\mathscr Y$ such that $m_{f, \mathscr D}>1$, and among those, there are only finitely many of codimension $1$. These, or the irreducible components of $\mathscr W$, give a first approximation of the set of images of $k$-rational points under $f$ via Theorems \ref{thm:DVR} and \ref{thm:Dedekind}.
By Example \ref{eg:inclusions} it is unclear whether there are finitely many integral closed subschemes $\mathscr D_1,\dots,\mathscr D_n$ of $\mathscr Y$ such that $\bigcap_\mathscr D(\mathscr Y,\mathscr D,m_{f,\mathscr D})(R)=\bigcap_{i=1}^n(\mathscr Y,\mathscr D_i,m_{f,\mathscr D_i})(R)$. By Remark \ref{rmk:inclusions} this question is equivalent to asking whether each $\mathscr D$ contains a proper closed subscheme $\mathscr W_{\mathscr D}\subsetneq\mathscr D$ that contains every integral closed subscheme $\mathscr D'\subseteq\mathscr D$ such that $m_{f,\mathscr D'}>m_{f,\mathscr D}$.

\subsection{Alternative definitions}

In the notation of Section \ref{sec:orbifold base invariants},
let $I=\bigcap_{i=1}^s \mathfrak q_i$ be a minimal primary decomposition of the ideal $I$ in the ring $A$. For $i\in\{1,\dots,s\}$, let $\mathfrak p_i$ be the radical of $\mathfrak q_i$. Without loss of generality we can assume that there is $r\leq s$ such that $\{\mathfrak p_i:1\leq i\leq r\}$ is the set of isolated primes of $I$. 
There are several alternative possibilities to define the orbifold base invariants. For example,
\begin{enumerate}[label=(\alph*),ref=(\alph*)]
    \item $m_a=\min_{\mathscr U}\min_{1\leq i\leq r}\max\{e:\mathfrak q_i\subseteq \mathfrak p_i^{e}\}$,
    \item $m_b=\min_{\mathscr U}\min_{1\leq i\leq r}\max\{e:\mathfrak q_i\subseteq \mathfrak p_i^{(e)}\}$, where $\mathfrak p_i^{(e)}=\mathfrak p_i^eA_{\mathfrak p_i}\cap A$ is the $e$-th symbolic power of $\mathfrak p_i$. 
    
    \item $m_c=\min_{\mathscr U}\max\{e: I\subseteq {\sqrt I}^e\}$,
    \item $m_d=\min_{\mathscr U}\max\{e:{\sqrt I}^e\subseteq I\}$.
\end{enumerate}
We observe that if $\mathscr X$ is regular and $\mathscr D$ has codimension $1$, then $m_a=m_b=m_c=m_d=m_{f,\mathscr D}$.
In general, there are inequalities 
\begin{enumerate}
    \item $m_{f,\mathscr D}\geq m_a$, as $I\subseteq\mathfrak q_i$ for all $i\in\{1,\dots,r\}$;
    \item $m_{f,\mathscr D}\geq m_c$, as $(\bigcap_{i=1}^r\mathfrak p_i)^e\subseteq\bigcap_{i=1}^r\mathfrak p_i^e$ for all $e\geq 0$;
    \item $m_b\geq m_a$, as $\mathfrak p_i^e\subseteq\mathfrak p_i^{(e)}$ for all $i\in\{1,\dots,r\}$ and all $e\geq 0$;
    \item $m_d\geq m_c$, as $\sqrt I^{m_d}\subseteq\sqrt I^{m_c}$.
\end{enumerate}

As a consequence of the first two inequalities, the statement of Theorem \ref{thm:DVR} holds also replacing $m_{f,\mathscr D}$ by $m_a$, or by $m_c$. The same cannot be said about $m_b$ and $m_d$. 
The following example shows that Theorem \ref{thm:Dedekind} doesn't hold replacing $m_{f,\mathscr D}$ with $m_b$. This can be explained by the fact that using symbolic powers
means restricting attention around the generic points of the irreducible components of $f^{-1}\mathscr D$, which can lead to miss conditions around other points of $\mathscr D$. 
\begin{example}
    Let $\mathscr X=V(a(x+y)^2-bz^2)\subseteq \mathbb P^2_{\ZZ}\times\mathbb P^1_{\ZZ}$ with coordinates $((x:y:z),(a:b))$. Let $\mathscr Y=\mathbb P^1_{\ZZ}$, and let $f:\mathscr X\to \mathscr Y$ be the projection onto $\mathbb P^1_{\ZZ}$. 
    This example has a section given by $x=-y, \quad z=0$. Thus $m_{f,\mathscr D}=1$ for all $\mathscr D\in \mathbb P^1(\ZZ)$. Let $\mathscr D=(0:1)\in\mathbb P^1(\ZZ)$. In the affine open given by $b=1,x=1$, the fiber over $(0:1)$ is given by the ideal $(a,z^2)=(a, a(1+y)^2-z^2)$ which is primary with radical $(a,z)=(a,z,a(y+1)^2-z^2)$. Since $y+1\in \mathbb Z[a,y,z]/(a(y+1)^2-z^2)\smallsetminus (a,z)$, then $(a,z^2)\subseteq (a,z)^{(2)}$, but $(a,z^2)\not\subseteq (a,z)^2$. 
   
\end{example}

\bibliographystyle{alpha}
\bibliography{zbib}

@misc{stacks-project,
    shorthand    = {Stacks},
    author       = {The {Stacks Project Authors}},
    title        = {\textit{Stacks Project}},
    howpublished = {\url{https://stacks.math.columbia.edu}},
    year         = {2025},
  }

@article{gross2013logarithmic,
  AUTHOR = {Gross, Mark and Siebert, Bernd},
     TITLE = {Logarithmic {G}romov-{W}itten invariants},
   JOURNAL = {J. Amer. Math. Soc.},
  FJOURNAL = {Journal of the American Mathematical Society},
    VOLUME = {26},
      YEAR = {2013},
    NUMBER = {2},
     PAGES = {451--510},
      ISSN = {0894-0347,1088-6834},
   MRCLASS = {14N35 (14D23)},
  MRNUMBER = {3011419},
MRREVIEWER = {Hsian-Hua\ Tseng},
       DOI = {10.1090/S0894-0347-2012-00757-7},
       URL = {https://doi.org/10.1090/S0894-0347-2012-00757-7},
}

@incollection {abramovich09ClayNotes,
    AUTHOR = {Abramovich, Dan},
     TITLE = {Birational geometry for number theorists},
 BOOKTITLE = {Arithmetic geometry},
    SERIES = {Clay Math. Proc.},
    VOLUME = {8},
     PAGES = {335--373},
 PUBLISHER = {Amer. Math. Soc., Providence, RI},
      YEAR = {2009},
      ISBN = {978-0-8218-4476-2},
   MRCLASS = {14E30 (11G35 14E15)},
  MRNUMBER = {2498065},
MRREVIEWER = {Alexandr\ V.\ Pukhlikov},
}

@book {atiyah-macdonald,
    AUTHOR = {Atiyah, M. F. and Macdonald, I. G.},
     TITLE = {Introduction to commutative algebra},
 PUBLISHER = {Addison-Wesley Publishing Co., Reading, Mass.-London-Don
              Mills, Ont.},
      YEAR = {1969},
     PAGES = {ix+128},
   MRCLASS = {13.00},
  MRNUMBER = {242802},
MRREVIEWER = {Johnny\ A.\ Johnson},
}

@misc{kock-psi-notes,
  author       = {Joachim Kock},
  title        = {Notes on $\psi$ classes},
  howpublished = {\url{https://mat.uab.cat/~kock/GW/notes/psi-notes.pdf}},
  note         = {Informal notes, accessed August 4, 2025},
  year         = {2001}
}

@article {MR0217086,
    AUTHOR = {Grothendieck, A.},
     TITLE = {\'{E}l\'ements de g\'eom\'etrie alg\'ebrique. {IV}. \'{E}tude
              locale des sch\'emas et des morphismes de sch\'emas. {III}},
   JOURNAL = {Inst. Hautes \'Etudes Sci. Publ. Math.},
  FJOURNAL = {Institut des Hautes \'Etudes Scientifiques. Publications
              Math\'ematiques},
    NUMBER = {28},
      YEAR = {1966},
     PAGES = {255},
      ISSN = {0073-8301},
   MRCLASS = {14.55},
  MRNUMBER = {0217086 (36 \#178)},
MRREVIEWER = {J. P. Murre},
}

@ARTICLE{loghochschild,
       AUTHOR = {Hablicsek, M\'arton and Herr, Leo and Leonardi, Francesca},
     TITLE = {Logarithmic {H}ochschild co/homology via formality of derived
              intersections},
   JOURNAL = {J. Algebra},
  FJOURNAL = {Journal of Algebra},
    VOLUME = {686},
      YEAR = {2026},
     PAGES = {127--175},
      ISSN = {0021-8693,1090-266X},
   MRCLASS = {14},
  MRNUMBER = {4950199},
       DOI = {10.1016/j.jalgebra.2025.07.048},
       URL = {https://doi.org/10.1016/j.jalgebra.2025.07.048},
}

@book{KTE,
  AUTHOR = {Kempf, G. and Knudsen, F. and Mumford, D. and
              Saint-Donat, B.},
     TITLE = {Toroidal embeddings. {I}},
    SERIES = {Lecture Notes in Mathematics},
    VOLUME = {Vol. 339},
 PUBLISHER = {Springer-Verlag, Berlin-New York},
      YEAR = {1973},
     PAGES = {viii+209},
   MRCLASS = {14E15 (14D20 14E05 14M20 20G15)},
  MRNUMBER = {335518},
MRREVIEWER = {G.\ Harder},
}

@ARTICLE{WeakToroidalization,
    AUTHOR = {Abramovich, Dan and Denef, Jan and Karu, Kalle},
     TITLE = {Weak toroidalization over non-closed fields},
   JOURNAL = {Manuscripta Math.},
  FJOURNAL = {Manuscripta Mathematica},
    VOLUME = {142},
      YEAR = {2013},
    NUMBER = {1-2},
     PAGES = {257--271},
      ISSN = {0025-2611,1432-1785},
   MRCLASS = {14E15 (14M25)},
  MRNUMBER = {3081008},
MRREVIEWER = {Alexandr\ V.\ Pukhlikov},
       DOI = {10.1007/s00229-013-0610-5},
       URL = {https://doi.org/10.1007/s00229-013-0610-5},
}

@misc{ToroidalMorphisms,
    author = {Denef, Jan},
    title = {Some remarks on toroidal morphisms},
    howpublished = {arXiv:1303.4999v1},
    year = {2021},
}

@article {Campana04,
    AUTHOR = {Campana, Fr\'ed\'eric},
     TITLE = {Orbifolds, special varieties and classification theory},
   JOURNAL = {Ann. Inst. Fourier (Grenoble)},
  FJOURNAL = {Universit\'e{} de Grenoble. Annales de l'Institut Fourier},
    VOLUME = {54},
      YEAR = {2004},
    NUMBER = {3},
     PAGES = {499--630},
      ISSN = {0373-0956,1777-5310},
   MRCLASS = {14E05 (14D06 14J40 32Q57 35Q15)},
  MRNUMBER = {2097416},
MRREVIEWER = {Dan\ Abramovich},
       DOI = {10.5802/aif.2027},
       URL = {https://doi.org/10.5802/aif.2027},
}

@article {Campana05surfaces,
    AUTHOR = {Campana, Fr\'ed\'eric},
     TITLE = {Fibres multiples sur les surfaces: aspects geom\'etriques,
              hyperboliques et arithm\'etiques},
   JOURNAL = {Manuscripta Math.},
  FJOURNAL = {Manuscripta Mathematica},
    VOLUME = {117},
      YEAR = {2005},
    NUMBER = {4},
     PAGES = {429--461},
      ISSN = {0025-2611,1432-1785},
   MRCLASS = {14D06 (14J40)},
  MRNUMBER = {2163487},
MRREVIEWER = {Lucia\ Caporaso},
       DOI = {10.1007/s00229-005-0570-5},
       URL = {https://doi.org/10.1007/s00229-005-0570-5},
}

@article {Campana11,
    AUTHOR = {Campana, Fr\'ed\'eric},
     TITLE = {Orbifoldes g\'eom\'etriques sp\'eciales et classification
              bim\'eromorphe des vari\'et\'es k\"ahl\'eriennes compactes},
   JOURNAL = {J. Inst. Math. Jussieu},
  FJOURNAL = {Journal of the Institute of Mathematics of Jussieu. JIMJ.
              Journal de l'Institut de Math\'ematiques de Jussieu},
    VOLUME = {10},
      YEAR = {2011},
    NUMBER = {4},
     PAGES = {809--934},
      ISSN = {1474-7480,1475-3030},
   MRCLASS = {32J27 (11G05 32J25 32Q45)},
  MRNUMBER = {2831280},
       DOI = {10.1017/S1474748010000101},
       URL = {https://doi.org/10.1017/S1474748010000101},
}

@article{romagnypi0,
AUTHOR = {Romagny, Matthieu},
     TITLE = {Composantes connexes et irr\'eductibles en familles},
   JOURNAL = {Manuscripta Math.},
  FJOURNAL = {Manuscripta Mathematica},
    VOLUME = {136},
      YEAR = {2011},
    NUMBER = {1-2},
     PAGES = {1--32},
      ISSN = {0025-2611,1432-1785},
   MRCLASS = {14D23 (14D06 14H10)},
  MRNUMBER = {2820394},
MRREVIEWER = {Orsola\ Tommasi},
       DOI = {10.1007/s00229-010-0424-7},
       URL = {https://doi.org/10.1007/s00229-010-0424-7},
}

@book{puncturedloggwabramovichchengrosssiebert,
  AUTHOR = {Abramovich, Dan and Chen, Qile and Gross, Mark and Siebert,
              Bernd},
     TITLE = {Punctured logarithmic maps},
    SERIES = {Memoirs of the European Mathematical Society},
    VOLUME = {15},
 PUBLISHER = {EMS Press, Berlin},
      YEAR = {2025},
     PAGES = {viii+156},
      ISBN = {978-3-98547-086-0; 978-3-98547-586-5},
   MRCLASS = {14N35 (14A21 14D20 14D23 14T20)},
  MRNUMBER = {4867560},
       DOI = {10.4171/mems/15},
       URL = {https://doi.org/10.4171/mems/15},
}

@book {Poonen17,
    AUTHOR = {Poonen, Bjorn},
     TITLE = {Rational points on varieties},
    SERIES = {Graduate Studies in Mathematics},
    VOLUME = {186},
 PUBLISHER = {American Mathematical Society, Providence, RI},
      YEAR = {2017},
     PAGES = {xv+337},
      ISBN = {978-1-4704-3773-2},
   MRCLASS = {14G05 (11G35)},
  MRNUMBER = {3729254},
MRREVIEWER = {Daniel\ Loughran},
       DOI = {10.1090/gsm/186},
       URL = {https://doi.org/10.1090/gsm/186},
}

@article {AVA18,
    AUTHOR = {Abramovich, Dan and V\'arilly-Alvarado, Anthony},
     TITLE = {Campana points, {V}ojta's conjecture, and level structures on
              semistable abelian varieties},
   JOURNAL = {J. Th\'eor. Nombres Bordeaux},
  FJOURNAL = {Journal de Th\'eorie des Nombres de Bordeaux},
    VOLUME = {30},
      YEAR = {2018},
    NUMBER = {2},
     PAGES = {525--532},
      ISSN = {1246-7405,2118-8572},
   MRCLASS = {11J97 (11G18 14G05 14K10 14K15)},
  MRNUMBER = {3891325},
MRREVIEWER = {Julie\ Tzu-Yueh\ Wang},
       DOI = {10.5802/jtnb.1037},
       URL = {https://doi.org/10.5802/jtnb.1037},
}

@article {PSTVA21,
    AUTHOR = {Pieropan, Marta and Smeets, Arne and Tanimoto, Sho and
              V\'arilly-Alvarado, Anthony},
     TITLE = {Campana points of bounded height on vector group
              compactifications},
   JOURNAL = {Proc. Lond. Math. Soc. (3)},
  FJOURNAL = {Proceedings of the London Mathematical Society. Third Series},
    VOLUME = {123},
      YEAR = {2021},
    NUMBER = {1},
     PAGES = {57--101},
      ISSN = {0024-6115,1460-244X},
   MRCLASS = {11G50 (11G35 14G05 14G10)},
  MRNUMBER = {4307130},
MRREVIEWER = {Tristan\ Phillips},
       DOI = {10.1112/plms.12391},
       URL = {https://doi.org/10.1112/plms.12391},
}

@article {MNS24,
    AUTHOR = {Mitankin, Vladimir and Nakahara, Masahiro and Streeter, Sam},
     TITLE = {Semi-integral {B}rauer-{M}anin obstruction and quadric
              orbifold pairs},
   JOURNAL = {Trans. Amer. Math. Soc.},
  FJOURNAL = {Transactions of the American Mathematical Society},
    VOLUME = {377},
      YEAR = {2024},
    NUMBER = {6},
     PAGES = {4435--4480},
      ISSN = {0002-9947,1088-6850},
   MRCLASS = {14G12 (11G35 14G05)},
  MRNUMBER = {4748623},
       DOI = {10.1090/tran/9170},
       URL = {https://doi.org/10.1090/tran/9170},
}

@misc{moerman,
      title={Generalized {C}ampana points and adelic approximation on toric varieties}, 
      author={Moerman, Boaz},
      year={2024},
    howpublished = {arXiv:2407.03048},
}

@article {Serre90split,
    AUTHOR = {Serre, Jean-Pierre},
     TITLE = {Sp\'ecialisation des \'el\'ements de {${\rm Br}_2({\bf
              Q}(T_1,\cdots,T_n))$}},
   JOURNAL = {C. R. Acad. Sci. Paris S\'er. I Math.},
  FJOURNAL = {Comptes Rendus de l'Acad\'emie des Sciences. S\'erie I.
              Math\'ematique},
    VOLUME = {311},
      YEAR = {1990},
    NUMBER = {7},
     PAGES = {397--402},
      ISSN = {0764-4442},
   MRCLASS = {12G05 (11G35 13A20)},
  MRNUMBER = {1075658},
MRREVIEWER = {Vyacheslav\ I.\ Yanchevski\u i},
}

@article {Skorobogatov96split,
    AUTHOR = {Skorobogatov, Alexei N.},
     TITLE = {Descent on fibrations over the projective line},
   JOURNAL = {Amer. J. Math.},
  FJOURNAL = {American Journal of Mathematics},
    VOLUME = {118},
      YEAR = {1996},
    NUMBER = {5},
     PAGES = {905--923},
      ISSN = {0002-9327,1080-6377},
   MRCLASS = {11G35 (14G25)},
  MRNUMBER = {1408492},
MRREVIEWER = {J\"org\ Jahnel},
       URL =
              {http://muse.jhu.edu/journals/american_journal_of_mathematics/v118/118.5skorobogatov.pdf},
}

@article {CTSSD98split,
    AUTHOR = {Colliot-Th\'el\`ene, J.-L. and Skorobogatov, A. N. and
              Swinnerton-Dyer, Peter},
     TITLE = {Hasse principle for pencils of curves of genus one whose
              {J}acobians have rational {$2$}-division points},
   JOURNAL = {Invent. Math.},
  FJOURNAL = {Inventiones Mathematicae},
    VOLUME = {134},
      YEAR = {1998},
    NUMBER = {3},
     PAGES = {579--650},
      ISSN = {0020-9910,1432-1297},
   MRCLASS = {11G35 (14G25)},
  MRNUMBER = {1660925},
MRREVIEWER = {David\ Harari},
       DOI = {10.1007/s002220050274},
       URL = {https://doi.org/10.1007/s002220050274},
}

@article {CTS00split,
    AUTHOR = {Colliot-Th\'el\`ene, J.-L. and Skorobogatov, A. N.},
     TITLE = {Descent on fibrations over {${\bf P}^1_k$} revisited},
   JOURNAL = {Math. Proc. Cambridge Philos. Soc.},
  FJOURNAL = {Mathematical Proceedings of the Cambridge Philosophical
              Society},
    VOLUME = {128},
      YEAR = {2000},
    NUMBER = {3},
     PAGES = {383--393},
      ISSN = {0305-0041,1469-8064},
   MRCLASS = {11G35 (14D10 14J20)},
  MRNUMBER = {1744112},
MRREVIEWER = {Juliusz\ Brzezi\'nski},
       DOI = {10.1017/S0305004199004077},
       URL = {https://doi.org/10.1017/S0305004199004077},
}

@article {HS03split,
    AUTHOR = {Harari, David and Skorobogatov, Alexei N.},
     TITLE = {The {B}rauer group of torsors and its arithmetic applications},
   JOURNAL = {Ann. Inst. Fourier (Grenoble)},
  FJOURNAL = {Universit\'e{} de Grenoble. Annales de l'Institut Fourier},
    VOLUME = {53},
      YEAR = {2003},
    NUMBER = {7},
     PAGES = {1987--2019},
      ISSN = {0373-0956,1777-5310},
   MRCLASS = {11G35 (14F22 14G05)},
  MRNUMBER = {2044165},
MRREVIEWER = {Antoine\ Ducros},
       DOI = {10.5802/aif.1998},
       URL = {https://doi.org/10.5802/aif.1998},
}

@article {BMS14split,
    AUTHOR = {Browning, T. D. and Matthiesen, L. and Skorobogatov, A. N.},
     TITLE = {Rational points on pencils of conics and quadrics with many
              degenerate fibers},
   JOURNAL = {Ann. of Math. (2)},
  FJOURNAL = {Annals of Mathematics. Second Series},
    VOLUME = {180},
      YEAR = {2014},
    NUMBER = {1},
     PAGES = {381--402},
      ISSN = {0003-486X,1939-8980},
   MRCLASS = {14G05 (14C21 14G25)},
  MRNUMBER = {3194818},
MRREVIEWER = {Cecilia\ Salgado},
       DOI = {10.4007/annals.2014.180.1.8},
       URL = {https://doi.org/10.4007/annals.2014.180.1.8},
}

@article{SJPA,
author = {Stefan Kebekus and Jorge Vit{\'o}rio Pereira and Arne Smeets},
title = {{Failure of the Brauer–Manin principle for a simply connected fourfold over a global function field, via orbifold Mordell}},
volume = {171},
journal = {Duke Mathematical Journal},
number = {17},
publisher = {Duke University Press},
pages = {3515 -- 3591},
keywords = {Brauer–Manin obstruction, function field, Hasse principle, Mordell conjecture},
year = {2022},
doi = {10.1215/00127094-2022-0045},
URL = {https://doi.org/10.1215/00127094-2022-0045}
}

@article {BBL16split,
    AUTHOR = {Bright, M. J. and Browning, T. D. and Loughran, D.},
     TITLE = {Failures of weak approximation in families},
   JOURNAL = {Compos. Math.},
  FJOURNAL = {Compositio Mathematica},
    VOLUME = {152},
      YEAR = {2016},
    NUMBER = {7},
     PAGES = {1435--1475},
      ISSN = {0010-437X,1570-5846},
   MRCLASS = {14G05 (11D25 11G35 11N36 14D10 14F22 14G25)},
  MRNUMBER = {3530447},
MRREVIEWER = {Yong\ Hu},
       DOI = {10.1112/S0010437X16007405},
       URL = {https://doi.org/10.1112/S0010437X16007405},
}

@article {Sofos16split,
    AUTHOR = {Sofos, E.},
     TITLE = {Serre's problem on the density of isotropic fibres in conic
              bundles},
   JOURNAL = {Proc. Lond. Math. Soc. (3)},
  FJOURNAL = {Proceedings of the London Mathematical Society. Third Series},
    VOLUME = {113},
      YEAR = {2016},
    NUMBER = {2},
     PAGES = {261--288},
      ISSN = {0024-6115,1460-244X},
   MRCLASS = {14G05 (11G35 11N36 14D10)},
  MRNUMBER = {3534973},
MRREVIEWER = {Daniel\ Loughran},
       DOI = {10.1112/plms/pdw030},
       URL = {https://doi.org/10.1112/plms/pdw030},
}

@article {LS16split,
    AUTHOR = {Loughran, D. and Smeets, A.},
     TITLE = {Fibrations with few rational points},
   JOURNAL = {Geom. Funct. Anal.},
  FJOURNAL = {Geometric and Functional Analysis},
    VOLUME = {26},
      YEAR = {2016},
    NUMBER = {5},
     PAGES = {1449--1482},
      ISSN = {1016-443X,1420-8970},
   MRCLASS = {14G05 (11G35 11N36 14D10)},
  MRNUMBER = {3568035},
MRREVIEWER = {Ulrich\ Derenthal},
       DOI = {10.1007/s00039-016-0381-8},
       URL = {https://doi.org/10.1007/s00039-016-0381-8},
}

@article {Loughran18split,
    AUTHOR = {Loughran, Daniel},
     TITLE = {The number of varieties in a family which contain a rational
              point},
   JOURNAL = {J. Eur. Math. Soc. (JEMS)},
  FJOURNAL = {Journal of the European Mathematical Society (JEMS)},
    VOLUME = {20},
      YEAR = {2018},
    NUMBER = {10},
     PAGES = {2539--2588},
      ISSN = {1435-9855,1435-9863},
   MRCLASS = {14G05 (11D45 14F22 14M25)},
  MRNUMBER = {3852186},
MRREVIEWER = {Sajad\ Salami},
       DOI = {10.4171/JEMS/818},
       URL = {https://doi.org/10.4171/JEMS/818},
}

@article {Denef19split,
    AUTHOR = {Denef, Jan},
     TITLE = {Proof of a conjecture of {C}olliot-{T}h\'el\`ene and a
              diophantine excision theorem},
   JOURNAL = {Algebra Number Theory},
  FJOURNAL = {Algebra \& Number Theory},
    VOLUME = {13},
      YEAR = {2019},
    NUMBER = {9},
     PAGES = {1983--1996},
      ISSN = {1937-0652,1944-7833},
   MRCLASS = {14G20 (11S05)},
  MRNUMBER = {4039493},
MRREVIEWER = {James\ H.\ Stankewicz},
       DOI = {10.2140/ant.2019.13.1983},
       URL = {https://doi.org/10.2140/ant.2019.13.1983},
}

@article {LSS20pseudo-split,
    AUTHOR = {Loughran, Daniel and Skorobogatov, Alexei N. and Smeets, Arne},
     TITLE = {Pseudo-split fibers and arithmetic surjectivity},
   JOURNAL = {Ann. Sci. \'Ec. Norm. Sup\'er. (4)},
  FJOURNAL = {Annales Scientifiques de l'\'Ecole Normale Sup\'erieure.
              Quatri\`eme S\'erie},
    VOLUME = {53},
      YEAR = {2020},
    NUMBER = {4},
     PAGES = {1037--1070},
      ISSN = {0012-9593,1873-2151},
   MRCLASS = {14G05 (11G35)},
  MRNUMBER = {4157111},
MRREVIEWER = {Christopher\ Frei},
       DOI = {10.24033/asens.2439},
       URL = {https://doi.org/10.24033/asens.2439},
}

@article {HWW22split,
    AUTHOR = {Harpaz, Yonatan and Wei, Dasheng and Wittenberg, Olivier},
     TITLE = {Rational points on fibrations with few non-split fibres},
   JOURNAL = {J. Reine Angew. Math.},
  FJOURNAL = {Journal f\"ur die Reine und Angewandte Mathematik. [Crelle's
              Journal]},
    VOLUME = {791},
      YEAR = {2022},
     PAGES = {89--133},
      ISSN = {0075-4102,1435-5345},
   MRCLASS = {14G12 (11G35)},
  MRNUMBER = {4489626},
MRREVIEWER = {Mikhail\ Borovoi},
       DOI = {10.1515/crelle-2022-0042},
       URL = {https://doi.org/10.1515/crelle-2022-0042},
}

@article {LM24split,
    AUTHOR = {Loughran, Daniel and Matthiesen, Lilian},
     TITLE = {Frobenian multiplicative functions and rational points in
              fibrations},
   JOURNAL = {J. Eur. Math. Soc. (JEMS)},
  FJOURNAL = {Journal of the European Mathematical Society (JEMS)},
    VOLUME = {26},
      YEAR = {2024},
    NUMBER = {12},
     PAGES = {4779--4830},
      ISSN = {1435-9855,1435-9863},
   MRCLASS = {14G05 (11B30 11D45 11N37 14D10)},
  MRNUMBER = {4780494},
       DOI = {10.4171/jems/1374},
       URL = {https://doi.org/10.4171/jems/1374},
}

@article{CampanaPaun15,
  author = {Campana, Fr{\'e}d{\'e}ric and P{\u{a}}un, Mihai},
  title = {Orbifold generic semi-positivity: an application to families of canonically polarized manifolds},
  journal = {Annales de l'Institut Fourier},
  volume = {65},
  number = {2},
  year = {2015},
  pages = {835--861}
}

@ARTICLE{2024arXiv240710668K,
       author = {{Kebekus}, Stefan and {Rousseau}, Erwan},
        title = "{C-pairs and their morphisms}",
      journal = {Annales de l'Institut Fourier, Special Volume in Memory of Jean-Pierre Demailly, to appear},
     keywords = {Mathematics - Algebraic Geometry, Mathematics - Complex Variables, 32C99, 32H99},
         year = {arXiv:2407.10668, 2024},
}

@article{BLS23split,
author = {Browning, Tim and  Lyczak, Julian and Smeets, Arne},
title = {Paucity of rational points on fibrations with multiple fibres},
journal = {Algebra Number Theory},
volume = {19},
number = {10},
pages = {2049--2090},
year = {2025},
url = {https://doi.org/10.2140/ant.2025.19.2049},
}

@article {AbrKaru00,
    AUTHOR = {Abramovich, D. and Karu, K.},
     TITLE = {Weak semistable reduction in characteristic 0},
   JOURNAL = {Invent. Math.},
  FJOURNAL = {Inventiones Mathematicae},
    VOLUME = {139},
      YEAR = {2000},
    NUMBER = {2},
     PAGES = {241--273},
      ISSN = {0020-9910,1432-1297},
   MRCLASS = {14D06 (14J10)},
  MRNUMBER = {1738451},
MRREVIEWER = {S\'andor\ J.\ Kov\'acs},
       DOI = {10.1007/s002229900024},
       URL = {https://doi.org/10.1007/s002229900024},
}

@incollection {IllusieTemkin14,
    AUTHOR = {Illusie, Luc and Temkin, Michael},
     TITLE = {Expos\'e{} {X}. {G}abber's modification theorem (log smooth
              case)},
      NOTE = {Travaux de Gabber sur l'uniformisation locale et la
              cohomologie \'etale des sch\'emas quasi-excellents},
   JOURNAL = {Ast\'erisque},
  FJOURNAL = {Ast\'erisque},
    NUMBER = {363-364},
      YEAR = {2014},
     PAGES = {167--212},
      ISSN = {0303-1179,2492-5926},
      ISBN = {978-2-85629-790-2},
   MRCLASS = {14E15 (14L30)},
  MRNUMBER = {3329779},
}

@article {Ulirsch17,
    AUTHOR = {Ulirsch, Martin},
     TITLE = {Functorial tropicalization of logarithmic schemes: the case of
              constant coefficients},
   JOURNAL = {Proc. Lond. Math. Soc. (3)},
  FJOURNAL = {Proceedings of the London Mathematical Society. Third Series},
    VOLUME = {114},
      YEAR = {2017},
    NUMBER = {6},
     PAGES = {1081--1113},
      ISSN = {0024-6115,1460-244X},
   MRCLASS = {14T05 (14G22 20M14)},
  MRNUMBER = {3661346},
MRREVIEWER = {Hannah\ Markwig},
       DOI = {10.1112/plms.12031},
       URL = {https://doi.org/10.1112/plms.12031},
}

@article{RdC,
title       =       {Resa dei Conti},
author      =       {{Herr}, Leo and {Molcho}, Sam and {Pandharipande}, Rahul and {Wise}, Jonathan},
journal     =       {forthcoming},
year        =       {2027}

}

@Article{universalsemistablereductionmolcho,
AUTHOR = {Molcho, Sam},
     TITLE = {Universal stacky semistable reduction},
   JOURNAL = {Israel J. Math.},
  FJOURNAL = {Israel Journal of Mathematics},
    VOLUME = {242},
      YEAR = {2021},
    NUMBER = {1},
     PAGES = {55--82},
      ISSN = {0021-2172,1565-8511},
   MRCLASS = {14A20 (14A21 14M25)},
  MRNUMBER = {4282076},
MRREVIEWER = {Damiano\ Fulghesu},
       DOI = {10.1007/s11856-021-2118-0},
       URL = {https://doi.org/10.1007/s11856-021-2118-0},
}

@InProceedings{skeletonsandfans,
AUTHOR = {Abramovich, Dan and Chen, Qile and Marcus, Steffen and
              Ulirsch, Martin and Wise, Jonathan},
     TITLE = {Skeletons and fans of logarithmic structures},
 BOOKTITLE = {Nonarchimedean and tropical geometry},
    SERIES = {Simons Symp.},
     PAGES = {287--336},
 PUBLISHER = {Springer, [Cham]},
      YEAR = {2016},
      ISBN = {978-3-319-30944-6; 978-3-319-30945-3},
   MRCLASS = {14M25 (14T05 52B20)},
  MRNUMBER = {3702314},
MRREVIEWER = {T.\ Oda},
}

@article{logpic, 
AUTHOR = {Molcho, Samouil and Wise, Jonathan},
     TITLE = {The logarithmic {P}icard group and its tropicalization},
   JOURNAL = {Compos. Math.},
  FJOURNAL = {Compositio Mathematica},
    VOLUME = {158},
      YEAR = {2022},
    NUMBER = {7},
     PAGES = {1477--1562},
      ISSN = {0010-437X,1570-5846},
   MRCLASS = {14A21 (14C20 14C22 14H40 14T10)},
  MRNUMBER = {4479817},
MRREVIEWER = {Howard\ M.\ Thompson},
       DOI = {10.1112/s0010437x22007527},
       URL = {https://doi.org/10.1112/s0010437x22007527},
}

@misc{evaluationspacefamiliesofstandardlogpoints,
   author = {{Abramovich}, Dan and {Chen}, Qile and {Gillam}, William D. and {Marcus}, Steffen
	},
    title = "{The Evaluation Space of Logarithmic Stable Maps}",
    howpublished = {arXiv:1012.5416v1},
    year = {2010},
}

@incollection{katooriginal,
AUTHOR = {Kato, Kazuya},
     TITLE = {Logarithmic structures of {F}ontaine-{I}llusie},
 BOOKTITLE = {Algebraic analysis, geometry, and number theory ({B}altimore,
              {MD}, 1988)},
     PAGES = {191--224},
 PUBLISHER = {Johns Hopkins Univ. Press, Baltimore, MD},
      YEAR = {1989},
      ISBN = {0-8018-3841-X},
   MRCLASS = {14F30 (14G20)},
  MRNUMBER = {1463703},
MRREVIEWER = {Adolfo\ Quir\'os},
}

@article{JanDenef,
 AUTHOR = {Denef, Jan},
     TITLE = {Geometric proofs of theorems of {A}x-{K}ochen and {E}r\v sov},
   JOURNAL = {Amer. J. Math.},
  FJOURNAL = {American Journal of Mathematics},
    VOLUME = {138},
      YEAR = {2016},
    NUMBER = {1},
     PAGES = {181--199},
      ISSN = {0002-9327,1080-6377},
   MRCLASS = {14G20 (03C10 11G25 11S85)},
  MRNUMBER = {3462884},
MRREVIEWER = {Lei\ Fu},
       DOI = {10.1353/ajm.2016.0008},
       URL = {https://doi.org/10.1353/ajm.2016.0008},
}

@article{logstacks, 
       AUTHOR = {Olsson, Martin C.},
     TITLE = {Logarithmic geometry and algebraic stacks},
   JOURNAL = {Ann. Sci. \'Ecole Norm. Sup. (4)},
  FJOURNAL = {Annales Scientifiques de l'\'Ecole Normale Sup\'erieure.
              Quatri\`eme S\'erie},
    VOLUME = {36},
      YEAR = {2003},
    NUMBER = {5},
     PAGES = {747--791},
      ISSN = {0012-9593},
   MRCLASS = {14D20 (14A20)},
  MRNUMBER = {2032986},
MRREVIEWER = {Ivan\ S.\ Kausz},
       DOI = {10.1016/j.ansens.2002.11.001},
       URL = {https://doi.org/10.1016/j.ansens.2002.11.001},
}

@article{birationalinvarianceabramovichwise, 
    AUTHOR = {Abramovich, Dan and Wise, Jonathan},
     TITLE = {Birational invariance in logarithmic {G}romov-{W}itten theory},
   JOURNAL = {Compos. Math.},
  FJOURNAL = {Compositio Mathematica},
    VOLUME = {154},
      YEAR = {2018},
    NUMBER = {3},
     PAGES = {595--620},
      ISSN = {0010-437X,1570-5846},
   MRCLASS = {14N35 (14A20 14D23 14E05 14H10)},
  MRNUMBER = {3778185},
MRREVIEWER = {Hsian-Hua\ Tseng},
       DOI = {10.1112/S0010437X17007667},
       URL = {https://doi.org/10.1112/S0010437X17007667},
}

@book{ogusloggeom, 
    AUTHOR = {Ogus, Arthur},
     TITLE = {Lectures on logarithmic algebraic geometry},
    SERIES = {Cambridge Studies in Advanced Mathematics},
    VOLUME = {178},
 PUBLISHER = {Cambridge University Press, Cambridge},
      YEAR = {2018},
     PAGES = {xviii+539},
      ISBN = {978-1-107-18773-3},
   MRCLASS = {14D06 (14A20 14M25)},
  MRNUMBER = {3838359},
MRREVIEWER = {Howard\ M.\ Thompson},
       DOI = {10.1017/9781316941614},
       URL = {https://doi.org/10.1017/9781316941614},
}

@article{chen2014stable,
  AUTHOR = {Chen, Qile},
     TITLE = {Stable logarithmic maps to {D}eligne-{F}altings pairs {I}},
   JOURNAL = {Ann. of Math. (2)},
  FJOURNAL = {Annals of Mathematics. Second Series},
    VOLUME = {180},
      YEAR = {2014},
    NUMBER = {2},
     PAGES = {455--521},
      ISSN = {0003-486X,1939-8980},
   MRCLASS = {14N35 (14D23)},
  MRNUMBER = {3224717},
MRREVIEWER = {Sergiy\ Koshkin},
       DOI = {10.4007/annals.2014.180.2.2},
       URL = {https://doi.org/10.4007/annals.2014.180.2.2},
}

@article{abramovich2014stable,
  AUTHOR = {Abramovich, Dan and Chen, Qile},
     TITLE = {Stable logarithmic maps to {D}eligne-{F}altings pairs {II}},
   JOURNAL = {Asian J. Math.},
  FJOURNAL = {Asian Journal of Mathematics},
    VOLUME = {18},
      YEAR = {2014},
    NUMBER = {3},
     PAGES = {465--488},
      ISSN = {1093-6106,1945-0036},
   MRCLASS = {14D23 (14A20 14H10 14N35)},
  MRNUMBER = {3257836},
MRREVIEWER = {Jonathan\ Wise},
       DOI = {10.4310/AJM.2014.v18.n3.a5},
       URL = {https://doi.org/10.4310/AJM.2014.v18.n3.a5},
}

@incollection{loggeometryhandbook,
  AUTHOR = {Abramovich, Dan and Chen, Qile and Gillam, Danny and Huang,
              Yuhao and Olsson, Martin and Satriano, Matthew and Sun,
              Shenghao},
     TITLE = {Logarithmic geometry and moduli},
 BOOKTITLE = {Handbook of moduli. {V}ol. {I}},
    SERIES = {Adv. Lect. Math. (ALM)},
    VOLUME = {24},
     PAGES = {1--61},
 PUBLISHER = {Int. Press, Somerville, MA},
      YEAR = {2013},
      ISBN = {978-1-57146-257-2},
   MRCLASS = {14D20 (14A20)},
  MRNUMBER = {3184161},
MRREVIEWER = {Howard\ M.\ Thompson},
}

@ARTICLE{tropicalcurvesmodulicones, 
AUTHOR = {Cavalieri, Renzo and Chan, Melody and Ulirsch, Martin and
              Wise, Jonathan},
     TITLE = {A moduli stack of tropical curves},
   JOURNAL = {Forum Math. Sigma},
  FJOURNAL = {Forum of Mathematics. Sigma},
    VOLUME = {8},
      YEAR = {2020},
     PAGES = {Paper No. e23, 93},
      ISSN = {2050-5094},
   MRCLASS = {14T20 (14D23)},
  MRNUMBER = {4091085},
MRREVIEWER = {Hannah\ Markwig},
       DOI = {10.1017/fms.2020.16},
       URL = {https://doi.org/10.1017/fms.2020.16},
}

@article{fkatomodulilogcurves,
AUTHOR = {Kato, Fumiharu},
     TITLE = {Log smooth deformation and moduli of log smooth curves},
   JOURNAL = {Internat. J. Math.},
  FJOURNAL = {International Journal of Mathematics},
    VOLUME = {11},
      YEAR = {2000},
    NUMBER = {2},
     PAGES = {215--232},
      ISSN = {0129-167X,1793-6519},
   MRCLASS = {14D22 (14H10)},
  MRNUMBER = {1754621},
MRREVIEWER = {Samuel\ Dalalyan},
       DOI = {10.1142/S0129167X0000012X},
       URL = {https://doi.org/10.1142/S0129167X0000012X},
}

@ARTICLE{fkatointegralblowup,
       AUTHOR = {Kato, Fumiharu},
     TITLE = {Integral morphisms and log blow-ups},
   JOURNAL = {Israel J. Math.},
  FJOURNAL = {Israel Journal of Mathematics},
    VOLUME = {247},
      YEAR = {2022},
    NUMBER = {2},
     PAGES = {831--843},
      ISSN = {0021-2172,1565-8511},
   MRCLASS = {14A15 (14D20 14E05 32C38 32G20)},
  MRNUMBER = {4425356},
MRREVIEWER = {Ana\ Bravo},
       DOI = {10.1007/s11856-021-2255-5},
       URL = {https://doi.org/10.1007/s11856-021-2255-5},
}

@article{wisebounded,
    AUTHOR = {Abramovich, Dan and Chen, Qile and Marcus, Steffen and Wise,
              Jonathan},
     TITLE = {Boundedness of the space of stable logarithmic maps},
   JOURNAL = {J. Eur. Math. Soc. (JEMS)},
  FJOURNAL = {Journal of the European Mathematical Society (JEMS)},
    VOLUME = {19},
      YEAR = {2017},
    NUMBER = {9},
     PAGES = {2783--2809},
      ISSN = {1435-9855,1435-9863},
   MRCLASS = {14D23 (14A20 14H10 14N35)},
  MRNUMBER = {3692887},
MRREVIEWER = {Junchao\ Shentu},
       DOI = {10.4171/JEMS/728},
       URL = {https://doi.org/10.4171/JEMS/728},
}

@article{logetale2nakayama,
AUTHOR = {Nakayama, Chikara},
     TITLE = {Logarithmic \'etale cohomology, {II}},
   JOURNAL = {Adv. Math.},
  FJOURNAL = {Advances in Mathematics},
    VOLUME = {314},
      YEAR = {2017},
     PAGES = {663--725},
      ISSN = {0001-8708,1090-2082},
   MRCLASS = {14F20 (14A20 14M25)},
  MRNUMBER = {3658728},
MRREVIEWER = {Howard\ M.\ Thompson},
       DOI = {10.1016/j.aim.2017.05.006},
       URL = {https://doi.org/10.1016/j.aim.2017.05.006},
}

@article{bornevistoliparabolic,
AUTHOR = {Borne, Niels and Vistoli, Angelo},
     TITLE = {Parabolic sheaves on logarithmic schemes},
   JOURNAL = {Adv. Math.},
  FJOURNAL = {Advances in Mathematics},
    VOLUME = {231},
      YEAR = {2012},
    NUMBER = {3-4},
     PAGES = {1327--1363},
      ISSN = {0001-8708,1090-2082},
   MRCLASS = {14F05 (14A20 18D10)},
  MRNUMBER = {2964607},
MRREVIEWER = {Jon\ Eivind\ Vatne},
       DOI = {10.1016/j.aim.2012.06.015},
       URL = {https://doi.org/10.1016/j.aim.2012.06.015},
}

@article{logjets,
       author = {{Herr}, Leo},
        title = "{The log tangent space of the log jet space}",
        journal = {Michigan Math. J., to appear},
        howpublished = {arXiv:2210.08505v1},
        year = {arXiv:2210.08505v1, 2022},
}

\end{document}